\newtheorem{theorem}{Theorem}[section]
\newtheorem{proposition}[theorem]{Proposition}
\newtheorem{corollary}[theorem]{Corollary}
\newtheorem{definition}[theorem]{Definition}
\newtheorem{lemma}[theorem]{Lemma}
\newtheorem{example}[theorem]{Example}
\newtheorem{remark}[theorem]{Remark}
\newtheorem{problem}[theorem]{Problem}
\numberwithin{equation}{section}
\def\bC{\mathbb{C}}
\def\bN{\mathbb{N}}
\def\bR{\mathbb{R}}
\def\cH{\mathcal{H}}
\def\cD{\mathcal{D}}
\def\cK{\mathcal{K}}
\def\cM{\mathcal{M}}
\def\cR{\mathcal{R}}
\def\fS{\mathfrak{S}}
\def\Inf{\mathop{\mathrm{Inf}}}
\def\eps{\varepsilon}
\def\<{\langle}
\def\>{\rangle}
\def\ffi{\varphi}
\def\Tr{\mathrm{Tr}}
\def\tr{\mathrm{tr}}
\def\Re{\mathrm{Re}\,}
\def\Ad{\mathrm{Ad}}
\def\Im{\mathrm{Im}\,}
\newcommand{\rmvpopsi}{\Delta_{\vpo\psi}}
\newcommand{\rmvppsi}{\Delta_{\varphi\psi}}
\newcommand{\rmvpvpo}{\Delta_{\varphi\vpo}}
\newcommand{\vpo}{\varphi_0}
\newcommand{\tp}{\tilde{p}}
\newcommand{\ellp}{L^p({\cM},\psi')}
\newcommand{\ellq}{L^q({\cM},\psi')}
\date{}
\begin{document}

\centerline{\Large Connections of unbounded operators and some related topics:}
\medskip
\centerline{\Large von Neumann algebra case}

\bigskip
\bigskip
\centerline{\large
Fumio Hiai\footnote{{\it E-mail:} hiai.fumio@gmail.com},
and Hideki Kosaki\footnote{{\it E-mail:} kosaki@math.kyushu-u.ac.jp}}

\medskip
\begin{center}
$^1$\,Graduate School of Information Sciences, Tohoku University, \\
Aoba-ku, Sendai 980-8579, Japan
\end{center}

\begin{center}
$^2$\,Graduate School of Mathematics, Kyushu University, \\
Nishi-ku, Fukuoka 819-0395, Japan
\end{center}

\bigskip

\begin{abstract}

The Kubo-Ando theory deals with connections for positive bounded operators.
On the other hand, in various analysis related to von Neumann algebras
it is impossible to avoid unbounded operators. 
In this article we try to extend a notion of connections to cover various classes 
of positive unbounded operators (or unbounded objects such as positive forms 
and weights) appearing naturally in the setting of von Neumann algebras, and
we must keep all the expected properties maintained.
This generalization is carried out for the following classes:
(i) positive $\tau$-measurable operators (affiliated with a semi-finite von Neumann algebra
equipped with a trace $\tau$),
(ii) positive elements in Haagerup's $L^p$-spaces,
(iii) semi-finite normal weights on a von Neumann algebra.
Investigation on these generalizations requires some analysis
(such as certain upper semi-continuity) on decreasing sequences 
in various classes.
Several results in this direction are proved, which may be of independent interest.
Ando studied Lebesgue decomposition for positive bounded operators
by making use of parallel sums. Here, such decomposition is obtained in the setting 
of non-commutative (Hilsum) $L^p$-spaces.

\bigskip\noindent
{\it 2010 Mathematics Subject Classification:}

46L10,  
47A64,  
46L51,  
47A63.  

\medskip\noindent
{\it Key words and phrases:}

\noindent
closable operator,
connection,
Connes' spatial theory,
extended positive self-adjoint operator,
form sum,
$\gamma$-homogeneous operator,
generalized $s$-number,
geometric mean,
graph analysis,
Haagerup $L^p$-space,
Hilsum $L^p$-space,
Kubo-Ando theory,
Lebesgue decomposition,
measure topology,
modular automorphism group,
operator monotone function,
operator valued weight,
parallel sum,
positive form,
positive self-adjoint operator,
Radon-Nikodym cocycle,
relative modular operator,
spatial derivative, 
strong resolvent convergence,
$\tau$-measurable operator,
trace,
von Neumann algebra,
weight

\end{abstract}

{\baselineskip11pt
\tableofcontents
}

\section{Introduction}\label{S-1}

Operator means in various settings have been studied in operator theory
and operator algebras, and they sometimes play unexpectedly important roles.
Studies of parallel sums in \cite{AD,AT} and geometric means in \cite{PW}
are such examples (and surely many others that the authors are 
unaware of). 
Also many properties of typical operator means such  as parallel sums 
(twice of harmonic means) and
geometric means were completely clarified in \cite{An3}. 
Motivated by these pioneering works, the so-called Kubo-Ando theory 
was established  in \cite{KA}.
In this theory a notion of operator means (or more generally connections) 
of positive operators was introduced in an axiomatic fashion.
They are in a one-to-one correspondence with the set of all non-negative operator monotone 
functions on $\bR_+$ (see the beginning of \S\ref{S-2.2}), and parallel sums
serve as building blocks in the sense that each connection 
is expressed as a certain integral of parallel sums
with particular parametrization (see \eqref{F-2.8}).
In fact, this integral is closely related to 
the well-known integral representation of an operator monotone function 
(\cite{Bh,Do}) corresponding to a connection in question.
This subject has attracted many specialists and there is a considerable amount 
of literature on this subject matter. It is simply impossible to list up 
important contributions (although some of them can be found in \cite{An4,Bh,Bh2,Hi2}
for instance). 

In the Kubo-Ando theory connections are defined for pairs of positive matrices
or bounded positive operators (on a Hilbert space). 
On the other hand, our main concern here is analysis 
in von Neumann algebras, where use of unbounded operators is unavoidable. 
For example unbounded operators have to enter into
study of non-commutative $L^p$-spaces. 
The main purpose of the article is to make a satisfactory theory of connections for
various classes of unbounded operators (or unbounded objects such as weights) 
appearing naturally in analysis related to von Neumann algebras. 
Generally unbounded operators are much more difficult to handle compared with bounded ones
since delicate problems such as domain questions and closability problems 
(under algebraic operations) have to be taken care of.
For a semi-finite von Neumann algebra  $\cM$ with a trace $\tau$ one can introduce the class
of $\tau$-measurable operators (see the beginning of \S\ref{S-3.1}). 
Operators in this  class
behave  in a perfectly reasonable way so that one can handle them without worrying about 
pathological phenomena.
This class turns out to be large enough to consider non-commutative $L^p$-spaces 
(see \cite{Di,Ku,Y} for early development of the theory).
The notion of $\tau$-measurability was originally given in \cite{Se} (see also \cite{Sti} 
for related topics) in a slightly different form, but the approach in \cite{Ne} is more convenient
to our purpose. Thus, the latter approach is employed, and we will 
use \cite{Ne, Te1} as our standard references on the subject.  
When a von Neumann algebra is finite, all unbounded closed operators 
(affiliated with $\cM$) is automatically $\tau$-measurable. 
This means that manipulations of unbounded operators in this case are 
completely under control, 
which was already shown by Murray-von Neumann since the beginning of the subject.
The recent articles \cite{DDSZ1,DDSZ2} have somewhat similar nature in the sense
that certain means for $\tau$-measurable operators are investigated. 
However, what are studied there are means in the sense of \cite{HK1,HK2} 
(which are different from Kubo-Ando means \cite{KA} considered here).
Hence there is no direct relation between \cite{DDSZ1,DDSZ2} and our present work, 
but some of standard techniques for $\tau$-measurable operators 
 (such as devices in \cite{FK}) are in common use.

We explain contents of later sections briefly, and more detailed description  
is given at the beginning of each section. 
The most general situation is treated in \S\ref{S-2}.
Namely, we  consider positive forms here, which are roughly positive self-adjoint operators
with possibly non-dense domains, or equivalently, elements in the extended positive part 
$\widehat{B(\cH)}_+$ (see \cite{Haa3}).
A general theory for parallel sums for positive forms was worked out in \cite{Ko6}.
Thus, by mimicking an integral expression of a connection (in terms of parallel sums) 
one can define connections of positive forms. 
Some properties of connections based on this naive definition are discussed in \S\ref{S-2.2}.

In \S\ref{S-3} we consider a semi-finite von Neumann algebra $\cM$ with a trace $\tau$, and
study connections of $\tau$-measurable operators. 
The definition of a $\tau$-measurable operator and some related topics are explained at the
beginning of \S\ref{S-3.1}.
Two definitions of connections are possible. We obviously have the naive  
definition (Definition \ref{D-3.15}) as a positive form (explained in \S\ref{S-2}), 
and another competing definition (Definition \ref{D-3.16}) motivated 
by the case of  bounded operators is also possible. We will show that the two definitions
actually coincide (in \S\ref{S-3.2}). 
In the Kubo-Ando theory 
decreasing sequences of positive operators appear quite often, for which convergence in the strong 
operator topology is usually used. 
Similarly, we encounter decreasing sequences of positive $\tau$-measurable 
operators in this section, and we need some general results on these matters.
They are collected in \S\ref{S-3.1}, and results such as Theorems \ref{T-3.8} 
and \ref{T-3.10} there might be of independent interest.

In \S\ref{S-4} we deal with a general von Neumann algebra and the associated Haagerup $L^p$-spaces 
$L^p(\cM)$ (\cite{Haa4}). We study connections in these $L^p$-spaces. 
(A similar study though specialized to connections in
$L^1(M)$ is also given in \cite[Appndix D]{Hi3}.)
The crossed product $\cM \rtimes_{\sigma} \bR$ relative to a modular action $\sigma$ is a semi-finite
algebra equipped with a so-called canonical trace $\tau$ 
(which is a standard fact in structure analysis of type III von Neumann algebras). 
The Haagerup $L^p$-spaces are constructed by making use of this semi-finite algebra. 
Since $\tau$-measurability and the measure topology there are (technically) important 
ingredients in Haagerup's theory (see Appendix \ref{S-A} for details), 
some of arguments in this section depend heavily upon those in \S\ref{S-3}.
Many interesting properties of connections were known in \cite{KA}. 
We show (besides some special topics in \S\ref{S-3.4} and \S\ref{S-3.5})
that these properties
remain to hold true in our more general settings (in \S\ref{S-3} and \S\ref{S-4}).

The Haagerup $L^1(\cM)$ can be identified with the predual $\cM_*$, meaning that a reasonable theory 
for normal positive linear functionals on $\cM$ is at our disposal.
More generally, in \S\ref{S-5} we investigate a notion of parallel sums for semi-finite normal weights on $\cM$.
To this end, we use Connes' spatial theory \cite{C3} and
it is achieved by considering parallel sums (in the sense of \cite{Ko6}) of spatial derivatives arising
from weights in question. In this study a notion of $(-1)$-homogeneity (see the beginning of \S\ref{S-5.1}
or Definition \ref{D-C.1}) plays an essential role.
Once parallel sums are defined, we can then define connections of weights in the usual way, 
which is briefly discussed in \S\ref{S-5.3}. 

Ando studied Lebesgue decomposition for (bounded) positive operators in \cite{An1}.
His work is based on ingenious use of parallel sums, and the (maximal) absolutely continuous part in
Lebesgue decomposition is captured as the limit of a certain sequence expressed in terms of parallel sums.
The final \S\ref{S-6} deals with Lebesgue decomposition in the setting of non-commutative $L^p$-spaces.
Since a reasonable notion of parallel sums has been prepared in \cite{Ko6} and \S\ref{S-3}, \S\ref{S-4},
we can play the same game by modifying arguments in \cite{An1}.
However, we take a different approach akin to that in \cite{Ko2,Ko3}.
Namely, based on graph analysis for relevant relative modular operators we express
Lebesgue decomposition in a more explicit manner. For this purpose  Hilsum's $L^p$-spaces 
(which are isometrically isomorphic to $L^p(\cM)$) are more convenient so that we use his $L^p$-spaces
in \S\ref{S-6}. In fact, for $\cM$ in its standard representation the positive parts of these $L^p$-spaces
consist of powers of relative modular operators. Results in this section generalize those in \cite{Ko3} 
(dealing with the predual $\cM_* \cong L^1(\cM)$).

We have three appendices for the reader's convenience.
In Appendix \ref{S-A} basic facts on the Haagerup $L^p$-spaces (used in \S\ref{S-4}) are reviewed.
Appendix \ref{S-B} explains several materials related to Connes' spatial theory \cite{C3}.
At first Connes' notion of spatial derivatives is explained in \S\ref{S-B.1} 
(via a slightly different approach presented in \cite{Te1}) and then \S\ref{S-B.2}
explains Hilsum's $L^p$-spaces.
Materials in these two subsections are needed in \S\ref{S-5} and \S\ref{S-6} respectively. 
In \cite{C3} Connes pointed out a canonical order-reversing correspondence 
between weights on $\cM$ (acting on a Hilbert space $\cH$) 
and operator-valued weights from $B(\cH)$ to the commutant $\cM'$. We review this correspondence
in \S\ref{S-B.3}. Parallel sums of weights in \S\ref{S-5} (and also the concept to be explained 
in Appendix \ref{S-C}) admit natural interpretation (see Theorem \ref{T-5.10} for instance) 
in terms of this correspondence, which may be of some interest. 
In this article we are forced to deal with decreasing sequences in many contexts 
such as positives forms, positive $\tau$-measurable operators
and so on. In the last appendix (Appendix \ref{S-C}) we study decreasing sequences
of (normal) weights.  Probably results here have never been recorded in literature.

\section{Positive forms and their connections}\label{S-2}

We will study connections for various classes of unbounded operators
(such as $\tau$-measurable positive operators and so on) appearing naturally in analysis 
with von Neumann algebras.
Therefore, we begin with the most general situation, i.e.,  (unbounded) positive self-adjoint 
operators. 
Actually, the notion of positive forms in \cite{Ko6} is more convenient for our purpose.
In \S\ref{S-2.1} we will quickly review basic facts on positive forms, which are roughly
positive self-adjoint operators with non-dense (or dense) domains. 
The Kubo-Ando theory \cite{KA} deals with connections for bounded positive operators
in an axiomatic fashion and establishes a one-to-one correspondence 
between connections and non-negative operator monotone functions on $\bR_+$
(see the beginning of \S\ref{S-2.2}).
Since an operator monotone function admits an integral representation (\cite{Bh,Do}),
each connection can be expressed as a certain integral of parallel sums 
(see \eqref{F-2.7} and \eqref{F-2.8} for details).

A notion of parallel sums for positive forms was studied in \cite{Ko6}, which enables us to
introduce connections of positive forms 
similarly (see also \cite[Appendix B]{Ko7}).
In \cite{KA} the transpose and adjoint of a connection are important operations.
In \S\ref{S-2.2} we will explain connections for positive forms at first and then examine 
these two operations in the case of positive forms.

\subsection{Positive forms}\label{S-2.1}

This section is a brief summary of \cite{Ko6} to fix ideas and notations for later use.
Our standard references for basic facts on unbounded operators are \cite{Ka2,Sch, St}.
Let $\cH$ be a Hilbert space (assumed infinite-dimensional). By a \emph{positive form} $q$ on
$\cH$ we mean a function $q:\cH\to[0,+\infty]$ satisfying
\begin{itemize}
\item[(i)] $q(\lambda\xi)=|\lambda|^2q(\xi)$ for all $\xi\in\cH$ and $\lambda\in\bC$ (with the
convention $0\cdot\infty=0$),
\item[(ii)] $q(\xi_1+\xi_2)+q(\xi_1-\xi_2)=2q(\xi_1)+2q(\xi_2)$ for all $\xi_1,\xi_2\in\cH$,
\item[(iii)] $q$ is lower semi-continuous on $\cH$ (i.e.,
$q(\xi)\le\liminf_{n\to\infty}q(\xi_n)$ whenever $\xi_n\to\xi$).
\end{itemize}

The domain $\cD(q)$ of $q$ is given by $\cD(q):=\{\xi\in\cH;\,q(\xi)<\infty\}$, which is
obviously a linear subspace of $\cH$. We note that a positive form $q$ on $\cH$ bijectively
corresponds to an \emph{extended positive self-adjoint operator} $h$ in the sense of
Haagerup \cite{Haa3} or Kato \cite{Ka1}, i.e., a positive self-adjoint operator $h$ on some
(closed) subspace $\cK$ of $\cH$. We write $\widehat{B(\cH)}_+$, following \cite{Haa3}, for
the set of such extended positive self-adjoint operators on $\cH$. The correspondence
$q\leftrightarrow h$ is given in such a way that $\cD(q)=\cD(h^{1/2})$ (so
$\overline{\cD(q)}=\cK$) and $q(\xi)=\|h^{1/2}\xi\|^2$ for $\xi\in\cD(q)$. In terms of the
spectral decomposition $h=\int_0^\infty\lambda\,de_\lambda$ on $\cK$ (so
$e_\infty:=\lim_{\lambda\to\infty}e_\lambda$ is the projection onto $\cK$), we can also write
\begin{align}\label{F-2.1} 
q(\xi)=\int_0^\infty\lambda\,d\|e_\lambda\xi\|^2+\infty\|e_\infty^\perp\xi\|^2,
\qquad\xi\in\cH
\end{align}
(with the convention $\infty\cdot0=0$). We write $q=q_h$ in this case. For each
$h\in\widehat{B(\cH)}_+$ we can define the resolvent $(1+h)^{-1}$, which is understood to be
$0$ on $\cK^\perp=\cD(q)^\perp$.

For positive forms $q_1,q_2$ corresponding to $h_1,h_2\in\widehat{B(\cH)}_+$ respectively, we
note that $q_1\le q_2$, i.e., $q_1(\xi)\le q_2(\xi)$ for all $\xi\in\cH$ if and only if
$h_1\le h_2$ in the form sense, i.e., $\cD(h_2^{1/2})\subseteq\cD(h_1^{1/2})$ and
$\|h_1^{1/2}\xi\|\le\|h_2^{1/2}\xi\|$ for all $\xi\in\cD(h_2^{1/2})$. Moreover, it is well-known
that $h_1\le h_2$ in the form sense if and only if $(1+h_1)^{-1}\ge(1+h_2)^{-1}$. The
(point-wise) sum $q_1+q_2$ is a positive form, which corresponds to the so-called
\emph{form sum} $h_1\,\dot+\,h_2$ \cite{Ka2}.
 Let $q$ be a positive form corresponding to
$h=\int_0^\infty\lambda\,de_\lambda$ (on $\cK$). Let $\cK_0$ be the null space of $h$ or
$\cK_0=\{\xi\in\cH;\,q(\xi)=0\}$. We can define $k\in\widehat{B(\cH)}_+$ by
$k:=\int_{(0,\infty)}\lambda^{-1}\,de_\lambda+0e_\infty^\perp$ (on $\cK_0^\perp$). Then the
\emph{inverse} $q^{-1}$ of $q$ is defined as the positive form corresponding to $k$, that is,
$$
q^{-1}(\xi)=\int_{[0,\infty)}\lambda^{-1}\,d\|e_\lambda\xi\|^2
\ \biggl(=\int_{(0,\infty)}\lambda^{-1}\,d\|e_\lambda\xi\|^2+\infty\|e_0\xi\|^2\biggr),
\qquad\xi\in\cH.
$$
The expression of $q^{-1}$ in terms of the pure positive form notation was shown in
\cite[Lemma 2]{Ko6} as
\begin{equation}\label{F-2.2} 
q^{-1}(\xi)=\sup_{\zeta\in\cH}{|(\xi,\zeta)|^2\over q(\zeta)}
\end{equation}
with the convention $0/0=0$, $\alpha/0=\infty$ ($\alpha>0$).
Obviously, $q \mapsto q^{-1}$ is order-reversing and $(q^{-1})^{-1}=q$.

The parallel sum of two positive forms was introduced in \cite{Ko6}.

\begin{definition}\label{D-2.1}\rm
For positive forms $\phi,\psi$ define the \emph{parallel sum} $\phi:\psi$ by
$$
\phi:\psi:=(\phi^{-1}+\psi^{-1})^{-1}.
$$
\end{definition}

\begin{theorem}[\cite{Ko6}]\label{T-2.2}
The parallel sum $\phi:\psi$ is the maximum of all the positive forms $q$ satisfying
$$
q(\xi_1+\xi_2)\le\phi(\xi_1)+\psi(\xi_2),\qquad\xi_1,\xi_2\in\cH.
$$
\end{theorem}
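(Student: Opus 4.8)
The plan is to read everything through the variational formula \eqref{F-2.2}, using that $q\mapsto q^{-1}$ is an order-reversing involution on positive forms and that the point-wise sum of positive forms is again a positive form. In particular $q_0:=\phi:\psi=(\phi^{-1}+\psi^{-1})^{-1}$ is a bona fide positive form and $q_0^{-1}=\phi^{-1}+\psi^{-1}$. Two statements must then be checked: (a) $q_0$ belongs to the family, i.e.\ $q_0(\xi_1+\xi_2)\le\phi(\xi_1)+\psi(\xi_2)$ for all $\xi_1,\xi_2\in\cH$; and (b) $q_0$ dominates every member of the family. Together these say $q_0$ is the maximum.

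For (a) I would expand, using \eqref{F-2.2} and $q_0^{-1}=\phi^{-1}+\psi^{-1}$,
\[
q_0(\xi_1+\xi_2)=\sup_{\zeta\in\cH}\frac{|(\xi_1,\zeta)+(\xi_2,\zeta)|^2}{\phi^{-1}(\zeta)+\psi^{-1}(\zeta)},
\]
and bound the $\zeta$-term for fixed $\zeta$. If $\phi^{-1}(\zeta)+\psi^{-1}(\zeta)=\infty$ the term is $0$; if both $\phi^{-1}(\zeta),\psi^{-1}(\zeta)$ are finite, the elementary inequality $|a+b|^2\le\bigl(|a|^2/s+|b|^2/t\bigr)(s+t)$ (with the conventions $0/0=0$, $\alpha/0=\infty$) applied with $a=(\xi_1,\zeta)$, $b=(\xi_2,\zeta)$, $s=\phi^{-1}(\zeta)$, $t=\psi^{-1}(\zeta)$ gives
\[
\frac{|(\xi_1,\zeta)+(\xi_2,\zeta)|^2}{\phi^{-1}(\zeta)+\psi^{-1}(\zeta)}\le\frac{|(\xi_1,\zeta)|^2}{\phi^{-1}(\zeta)}+\frac{|(\xi_2,\zeta)|^2}{\psi^{-1}(\zeta)}.
\]
By \eqref{F-2.2} applied to the positive forms $\phi^{-1}$ and $\psi^{-1}$, together with $(\phi^{-1})^{-1}=\phi$ and $(\psi^{-1})^{-1}=\psi$, the right-hand side is $\le\phi(\xi_1)+\psi(\xi_2)$; taking the supremum over $\zeta$ proves (a).

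For (b), let $q$ be a positive form with $q(\xi_1+\xi_2)\le\phi(\xi_1)+\psi(\xi_2)$ for all $\xi_1,\xi_2$. Since inversion is order-reversing and involutive, $q\le q_0$ is equivalent to $q^{-1}\ge\phi^{-1}+\psi^{-1}$, so I fix $\zeta\in\cH$ and aim at $q^{-1}(\zeta)\ge\phi^{-1}(\zeta)+\psi^{-1}(\zeta)$. Choosing $\xi_2=0$ resp.\ $\xi_1=0$ in the hypothesis gives $q\le\phi$ resp.\ $q\le\psi$, hence $q^{-1}(\zeta)\ge\max\{\phi^{-1}(\zeta),\psi^{-1}(\zeta)\}$, which already settles every case in which one of $\phi^{-1}(\zeta),\psi^{-1}(\zeta)$ equals $0$ or $\infty$. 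In the remaining case $\phi^{-1}(\zeta),\psi^{-1}(\zeta)\in(0,\infty)$, fix $\delta\in\bigl(0,\min\{\phi^{-1}(\zeta),\psi^{-1}(\zeta)\}\bigr)$ and set $p:=\phi^{-1}(\zeta)-\delta$, $s:=\psi^{-1}(\zeta)-\delta$. By the definition of the suprema in \eqref{F-2.2}, and since these ratios are invariant under scaling and a vector attaining a positive ratio has form-value in $(0,\infty)$, I can choose $\xi_1$ with $\phi(\xi_1)=p$ and, after a unimodular phase rotation, $(\xi_1,\zeta)>p$, and similarly $\xi_2$ with $\psi(\xi_2)=s$ and $(\xi_2,\zeta)>s$. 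Then $\xi:=\xi_1+\xi_2$ satisfies $q(\xi)\le\phi(\xi_1)+\psi(\xi_2)=p+s$ and $(\xi,\zeta)=(\xi_1,\zeta)+(\xi_2,\zeta)>p+s$, so \eqref{F-2.2} gives
\[
q^{-1}(\zeta)\ge\frac{|(\xi,\zeta)|^2}{q(\xi)}>\frac{(p+s)^2}{p+s}=p+s=\phi^{-1}(\zeta)+\psi^{-1}(\zeta)-2\delta.
\]
Letting $\delta\to0$ yields $q^{-1}(\zeta)\ge\phi^{-1}(\zeta)+\psi^{-1}(\zeta)$, hence $q\le q_0$, proving (b).

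The conceptual content is modest: both halves rest on the single Cauchy--Schwarz inequality $|a+b|^2\le(|a|^2/s+|b|^2/t)(s+t)$, used once to bound the inverse and once (via the choice $\phi(\xi_1)=p$, $\psi(\xi_2)=s$, which realizes its equality case) to produce a near-optimal test vector. The main obstacle is the bookkeeping of the extended-real arithmetic — treating the values $0$ and $\infty$ of $\phi^{-1}(\zeta),\psi^{-1}(\zeta)$ and of $q(\xi)$ consistently with the conventions in \eqref{F-2.2}, verifying that the vectors chosen in (b) genuinely have $\phi(\xi_1),\psi(\xi_2)\in(0,\infty)$ so that the rescaling is legitimate, and keeping track of phases so that $(\xi,\zeta)$ really is the sum of the nonnegative reals $(\xi_1,\zeta)$ and $(\xi_2,\zeta)$.
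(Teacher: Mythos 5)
Your argument is correct. Note first that the paper itself does not prove Theorem \ref{T-2.2}: it is quoted from \cite{Ko6}, and the text immediately following it (the introduction of $\rho_0$ in \eqref{F-2.3} and the regularization formula \eqref{F-2.4}, with references to \cite{Si1,Si2}) indicates that the intended route runs through the infimal convolution $\rho_0(\xi)=\inf\{\phi(\xi_1)+\psi(\xi_2);\,\xi=\xi_1+\xi_2\}$ and the identification of $\phi:\psi$ with the maximal lower semi-continuous minorant of that quadratic form. Your proof takes a genuinely different and more self-contained route: both halves are read off from the variational formula \eqref{F-2.2} for the inverse, using only that $q\mapsto q^{-1}$ is an order-reversing involution and that $(\phi:\psi)^{-1}=\phi^{-1}+\psi^{-1}$. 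What this buys is independence from the theory of closable forms and monotone regularization; what it loses is the byproduct \eqref{F-2.4}, which the $\rho_0$-route delivers for free and which the paper actually uses later. The one place where your write-up is slightly loose is the degenerate case $\phi^{-1}(\zeta)=\psi^{-1}(\zeta)=0$ in part (a): there you cannot obtain the quotient inequality by dividing $|a+b|^2\le(|a|^2/s+|b|^2/t)(s+t)$ by $s+t$ (the product-form inequality itself fails at $s=t=0$ under the convention $0\cdot\infty=0$), but the quotient inequality you actually need holds by direct inspection of the conventions, since $a+b\ne0$ forces one of $a,b$ to be nonzero and hence the right-hand side to be $+\infty$; so this is a presentational quibble, not a gap.
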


In fact, we set
\begin{align}\label{F-2.3}
\rho_0(\xi):=\inf\{\phi(\xi_1)+\psi(\xi_2);\,\xi=\xi_1+\xi_2\}.
\end{align}
Then $\rho_0$ is \emph{quadratic}, that is, $\rho_0$ satisfies (i) and (ii) of the definition
of positive forms. The above theorem says that $\phi:\psi$ is the maximum of all the positive
forms $q$ satisfying $q(\xi)\le\rho_0(\xi)$ for all $\xi\in\cH$. Hence we can write
\cite{Si1,Si2}
\begin{align}\label{F-2.4}
(\phi:\psi)(\xi)=\inf_{\xi_n\to\xi}\liminf_{n\to\infty}\rho_0(\xi_n).
\end{align}

If $\{q_n\}$ is an increasing sequence of positive forms, 
then the point-wise supremum $q=\sup_nq_n$ is
a positive form. On the other hand, when $\{q_n\}$ is decreasing, the point-wise infimum
$\inf_nq_n$ is quadratic but not necessarily lower semi-continuous. We have the maximum
of all the positive forms $q$ satisfying $q(\xi)\le\inf_nq_n(\xi)$ for all $\xi\in\cH$ \cite{Si1,Si2},
which is denoted by $\Inf_nq_n$.
Note that $\{q_n^{-1}\}$ is increasing (in the decreasing case)
and consequently the point-wise supremum $\sup_n q_n^{-1}$ is a positive form as mentioned above.
We actually have 
\begin{equation}\label{F-2.5} 
\Inf_n q_n=\left(\sup_n q_n^{-1}\right)^{-1}
\end{equation}
(see \cite[Lemma 16]{Ko6}).
The following was given in \cite[Corollary 17]{Ko6}:

\begin{theorem}\label{T-2.3}
If $\{\phi_n\}$ and $\{\psi_n\}$ are decreasing sequences of positive forms, then
\begin{align}\label{F-2.6} 
\Inf_n(\phi_n:\psi_n)=\Bigl(\Inf_n\phi_n\Bigr):\Bigl(\Inf_n\psi_n\Bigr).
\end{align}
\end{theorem}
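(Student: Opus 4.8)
\emph{Proof idea.} The plan is to push everything through the inverse operation, where the delicate lower-semicontinuous regularization hidden in $\Inf$ becomes an ordinary pointwise supremum by virtue of \eqref{F-2.5}. Since $q\mapsto q^{-1}$ is order-reversing, the decreasing sequences $\{\phi_n\}$ and $\{\psi_n\}$ give rise to increasing sequences $\{\phi_n^{-1}\}$ and $\{\psi_n^{-1}\}$, hence to an increasing sequence $\{\phi_n^{-1}+\psi_n^{-1}\}$ of positive forms; and by Definition \ref{D-2.1} together with $(q^{-1})^{-1}=q$ the parallel sums form a decreasing sequence with $(\phi_n:\psi_n)^{-1}=\phi_n^{-1}+\psi_n^{-1}$. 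Applying \eqref{F-2.5} to the decreasing sequence $\{\phi_n:\psi_n\}$ therefore gives
\[
\Inf_n(\phi_n:\psi_n)=\biggl(\sup_n\bigl(\phi_n^{-1}+\psi_n^{-1}\bigr)\biggr)^{-1}.
\]

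The one elementary point is that $\sup_n(\phi_n^{-1}+\psi_n^{-1})=\bigl(\sup_n\phi_n^{-1}\bigr)+\bigl(\sup_n\psi_n^{-1}\bigr)$ as positive forms: evaluated at any $\xi\in\cH$ this is just the fact that for two increasing sequences in $[0,+\infty]$ the limit of the sum equals the sum of the limits, and lower semicontinuity of the supremum is automatic. Rewriting \eqref{F-2.5} as $(\Inf_nq_n)^{-1}=\sup_nq_n^{-1}$, one gets $\sup_n\phi_n^{-1}=(\Inf_n\phi_n)^{-1}$ and $\sup_n\psi_n^{-1}=(\Inf_n\psi_n)^{-1}$, whence, using Definition \ref{D-2.1} once more,
\[
\Inf_n(\phi_n:\psi_n)=\bigl((\Inf_n\phi_n)^{-1}+(\Inf_n\psi_n)^{-1}\bigr)^{-1}=\bigl(\Inf_n\phi_n\bigr):\bigl(\Inf_n\psi_n\bigr).
\]

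I do not anticipate a real obstacle: essentially all the analytic substance is already packaged into \eqref{F-2.5}, and what remains is the interchange of supremum and form sum above together with routine checks that every object in sight is a genuine positive form. It is worth recording why the obvious direct argument is inadequate. From Theorem \ref{T-2.2} and monotonicity of the parallel sum one obtains $(\Inf_n\phi_n):(\Inf_n\psi_n)\le\Inf_n(\phi_n:\psi_n)$ at once, since $\Inf_n\phi_n\le\phi_m$ and $\Inf_n\psi_n\le\psi_m$ for every $m$; but for the reverse inequality the natural estimate only yields $\Inf_n(\phi_n:\psi_n)(\xi_1+\xi_2)\le\inf_n\phi_n(\xi_1)+\inf_n\psi_n(\xi_2)$, and the pointwise infima $\inf_n\phi_n$, $\inf_n\psi_n$ may sit strictly above $\Inf_n\phi_n$, $\Inf_n\psi_n$ (precisely the failure of lower semicontinuity of the pointwise infimum), so this does not close. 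Passing to inverses, where suprema behave continuously under addition, is exactly what circumvents the difficulty.
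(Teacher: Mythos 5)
Your argument is correct and is essentially identical to the paper's own proof: both reduce the claim to repeated use of \eqref{F-2.5} together with the pointwise interchange $\sup_n(\phi_n^{-1}+\psi_n^{-1})=\sup_n\phi_n^{-1}+\sup_n\psi_n^{-1}$ for increasing sequences, and then take inverses. The closing remark on why the direct two-sided estimate fails is a nice addition but not needed.
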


This result is obtained by repeated use of \eqref{F-2.5}. Indeed, we observe 
$$
\left(\Inf_n \phi_n\right)^{-1}+\left(\Inf_n \psi_n\right)^{-1}
=\sup_n \phi_n^{-1}+\sup_n\psi_n^{-1}=\sup_n\left(\phi_n^{-1}+\psi_n^{-1}\right)
=\sup_n \,(\phi_n:\psi_n)^{-1}
$$
with the decreasing sequence $\{\phi_n:\psi_n\}$ of parallel sums 
so that one can just take the inverses of the both sides.

The obvious estimate $\Inf_n \phi_n + \Inf_n \leq \inf_n \phi_n + \inf_n \psi_n= \inf_n\, (\phi_n+\psi_n)$
and the maximality of $\Inf_n\, (\phi_n+\psi_n)$ stated above yield
$$
\Inf_n \phi_n + \Inf_n \psi_n \leq \Inf_n\,(\phi_n+\psi_n),
$$
but the equality generally fails to hold here. Related results (for other classes of unbounded objects) 
will be mentioned in Theorem \ref{T-3.10}, Remark \ref{R-3.11} and Proposition \ref{P-3.27}
(see also the paragraph before that proposition).

Let $\{q_n\}$ be a sequence of positive forms corresponding to $\{h_n\}$ in
$\widehat{B(\cH)}_+$.
We say that a sequence $\{h_n\}$ in $\widehat{B(\cH)}_+$ converges in the
\emph{strong resolvent sense} to $h\in\widehat{B(\cH)}_+$ if $(1+h_n)^{-1}\to(1+h)^{-1}$
strongly \cite{Ka2,RS}. The following is from \cite[Lemma 18]{Ko6}:

\begin{lemma}\label{L-2.4}
Let $q_n$ $(n\in\bN)$ and $q$ be positive forms corresponding to $h_n$ and $h$ in
$\widehat{B(\cH)}_+$. Assume that $q_n$ is decreasing. Then $q=\Inf_nq_n$ if and only if
$h_n\to h$ in the strong resolvent sense, that is, $(1+h_n)^{-1}\nearrow(1+h)^{-1}$ strongly.
$($Similarly, when $q_n$ is increasing, $q=\sup_nq_n$ if and only if
$(1+h_n)^{-1}\searrow(1+h)^{-1}$ strongly.$)$
\end{lemma}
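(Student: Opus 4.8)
The plan is to reduce the statement to the already-established inversion identity \eqref{F-2.5} together with the standard order-theoretic characterization of strong convergence of monotone bounded operator sequences. First I would record the bijective dictionary between positive forms and resolvents: for $q \leftrightarrow h$ in $\widehat{B(\cH)}_+$ the bounded operator $R_h := (1+h)^{-1}$ is a positive contraction (with $0$ on $\cD(q)^\perp$), the map $h \mapsto R_h$ is an order-reversing bijection onto a suitable set of positive contractions, and — crucially — $q_n$ decreasing is equivalent to $h_n$ decreasing in the form sense, hence to $R_{h_n}$ increasing. So the whole lemma lives in the world of increasing sequences of positive contractions, where it is classical that an increasing sequence $R_n$ of self-adjoint contractions converges strongly to its supremum (the least upper bound taken in the bounded operator order), since for each $\xi$ the numbers $(R_n\xi,\xi)$ increase and are bounded, forcing $\|(R_n - R_m)^{1/2}\xi\|^2 = ((R_n-R_m)\xi,\xi) \to 0$.

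Next, for the ``only if'' direction I would assume $q = \Inf_n q_n$. By \eqref{F-2.5} this means $q^{-1} = \sup_n q_n^{-1}$, i.e.\ the form $q^{-1}$ corresponds to the increasing supremum of the forms $q_n^{-1}$. Translating back through Lemma \ref{L-2.4}'s own increasing half — or rather proving the increasing half first and deducing the decreasing half from it by the inverse correspondence — one gets that the resolvents of the operators attached to $q_n^{-1}$ increase strongly to the resolvent attached to $q^{-1}$. The remaining point is purely algebraic: if $h = \int_0^\infty \lambda\, de_\lambda$ then the operator $k$ attached to $q^{-1}$ is $\int_{(0,\infty)} \lambda^{-1}\, de_\lambda$ on $\cK_0^\perp$, and one checks directly via spectral calculus that $(1+h)^{-1}$ and $(1+k)^{-1}$ are related by $(1+k)^{-1} = 1 - (1+h)^{-1}$ on the relevant subspace (and both vanish on the appropriate complements); hence strong convergence of one family of resolvents is equivalent to strong convergence of the other. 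Combining these gives $(1+h_n)^{-1} \to (1+h)^{-1}$ strongly, and since $q_n \geq q$ forces $h_n \geq h$ in the form sense, the convergence is monotone increasing, i.e.\ $(1+h_n)^{-1} \nearrow (1+h)^{-1}$.

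For the ``if'' direction I would run the same equivalences backwards: assume $(1+h_n)^{-1} \nearrow (1+h)^{-1}$ strongly. Then $h_n$ is decreasing in the form sense, so $q_n$ is decreasing and $\Inf_n q_n$ makes sense; let $q'$ be the positive form it corresponds to. By the direction just proved, $(1+h_n)^{-1}$ converges strongly to the resolvent of the operator attached to $q'$. Strong limits are unique, so that operator is $h$, whence $q' = q$, i.e.\ $q = \Inf_n q_n$. The parenthetical increasing statement is handled symmetrically — indeed it is cleanest to prove the increasing case first directly (an increasing sequence of forms has a pointwise supremum which is again a positive form, and \eqref{F-2.1} plus monotone convergence of the spectral measures gives strong resolvent convergence), and then obtain the decreasing case from it by applying $q \mapsto q^{-1}$ and \eqref{F-2.5}.

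The main obstacle is the bookkeeping around subspaces and null spaces: $h$ and $h^{-1}$ (in the extended-positive-operator sense) live on the orthogonal complements of, respectively, the kernel and the ``$e_\infty^\perp$ part'', and one must verify that the resolvents — all defined to be $0$ off the appropriate closed subspace — behave compatibly with the limiting operations, so that strong convergence is not corrupted by the subspaces drifting. This is where invoking \cite{Ko6} for \eqref{F-2.5} and the form/operator dictionary does the real work; the rest is the classical monotone-convergence argument for contractions plus the elementary spectral identity $(1+k)^{-1} = 1-(1+h)^{-1}$.
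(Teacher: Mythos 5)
The paper does not actually prove this lemma; it imports it verbatim from \cite[Lemma 18]{Ko6}, so there is no in-paper argument to compare yours against. Judged on its own terms, your overall architecture is sound and, as far as I can tell, close in spirit to the source: reduce the decreasing case to the increasing case via the involution $q\mapsto q^{-1}$ and the identity \eqref{F-2.5}, using the resolvent identity $(1+k)^{-1}=1-(1+h)^{-1}$ for the operator $k$ attached to $q_h^{-1}$. That identity is correct, including the bookkeeping: on $\cK_0$ one has $(1+h)^{-1}=1$ and $(1+k)^{-1}=0$; on $\cK^\perp$ one has $(1+h)^{-1}=0$ and $(1+k)^{-1}=1$ (since $k$ vanishes on $\cK^\perp\subseteq\cK_0^\perp$); and on $\cK\ominus\cK_0$ both sides equal $\int_{(0,\infty)}{\lambda\over1+\lambda}\,de_\lambda$. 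Given that identity, the equivalence of the two monotone resolvent convergences is immediate, and the uniqueness of strong limits closes the ``if'' direction as you describe.

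The genuine gap is in the increasing case, which is where all the content of the lemma lives. Your justification --- ``\eqref{F-2.1} plus monotone convergence of the spectral measures'' --- does not work as stated: the $h_n$ need not commute, their spectral projections $e^{(n)}_\lambda$ are not monotone in $n$ in any useful sense, and \eqref{F-2.1} only expresses each $q_n$ through its own spectral family. What you actually need is the monotone convergence theorem for increasing sequences of closed positive forms (Kato; see \cite{Si1,Si2} or \cite{Ka2,RS}, all cited in the paper): the decreasing contractions $(1+h_n)^{-1}$ converge strongly to some positive contraction $R$, and the nontrivial point is identifying $R$ with $(1+h)^{-1}$ for the $h$ attached to $\sup_nq_n$. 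A self-contained repair inside this paper's toolkit: note that $q_{h}+\|\cdot\|^2=q_{1+h}=(q_{(1+h)^{-1}})^{-1}$, so \eqref{F-2.2} gives $q_{h_n}(\xi)+\|\xi\|^2=\sup_\eta|(\xi,\eta)|^2/((1+h_n)^{-1}\eta,\eta)$; interchanging the suprema over $n$ and $\eta$ (legitimate for suprema, with the stated $0/0$ conventions) yields $\sup_nq_{h_n}+\|\cdot\|^2=(q_R)^{-1}$, whence $R=(1+h)^{-1}$. Either cite the classical theorem or insert this computation; without one of the two, the proof is incomplete at its central step.
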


From this lemma we also write $h=\Inf_nh_n$ when $h_n$ is decreasing and $h_n\to h$ in
the strong resolvent sense.

\subsection{Connections of positive forms}\label{S-2.2}

At first recall the notion of connections for bounded positive operators on $\cH$ studied in
Kubo and Ando \cite{KA}. A \emph{connection} $\sigma$ is a binary operation
$\sigma:B(\cH)_+\times B(\cH)_+\to B(\cH)_+$ satisfying, for every $A,B,C,D\in B(\cH)_+$,
\begin{itemize}
\item[(I)] if $A\le C$ and $B\le D$, then $A\sigma B\le C\sigma D$,
\item[(II)] $C(A\sigma B)C\le(CAC)\sigma(CBC)$,
\item[(III)] if $A_n\searrow A$ and $B_n\searrow B$ strongly, then
$A_n\sigma B_n\searrow A\sigma B$ strongly.
\end{itemize}

A main result of \cite{KA} says that there is a bijective correspondence between the
connections $\sigma$ and the non-negative \emph{operator monotone functions} $f$ on $(0,\infty)$,
determined in such a way that
$$
A\sigma B=A^{1/2}f(A^{-1/2}BA^{-1/2})A^{1/2}
$$
for $A,B\in B(\cH)_+$ with $A$ invertible. (A connection $\sigma$ is called an
\emph{operator mean} if $I\sigma I=I$ or $f(1)=1$.) The function $f$, called the
\emph{representing function} of $\sigma$, has the integral expression
\begin{align}\label{F-2.7} 
f(s)=\alpha+\beta s+\int_{(0,\infty)}{(1+t)s\over s+t}\,d\mu(t),\qquad s\in(0,\infty),
\end{align}
with unique $\alpha,\beta\ge0$ and a unique finite positive measure $\mu$ on $(0,\infty)$, see,
e.g., \cite{Bh}. Based on \eqref{F-2.7} it was shown in \cite{KA} that the connection $\sigma$
has the expression
\begin{align}\label{F-2.8} 
A\sigma B=\alpha A+\beta B+\int_{(0,\infty)}{1+t\over t}((tA):B)\,d\mu(t),
\qquad A,B\in B(\cH)_+.
\end{align}

For each connection $\sigma$ on $B(\cH)_+$ the connection $\sigma$ of positive forms was
introduced in \cite{Ko7} by extending the above expression as follows:

\begin{definition}\label{D-2.5}\rm
For a connection $\sigma$ given in \eqref{F-2.8} the \emph{connection} $\sigma$ of positive
forms $\phi,\psi$ is defined by
\begin{align}\label{F-2.9} 
(\phi\sigma\psi)(\xi):=\alpha\phi(\xi)+\beta\psi(\xi)
+\int_{(0,\infty)}{1+t\over t}((t\phi):\psi)(\xi)\,d\mu(t),\quad \xi\in\cH,
\end{align}
where the lower semi-continuity of $\xi\mapsto(\phi\sigma\psi)(\xi)$ on $\cH$ is easily seen from
that of $((t\phi):\psi)(\xi)$ and Fatou's lemma.
\end{definition}

By definition it is obvious that the connection $\phi\sigma\psi$ for positive forms is a
generalization of $A\sigma B$ for $A,B\in B(\cH)_+$, that is, $\phi\sigma\psi=q_{A\sigma B}$ if
$\phi(\xi)=q_A(\xi)=\|A^{1/2}\xi\|^2$ and $\psi(\xi)=q_B(\xi)=\|B^{1/2}\xi\|^2$ for $\xi\in\cH$.

The notions of transpose and adjoint play an important part in theory of connections (and
operator means) on $B(\cH)_+$. Corresponding to the transpose $\tilde f(t):=tf(t^{-1})$ and
the adjoint $f^*(t):=f(t^{-1})^{-1}$ for operator monotone functions $f>0$ on $(0,\infty)$ we
have the \emph{transpose} $\tilde\sigma$ and the \emph{adjoint} $\sigma^*$ of a connection
$\sigma$ on $B(\cH)_+$ \cite{KA}. In the rest of the section we examine $\tilde\sigma$ and
$\sigma^*$ for connections of positive forms.

\begin{proposition}\label{P-2.6}
For any connection $\sigma$ and positive forms $\phi,\psi$ we have
$$
\phi\tilde\sigma\psi=\psi\sigma\phi.
$$
\end{proposition}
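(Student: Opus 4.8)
The plan is to reduce the identity $\phi\tilde\sigma\psi = \psi\sigma\phi$ to the integral representation \eqref{F-2.9} together with the defining relation $\tilde f(t) = tf(t^{-1})$ between the representing functions. First I would record how the integral data $(\alpha,\beta,\mu)$ of $\sigma$ transform under taking the transpose. Writing $f(s) = \alpha + \beta s + \int_{(0,\infty)} \frac{(1+t)s}{s+t}\,d\mu(t)$ and computing $\tilde f(s) = sf(s^{-1})$, one gets $\tilde f(s) = \alpha s + \beta + \int_{(0,\infty)} \frac{(1+t)s}{1+ts}\,d\mu(t)$; after the change of variable $t\mapsto t^{-1}$ in the integral (which replaces $\mu$ by its pushforward $\tilde\mu$ under $t\mapsto t^{-1}$, still a finite positive measure on $(0,\infty)$, picking up the factor $t/(1+t)$ versus $(1+t)/t$ in the right way), this rearranges to $\tilde f(s) = \beta + \alpha s + \int_{(0,\infty)} \frac{(1+t)s}{s+t}\,d\tilde\mu(t)$ with an appropriate normalization of $\tilde\mu$. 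Thus the transpose connection $\tilde\sigma$ has integral data $(\beta, \alpha, \tilde\mu)$ in the sense of \eqref{F-2.8}--\eqref{F-2.9}. This is exactly the matrix/bounded-operator computation from \cite{KA} and I would simply cite it rather than redo it.

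Next I would plug this into Definition~\ref{D-2.5}. By \eqref{F-2.9} applied to $\tilde\sigma$,
\begin{align*}
(\phi\tilde\sigma\psi)(\xi) = \beta\phi(\xi) + \alpha\psi(\xi) + \int_{(0,\infty)} \frac{1+t}{t}\bigl((t\phi):\psi\bigr)(\xi)\,d\tilde\mu(t),
\end{align*}
and the task is to identify the right-hand side with
\begin{align*}
(\psi\sigma\phi)(\xi) = \alpha\psi(\xi) + \beta\phi(\xi) + \int_{(0,\infty)} \frac{1+t}{t}\bigl((t\psi):\phi\bigr)(\xi)\,d\mu(t).
\end{align*}
The two linear terms match immediately. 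For the integral terms I would use the elementary scaling/symmetry properties of the parallel sum for positive forms: from Definition~\ref{D-2.1} one has $(t\phi):\psi = \psi:(t\phi)$ (symmetry, since $\phi^{-1}+\psi^{-1}$ is symmetric) and, using that inversion scales as $(t\phi)^{-1} = t^{-1}\phi^{-1}$, the homogeneity identity $(t\phi):\psi = t\bigl(\phi:(t^{-1}\psi)\bigr)$. Substituting $s = t^{-1}$ and pushing the measure forward, $\frac{1+t}{t}\,(t\phi:\psi)\,d\tilde\mu(t)$ becomes $\frac{1+s}{s}\,(s\psi:\phi)\,d\mu(s)$ after the bookkeeping, which is precisely the integrand in $(\psi\sigma\phi)(\xi)$. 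Both integrands are nonnegative and measurable in $t$ (lower semicontinuity of the parallel sum in $\xi$ was noted after \eqref{F-2.4}, and measurability in the parameter is routine), so Tonelli lets me change variables freely.

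The main obstacle I anticipate is purely notational bookkeeping: getting every factor of $t$, $1+t$, and the Jacobian of $t\mapsto t^{-1}$ to land correctly, and making sure the pushforward measure $\tilde\mu$ produced by transposing $f$ is the \emph{same} measure that appears when one expands $\tilde\sigma$ via \eqref{F-2.8}. This is entirely mechanical and already settled at the level of bounded operators in \cite{KA}; the only genuinely ``form-theoretic'' input needed is that the parallel sum of positive forms obeys the same symmetry and positive-homogeneity identities as in the bounded case, which follow directly from Definition~\ref{D-2.1} and the scaling behaviour of the inverse $q^{-1}$ recorded in \S\ref{S-2.1}. So the proof amounts to: (1) cite the transpose computation for the representing function, (2) invoke Definition~\ref{D-2.5} for both sides, (3) match terms using symmetry and homogeneity of ``$:$'' together with a change of variables in the integral.
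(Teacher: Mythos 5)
Your proposal is correct and follows essentially the same route as the paper: compute $\tilde f$ from \eqref{F-2.7}, recognize that $\tilde\sigma$ has data $(\beta,\alpha,\tilde\mu)$ with $\tilde\mu$ the pushforward of $\mu$ under $t\mapsto t^{-1}$ (the paper writes $d\tilde\mu(t)=d\mu(t^{-1})$, with no extra density factor), apply \eqref{F-2.9}, and finish with the homogeneity and symmetry identities $t((t^{-1}\phi):\psi)=(\phi:(t\psi))=((t\psi):\phi)$. The only caveat is your hedging about a possible ``normalization'' of $\tilde\mu$ — the Jacobian factors cancel exactly, so $\tilde\mu$ is the plain image measure — but this does not affect the argument.
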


\begin{proof}
For the representing function $f$ of $\sigma$ with 
the expression \eqref{F-2.7} we write
\begin{align*}
\tilde f(s)&=\alpha s+\beta+\int_{(0,\infty)}{(1+t)s\over1+ts}\,d\mu(t) \\
&=\beta+\alpha s+\int_{(0,\infty)}{(1+t)s\over s+t}\,d\tilde\mu(t),
\end{align*}
where $d\tilde\mu(t):=d\mu(t^{-1})$ for $t\in(0,\infty)$. Hence it follows that for every $\xi\in\cH$,
\begin{align*}
(\phi\tilde\sigma\psi)(\xi)&=\beta\phi(t)+\alpha\psi(\xi)
+\int_{(0,\infty)}{1+t\over t}((t\phi):\psi)(\xi)\,d\tilde\mu(t) \\
&=\alpha\psi(\xi)+\beta\phi(\xi)
+\int_{(0,\infty)}{1+t\over t}\,t((t^{-1}\phi):\psi)(\xi)\,d\mu(t).
\end{align*}
Since $t((t^{-1}\phi):\psi)(\xi)=(\phi:(t\psi))(\xi)=((t\psi):\phi)(\xi)$, we have
$(\phi\tilde\sigma\psi)(\xi)=(\psi\sigma\phi)(\xi)$.
\end{proof}

The parallel sum is the adjoint of the sum, and $\phi:\psi=(\phi^{-1}+\psi^{-1})^{-1}$
holds as the definition itself (Definition \ref{D-2.1}). But this is not true for general
connection $\sigma$ (see Remark \ref{R-2.10} below).

\begin{proposition}\label{P-2.7}
Let $\sigma$ be a connection. Either if $h,k\in B(\cH)_+$, or if $h,k\in\widehat{B(\cH)}_+$ are
bounded from below, i.e., $h\ge\eps1$, $k\ge\eps1$ for some $\eps>0$, then
$$
q_h\sigma^*q_k=(q_h^{-1}\sigma q_k^{-1})^{-1},\quad\mbox{i.e.,}\quad
(q_h\sigma^*q_k)^{-1}=q_h^{-1}\sigma q_k^{-1}.
$$
\end{proposition}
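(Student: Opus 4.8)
The plan is to reduce the claim to the integral representation \eqref{F-2.9} together with the order-reversing bijection $q\mapsto q^{-1}$, using the extra hypothesis that $h,k$ are bounded below (hence have bounded inverses) to avoid pathologies. Recall that the adjoint connection $\sigma^*$ has representing function $f^*(s)=f(s^{-1})^{-1}$, and the passage $f\mapsto f^*$ corresponds at the level of the integral representation \eqref{F-2.7} to a concrete transformation of the parameters $(\alpha,\beta,\mu)$. Specifically, writing $f$ as in \eqref{F-2.7}, one computes $f^*$ and re-expands it in the same canonical form, obtaining new parameters $(\alpha^*,\beta^*,\mu^*)$; for instance when $\alpha,\beta>0$ and $\mu$ is suitably nice one gets something like $\alpha^*=\beta^{-1}$, $\beta^*=\alpha^{-1}$, together with a measure $\mu^*$ built from $\mu$ by an explicit reparametrization and reweighting (this is standard, e.g. from \cite{KA,Bh}). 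The point is only that $\sigma^*$ is again a connection with known data, so \eqref{F-2.9} applies to it as well.

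First I would record the identity for parallel sums: for positive forms $\phi,\psi$ one has, directly from Definition \ref{D-2.1}, $(\phi:\psi)^{-1}=\phi^{-1}+\psi^{-1}$, and more generally $((t\phi):\psi)^{-1}=t^{-1}\phi^{-1}+\psi^{-1}$. Next I would take inverses in \eqref{F-2.9} applied to $q_h^{-1}\sigma q_k^{-1}$. Using \eqref{F-2.5}-type manipulations (the inverse of a pointwise supremum of an increasing family is the ``$\Inf$'' of the decreasing family of inverses, and conversely), together with the fact that the inverse of a sum relates to a parallel-sum-type expression, I would transform
$$
\bigl(q_h^{-1}\sigma q_k^{-1}\bigr)^{-1}
=\Bigl(\alpha q_h^{-1}+\beta q_k^{-1}+\int_{(0,\infty)}\tfrac{1+t}{t}\bigl((t q_h^{-1}):q_k^{-1}\bigr)\,d\mu(t)\Bigr)^{-1}
$$
into an expression of the same shape as \eqref{F-2.9} but with the parameters $(\alpha^*,\beta^*,\mu^*)$ of $\sigma^*$, which by definition is $q_h\sigma^*q_k$. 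The boundedness-below hypothesis on $h,k$ guarantees that $q_h^{-1},q_k^{-1}$ are genuine bounded positive forms (with $(1+\cdot)^{-1}$ behaving well), so all the integrals converge, the term-by-term inversion is legitimate, and no $0\cdot\infty$ subtleties intervene; in the case $h,k\in B(\cH)_+$ one can alternatively just invoke the corresponding bounded-operator identity of \cite{KA} after reducing to invertible $h,k$ by the continuity axiom (III) and then pass to forms via $q_h\sigma q_k=q_{h\sigma k}$.

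The main obstacle I expect is the bookkeeping in re-expressing $f^*$ (equivalently, the inverse of the integral in \eqref{F-2.9}) in canonical form: inverting a pointwise infimum/supremum and an integral simultaneously is not formal, and one must justify interchanging ``inverse'' with the integral $\int_{(0,\infty)}$. The clean way around this is to avoid manipulating the integral directly and instead argue through representing functions: both sides of the asserted identity are, by \eqref{F-2.9} and the bounded case of \cite{KA}, determined by the same operator monotone function after the substitution $h\mapsto h^{-1}$, $k\mapsto k^{-1}$; so it suffices to check the scalar identity $f^*(s^{-1})^{-1}=\bigl(\text{transform of }f\text{ under }s\mapsto s^{-1}\bigr)$, which is immediate, and then quote the already-established compatibility of Definition \ref{D-2.5} with the bounded theory. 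Thus the real content is (i) the parallel-sum inversion identity above, (ii) the reduction of $q_h^{-1}$ to a bounded form under the hypothesis, and (iii) matching parameters — none individually hard, but needing care to state as an honest chain of equalities between positive forms rather than a formal calculation.
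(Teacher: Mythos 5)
There is a genuine gap at the central step. Your plan is to invert the integral expression \eqref{F-2.9} for $q_h^{-1}\sigma q_k^{-1}$ term by term and match the result against the canonical data $(\alpha^*,\beta^*,\mu^*)$ of $\sigma^*$; you correctly flag that interchanging ``inverse'' with the integral is not formal, but the workaround you offer --- arguing ``through representing functions'' and quoting the compatibility of Definition \ref{D-2.5} with the bounded theory --- does not close it. For positive forms the connection is \emph{defined} by \eqref{F-2.9}, not by functional calculus, so the representing function alone does not determine $\phi\sigma\psi$ outside the bounded invertible case; indeed Remark \ref{R-2.10} shows that the asserted identity \emph{fails} for general positive forms, so no purely scalar or bookkeeping argument can suffice and some analytic use of the hypothesis is unavoidable. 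Concretely, after reducing to invertible operators via axiom (III) (which gives $q_h\sigma^*q_k=\Inf_{\eps>0}q_{(h+\eps1)\sigma^*(k+\eps1)}$, hence $(q_h\sigma^*q_k)^{-1}=\sup_{\eps>0}(q_{h+\eps1}^{-1}\sigma q_{k+\eps1}^{-1})$ by \eqref{F-2.5} and the invertible-operator identity of \cite{KA}), you still must prove
$$
\sup_{\eps>0}\,(q_{h+\eps1}^{-1}\sigma q_{k+\eps1}^{-1})=q_h^{-1}\sigma q_k^{-1},
$$
and this is the real content of the proposition: increasing convergence of arguments is not an axiom of connections (only decreasing convergence is), and ``pass to forms via $q_h\sigma q_k=q_{h\sigma k}$'' does not apply here because $h^{-1},k^{-1}$ need not be bounded.

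The paper proves exactly this identity (its \eqref{F-2.10}) by writing out \eqref{F-2.11}, checking via spectral decompositions that $((h+\eps1)^{-1}\xi,\xi)\nearrow q_h^{-1}(\xi)$ and $(\{(t^{-1}h+k)+\eps(t^{-1}+1)\}^{-1}\xi,\xi)\nearrow q_{t^{-1}h+k}^{-1}(\xi)=((tq_h^{-1}):q_k^{-1})(\xi)$ as $\eps\searrow0$, and then applying the monotone convergence theorem in $t$. Your proposal contains no substitute for this step. The bounded-below case is then immediate by applying the bounded case to $h^{-1},k^{-1}$ and $\sigma^*$ and using $(\sigma^*)^*=\sigma$ --- a reduction you hint at but do not carry out. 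Note finally that the paper never needs the explicit parameters $(\alpha^*,\beta^*,\mu^*)$: it only uses $(A\sigma^*B)^{-1}=A^{-1}\sigma B^{-1}$ for invertible bounded $A,B$, which sidesteps all the re-expansion of $f^*$ you were concerned about.
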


\begin{proof}
Assume $h,k\in B(\cH)_+$ first. From the decreasing convergence (in (III) above) of the
connection $\sigma^*$ on $B(\cH)_+$ we have
$$
q_h\sigma^*q_k=\inf_{\eps>0}\,(q_{h+\eps1}\sigma^*q_{k+\eps1})
=\Inf_{\eps>0}\,(q_{h+\eps1}\sigma^*q_{k+\eps1})
$$
so that
$$
(q_h\sigma^*q_k)^{-1}=\sup_{\eps>0}\,(q_{h+\eps1}\sigma^*q_{k+\eps1})^{-1}
$$
by \cite[Lemma 16,(ii)]{Ko6}. Since
$((h+\eps1)\sigma^*(k+\eps1))^{-1}=(h+\eps1)^{-1}\sigma(k+\eps1)^{-1}$ in the case of
invertible bounded positive operators, it follows that
$$
(q_{h+\eps1}\sigma^*q_{k+\eps1})^{-1}=q_{h+\eps1}^{-1}\sigma q_{k+\eps1}^{-1}.
$$
Hence it suffices to show that
\begin{align}\label{F-2.10} 
q_h^{-1}\sigma q_k^{-1}=\sup_{\eps>0}\,(q_{h+\eps1}^{-1}\sigma q_{k+\eps1}^{-1}).
\end{align}
For every $\xi\in\cH$, in terms of 
the expression \eqref{F-2.9} we write
\begin{align}
(q_{h+\eps1}^{-1}\sigma q_{k+\eps1}^{-1})(\xi)
&=\alpha((h+\eps1)^{-1}\xi,\xi)+\beta((k+\eps1)^{-1}\xi,\xi) \nonumber\\
&\quad+\int_{(0,\infty)}{1+t\over t}(\{(t(h+\eps1)^{-1}):(k+\eps1)^{-1}\}\xi,\xi)\,d\mu(t)
\nonumber\\
&=\alpha((h+\eps1)^{-1}\xi,\xi)+\beta((k+\eps1)^{-1}\xi,\xi) \nonumber\\
&\quad+\int_{(0,\infty)}{1+t\over t}(\{(t^{-1}h+k)+\eps(t^{-1}+1)\}^{-1}\xi,\xi)\,d\mu(t).
\label{F-2.11} 
\end{align}
With the spectral decomposition $h=\int_0^{\|h\|}\lambda\,de_\lambda$ one can compute
\begin{align*}
((h+\eps1)^{-1}\xi,\xi)&=\int_0^{\|h\|}{1\over\lambda+\eps}\,d\|e_\lambda\xi\|^2 \\
&\nearrow\,\int_{(0,\|h\|]}\lambda^{-1}\,d\|e_\lambda\xi\|^2+\infty\|e_0\xi\|^2
=q_h^{-1}(\xi)
\end{align*}
as $\eps\searrow0$. Similarly, $((k+\eps1)^{-1}\xi,\xi)\nearrow q_k^{-1}(\xi)$ and
$(\{(t^{-1}h+k)+\eps(t^{-1}+1)\}^{-1}\xi,\xi)\,\nearrow\,q_{t^{-1}h+k}^{-1}(\xi)$ as
$\eps\searrow0$. Since $h,k$ are bounded, one furthermore has $q_{t^{-1}h+k}=t^{-1}q_h+q_k$ so
that $q_{t^{-1}h+k}^{-1}(\xi)=(t^{-1}q_h+q_k)^{-1}(\xi)=((tq_h^{-1}):q_k^{-1})(\xi)$. Hence
from the monotone convergence theorem applied to \eqref{F-2.11} it follows that
\begin{align*}
\sup_{\eps>0}\,(q_{h+\eps1}^{-1}\sigma q_{k+\eps1}^{-1})(\xi)
&=\alpha q_h^{-1}(\xi)+\beta q_k^{-1}(\xi)
+\int_{(0,\infty)}{1+t\over t}((tq_h^{-1}):q_k^{-1})(\xi)\,d\mu(t) \\
&=(q_h^{-1}\sigma q_k^{-1})(\xi),
\end{align*}
showing \eqref{F-2.10}.

Next, assume that $h,k$ are bounded from below. Then we can apply the above case to
$q_{h^{-1}}=q_h^{-1}$, $q_{k^{-1}}=q_k^{-1}$ and $\sigma^*$ in place of $q_h$, $q_k$ and
$\sigma$ to have $(q_h^{-1}\sigma q_k^{-1})^{-1}=q_h\sigma^* q_k$.
\end{proof}

\begin{proposition}\label{P-2.8}
For any connection $\sigma$ and positive forms $\phi,\psi$ we have
$$
\phi\sigma^*\psi\le(\phi^{-1}\sigma\psi^{-1})^{-1},\quad\mbox{i.e.,}\quad
(\phi\sigma^*\psi)^{-1}\ge\phi^{-1}\sigma\psi^{-1}.
$$
\end{proposition}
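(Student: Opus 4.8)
The plan is to deduce the inequality from Proposition \ref{P-2.7} by perturbing $\phi$ and $\psi$ from above by forms that are bounded from below; perturbing from below would instead run into a genuine failure of increasing continuity of connections, whereas perturbing from above makes the \emph{inverses} increase in a controlled way. For $\eps>0$ I set $\phi_\eps:=\phi+\eps q_1$ and $\psi_\eps:=\psi+\eps q_1$, where $q_1(\xi)=\|\xi\|^2$. Then $\phi_\eps\ge\eps q_1$ and $\psi_\eps\ge\eps q_1$, so Proposition \ref{P-2.7} gives $\phi_\eps\sigma^*\psi_\eps=(\phi_\eps^{-1}\sigma\psi_\eps^{-1})^{-1}$ for every $\eps>0$. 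Since $\phi\le\phi_\eps$ and $\psi\le\psi_\eps$, and since $\sigma^*$ is monotone in its arguments (immediate from \eqref{F-2.9} and monotonicity of the parallel sum), we have $\phi\sigma^*\psi\le\phi_\eps\sigma^*\psi_\eps$ for all $\eps$. Taking $\eps=1/n$ and recalling that $\phi\sigma^*\psi$ is a positive form lying below each term of the decreasing sequence $\{\phi_{1/n}\sigma^*\psi_{1/n}\}$, this yields
$$
\phi\sigma^*\psi\ \le\ \Inf_n\bigl(\phi_{1/n}\sigma^*\psi_{1/n}\bigr)\ =\ \Inf_n\Bigl[\bigl(\phi_{1/n}^{-1}\sigma\psi_{1/n}^{-1}\bigr)^{-1}\Bigr],
$$
so it suffices to show that the right-hand side equals $(\phi^{-1}\sigma\psi^{-1})^{-1}$.

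The key step is the claim $\phi_{1/n}^{-1}\sigma\psi_{1/n}^{-1}\nearrow\phi^{-1}\sigma\psi^{-1}$. By \eqref{F-2.2} (as in the first part of the proof of Proposition \ref{P-2.7}) one has $\phi_\eps^{-1}(\xi)\nearrow\phi^{-1}(\xi)$ and $\psi_\eps^{-1}(\xi)\nearrow\psi^{-1}(\xi)$ as $\eps\searrow0$. Moreover, using $(c\rho)^{-1}=c^{-1}\rho^{-1}$ and $(\rho^{-1})^{-1}=\rho$, for each fixed $t>0$,
$$
(t\phi_\eps^{-1}):\psi_\eps^{-1}\ =\ \bigl(t^{-1}\phi_\eps+\psi_\eps\bigr)^{-1}\ =\ \bigl(t^{-1}\phi+\psi+\eps(t^{-1}+1)q_1\bigr)^{-1};
$$
here the perturbing term $\eps(t^{-1}+1)q_1$ is a \emph{bounded} form, so the decreasing sequence $\{t^{-1}\phi+\psi+(t^{-1}+1)n^{-1}q_1\}_n$ has pointwise infimum equal to the positive form $t^{-1}\phi+\psi$, whence $\Inf_n$ of it is $t^{-1}\phi+\psi$ and \eqref{F-2.5} gives
$$
\sup_n\bigl[(t\phi_{1/n}^{-1}):\psi_{1/n}^{-1}\bigr]\ =\ \bigl(t^{-1}\phi+\psi\bigr)^{-1}\ =\ (t\phi^{-1}):\psi^{-1}.
$$
Substituting these three monotone limits into the representation \eqref{F-2.9} of $\phi_{1/n}^{-1}\sigma\psi_{1/n}^{-1}$ and applying the monotone convergence theorem to the integral there gives $(\phi_{1/n}^{-1}\sigma\psi_{1/n}^{-1})(\xi)\nearrow(\phi^{-1}\sigma\psi^{-1})(\xi)$ for all $\xi$, i.e.\ $\phi^{-1}\sigma\psi^{-1}=\sup_n(\phi_{1/n}^{-1}\sigma\psi_{1/n}^{-1})$. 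Applying \eqref{F-2.5} once more, now to the decreasing sequence of the inverses of this increasing sequence, gives $\Inf_n[(\phi_{1/n}^{-1}\sigma\psi_{1/n}^{-1})^{-1}]=(\phi^{-1}\sigma\psi^{-1})^{-1}$, and combining with the display of the first paragraph completes the proof.

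I expect the claim $\phi^{-1}\sigma\psi^{-1}=\sup_n(\phi_{1/n}^{-1}\sigma\psi_{1/n}^{-1})$ to be the only delicate point. Increasing continuity of connections of positive forms — indeed already of parallel sums — \emph{fails} for general increasing sequences, which is exactly the mechanism behind the strict inequality $\Inf_n\phi_n+\Inf_n\psi_n<\Inf_n(\phi_n+\psi_n)$ noted after Theorem \ref{T-2.3}; and choosing merely bounded approximants from below does not repair this. The present argument circumvents the difficulty only because $\phi_\eps$ and $\psi_\eps$ are obtained from $\phi$ and $\psi$ by adding one and the same bounded form $\eps q_1$, so that each $t^{-1}\phi_\eps+\psi_\eps$ is a vanishing bounded perturbation of $t^{-1}\phi+\psi$ and the $\Inf$-versus-$\inf$ discrepancy never arises. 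It is this point — not the formal bookkeeping with inverses and \eqref{F-2.5} — that has to be verified with care.
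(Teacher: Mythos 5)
Your proof is correct and follows essentially the same route as the paper's: regularize $\phi,\psi$ from above by forms bounded from below so that Proposition \ref{P-2.7} applies, then pass to the limit via \eqref{F-2.5} and the monotone convergence theorem in the integral representation \eqref{F-2.9}. The only difference is cosmetic — you add $\eps q_1$ to the forms where the paper truncates the spectral decompositions of the corresponding operators at level $1/n$ — and the delicate point you rightly flag (that the pointwise infimum of $t^{-1}\phi_\eps+\psi_\eps$ is already lower semi-continuous, so no $\Inf$-versus-$\inf$ discrepancy arises) is handled by exactly the same mechanism in the paper.
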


\begin{proof}
Write $\phi=q_h$ and $\psi=q_k$ with $h,k\in\widehat{B(\cH)}_+$ and take the spectral
decompositions
$$
h=\int_0^\infty\lambda\,de_\lambda+\infty e_\infty^\perp,\qquad
k=\int_0^\infty\lambda\,df_\lambda+\infty f_\infty^\perp.
$$
For each $n\in\bN$ set
\begin{align*}
h_n&:=(1/n)e_{1/n}+\int_{(1/n,\infty)}\lambda\,de_\lambda+\infty e_\infty^\perp, \\
k_n&:=(1/n)f_{1/n}+\int_{(1/n,\infty)}\lambda\,df_\lambda+\infty f_\infty^\perp.
\end{align*}
Since
\begin{align*}
q_{h_n}(\xi)&={1\over n}\,\|e_{1/n}\xi\|^2+\int_{(1/n,\infty)}\lambda\|e_\lambda\xi\|^2
+\infty\|e_\infty^\perp\xi\|^2 \\
&=\int_{[0,1/n]}(1/n-t)\,d\|e_\lambda\xi\|^2+q_h(\xi)\,\searrow\,q_h(\xi)
\end{align*}
as $n\to\infty$ for every $\xi\in\cH$, one has $q_h=\inf_nq_{h_n}=\Inf_nq_{h_n}$ so that
$q_h^{-1}=\sup_nq_{h_n}^{-1}$ by \cite[Lemma 16,(ii)]{Ko6}. The same holds for $q_k$
as well. Since $t^{-1}q_{h_n}(\xi)+q_{k_n}(\xi)\searrow t^{-1}q_h(\xi)+q_k(\xi)$, it follows
that $t^{-1}q_h+q_k=\Inf_n(t^{-1}q_{h_n}+q_{k_n})$ so that
$$
(tq_h^{-1}:q_k^{-1})=(t^{-1}q_h+q_k)^{-1}
=\sup_n\,(t^{-1}q_{h_n}+q_{k_n})^{-1}=\sup_n\,((tq_{h_n}^{-1}):q_{k_n}^{-1}),
\qquad t>0.
$$
From these and the monotone convergence theorem with 
the expression \eqref{F-2.9} we have
\begin{align*}
\sup_n\,(q_{h_n}^{-1}\sigma q_{k_n}^{-1})(\xi)
&=\sup_n\biggl[\alpha q_{h_n}^{-1}(\xi)+\beta q_{k_n}^{-1}(\xi)
+\int_{(0,\infty)}{1+t\over t}((tq_{h_n}^{-1}):q_{k_n}^{-1})(\xi)\,d\mu(t)\biggr] \\
&=\alpha q_h^{-1}(\xi)+\beta q_k^{-1}(\xi)
+\int_{(0,\infty)}{1+t\over t}((tq_h^{-1}):q_k^{-1})(\xi)\,d\mu(t) \\
&=(q_h^{-1}\sigma q_k^{-1})(\xi),\qquad\xi\in\cH.
\end{align*}
Moreover, since $h_n,k_n$ are bounded from below, Proposition \ref{P-2.7} implies that
$q_{h_n}^{-1}\sigma q_{k_n}^{-1}=(q_{h_n}\sigma^* q_{k_n})^{-1}\le(q_h\sigma^*q_k)^{-1}$ for
all $n$. Therefore, $q_h^{-1}\sigma q_k^{-1}\le(q_h\sigma^*q_k)^{-1}$ follows.
\end{proof}

\begin{example}\label{E-2.9}\rm
The most studied family of operator means (for bounded positive operators) is the
\emph{weighted geometric means} $\#_\alpha$ ($0\le\alpha\le1$). In particular, the
\emph{geometric mean} $\#=\#_{1/2}$ was first introduced by Pusz and Woronowicz \cite{PW} and
developed in \cite{An2,An3}. The representing function of $\#_\alpha$ ($0<\alpha<1$) is
$$
s^\alpha={\sin\alpha\pi\over\pi}\int_0^\infty{s\over s+t}\,{dt\over t^{1-\alpha}},
\qquad s>0.
$$
Hence for any positive forms $\phi,\psi$ we write
\begin{align}\label{F-2.12} 
(\phi\#_\alpha\psi)(\xi)={\sin\alpha\pi\over\pi}\int_0^\infty
((t\phi):\psi)(\xi)\,{dt\over t^{2-\alpha}},\qquad0<\alpha<1,
\end{align}
and $\phi\#_0\psi=\phi$, $\phi\#_1\psi=\psi$. Since $(\#_\alpha)\,\tilde{}=\#_{1-\alpha}$ and
$(\#_\alpha)^*=\#_\alpha$ obviously, Propositions \ref{P-2.6} and \ref{P-2.8} imply that
$$
\phi\#_\alpha\psi=\psi\#_{1-\alpha}\phi,\qquad
(\phi\#_\alpha\psi)^{-1}\ge\phi^{-1}\#_\alpha\psi^{-1}
$$
for any positive forms $\phi,\psi$. 
We here give the homogeneity property of $\#_\alpha$:
\begin{align}\label{F-2.13} 
(r_1\phi)\#_\alpha(r_2\psi)=r_1^{1-\alpha}r_2^\alpha(\phi\#_\alpha\psi),
\qquad r_1,r_2\ge0.
\end{align}
Indeed, this is clear when $r_1=0$ or $r_2=0$ or $\alpha=0,1$. So assume $r_1,r_2>0$ and
$0<\alpha<1$. Then one can easily compute
\begin{align*}
((r_1\phi)\#_\alpha(r_2\psi))(\xi)
&={\sin\alpha\pi\over\pi}\int_0^\infty
((r_1t\phi):(r_2\psi))(\xi)\,{dt\over t^{2-\alpha}} \\
&={\sin\alpha\pi\over\pi}\int_0^\infty
r_2((r_1r_2^{-1}t\phi):\psi)(\xi)\,{dt\over t^{2-\alpha}} \\
&={\sin\alpha\pi\over\pi}\int_0^\infty
r_2(t\phi:\psi)(\xi){r_1^{-1}r_2\over(r_1^{-1}r_2t)^{2-\alpha}}\,dt \\
&=r_1^{1-\alpha}r_2^\alpha(\phi\#_\alpha\psi)(\xi).
\end{align*}
\end{example}

\begin{remark}\label{R-2.10}\rm
It is known \cite{Ko5} that there exists a pair $(A,B)$ of
non-singular positive self-adjoint operators
such that all of $A$, $B$, $A^{-1}$ and $B^{-1}$ have dense domains and
$$
\cD(A)\cap\cD(B)=\cD(A^{-1})\cap\cD(B^{-1})=\{0\}.
$$
For such $A,B$ consider $h:=A^2$ and $k:=B^2$. Then for every $t>0$ we have
$(tq_h):q_k=(tq_h^{-1}):q_k^{-1}=0$ for all $t>0$. Consider the weighted geometric mean
$\#_\alpha$ with $0<\alpha<1$. Then by \eqref{F-2.12} we have
$q_h\#_\alpha q_k=q_h^{-1}\#_\alpha q_k^{-1}=0$, and hence
$(q_h\#_\alpha q_k)^{-1}=\infty>0=q_h^{-1}\#_\alpha q_k^{-1}$. This shows that
$(\phi\sigma^*\psi)^{-1}=\phi^{-1}\sigma\psi^{-1}$ does not hold for general positive forms
when $\sigma=\#_\alpha$ ($0<\alpha<1$).
\end{remark}

\section{Connections of $\tau$-measurable operators}\label{S-3}

In this section we consider a semi-finite von Neumann algebra $\cM$ with a trace $\tau$,
and connections for $\tau$-measurable operators will be studied.
In \S\ref{S-3.1} we collect some general results on $\tau$-measurable operators needed 
in later subsections, and among other things various behaviors of decreasing sequences 
of such operators are studied (see Theorems \ref{T-3.8} and \ref{T-3.10} for instance). 
Then, in \S\ref{S-3.2} we consider two natural definitions for connections, 
which are shown to be equivalent (Theorem \ref{T-3.23}). 
Various properties are known for connections in the setting of bounded positive operators.
In \S\ref{S-3.3} we will show that these properties remain to hold true in our general setting. 
Some special topics will be covered in the last two subsections. 
The subspace $L^1+\cM$ (which is a natural ambient space in
non-commutative interpolation theory for instance) is considered in \S\ref{S-3.4}
while \S\ref{S-3.5} deals with analysis closely related to that in \cite{Ko4}.

\subsection{$\tau$-measurability and decreasing sequences}\label{S-3.1}

We begin with brief explanation on $\tau$-measurable operators together with their
generalized $s$-numbers.  Throughout let $\cM$ be a semi-finite von Neumann algebra on
a Hilbert space $\cH$ with a faithful, semi-finite and normal trace $\tau$.
A densely defined closed operator $a$ affiliated with $\cM$ is said to be
\emph{$\tau$-measurable} if, for any $\delta>0$, there exists a projection $e\in\cM$
such that $e\cH\subseteq\cD(a)$ (so $\|ae\|<\infty$ 
by the closed graph theorem) and $\tau(e^\perp)\le\delta$.
It is known that $a$ is $\tau$-measurable if and only if 
$\tau(e_{(s,\infty)}(|a|)) <\infty$ for some $s>0$,
where $|a|$ is the positive part of the polar decomposition.
This notion was introduced in \cite{Ne}. Let $\overline\cM$ be the space of
$\tau$-measurable operators affiliated with $\cM$, and $\overline\cM_+$ be the positive
part of $\overline\cM$. As usual we simply write $a+b$ and $ab$ for the strong sum
and the strong product. See \cite{Ne,Te1} for more details on $\tau$-measurable operators.

For each $a\in\overline\cM$ we write $e_I(|a|)$ for the spectral projection of $|a|$
corresponding to an interval $I\subseteq[0,\infty)$. The \emph{generalized $s$-number}
$\mu_t(a)$ ($t>0$) was introduced in \cite{FK} as
\begin{align*}
\mu_t(a)&:=\inf\{\|ae\|;\,e\ \mbox{is a projection in $\cM$ with $\tau(1-e)\le t$}\} \\
&\ =\inf\{s\ge0;\,\tau(e_{(s,\infty)}(|a|))\le t\}.
\end{align*}
This notion is useful in analysis of $\tau$-measurable operators. Note that $\overline\cM$
is a complete metrizable topological *-algebra whose neighborhood basis of $0$ is
\begin{align*}
V(\eps,\delta):=\{a\in\overline\cM&;\,\mbox{there is a projection $e$ in $\cM$} \\
&\quad\mbox{such that $\|ae\|\le\eps,\,\tau(1-e)\le\delta$}\},\qquad
\eps,\delta>0,
\end{align*}
and furthermore (see the proof of \cite[Lemma 3.1]{FK}),
\begin{align}\label{F-3.1}
a\in V(\eps,\delta)\,\iff\,\mu_\delta(a)\le\eps.
\end{align}

We write $\overline\fS$ for the set of $a\in\overline\cM$ such that
$\tau(e_{(\eps,\infty)}(|a|))<\infty$ for all $\eps>0$ or equivalently
$\lim_{t\to\infty}\mu_t(a)=0$. The space $\overline\fS$ is a closed (in the measure topology)
two-sided ideal of $\overline\cM$ and contains $L^p(\cM,\tau)$ for all $p\in(0,\infty)$,
where $L^p(\cM,\tau)$ is the non-commutative $L^p$-space associated with $(\cM,\tau)$.
These $L^p$-spaces were studied in \cite{Di,Y}. 
The latter directly deals with unbounded operators whereas  in the former
the $L^p$-space is introduced as the Banach space completion of a certain space of bounded 
operators equipped with the $p$-norm.
We recall that the $L^p$-norm is given by
$$
\|a\|_p=\tau(|a|^p)^{1/p}=\left(\int_0^{\infty} \mu_t(a)^p\,dt\right)^{1/p}.
$$
Decreasingness of a generalized $s$-number yields the obvious estimate
$\|a\|_p^p \geq s\mu_s(a)^p$ for each $s>0$ so that we have
\begin{align}\label{F-3.2}
\mu_s(a) \leq \|a\|_ps^{-1/p},\qquad s>0.
\end{align}
Note that when $\cM=B(\cH)$ with the usual trace, $\overline\cM=B(\cH)$ and
$\overline\fS$ is the set of compact operators on $\cH$. Hence, operators in $\overline\fS$
are called $\tau$-compact operators in some literature. 

In this subsection we prepare a few basic facts on $\tau$-measurable operators, in particular,
on convergence of decreasing sequences in $\overline\cM_+$, which will be used later in this
section. For each $a\in\overline\cM_+$ the corresponding positive form $q_a$ has the domain
$\cD(q_a)=\cD(a^{1/2})$ dense in $\cH$ since $a$ is densely defined.

The first four lemmas might be known to experts while we give the proofs for completeness.

\begin{lemma}\label{L-3.1}
Let $a,b$ be positive self-adjoint operators affiliated with $\cM$. Assume that $a\le b$ in the
form sense, or equivalently $(1+a)^{-1}\ge(1+b)^{-1}$. Then $b\in\overline\cM_+$ implies
$a\in\overline\cM_+$.
\end{lemma}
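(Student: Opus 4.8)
The plan is to pass from $a,b$ to their bounded resolvents and exploit the operator monotonicity of $\lambda\mapsto\lambda(1+\lambda)^{-1}$. Since $a$ and $b$ are (densely defined) positive self-adjoint operators affiliated with $\cM$, the resolvents $(1+a)^{-1},(1+b)^{-1}$ lie in $\cM$, and, as recalled in \S\ref{S-2.1}, the hypothesis ``$a\le b$ in the form sense'' is exactly $(1+b)^{-1}\le(1+a)^{-1}$. Put $g(\lambda):=\lambda(1+\lambda)^{-1}$, a strictly increasing homeomorphism of $[0,\infty]$ onto $[0,1]$ with $g(\infty)=1$, and set $c_a:=g(a)=1-(1+a)^{-1}$ and $c_b:=g(b)=1-(1+b)^{-1}$. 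Then $c_a,c_b\in\cM$ and $0\le c_a\le c_b\le 1$, either from the resolvent inequality directly or from operator monotonicity of $g$. Moreover, because $a$ and $b$ are genuine self-adjoint operators (no spectral mass at $\infty$), one has $e_{(s,\infty)}(a)=e_{(g(s),\infty)}(c_a)$ and $e_{(s,\infty)}(b)=e_{(g(s),\infty)}(c_b)$ for every $s\in(0,\infty)$.

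Next I would reformulate $\tau$-measurability in terms of generalized $s$-numbers. By the characterization recalled at the beginning of this subsection, a positive self-adjoint operator $x$ affiliated with $\cM$ is $\tau$-measurable if and only if $\tau(e_{(s,\infty)}(x))<\infty$ for some $s>0$; since $\lim_{t\to\infty}\mu_t(x)=\inf\{s\ge 0:\tau(e_{(s,\infty)}(x))<\infty\}$ (with $\inf\emptyset=\infty$), this is equivalent to $\lim_{t\to\infty}\mu_t(x)<\infty$. Now invoke the standard properties of $s$-numbers from \cite{FK}: monotonicity, i.e. $0\le x\le y$ implies $\mu_t(x)\le\mu_t(y)$ for all $t>0$, and the functional identity $\mu_t(g(x))=g(\mu_t(x))$ for the continuous increasing $g$ above (interpreted via $g(\infty)=1$ when $\mu_t(x)=\infty$). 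Applying these to $c_a\le c_b$ gives $\mu_t(c_a)\le\mu_t(c_b)$, hence $g\bigl(\lim_{t\to\infty}\mu_t(a)\bigr)=\lim_{t\to\infty}\mu_t(c_a)\le\lim_{t\to\infty}\mu_t(c_b)=g\bigl(\lim_{t\to\infty}\mu_t(b)\bigr)$. Since $b\in\overline\cM_+$, the right-hand side is $<g(\infty)=1$, and applying the continuous increasing inverse $g^{-1}$ yields $\lim_{t\to\infty}\mu_t(a)<\infty$; together with the fact that $a$, being self-adjoint, is densely defined, this gives $a\in\overline\cM_+$. (If one prefers to avoid the $s$-number functional identity for unbounded $a,b$, the same conclusion follows by working directly with the spectral projections $e_{(g(s),\infty)}(c_a)$ and $e_{(g(s),\infty)}(c_b)$ and the monotonicity of $\mu_t$ for the bounded operators $c_a\le c_b$.)

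The only point requiring care is the bookkeeping around the point at infinity: $\tau$-measurability of $b$ is a statement about the behaviour of $\mu_t(b)$ as $t\to\infty$, which under $g$ translates into $\mu_t(c_b)$ staying bounded \emph{away from} $1$ rather than tending to $0$, so one must make sure $g$ and $g^{-1}$ are applied in the right direction and that the relevant $s$-number facts are cited in the (possibly) unbounded setting. Everything else is routine once the resolvent picture is set up, so I expect no substantive obstacle beyond this translation.
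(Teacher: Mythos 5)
Your proof is correct and follows essentially the same route as the paper: both pass to the bounded resolvents, use the standard Fack--Kosaki comparison of spectral distribution functions for ordered bounded positive operators, and translate the spectral projections of $a,b$ into those of the resolvents. The only (cosmetic) difference is that you work with $1-(1+a)^{-1}$ and the limit $\lim_{t\to\infty}\mu_t(\cdot)$, whereas the paper works directly with $(1+a)^{-1}$ and the quantities $\tau(e_{[0,\alpha)}(\cdot))$; your fallback remark about arguing via spectral projections of the bounded operators is exactly the paper's argument.
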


\begin{proof}
Set $x:=(1+a)^{-1}$ and $y:=(1+b)^{-1}$; then $x,y\in\cM_+$ and $x\ge y$. From a standard
argument as in the proof of \cite[Proposition 2.2]{FK} it follows that
$\tau(e_{[0,\alpha)}(x))\le\tau(e_{[0,\alpha)}(y))$ for all $\alpha>0$. Since
$$
e_{(s,\infty)}(a)=e_{[0,(1+s)^{-1})}(x),\quad
e_{(s,\infty)}(b)=e_{[0,(1+s)^{-1})}(y),\qquad s\ge0,
$$
we find that $\tau(e_{(s,\infty)}(a))\le\tau(e_{(s,\infty)}(b))$ for all $s\ge0$, showing
the assertion.
\end{proof}

\begin{lemma}\label{L-3.2}
Let $a,b\in\overline\cM_+$. The order $a\le b$ in $\overline\cM_+$, i.e.,
$b-a\in\overline\cM_+$ is equivalent to $a\le b$ in the form sense.
\end{lemma}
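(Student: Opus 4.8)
The plan is to split the equivalence into its two implications, with the forward direction being essentially trivial and the reverse direction requiring the bulk of the work. First, suppose $a \le b$ in $\overline\cM_+$, meaning $c := b - a \in \overline\cM_+$. Then for every $\xi \in \cD(b^{1/2})$ (more carefully, for $\xi$ in a suitable common core) we have $b = a + c$ as a strong sum, and since $q_b = q_a + q_c \ge q_a$ pointwise, the inequality $a \le b$ in the form sense follows from the characterization of form order recalled in \S\ref{S-2.1}. The only mild subtlety here is the relation between the strong sum $a + c$ and the form sum $a \,\dot+\, c$; but for $\tau$-measurable operators the strong sum is already closed and self-adjoint, hence agrees with the form sum (both correspond to the positive form $q_a + q_c$), so $q_b = q_a + q_c$ holds and monotonicity is immediate.

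The reverse direction is the main obstacle: assume $a \le b$ in the form sense (equivalently $(1+a)^{-1} \ge (1+b)^{-1}$) and show $b - a \in \overline\cM_+$. The first point is that $a, b \in \overline\cM_+$ is given, so both are densely defined and affiliated with $\cM$; the issue is purely that $b - a$, a priori only a symmetric operator on $\cD(a) \cap \cD(b)$, need be shown to have a positive self-adjoint $\tau$-measurable closure. The natural route is to work with the positive forms: $q := q_b - q_a$ is defined on $\cD(q_b) \subseteq \cD(q_a)$ and is a nonnegative quadratic form there; the question is whether it is closable (lower semi-continuous) and whether the associated operator is $\tau$-measurable. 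I would instead argue more concretely by a resolvent/spectral comparison. From $(1+a)^{-1} \ge (1+b)^{-1}$ and the standard argument in the proof of \cite[Proposition 2.2]{FK} (exactly as used in Lemma \ref{L-3.1}), one gets $\mu_t(a) \le \mu_t(b)$ for all $t > 0$, hence $a \in \overline\cM_+$ automatically and, more to the point, one can try to control $b - a$ directly.

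The cleanest approach: reduce to the bounded case by spectral truncation of $b$. For $n \in \bN$ let $p_n := e_{[0,n]}(b)$, so $p_n \nearrow 1$, $b p_n \in \cM_+$ is bounded, and $a p_n$ is bounded because $a \le b$ in the form sense forces $p_n a p_n \le p_n b p_n \le n p_n$ in $\cM_+$; thus $p_n a p_n$ is a bounded positive operator with $p_n b p_n - p_n a p_n = p_n b p_n - p_n a p_n \in \cM_+$. Then I would show $p_n(b-a)p_n$ converges in the measure topology, or rather is an increasing sequence of positive $\tau$-measurable operators dominated appropriately, and identify its limit with $b - a$ via graph considerations: on the common core $\bigcup_n p_n \cH$ (which is a core for both $a^{1/2}$ and $b^{1/2}$), the forms agree, and lower semi-continuity of the limiting form together with the $\tau$-measurability criterion $\tau(e_{(s,\infty)}(\,\cdot\,)) < \infty$ closes the argument. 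The hard part will be handling the domains correctly — verifying that $\bigcup_n p_n\cH$ is a form core for $b$ (hence the truncated forms increase to $q_b$ and the differences behave), and that the resulting self-adjoint operator really is the strong difference $b - a$ rather than some proper extension; the $\tau$-measurability of the limit should then follow routinely from $\mu_t(b-a) \le \mu_{t/2}(b) + \mu_{t/2}(a) \le 2\mu_{t/2}(b)$ and $b \in \overline\cM_+$.
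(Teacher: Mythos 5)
Your proposal diverges substantially from the paper's argument, and both halves have problems. The paper disposes of the whole lemma in one algebraic chain of equivalences inside the $*$-algebra $\overline\cM$:
$$
a\le b\ \mbox{in }\overline\cM_+\iff(1+b)^{-1/2}(1+a)(1+b)^{-1/2}\le1
\iff(1+a)^{1/2}(1+b)^{-1}(1+a)^{1/2}\le1\iff(1+b)^{-1}\le(1+a)^{-1},
$$
the middle step being the observation that for $x=(1+a)^{1/2}(1+b)^{-1/2}\in\overline\cM$ one has $x^*x\le1\iff xx^*\le1$ (both say $x\in\cM$ with $\|x\|\le1$). No spectral truncation, no generalized $s$-numbers, no separate treatment of the two implications: everything is a legitimate manipulation of $\tau$-measurable operators.

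Concretely, the gaps in your version are these. In the forward direction you assert $q_b=q_a+q_c$, i.e.\ that the strong sum coincides with the form sum; that is exactly Lemma \ref{L-3.4}, which the paper proves \emph{after} and \emph{using} the present lemma (via the Douglas-type factorization Lemma \ref{L-3.3}), so invoking it here is circular, and your stated justification --- ``the strong sum is closed and self-adjoint, hence agrees with the form sum'' --- is not an argument: both operators are positive self-adjoint extensions of the operator sum on $\cD(a)\cap\cD(b-a)$, and that alone does not force them to coincide. (The one-sided inequality $q_b\ge q_a$ you actually need can be rescued by noting that $\cD(a)\cap\cD(b-a)$ is a form core for $b$ and using lower semi-continuity of $q_a$, but you do not do this.) In the reverse direction the pivotal claim that $p_n(b-a)p_n$ is increasing in $n$ is false: compression by a projection does not decrease a positive operator in the order sense ($X\ge pXp$ already fails for $2\times2$ matrices), so the monotone-limit scheme collapses, and the remaining ``graph considerations'' identifying the limit with $b-a$ are left entirely unexecuted. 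Note also that since $\overline\cM$ is a $*$-algebra, the strong difference $b-a$ is automatically a self-adjoint element of $\overline\cM$; the only genuine content of this direction is its \emph{positivity}, which your sketch never actually establishes.
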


\begin{proof}
The proof is standard as follows:
\begin{align*}
a\le b\ \mbox{in $\overline\cM_+$}&\,\iff\,1+a\le1+b\ \mbox{in $\overline\cM_+$} \\
&\,\iff\,(1+b)^{-1/2}(1+a)(1+b)^{-1/2}\le1 \\
&\,\iff\,(1+a)^{1/2}(1+b)^{-1}(1+a)^{1/2}\le1 \\
&\,\iff\,(1+b)^{-1}\le(1+a)^{-1},
\end{align*}
showing the assertion.
\end{proof}

The following factorization technique is the $\tau$-measurable operator version of
\cite{D} and will be repeatedly used throughout the section:

\begin{lemma}\label{L-3.3}
If $a,b\in\overline\cM_+$ and $a\le\lambda b$ for some $\lambda>0$, then there exists a unique
$x\in s(b)\cM s(b)$, where $s(b)$ is the support projection of $b$, such that
$a^{1/2}=xb^{1/2}$. Moreover, we have $\|x\|\le\lambda^{1/2}$.
\end{lemma}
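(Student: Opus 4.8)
The plan is to imitate Douglas's factorization argument in the $\tau$-measurable setting. The hypothesis $a \le \lambda b$ holds in $\overline{\cM}_+$, which by Lemma \ref{L-3.2} is equivalent to the form inequality, hence (taking $\xi = b^{1/2}\eta$ for $\eta \in \cD(b^{1/2})$) gives $\|a^{1/2}b^{1/2}\eta\|^2 \le \lambda\|b^{1/2}\eta\|^2$ once we know $b^{1/2}\eta$ lies in $\cD(a^{1/2})$. More precisely, first I would record the form inequality $\|a^{1/2}\xi\|^2 \le \lambda\|b^{1/2}\xi\|^2$ valid for all $\xi \in \cD(b^{1/2}) \subseteq \cD(a^{1/2})$. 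This lets me define a bounded operator $x_0$ on the subspace $b^{1/2}\cD(b^{1/2})$ (which is dense in $s(b)\cH$) by $x_0(b^{1/2}\xi) := a^{1/2}\xi$; the inequality shows $x_0$ is well-defined (if $b^{1/2}\xi = 0$ then $a^{1/2}\xi=0$) and contractive up to the factor $\lambda^{1/2}$. Extend $x_0$ to $\overline{s(b)\cH}$ by continuity and set $x := x_0 s(b)$, so $\|x\| \le \lambda^{1/2}$ and $a^{1/2} = x b^{1/2}$ on $\cD(b^{1/2})$, which suffices by closedness of $a^{1/2}$ and the fact that $b^{1/2}\cD(b^{1/2})$ is a core issue handled via $\cD(b^{1/2}) \subseteq \cD(a^{1/2})$ and density.

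Next I would check that $x$ is affiliated with $\cM$, hence (being bounded) lies in $\cM$, and then that $x \in s(b)\cM s(b)$. Affiliation is routine: for a unitary $u \in \cM'$ one has $u a^{1/2} u^* = a^{1/2}$, $u b^{1/2} u^* = b^{1/2}$ and $u s(b) u^* = s(b)$, so $u x u^* = x$ on the relevant domain. The containment $x \in s(b)\cM s(b)$ follows because by construction $x$ vanishes on $s(b)^\perp\cH = \ker b^{1/2}$ (so $x = x s(b)$) and the range of $x$ lies in $\overline{a^{1/2}\cD(b^{1/2})} \subseteq \overline{\mathrm{ran}\, a^{1/2}} = s(a)\cH \subseteq s(b)\cH$, the last inclusion because $a \le \lambda b$ forces $s(a) \le s(b)$; hence $x = s(b) x$ as well. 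Replacing $x$ by $s(b) x s(b)$ if necessary gives the stated form.

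For uniqueness: if $x, x' \in s(b)\cM s(b)$ both satisfy $a^{1/2} = x b^{1/2} = x' b^{1/2}$, then $(x - x')b^{1/2} = 0$ on $\cD(b^{1/2})$, so $x - x'$ vanishes on $\mathrm{ran}\, b^{1/2}$, hence on its closure $s(b)\cH$; but $x - x' \in s(b)\cM s(b)$ also vanishes on $s(b)^\perp\cH$, so $x = x'$. The one point demanding a little care — the main obstacle, such as it is — is the domain bookkeeping: justifying that $a^{1/2} = x b^{1/2}$ as an operator identity (not merely on the dense subspace $\cD(b^{1/2})$) and that $\cD(b^{1/2}) \subseteq \cD(a^{1/2})$ genuinely follows from the form inequality $a \le \lambda b$. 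This is exactly the content of the form-order characterization recalled in \S\ref{S-2.1} (the inequality $h_1 \le h_2$ in the form sense means $\cD(h_2^{1/2}) \subseteq \cD(h_1^{1/2})$ with the norm bound), together with Lemma \ref{L-3.2}; with these in hand the argument is the standard Douglas lemma and nothing measure-theoretic about $\tau$ is actually needed beyond concluding $x \in \cM$.
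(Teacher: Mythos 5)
Your construction follows the paper's proof almost verbatim: Lemma \ref{L-3.2} gives the form inequality, $x$ is defined on the dense range of $b^{1/2}$ in $s(b)\cH$ and extended by $0$ on $s(b)^\perp\cH$, membership in $\cM$ comes from commutation with unitaries of $\cM'$ (equivalently, from uniqueness), and $x=s(a)xs(b)\in s(b)\cM s(b)$.

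The one step you should tighten is the upgrade from ``$a^{1/2}\xi=xb^{1/2}\xi$ for all $\xi\in\cD(b^{1/2})$'' to the operator identity $a^{1/2}=xb^{1/2}$ in $\overline\cM$ (where the product is the strong product). Closedness of $a^{1/2}$ together with ordinary norm-density of $\cD(b^{1/2})$ is not enough: two closed operators agreeing on a dense subspace of their common domain need not coincide unless that subspace is a core, and $\cD(b^{1/2})\subseteq\cD(a^{1/2})$ does not by itself make $\cD(b^{1/2})$ a core for $a^{1/2}$. What closes this is precisely the $\tau$-measurable structure: $\cD(b^{1/2})$ is $\tau$-dense, and a $\tau$-measurable operator is determined by its restriction to any $\tau$-dense subspace (\cite[Chap.~I, Proposition 12]{Te1}), so the closure of $xb^{1/2}|_{\cD(b^{1/2})}$ --- which is the strong product $xb^{1/2}$ --- equals $a^{1/2}$. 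This is the citation the paper uses at exactly this point, and it contradicts your closing remark that nothing measure-theoretic beyond $x\in\cM$ is needed; in the general (non-measurable) unbounded setting the pointwise identity on $\cD(b^{1/2})$ would only give an inclusion, not equality.
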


\begin{proof}
Assume that $a\le\lambda b$ with $\lambda>0$. From Lemma \ref{L-3.2} it follows that
$\|a^{1/2}\xi\|^2\le\lambda\|b^{1/2}\xi\|^2$ for all $\xi\in\cD(b^{1/2})$. Since the range of
$b^{1/2}$ is dense in $s(b)\cH$, there is a unique operator $x$ on $\cH$ such that
$xs(b)^\perp=0$ and $a^{1/2}\xi=xb^{1/2}\xi$ for all $\xi\in\cD(b^{1/2})$. Moreover, we have
$\|x\|\le\lambda^{1/2}$. From uniqueness we easily see that $x\in\cM$ and hence
$x=s(a)xs(b)\in s(b)\cM s(b)$. Since $\cD(b^{1/2})$ is $\tau$-dense, we have $a^{1/2}=xb^{1/2}$
by \cite[Chap.~I, Proposition 12]{Te1}.
\end{proof}

\begin{lemma}\label{L-3.4}
For every $a,b\in\overline\cM_+$ the strong sum $a+b$ in $\overline\cM_+$ coincides with the
form sum $a\,\dot+\,b$, that is, $q_{a+b}=q_a+q_b$.
\end{lemma}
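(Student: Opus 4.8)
The plan is to exhibit both the strong sum and the form sum as extensions of one common densely defined symmetric operator and then to invoke maximality of self-adjointness. Put $D:=\cD(a)\cap\cD(b)$ and let $s_0$ denote the operator $s_0\xi:=a\xi+b\xi$ with domain $D$. Recall that $D$ is $\tau$-dense, being an intersection of $\tau$-dense domains, and in particular dense in $\cH$; recall also that $\overline{\cM}$ is a $*$-algebra under strong sum and strong product with the adjoint operation as involution \cite{Ne,Te1}, so that the strong sum $s:=a+b$ is by construction the (well-defined, $\tau$-measurable) closure $\overline{s_0}$ and satisfies $s^{*}=a^{*}+b^{*}=s$. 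Since $\langle s_0\xi,\xi\rangle=\langle a\xi,\xi\rangle+\langle b\xi,\xi\rangle\ge0$ on the core $D$, the self-adjoint operator $s$ is positive, hence $s\in\overline{\cM}_+$.

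Next I would pass to forms. The domain $\cD(q_a)\cap\cD(q_b)=\cD(a^{1/2})\cap\cD(b^{1/2})$ contains $D$, hence is dense, so the closed positive form $q_a+q_b$ is densely defined and corresponds, via the first representation theorem for closed forms \cite{Ka2}, to a positive self-adjoint operator $s_f$ with $q_{s_f}=q_a+q_b$; by definition this $s_f$ is the form sum $a\,\dot+\,b$. For $\xi\in D$ one has $\xi\in\cD(a^{1/2})\cap\cD(b^{1/2})$ with $a^{1/2}\xi\in\cD(a^{1/2})$ and $b^{1/2}\xi\in\cD(b^{1/2})$, so that for every $\eta\in\cD(a^{1/2})\cap\cD(b^{1/2})$
\[
(q_a+q_b)(\xi,\eta)=\langle a^{1/2}\xi,a^{1/2}\eta\rangle+\langle b^{1/2}\xi,b^{1/2}\eta\rangle=\langle a\xi+b\xi,\eta\rangle .
\]
By the representation theorem this forces $\xi\in\cD(s_f)$ and $s_f\xi=a\xi+b\xi=s_0\xi$; hence $s_f\supseteq s_0$, and since $s_f$ is closed, $s_f\supseteq\overline{s_0}=s$.

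To conclude, $s$ is self-adjoint and therefore maximal among symmetric operators, whereas $s_f$ is symmetric and extends $s$; thus $s_f=s$, i.e.\ $a\,\dot+\,b=a+b$, which is the same as $q_{a+b}=q_s=q_{s_f}=q_a+q_b$.

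The only nonroutine ingredient is the $*$-algebra structure of $\overline{\cM}$, which is precisely what guarantees that the strong sum $s$ is genuinely self-adjoint and not merely closed and symmetric; this is the step I would be most careful to state correctly, since for arbitrary positive self-adjoint operators on $\cH$ the operator sum need not be essentially self-adjoint and the conclusion then fails. Everything else—$\tau$-density of $D$ (hence density in $\cH$), closability of $s_0$ inside $\overline{\cM}$, and the inclusion $\cD(a)\subseteq\cD(a^{1/2})$ together with $q_a(\xi,\eta)=\langle a\xi,\eta\rangle$ for $\xi\in\cD(a)$—is standard from \cite{Ne,Te1,Ka2}.
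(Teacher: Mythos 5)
Your proof is correct, and it takes a genuinely different route from the paper's. The paper sets $h:=a+b$ (the strong sum) and invokes the factorization Lemma \ref{L-3.3} to write $a^{1/2}=xh^{1/2}$, $b^{1/2}=yh^{1/2}$ with $x,y\in s(h)\cM s(h)$; the identity $h=h^{1/2}(x^*x+y^*y)h^{1/2}$ forces $x^*x+y^*y=s(h)$, whence $h^{1/2}=x^*a^{1/2}+y^*b^{1/2}$ yields the domain equality $\cD(h^{1/2})=\cD(a^{1/2})\cap\cD(b^{1/2})$ and the norm identity $\|a^{1/2}\xi\|^2+\|b^{1/2}\xi\|^2=\|h^{1/2}\xi\|^2$ by direct computation. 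You instead exhibit the strong sum and the form sum as self-adjoint extensions of the common symmetric operator $s_0$ on $\cD(a)\cap\cD(b)$ and conclude by maximality of self-adjoint operators among their symmetric extensions. Both arguments are sound. What the paper's method buys is the explicit factorization data $T_{a/h}+T_{b/h}=s(h)$, which is recorded in Definition \ref{D-3.5} and reused repeatedly afterwards (e.g.\ in Lemma \ref{L-3.9}, Theorem \ref{T-3.10} and the proof of Theorem \ref{T-3.23}); what your method buys is independence from the Douglas-type factorization and a sharper isolation of where $\tau$-measurability actually enters, namely in guaranteeing that $\cD(a)\cap\cD(b)$ is ($\tau$-)dense and that the closure of the operator sum is self-adjoint. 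You are right to flag that step as the crux: for general positive self-adjoint operators the intersection of domains can even be $\{0\}$ (cf.\ Remark \ref{R-2.10}), the operator sum then has no self-adjoint closure, and the conclusion fails, which is exactly why the lemma is stated only for $\overline\cM_+$.
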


\begin{proof}
Let $h:=a+b$ in $\overline\cM_+$. By Lemma \ref{L-3.3} there exist $x,y\in s(h)\cM s(h)$ such
that $a^{1/2}=xh^{1/2}$ and $b^{1/2}=yh^{1/2}$, so $a^{1/2}=h^{1/2}x^*$ and $b^{1/2}=h^{1/2}y^*$
as well. We have
$$
h=h^{1/2}x^*xh^{1/2}+h^{1/2}y^*yh^{1/2}=h^{1/2}(x^*x+y^*y)h^{1/2},
$$
from which $x^*x+y^*y=s(h)$ follows immediately. Clearly,
$\cD(h^{1/2})\subseteq\cD(a^{1/2})\cap\cD(b^{1/2})$. Moreover, since
$h^{1/2}=(x^*x+y^*y)h^{1/2}=x^*a^{1/2}+y^*b^{1/2}$, we have
$\cD(a^{1/2})\cap\cD(b^{1/2})\subseteq\cD(h^{1/2})$. Therefore,
$\cD(h^{1/2})=\cD(a^{1/2})\cap\cD(b^{1/2})=\cD((a\,\dot+\,b)^{1/2})$. For every
$\xi\in\cD(h^{1/2})$,
\begin{align*}
\|a^{1/2}\xi\|^2+\|b^{1/2}\xi\|^2
&=\|xh^{1/2}\xi\|^2+\|yh^{1/2}\xi\|^2 \\
&=((x^*x+y^*y)h^{1/2}\xi,h^{1/2}\xi)=\|h^{1/2}\xi\|^2.
\end{align*}
Hence $q_a+q_b=q_h$, that is, $a\,\dot+\,b=h$ follows.
\end{proof}

\begin{definition}\label{D-3.5}\rm
Assume that $a,b\in\overline\cM_+$ satisfies $a\le\lambda b$ for some $\lambda>0$. By Lemma
\ref{L-3.3} we have a unique $x\in s(b)\cM s(b)$ such that $a^{1/2}=xb^{1/2}$. It is convenient
for later use to introduce the notation $T_{a/b}:=x^*x$, which is a unique $T\in(s(b)\cM s(b))_+$
such that $a=b^{1/2}Tb^{1/2}$. Moreover, $T_{a/b}\le\lambda s(b)$ holds. From the proof of
Lemma \ref{L-3.4} we also note that when $a,b\in\overline\cM_+$ and $h:=a+b$, we have
$T_{a/h}+T_{b/h}=s(h)$.
\end{definition}

\begin{lemma}\label{L-3.6}
Let $a,a_n\in\cM$ $(n\in\bN)$ and assume that $\sup_n\|a_n\|<\infty$. If $a_n\to a$ in the measure
topology, then $a_n\to a$ in the strong operator topology.
\end{lemma}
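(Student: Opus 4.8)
The plan is to reduce strong convergence $a_n\xi\to a\xi$ (for a fixed $\xi\in\cH$) to the convergence of certain bounded continuous functions evaluated at the $a_n$'s, exploiting the uniform norm bound. First I would replace $a_n$ by $a_n-a$, so that it suffices to show: if $b_n\in\cM$ with $\sup_n\|b_n\|=:K<\infty$ and $b_n\to0$ in the measure topology, then $b_n\to0$ strongly. Since $\|b_n^*b_n\|\le K^2$ and $b_n\to0$ in measure implies $b_n^*b_n\to0$ in measure (the measure topology makes $\overline\cM$ a topological $*$-algebra), we may further assume $b_n=b_n^*\ge0$ with $0\le b_n\le K$, and we must show $\|b_n^{1/2}\xi\|^2=(b_n\xi,\xi)\to0$ for each $\xi$; equivalently, $b_n^{1/2}\to0$ strongly, and since $b_n^{1/2}\to0$ in measure too (continuity of $t\mapsto t^{1/2}$) with $\|b_n^{1/2}\|\le K^{1/2}$, it is enough to treat the self-adjoint case directly.

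The key step is the following estimate. Fix $\xi\in\cH$ and $\eps>0$. Using \eqref{F-3.1}, $b_n\to0$ in measure means $\mu_\delta(b_n)\to0$ for every $\delta>0$; pick $n$ large so that $\mu_\delta(b_n)\le\eps$, which by definition of $\mu_\delta$ gives a projection $e_n\in\cM$ with $\|b_ne_n\|\le\eps$ and $\tau(1-e_n)\le\delta$. Then decompose
\begin{align*}
\|b_n\xi\|\le\|b_ne_n\xi\|+\|b_ne_n^\perp\xi\|\le\eps\|\xi\|+K\|e_n^\perp\xi\|.
\end{align*}
The first term is controlled; the issue is the second term $\|e_n^\perp\xi\|$, where $e_n^\perp=1-e_n$ is a small projection in trace but need not shrink in norm on a fixed vector $\xi$. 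This is the main obstacle. To handle it I would first reduce to $\xi$ in a dense subset: because $\sup_n\|b_n\|\le K$, strong convergence on a dense set implies strong convergence everywhere, so it suffices to take $\xi\in\cM'\eta_0$ for a cyclic-and-separating (or at least cyclic) vector, or more simply to take $\xi$ in the range of a spectral projection $f:=e_{[0,R]}(|T|)$ of some fixed positive $\tau$-measurable $T$ whose range is dense — but the cleanest route is: it is enough to prove $(b_n\eta,\eta)\to0$ for $\eta$ ranging over a total set, and for a self-adjoint $b_n\ge0$ one has $|(b_n\eta,\eta)|\le (b_ne_n\eta,\eta)+ (b_ne_n^\perp\eta,\eta)\le\eps\|\eta\|^2+K\|e_n^\perp\eta\|^2$, so again one needs $\|e_n^\perp\eta\|\to0$.

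The resolution of the obstacle is to choose $\delta$ adaptively. Given $\xi$ and $\eps$, I would use that the net of finite-trace projections is "dense" in the sense that there exists a projection $p\in\cM$ with $\tau(p)<\infty$ and $\|p^\perp\xi\|\le\eps$ — this holds because $\tau$ is semi-finite, so $1=\sup p_\alpha$ strongly over an increasing net of finite-trace projections, whence $p_\alpha\xi\to\xi$. Fix such a $p$. Now when $\tau(1-e_n)=\tau(e_n^\perp)\le\delta$, the projection $q_n:=e_n\wedge p$ satisfies $\tau(p-q_n)\le\tau(e_n^\perp)\le\delta$ (by $p-q_n=p-e_n\wedge p\le e_n^\perp$, say, or by the standard inequality $\tau(p\vee e_n-e_n)\le\tau(p-p\wedge e_n)$ together with parallelogram-type trace estimates), and since $q_n\le p$ with $\tau(p)<\infty$, we get $\|(p-q_n)\xi\|^2\le$ something controlled by $\delta$ via Chebyshev on the finite-trace corner — more precisely, restricting to the finite von Neumann algebra $p\cM p$ with the finite trace $\tau|_{p\cM p}$, strong and measure topologies on bounded sets are comparable because a bounded decreasing-in-trace family of projections converges strongly. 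Concretely: on $p\cH$ the vector state $\omega_\xi$ is normal, so $\omega_\xi(p-q_n)\to0$ as $\tau(p-q_n)\to0$ would follow if $\omega_\xi$ were dominated by $\tau$, which it need not be; so instead I would simply invoke that for projections $r_n\le p$ with $\tau(p-r_n)\to0$ one has $r_n\to p$ strongly (standard: $r_n$ has a $\sigma$-weak cluster point $0\le c\le p$ with $\tau(p-c)=0$ hence $c=p$, and then $\sigma$-weak convergence of projections to a projection upgrades to strong). Combining, choose $\delta$ so small along $n$ that $\|e_n^\perp\xi\|\le\|p^\perp\xi\|+\|(p-q_n)\xi\|+\|(e_n^\perp-e_n^\perp\wedge p^\perp)\xi\|$ is $\le C\eps$; then $\|b_n\xi\|\le(\,\eps+KC\eps)\|\xi\|$ for all large $n$. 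Letting $\eps\to0$ gives $b_n\xi\to0$, completing the proof. The heart of the argument is thus the passage from smallness in trace of $e_n^\perp$ to smallness in norm of $e_n^\perp\xi$, achieved by truncating to a finite-trace corner where measure convergence of projections implies strong convergence.
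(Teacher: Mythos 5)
Your proof is correct in substance but follows a genuinely different route from the paper's. The paper first treats $\cM$ in its standard representation on $L^2(\cM,\tau)$, where the inequality $\mu_t(a_nz)\le\mu_{t/2}(a_n)\mu_{t/2}(z)$ together with dominated convergence gives $\|a_nz\|_2\to0$ at once, and then transfers the conclusion to an arbitrary representation by factoring a normal isomorphism into an amplification, an induction and a spatial isomorphism. You instead work in the given representation throughout: after writing $b_n=b_ne_n+b_ne_n^\perp$ with $\|b_ne_n\|\le\eps_n\to0$ and $\tau(e_n^\perp)\le\delta_n\to0$, everything reduces to showing $\|e_n^\perp\xi\|\to0$, which you extract from semi-finiteness of $\tau$, Kaplansky's formula, and a $\sigma$-weak compactness/normality argument. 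This buys independence from the structure theory of normal isomorphisms at the price of more projection bookkeeping; both routes are legitimate. Two points worth noting. (1) The finite-trace corner $p$ is actually superfluous: your own cluster-point argument applies directly to $e_n^\perp$ itself --- any $\sigma$-weak cluster point $c$ of $\{e_n^\perp\}$ in the unit ball satisfies $\tau(c)\le\liminf_n\tau(e_n^\perp)=0$ by $\sigma$-weak lower semicontinuity of the normal weight $\tau$, hence $c=0$ by faithfulness, so $e_n^\perp\to0$ $\sigma$-weakly and therefore $\|e_n^\perp\xi\|^2=(e_n^\perp\xi,\xi)\to0$, which finishes the proof with no lattice manipulations at all. (2) Your closing three-term estimate for $\|e_n^\perp\xi\|$ is garbled as written; if you do keep $p$, the clean bound is $\|e_n^\perp\xi\|^2\le((1-q_n)\xi,\xi)=\|p^\perp\xi\|^2+\|(p-q_n)\xi\|^2$, which follows from $e_n^\perp\le1-q_n$ and the orthogonal splitting $1-q_n=p^\perp+(p-q_n)$; also, the sequences $\eps_n,\delta_n\to0$ and the corresponding $e_n$ should be fixed by a diagonal choice at the outset rather than choosing $\delta$ after the fact.
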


\begin{proof}
We may and do assume $a=0$. First, let us show the result when $\cM$ is standardly represented (by left
multiplication) on the Hilbert space $L^2(\cM,\tau)$. Let $\alpha:=\sup_n\|a_n\|<+\infty$. For every
$z\in L^2(\cM,\tau)$, by \cite[Lemma 2.5,(vii)]{FK} we have
$$
\|a_nz\|_2^2=\int_0^\infty\mu_t(a_nz)^2\,dt\le\int_0^\infty\mu_{t/2}(a_n)^2\mu_{t/2}(z)^2\,dt.
$$
From the assumption $a_n\to0$ in the measure topology, it follows that $\mu_{t/2}(a_n)\to0$ in
measure on $((0,\infty),dt)$. Moreover, note that
$\mu_{t/2}(a_n)^2\mu_{t/2}(z)^2\le\alpha^2\mu_{t/2}(z)^2\in L^1((0,\infty),dt)$. Hence Lebesgue's
dominated convergence theorem can be applied to see $\|a_nz\|_2\to0$.

It remains to show that the strong convergence $a_n\to0$ is space-free, i.e., independent of a
representing Hilbert space of $\cM$. To be more precise, we may show that if  $a_n\to0$ strongly
in $\cM$ with $\sup_n\|a_n\|<\infty$ and $\pi:\cM\to\cM_1$ is an isomorphism, then $\pi(a_n)\to0$
strongly in $\cM_1$. For this, we may consider the cases of $\pi$ being an amplification, an injective
induction, and a spatial isomorphism, separately (see \cite[Chap.~IV, Theorem 5.5]{Ta1}). The
assertion is obvious for the latter two cases. For the first case, let $\pi$ be the amplification of
$\cM$ onto $\cM\otimes1$ on $\cH\otimes\cK$. Then it is obvious that
$\pi(a_n)(\xi\otimes\eta)\to0$ for all $\xi\in\cH$ and $\eta\in\cK$. The assertion is immediate
since $\{\xi\otimes\eta:\xi\in\cH,\,\eta\in\cK\}$ is total in $\cH\otimes\cK$ (and $\{a_n\}$ is
uniformly bounded).
\end{proof}

The next proposition will be useful in our discussions below, which is probably known to
experts but we cannot find a suitable reference.

\begin{proposition}\label{P-3.7}
Let $a_n\in\overline\cM_+$ $(n\in\bN)$. If $a_n\to a$ in the measure topology for some
$a\in\overline\cM_+$, then $a_n\to a$ in the strong resolvent sense.
\end{proposition}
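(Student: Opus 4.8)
The plan is to reduce the claim to a statement about the resolvents $(1+a_n)^{-1}$, which are uniformly bounded elements of $\cM_+$, and then invoke Lemma \ref{L-3.6}. The key algebraic identity is the resolvent formula: for $a,b\in\overline\cM_+$ we have
$$
(1+a)^{-1}-(1+b)^{-1}=(1+a)^{-1}(b-a)(1+b)^{-1},
$$
which holds verbatim in $\overline\cM$ since all the operators involved are $\tau$-measurable, the products make sense, and $(1+a)^{-1},(1+b)^{-1}\in\cM$ with norm $\le1$. Thus
$$
\|(1+a_n)^{-1}-(1+a)^{-1}\| \quad\text{cannot be controlled directly,}
$$
but the point is that multiplication is jointly continuous in the measure topology on $\overline\cM$ (it is a topological *-algebra, as recalled at the beginning of \S\ref{S-3.1}), and $a_n\to a$ in measure forces $a_n-a\to 0$ in measure. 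Hence the right-hand side $(1+a_n)^{-1}(a-a_n)(1+a)^{-1}$ tends to $0$ in the measure topology, so $(1+a_n)^{-1}\to(1+a)^{-1}$ in the measure topology.

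Now $(1+a_n)^{-1}$ and $(1+a)^{-1}$ all lie in $\cM$ with $\|(1+a_n)^{-1}\|\le1$, so Lemma \ref{L-3.6} applies directly: convergence in the measure topology together with the uniform norm bound yields convergence in the strong operator topology, i.e. $(1+a_n)^{-1}\to(1+a)^{-1}$ strongly. By definition this is exactly the statement that $a_n\to a$ in the strong resolvent sense, which is what we had to prove.

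The only point requiring a little care is the justification that $a_n\to a$ in measure implies $(1+a_n)^{-1}(a-a_n)(1+a)^{-1}\to 0$ in measure. This follows from joint continuity of multiplication on $\overline\cM$ once we know that $a-a_n\to0$ in measure (immediate from $a_n\to a$ in measure, since $\overline\cM$ is a topological vector space) and that the factors $(1+a_n)^{-1},(1+a)^{-1}$ are suitably controlled. Here one should note that multiplication $\overline\cM\times\overline\cM\to\overline\cM$ is jointly continuous but not uniformly so; however, the sequence $(1+a_n)^{-1}$ is bounded in norm by $1$, and multiplication by a uniformly $\|\cdot\|$-bounded family against a null sequence in measure does preserve the null property — this is a standard consequence of the estimate $\mu_{s+t}(xy)\le\mu_s(x)\mu_t(y)$ together with $\|(1+a_n)^{-1}\|\le1$, giving $\mu_{2s}((1+a_n)^{-1}(a-a_n)(1+a)^{-1})\le\mu_s(a-a_n)$. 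I expect this estimate, rather than any conceptual difficulty, to be the main (and only) technical obstacle; everything else is a direct application of the resolvent identity and Lemma \ref{L-3.6}.
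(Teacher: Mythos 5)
Your proposal is correct and follows essentially the same route as the paper: the resolvent identity $(1+a)^{-1}-(1+a_n)^{-1}=(1+a)^{-1}(a_n-a)(1+a_n)^{-1}$, the bound $\mu_\eps$ of the product by $\mu_\eps(a_n-a)$ using that the outer factors are contractions, and then Lemma \ref{L-3.6}. The only cosmetic difference is that the paper uses the sharper estimate $\mu_\eps(xyz)\le\|x\|\,\mu_\eps(y)\,\|z\|$ directly rather than the doubled-index form $\mu_{2s}(\cdot)\le\mu_s(a-a_n)$, but both suffice.
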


\begin{proof}
Since
$$
(1+a)^{-1}-(1+a_n)^{-1}=(1+a)^{-1}(a_n-a)(1+a_n)^{-1},
$$
we have for every $\eps>0$,
$$
\mu_\eps((1+a)^{-1}-(1+a_n)^{-1})\le\mu_\eps(a_n-a)\,\longrightarrow\,0
$$
as $n\to\infty$, which means that $(1+a_n)^{-1}\to(1+a)^{-1}$ in the measure topology.
Hence Lemma \ref{L-3.6} implies that $(1+a_n)^{-1}\to(1+a)^{-1}$ strongly, that is,
$a_n\to a$ in the strong resolvent sense.
\end{proof}

Let $a_n\in\overline\cM_+$ ($n\in\bN$) and assume that $a_1\ge a_2\ge\cdots$. Then there
is a positive self-adjoint operator $a$ on $\cH$ such that $a_n\to a$ in the strong resolvent
sense, i.e., $(1+a_n)^{-1}\nearrow(1+a)^{-1}$ (see Lemma \ref{L-2.4}). 
Note that Lemma \ref{L-3.1} guarantees $a\in\overline\cM_+$.
Let us agree to express this situation as $a_n\searrow a$ in the strong resolvent sense.

\begin{theorem}\label{T-3.8}
Let $a_n\in\overline\cM_+$ $(n\in\bN)$ and assume that $a_n\searrow a$ in the strong resolvent
sense. If $a_1\in\overline\fS_+$, then $a_n\to a$ in the measure topology. If $0<p<\infty$ and
$a_1\in L^p(\cM,\tau)_+$, then $a\in L^p(\cM,\tau)_+$ and $\|a_n-a\|_p\to0$.
\end{theorem}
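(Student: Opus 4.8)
The plan is to exploit the fact that $a_n \searrow a$ in the strong resolvent sense translates, via Lemma~\ref{L-2.4}, into $q_a = \Inf_n q_{a_n}$ for the associated positive forms, and that by Lemma~\ref{L-3.1} (using $a_1 \in \overline{\cM}_+$) each limit operator $a$ lies in $\overline{\cM}_+$. The first step is to establish pointwise monotone control of generalized $s$-numbers. Since $a_n$ is decreasing in the form sense, Lemma~\ref{L-3.2} gives $a_n \le a_1$ in $\overline{\cM}_+$, so $\mu_t(a_n) \le \mu_t(a_1)$ and $\mu_t(a) \le \mu_t(a_1)$ for all $t > 0$; moreover $a_m \le a_n$ for $m \ge n$ gives $\mu_t(a_m) \le \mu_t(a_n)$. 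Thus for fixed $t$ the sequence $\mu_t(a_n)$ decreases to some limit $\ge \mu_t(a)$. The key pointwise claim is that $\mu_t(a_n) \to \mu_t(a)$ for every $t>0$ at which $t \mapsto \mu_t(a)$ is continuous (equivalently, for all but countably many $t$); this should follow from the spectral characterization $\mu_t(a) = \inf\{s \ge 0 : \tau(e_{(s,\infty)}(a)) \le t\}$ together with strong resolvent convergence, which gives $e_{[0,\lambda)}(a_n) \to e_{[0,\lambda)}(a)$ strongly at continuity points $\lambda$ of the spectral distribution, hence $\tau(e_{(s,\infty)}(a_n)) \to \tau(e_{(s,\infty)}(a))$ by normality and the uniform domination $e_{(s,\infty)}(a_n) \le e_{(s,\infty)}(a_1)$ with $\tau(e_{(s,\infty)}(a_1)) < \infty$ (valid since $a_1 \in \overline{\fS}_+$, or for $s$ large enough when $a_1 \in \overline{\cM}_+$).

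For the first assertion, I would deduce measure-topology convergence from \eqref{F-3.1}: given $\eps, \delta > 0$, it suffices to show $\mu_\delta(a_n - a) \le \eps$ eventually. Here I would use the hypothesis $a_1 \in \overline{\fS}_+$, so $\mu_t(a_1) \to 0$ as $t \to \infty$; pick $\delta' \le \delta/2$ and note $\mu_{\delta/2}(a_n - a) \le \mu_{\delta/4}(a_n) + \mu_{\delta/4}(a) \le 2\mu_{\delta/4}(a_1)$, which is not small --- so instead the cleaner route is: since $0 \le a_n - a \le a_n$ (using $a \le a_n$, Lemma~\ref{L-3.2}) and $a_n \searrow a$, we have $\mu_t(a_n - a) \le \mu_{t/2}(a_n) + \mu_{t/2}(a)$ won't directly work either because it doesn't shrink. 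The right approach: fix a continuity point $t_0$ with $2\mu_{t_0}(a_1) \le \eps$ if such exists (possible since $\mu_t(a_1)\to 0$), and for $t > t_0$ write $\mu_t(a_n - a) \le \mu_{t-t_0}(a_n) \cdot \mathbf{1} + \ldots$; more robustly, since $\mu_t(a_n - a) \le \mu_{t/2}(a_n - a_N) + \mu_{t/2}(a_N - a)$ and $a_N \ge a \ge 0$ with $a_N - a \le a_N - a_m \to$ ... Actually the cleanest is: for $m \ge N$, $0 \le a_N - a_m \le a_N$ and $a_N - a_m \to a_N - a$ in measure iff we already know convergence --- so I would instead argue directly that $a_n - a \searrow 0$ in the form sense (since $q_{a_n} - q_a = q_{a_n - a}$ decreases to $0$ pointwise on the common domain, using Lemma~\ref{L-3.4}), hence by Lemma~\ref{L-2.4} $a_n - a \searrow 0$ in the strong resolvent sense, and then apply the $s$-number continuity argument above to the decreasing sequence $b_n := a_n - a \in \overline{\fS}_+$ with limit $0$: at continuity points $\mu_t(b_n) \to \mu_0^+(0) = 0$ if $t>0$, wait $\mu_t(0)=0$ for all $t>0$, so $\mu_t(b_n) \to 0$ for a.e.\ $t$; combined with monotonicity $\mu_t(b_n) \le \mu_t(b_1) \le \mu_t(a_1)$ and $\lim_{t\to\infty}\mu_t(a_1) = 0$, a standard argument gives $\mu_\delta(b_n) \to 0$ for every fixed $\delta$ (Dini-type, using that $\mu_t(b_n)$ is right-continuous decreasing in $t$). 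This yields $a_n - a \to 0$ in the measure topology.

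For the second assertion, assume $a_1 \in L^p(\cM,\tau)_+$. Then $a \le a_1$ gives $\mu_t(a) \le \mu_t(a_1)$, so $\int_0^\infty \mu_t(a)^p\,dt \le \|a_1\|_p^p < \infty$, i.e.\ $a \in L^p(\cM,\tau)_+$. For norm convergence, set $b_n := a_n - a \in L^p(\cM,\tau)_+$ (decreasing to $0$ in the strong resolvent sense by the above). Then $\|b_n\|_p^p = \int_0^\infty \mu_t(b_n)^p\,dt$; the integrand is dominated by $\mu_t(b_1)^p \le \mu_t(a_1)^p \in L^1((0,\infty),dt)$ and, by the first part's pointwise conclusion, $\mu_t(b_n) \to 0$ for a.e.\ $t$. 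Lebesgue's dominated convergence theorem then gives $\|a_n - a\|_p = \|b_n\|_p \to 0$.

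The main obstacle I anticipate is the $s$-number continuity step: justifying $\mu_t(a_n) \to \mu_t(a)$ (a.e.\ $t$) from strong resolvent convergence. One must pass from strong convergence of resolvents to convergence of the spectral projections $e_{[0,\lambda)}$ at continuity points, then to convergence of their traces --- which requires the normality of $\tau$ together with a uniform finiteness bound $\tau(e_{(s,\infty)}(a_n)) \le \tau(e_{(s,\infty)}(a_1)) < \infty$ so that the traces behave well under the strong limit. The hypothesis $a_1 \in \overline{\fS}_+$ (resp.\ $L^p$) is exactly what supplies this finiteness and the decay $\mu_t(a_1) \to 0$ needed to upgrade a.e.\ convergence of $s$-numbers to the uniform statements \eqref{F-3.1} and to dominated convergence for the $L^p$-norm. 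Handling the countable exceptional set of discontinuities (and checking that the monotone right-continuous functions $t \mapsto \mu_t(b_n)$ converge at every fixed $\delta$, not just a.e.) is the delicate bookkeeping that makes this more than routine.
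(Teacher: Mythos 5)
Your overall strategy (reduce to $b_n:=a_n-a\searrow0$ and control the generalized $s$-numbers $\mu_t(b_n)$) is genuinely different from the paper's, but as written it has two gaps, one of which is fatal. First, the minor one: you justify $b_n\searrow0$ in the strong resolvent sense by saying $q_{a_n}-q_a=q_{a_n-a}$ ``decreases to $0$ pointwise on the common domain.'' That is not available: $q_a=\Inf_nq_{a_n}$ is in general strictly smaller than the pointwise infimum $\inf_nq_{a_n}$ (this is exactly the $\Inf$ versus $\inf$ distinction of \S\ref{S-2.1}), so $q_{a_n}(\xi)-q_a(\xi)$ need not tend to $0$. The conclusion $b_n\searrow0$ is nevertheless true, but it must be obtained the way the paper obtains Theorem \ref{T-3.10}: factor everything through a common majorant $h=a_1$ via Lemma \ref{L-3.9}, so that $T_{b_n/h}=T_{a_n/h}-T_{a/h}\searrow0$ strongly in $\cM_+$.

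The fatal gap is the step you yourself flag as the main obstacle: deducing $\mu_t(b_n)\to0$ for a.e.\ $t$ from $b_n\searrow0$ in the strong resolvent sense. Your sketch rests on the ``uniform domination $e_{(s,\infty)}(a_n)\le e_{(s,\infty)}(a_1)$,'' but $a_n\le a_1$ does \emph{not} imply an inequality of spectral projections; it only gives the trace inequality $\tau(e_{(s,\infty)}(a_n))\le\tau(e_{(s,\infty)}(a_1))$ (this is precisely how Lemma \ref{L-3.1} is proved). Without an actual finite-trace projection majorizing all the $e_{(s,\infty)}(b_n)$, strong convergence $e_{(s,\infty)}(b_n)\to0$ does not yield $\tau(e_{(s,\infty)}(b_n))\to0$: a normal trace is only lower semi-continuous along strong limits, and a uniform bound on the traces does not rescue this (rank-one projections marching to infinity in $B(\cH)$ converge strongly to $0$ with constant trace $1$). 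Since ``$\mu_\delta(b_n)\to0$ for every $\delta>0$'' is literally equivalent to the measure-topology conclusion, this unproved step is the entire content of the theorem; the hypothesis $a_1\in\overline\fS_+$ must enter in a more structural way. The paper's route is quite different: it first treats $a_1\in L^1(\cM,\tau)_+$ by observing that $\ffi(x):=\lim_n\tau(a_nx)$ is dominated by $\tau(a_1\,\cdot)$, hence normal, hence of the form $\tau(b\,\cdot)$ with $b\in L^1$, and identifies $b=a$ via Proposition \ref{P-3.7}; for general $a_1\in\overline\fS_+$ it then cuts $a_n$ by spectral projections of $a_1$ into pieces that are small in measure plus a piece $fa_nf\in L^1$, and shows $\{a_n\}$ is Cauchy in measure. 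If you want to salvage the $s$-number approach, you would need to prove $\mu_t(b_n)\to0$ by some such reduction anyway, at which point you have essentially reproduced the paper's argument.
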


\begin{proof}
First, assume $a_1\in L^1(\cM,\tau)_+$. Define $\ffi\in\cM_+^*$ by
$\ffi(x):=\lim_{n\to\infty}\tau(a_nx)$ for $x\in\cM$. Since $0\le\ffi\le\tau(a_1\cdot)$, $\ffi$
is normal, i.e., $\ffi\in\cM_*^+$. Hence $\ffi=\tau(b\,\cdot)$ for some $b\in L^1(\cM,\tau)_+$.
Since $a_n\ge b$ for all $n$, $\|a_n-b\|_1=\tau((a_n-b)1)=\tau(a_n1)-\ffi(1)\to0$. Proposition
\ref{P-3.7} shows that $a=b$. Hence $a\in L^1(\cM,\tau)_+$ and $\|a_n-a\|_1\to0$.

Next, assume $a_1\in\overline\fS_+$ and take the spectral decomposition
$a_1=\int_0^\infty\lambda\,de_\lambda$. Let us show that $\{a_n\}$ converges in $\overline\cM$.
In vew of \eqref{F-3.1} it suffices to prove that for any $\eps>0$ there is an $n_0$ such that
$\delta_\eps(a_n-a_m)\le\eps$ for all $n,m\ge n_0$. To do so, let $\eps>0$ be arbitrary and we
decompose $a_n$ into sum of several small pieces and an element of $L^1(\cM,\tau)_+$. Choose
$r>\delta>0$ such that $\delta\mu_\eps(a_1)<\eps^2$ and $\tau(e_r^\perp)<\eps$.
Let $f_1:=e_\delta$, $f_2:=e_r^\perp$ and $f:=1-(f_1+f_2)$. Then
$\tau(f)\le\tau(f_1^\perp)<\infty$ (thanks to $a_1\in\overline\fS$) and $\tau(f_2)<\eps$. Write
$$
a_n=a_nf_1+a_nf_2+a_nf=a_nf_1+a_nf_2+f_1a_nf+f_2a_nf+fa_nf.
$$
Since $\tau(f)<\infty$ and $\|fa_1f\|\le r$, one has $fa_1f\in L^1(\cM,\tau)_+$. From the
case proved above it follows that $\|fa_nf-b_0\|_1\to0$ for some $b_0\in L^1(\cM,\tau)_+$.
Hence there is an $n_0\in\bN$ such that
\begin{align}\label{F-3.3}
\mu_\eps(fa_nf-fa_mf)<\eps,\qquad n,m\ge n_0.
\end{align}
Since $\tau(f_2)<\eps$, one has
\begin{align}\label{F-3.4}
\mu_\eps(f_2a_nf)\le\mu_\eps(f_2a_n)=\mu_\eps(a_nf_2)=\mu_\eps^{1/2}(f_2a_n^2f_2)=0,
\qquad n\in\bN.
\end{align}
From \cite[Lemma 2.5,(vi)]{FK} one moreover finds that
\begin{align*}
\mu_\eps^2(f_1a_nf)&\le\mu_\eps^2(f_1a_n)=\mu_\eps^2(a_nf_1)
=\mu_\eps(f_1a_n^2f_1) \\
&\le\|f_1a_n^{1/2}\|\mu_\eps(a_n)\|a_n^{1/2}f_1\| \\
&=\|f_1a_nf_1\|^{1/2}\mu_\eps(a_n)\|f_1a_nf_1\|^{1/2} \\
&\le\|f_1a_1f_1\|\mu_\eps(a_1)\le\delta\mu_\eps(a_1)<\eps^2
\end{align*}
so that
\begin{align}\label{F-3.5}
\mu_\eps(f_1a_nf)\le\mu_\eps(a_nf_1)\le\eps,\qquad n\in\bN.
\end{align}
By \cite[Lemma 2.5,(v)]{FK} and \eqref{F-3.3}--\eqref{F-3.5} we find that
\begin{align*}
\mu_{9\eps}(a_n-a_m)
&\le\mu_\eps(a_nf_1)+\mu_\eps(a_mf_1)+\mu_\eps(f_1a_nf)+\mu_\eps(f_1a_mf) \\
&\quad+\mu_\eps(a_nf_2)+\mu_\eps(a_mf_2)+\mu_\eps(f_2a_nf)+\mu_\eps(f_2a_mf) \\
&\quad+\mu_\eps(fa_nf-fa_mf) \\
&\le5\eps
\end{align*}
for all $n,m\ge n_0$. Since $\eps>0$ is arbitrary, this shows that $\{a_n\}$ converges to 
some $b\in\overline\cM_+$  in the measure topology. Since $b=a$ by Proposition \ref{P-3.7},
the first assertion holds. The second follows from the first and \cite[Theorem 3.6]{FK}.
\end{proof}

The assumption of $a_1\in\overline\fS$ (or at least $a_n\in\overline\fS$ for some $n$) is
essential for the first assertion of Theorem \ref{T-3.8}. For this, we may consider a
sequence $a_n$ in $B(\cH)_+$ such that $a_n\searrow a$ but $\|a_n-a\|\not\to0$. Note that the
measure topology on $B(\cH)$ coincides with the operator norm topology.

The next lemma gives a convenient description of the limit in the strong resolvent sense of a
decreasing sequence in $\overline\cM_+$.

\begin{lemma}\label{L-3.9}
Let $a_n,b\in\overline\cM_+$ $(n\in\bN)$ be such that $a_1\ge a_2\ge\cdots$ and
$a_1\le\lambda b$ for some $\lambda>0$. Then $T_{a_n/b}$ $($see Definition \ref{D-3.5}$)$
is decreasing, and if $T_0:=\lim_nT_{a_n/b}$ $($the strong limit$)$ then
$a_n\searrow b^{1/2}T_0b^{1/2}$ in the strong resolvent sense.
\end{lemma}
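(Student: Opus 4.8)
The plan is to work entirely on the support projection $s(b)$ and exploit the factorization $a_n^{1/2} = x_n b^{1/2}$ with $T_{a_n/b} = x_n^* x_n \le \lambda s(b)$ provided by Lemma \ref{L-3.3}. First I would check monotonicity: since $a_n \ge a_{n+1}$ in $\overline\cM_+$, Lemma \ref{L-3.2} gives $a_n \le a_{n+1}$... more precisely $a_{n+1} \le a_n$ in the form sense, hence $\|a_{n+1}^{1/2}\xi\|^2 \le \|a_n^{1/2}\xi\|^2$ for all $\xi \in \cD(a_n^{1/2}) \supseteq \cD(b^{1/2})$. Writing this as $(b^{1/2}\xi, T_{a_{n+1}/b}\, b^{1/2}\xi) \le (b^{1/2}\xi, T_{a_n/b}\, b^{1/2}\xi)$ and using that the range of $b^{1/2}$ is dense in $s(b)\cH$ while both $T_{a_n/b}$ live in $s(b)\cM s(b)$, we get $T_{a_{n+1}/b} \le T_{a_n/b}$ in $(s(b)\cM s(b))_+$. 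A bounded decreasing sequence of positive operators converges strongly, so $T_0 := \lim_n T_{a_n/b}$ exists as a strong limit, $T_0 \in (s(b)\cM s(b))_+$, and $T_0 \le \lambda s(b)$; in particular $b^{1/2}T_0 b^{1/2} \in \overline\cM_+$ (it is $\le \lambda b$, so Lemma \ref{L-3.1} applies).

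Next I would identify the strong-resolvent limit of $\{a_n\}$. By Lemma \ref{L-2.4} the decreasing sequence $\{q_{a_n}\}$ of positive forms has $\Inf_n q_{a_n} = q_c$ for the unique $c \in \overline\cM_+$ with $a_n \searrow c$ in the strong resolvent sense, and by \eqref{F-2.5} this is computed via $q_c^{-1} = \sup_n q_{a_n}^{-1}$. Rather than invert, I would compute $\Inf_n q_{a_n}$ directly from its maximality characterization: on one hand, for every $\xi$,
\[
q_{a_n}(\xi) = \|a_n^{1/2}\xi\|^2 = \|x_n b^{1/2}\xi\|^2 = (T_{a_n/b}\,b^{1/2}\xi, b^{1/2}\xi)\ \searrow\ (T_0\,b^{1/2}\xi, b^{1/2}\xi) = \|T_0^{1/2}b^{1/2}\xi\|^2
\]
for $\xi \in \cD(b^{1/2})$, where the monotone convergence of the inner products is exactly strong convergence $T_{a_n/b} \to T_0$ tested against the vector $b^{1/2}\xi$. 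So the pointwise infimum $\inf_n q_{a_n}$ agrees on $\cD(b^{1/2})$ with the form $\xi \mapsto \|T_0^{1/2}b^{1/2}\xi\|^2 = q_{b^{1/2}T_0 b^{1/2}}(\xi)$, which is genuinely lower semi-continuous (it corresponds to the $\tau$-measurable operator $b^{1/2}T_0 b^{1/2}$). Since $\Inf_n q_{a_n}$ is the largest l.s.c.\ form below $\inf_n q_{a_n}$ and $q_{b^{1/2}T_0 b^{1/2}} \le \inf_n q_{a_n}$ is already l.s.c., it remains to check that $\Inf_n q_{a_n} \le q_{b^{1/2}T_0 b^{1/2}}$, i.e. that no larger l.s.c.\ form fits; equivalently, that the two forms agree, for which it suffices to show $\cD((\Inf_n q_{a_n})) \subseteq \cD(b^{1/2})$ up to the relevant closure and that they coincide there.

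The main obstacle is precisely this last point: $\inf_n q_{a_n}$ need not be l.s.c., so passing from ``agrees on $\cD(b^{1/2})$'' to ``equals $\Inf_n q_{a_n}$'' requires care about what happens on vectors outside $\cD(b^{1/2})$. To handle it cleanly I would instead argue via resolvents and Proposition \ref{P-3.7}-style reasoning, or via Lemma \ref{L-2.4} applied to the auxiliary decreasing sequence. One efficient route: note $a_n + \text{(fixed piece)}$ tricks are not available, so use that $(1 + a_n)^{-1} = (b^{1/2}T_{a_n/b}b^{1/2} + 1)^{-1}$ only on $s(b)\cH$ in general, which is false when $s(a_1) \ne s(b)$; so I would first reduce to the case $s(b) = s(a_1)$ by replacing $b$ with $b_1 := b^{1/2}s(a_1)b^{1/2} \wedge$-type adjustments — actually more simply, observe $s(a_n)$ is decreasing with strong limit $s(T_0)$ (since $T_{a_n/b} \ge 0$ decreasing), and $T_{a_n/b} \nearrow$... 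On reflection, the cleanest argument is: the map $T \mapsto b^{1/2}Tb^{1/2}$ is monotone and, restricted to the order interval $[0, \lambda s(b)]$, sends strong limits of decreasing sequences to strong-resolvent limits; this is where I expect to spend the real effort, and I would prove it by showing $(1 + b^{1/2}T_{a_n/b}b^{1/2})^{-1} \nearrow (1 + b^{1/2}T_0 b^{1/2})^{-1}$ strongly via a direct estimate $\|((1+b^{1/2}T_n b^{1/2})^{-1} - (1 + b^{1/2}T_0 b^{1/2})^{-1})\eta\|$ in terms of $\|(T_n - T_0)^{1/2}\,b^{1/2}(1+\cdots)^{-1}\eta\|$, bounded since $(1 + b^{1/2}T_0 b^{1/2})^{-1}$ maps into (a core for) $\cD(b^{1/2})$. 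Combined with the uniqueness in Lemma \ref{L-2.4}, this yields $a_n \searrow b^{1/2}T_0 b^{1/2}$ in the strong resolvent sense.
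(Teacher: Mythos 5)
The first half of your argument coincides with the paper's: the monotonicity $T_{a_{n+1}/b}\le T_{a_n/b}$ via density of $b^{1/2}\cD(b^{1/2})$ in $s(b)\cH$, the existence of the strong limit $T_0$, and the inequality $q_{b^{1/2}T_0b^{1/2}}\le\Inf_nq_{a_n}$ (equivalently $a_0:=b^{1/2}T_0b^{1/2}\le a:=\lim_na_n$ in the strong resolvent sense) are all correct. One small point: to get $q_{a_0}\le q_{a_n}$ on all of $\cH$, not merely on $\cD(b^{1/2})$, you should invoke Lemma \ref{L-3.2} applied to $a_n-a_0=b^{1/2}(T_{a_n/b}-T_0)b^{1/2}\ge0$ in $\overline\cM_+$; your pointwise computation only covers $\xi\in\cD(b^{1/2})$.

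The reverse inequality $a\le a_0$ is where your proposal has a genuine gap. You correctly identify it as the hard step, but the route you finally settle on --- a direct estimate showing $(1+b^{1/2}T_{a_n/b}b^{1/2})^{-1}\nearrow(1+a_0)^{-1}$ --- rests on the claim that $(1+a_0)^{-1}$ maps into (a core for) $\cD(b^{1/2})$, and this is false in general: take $T_0=0$ with $b$ unbounded, so that $(1+a_0)^{-1}=1$ maps onto all of $\cH$ while $\cD(b^{1/2})\ne\cH$. (Several of the intermediate remarks, e.g.\ that $s(a_n)$ converges strongly to $s(T_0)$, are also incorrect, but you abandon those lines yourself.) The paper closes this direction with no resolvent estimate at all: since $a\le a_1\le\lambda b$, the operator $T_{a/b}$ of Definition \ref{D-3.5} is itself defined, and for every $\xi\in\cD(b^{1/2})$ one has $(T_{a/b}b^{1/2}\xi,b^{1/2}\xi)=\|a^{1/2}\xi\|^2\le\|a_n^{1/2}\xi\|^2=(T_{a_n/b}b^{1/2}\xi,b^{1/2}\xi)$, whence $T_{a/b}\le T_{a_n/b}$ for all $n$, so $T_{a/b}\le T_0$ and $a=b^{1/2}T_{a/b}b^{1/2}\le b^{1/2}T_0b^{1/2}=a_0$. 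In short, the missing idea is to apply the factorization $T_{\cdot/b}$ to the limit $a$ itself, sandwiching $a$ between the two inequalities, rather than trying to control the resolvents of $b^{1/2}T_{a_n/b}b^{1/2}$ directly.
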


\begin{proof}
Since $\|a_n^{1/2}\xi\|^2=(T_{a_n/b}b^{1/2}\xi,b^{1/2}\xi)$ is decreasing for every
$\xi\in\cD(b^{1/2})$, we see that $T_{a_n/b}$ is decreasing so that the strong limit
$T_0:=\lim_nT_{a_n/b}$ ($\in\cM_+$) exists. Now, let $a_0:=b^{1/2}T_0b^{1/2}$ and $a:=\lim_na_n$
in the strong resolvent sense, both of which are in $\overline\cM_+$ (by Lemma \ref{L-3.1}).
Since $(1+a_0)^{-1}\ge(1+a_n)^{-1}$ for all $n$ (by Lemma \ref{L-3.2}) and
$(1+a_n)^{-1}\nearrow(1+a)^{-1}$, one has $(1+a_0)^{-1}\ge(1+a)^{-1}$ so that $a_0\le a$ in
$\overline\cM_+$ by Lemma \ref{L-3.2} again. On the other hand, for every $\xi\in\cD(b^{1/2})$,
$$
(T_{a/b}b^{1/2}\xi,b^{1/2}\xi)=\|a^{1/2}\xi\|^2\le\|a_n^{1/2}\xi\|^2
=(T_{a_n/b}b^{1/2}\xi,b^{1/2}\xi),
$$
which implies $T_{a/b}\le T_{a_n/b}$ for all $n$ so that $T_{a/b}\le T_0$. Hence $a\le a_0$ in
$\overline\cM_+$ as well.
\end{proof}

\begin{theorem}\label{T-3.10}
Let $a_n,b_n\in\overline\cM_+$ and assume that $a_n\searrow a$ and $b_n\searrow b$ in the
strong resolvent sense. Then $a_n+b_n\searrow a+b$ in the strong resolvent sense. Moreover, if
$a_1,b_1\in\overline\fS_+$, then $a_n+b_n\to a+b$ in the measure topology. If
$a_1,b_1\in L^p(\cM,\tau)_+$ for some $p\in(0,\infty)$, then $a_n+b_n\to a+b$ in the
$($quasi-$)$norm $\|\cdot\|_p$.
\end{theorem}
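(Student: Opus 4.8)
The first assertion is the crux; the measure-topology and $L^p$ statements then drop out of Theorem \ref{T-3.8}. The point to keep in mind is that, as noted right after Theorem \ref{T-2.3}, for \emph{general} positive forms strong resolvent convergence of decreasing sequences need not pass through sums ($\Inf_n\phi_n+\Inf_n\psi_n$ can be strictly smaller than $\Inf_n(\phi_n+\psi_n)$). So the proof must genuinely use the $\tau$-measurable structure, and the right tool is the factorization $T_{\cdot/\cdot}$ of Lemma \ref{L-3.3}/Definition \ref{D-3.5} together with Lemma \ref{L-3.9}, which linearizes everything over a fixed ``denominator''.

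For the first assertion I would put $c:=a_1+b_1\in\overline\cM_+$, so that $a_n\le a_1\le c$ and $b_n\le b_1\le c$ for all $n$. By Definition \ref{D-3.5} the sequences $T_{a_n/c},T_{b_n/c}\in(s(c)\cM s(c))_+$ are decreasing; let $S:=\lim_nT_{a_n/c}$ and $T:=\lim_nT_{b_n/c}$ be their strong limits. Lemma \ref{L-3.9} applied to $\{a_n\}$ with reference $c$ gives $a_n\searrow c^{1/2}Sc^{1/2}$ in the strong resolvent sense; since the strong resolvent limit of a decreasing sequence is unique, $a=c^{1/2}Sc^{1/2}$, and similarly $b=c^{1/2}Tc^{1/2}$. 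Using that $\overline\cM$ is an algebra (distributivity of the strong product over the strong sum) one gets $a_n+b_n=c^{1/2}(T_{a_n/c}+T_{b_n/c})c^{1/2}$, whence by the uniqueness clause of Definition \ref{D-3.5}, $T_{(a_n+b_n)/c}=T_{a_n/c}+T_{b_n/c}$. Since $\{a_n+b_n\}$ is decreasing in $\overline\cM_+$ (Lemmas \ref{L-3.2} and \ref{L-3.4}) and bounded above by $c$, Lemma \ref{L-3.9} applies once more to yield $a_n+b_n\searrow c^{1/2}(S+T)c^{1/2}$ in the strong resolvent sense, and $c^{1/2}(S+T)c^{1/2}=c^{1/2}Sc^{1/2}+c^{1/2}Tc^{1/2}=a+b$.

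For the remaining assertions, suppose $a_1,b_1\in\overline\fS_+$. Then Theorem \ref{T-3.8} gives $a_n\to a$ and $b_n\to b$ in the measure topology, and since $\overline\cM$ is a topological algebra, $a_n+b_n\to a+b$ in the measure topology. If moreover $a_1,b_1\in L^p(\cM,\tau)_+$ for some $p\in(0,\infty)$, then Theorem \ref{T-3.8} gives $a,b\in L^p(\cM,\tau)_+$ with $\|a_n-a\|_p\to0$ and $\|b_n-b\|_p\to0$, and the (quasi-)triangle inequality $\|x+y\|_p^{\min(1,p)}\le\|x\|_p^{\min(1,p)}+\|y\|_p^{\min(1,p)}$ then forces $\|(a_n+b_n)-(a+b)\|_p\to0$. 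The only step that is not soft is the first assertion, and within it the essential verification is that passing to the common denominator $c=a_1+b_1$ really converts the (form) sum into the ordinary sum $T_{a_n/c}+T_{b_n/c}$ of uniformly bounded positive operators, whose strong limit controls the strong resolvent limit of $a_n+b_n$ via Lemma \ref{L-3.9}; I expect the bookkeeping around $T_{(a_n+b_n)/c}=T_{a_n/c}+T_{b_n/c}$ (and the attendant domain-free manipulations in $\overline\cM$) to be the main thing to get right.
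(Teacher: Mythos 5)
Your proof is correct and follows essentially the same route as the paper: fix the common denominator $h=a_1+b_1$, use Lemma \ref{L-3.9} to identify the strong limits of $T_{a_n/h}$ and $T_{b_n/h}$ with $T_{a/h}$ and $T_{b/h}$, observe $T_{(a_n+b_n)/h}=T_{a_n/h}+T_{b_n/h}$, and apply Lemma \ref{L-3.9} once more; the measure-topology and $L^p$ statements then follow from Theorem \ref{T-3.8}. The only cosmetic difference is that the paper applies Theorem \ref{T-3.8} directly to the decreasing sequence $a_n+b_n$, whereas you apply it to $a_n$ and $b_n$ separately and then add — both are fine.
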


\begin{proof}
Let $h:=a_1+b_1$. Set $T_n:=T_{a_n/h}$, $T:=T_{a/h}$, $S_n:=T_{b_n/h}$ and $S:=T_{b/h}$, which
are positive operators in $s(h)Ms(h)$. By Lemma \ref{L-3.9} we have $T_n\searrow T$ and
$S_n\searrow S$ strongly, and hence $T_n+S_n\searrow T+S$ strongly. It is immediate to see that
$T_{(a_n+b_n)/h}=T_n+S_n$ and $T_{(a+b)/h}=T+S$. Hence the conclusion of the first
assertion follows from Lemma \ref{L-3.9} again. The latter assertions are immediate from Theorem \ref{T-3.8}.
\end{proof}

\begin{remark}\label{R-3.11}\rm
Recall (Lemma \ref{L-2.4}) that $a_n\searrow a$ in the strong resolvent sense is equivalently
written as $q_a=\Inf_nq_{a_n}$ in terms of symbol $\Inf$. Theorem \ref{T-3.10} says that for
every decreasing sequences $a_n$ and $b_n$ in $\overline\cM_+$,
\begin{align}\label{F-3.6}
\Inf_n\,(q_{a_n}+q_{b_n})=\Bigl(\Inf_nq_{a_n}\Bigr)+\Bigl(\Inf_nq_{b_n}\Bigr).
\end{align}
Indeed, by Lemma \ref{L-3.4} and Theorem \ref{T-3.10},
$$
\Inf_n\,(q_{a_n}+q_{b_n})=\Inf_nq_{a_n+b_n}=q_{a+b}=q_a+q_b
=\Bigl(\Inf_nq_{a_n}\Bigr)+\Bigl(\Inf_nq_{b_n}\Bigr).
$$
Note that this is not true for general (not $\tau$-measurable) positive self-adjoint operators.
See \cite{Si1,Ko6} and \cite[\S7.4 and \S7.5]{Si3} for 
counterexamples for which \eqref{F-3.6} fails to hold.
\end{remark}

The following lemma might be well-known to specialists while we give a proof
for completeness:

\begin{lemma}\label{L-3.12}
Let $a\in\overline\fS$. Let $c,c_n\in\cM$ $(n\in\bN)$ be such that $c_n\to c$ in the strong* topology.
Then $ac_n\to ac$ and $c_na\to ca$ in the measure topology.
\end{lemma}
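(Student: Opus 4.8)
The plan is to reduce the assertion to the known factorization/$s$-number machinery for $\tau$-measurable operators, treating the cases $ac_n \to ac$ and $c_n a \to ca$ separately (the second follows from the first by taking adjoints, since $c_n \to c$ strong* means $c_n^* \to c^*$ strong* and $(c_n a)^* = a^* c_n^*$ with $a^* \in \overline{\fS}$ as well). So it suffices to prove $ac_n \to ac$ in measure. Replacing $c_n$ by $c_n - c$, we may assume $c = 0$ and must show $ac_n \to 0$ in measure, knowing $c_n \to 0$ strong* and $\sup_n \|c_n\| =: \gamma < \infty$ (uniform boundedness is automatic from strong* convergence by the uniform boundedness principle).

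First I would fix $\eps > 0$ and $\delta > 0$ and, using $a \in \overline{\fS}$ together with the spectral decomposition $|a| = \int_0^\infty \lambda\, de_\lambda$, choose $r > 0$ large enough that $\tau(e_{(r,\infty)}(|a|)) \le \delta/2$; write $p := e_{[0,r]}(|a|)$, so $p^\perp = e_{(r,\infty)}(|a|)$ has $\tau(p^\perp) \le \delta/2$ and $\|ap\| \le r < \infty$. Split $ac_n = ap c_n + a p^\perp c_n$. For the second piece, $a p^\perp c_n \in V(\text{anything}, \delta/2)$ because $a p^\perp c_n (1 - p^\perp) \cdot$ — more precisely $\mu_{\delta/2}(a p^\perp c_n) \le \mu_{\delta/2}(a p^\perp) = 0$ by the estimate $\mu_t(xy) \le \|y\| \mu_t(x)$ and $\mu_{\delta/2}(a p^\perp)=0$ since $\tau(s(ap^\perp)) \le \tau(p^\perp) \le \delta/2$; the cleaner route is simply $\mu_{\delta/2}(ap^\perp c_n) \le \mu_{\delta/2}(ap^\perp) \le \gamma \cdot 0$ — I would phrase it via \cite[Lemma 2.5]{FK}. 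For the first piece, $ap \in \cM$ is a bounded operator, and $c_n \to 0$ strongly, so $(ap) c_n \to 0$ strongly in $\cM$ with $\|(ap)c_n\| \le r\gamma$ uniformly bounded; hence by Lemma \ref{L-3.6} (in its converse direction — actually I need the forward direction: strong convergence of a bounded sequence does not imply measure convergence in general) one cannot directly conclude. Instead I would use that $ap \in \overline{\fS}$ (it is $\tau$-compact since $a \in \overline{\fS}$), so $\mu_t(ap) \to 0$ as $t \to \infty$; pick $s_0$ with $\mu_{s_0}(ap) < \eps$. Then $e := e_{[0,s_0 ... ]}$... the clean statement is: for a $\tau$-compact operator $y$ and a bounded sequence $c_n \to 0$ strongly, $y c_n \to 0$ in measure. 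This holds because $y = y e + y e^\perp$ where $e$ is a finite-trace spectral projection of $|y^*|$ with $\|y e^\perp\|$ small; $y e$ has finite-dimensional-type range in the sense $\tau(s(ye)) < \infty$, so $\mu_\eps(y e c_n) \le \|y e c_n\| \to 0$ once $\tau(s(ye)) \le \eps$, after which strong convergence gives norm-to-zero on that corner.

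The step I expect to be the main obstacle is making rigorous the claim that for $y \in \overline{\fS}$ and $c_n \to 0$ strongly (bounded), $y c_n \to 0$ in measure — i.e., bridging from strong operator convergence (which is genuinely weaker than measure convergence in general) to measure convergence using $\tau$-compactness of $y$. The resolution is the two-piece decomposition above: on the corner $ye$ with $\tau(s(ye)) \le \eps$ one has $\mu_\eps(y e c_n) = 0$ trivially (the support projection already has trace $\le \eps$, independent of $c_n$), wait — that gives it for free and I do not even need strong convergence there; and on $y e^\perp$ one has operator-norm smallness $\|y e^\perp c_n\| \le \gamma \|y e^\perp\|$ which is $< \eps$ by choice of $e$. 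So in fact strong* convergence of $c_n$ is only used to get uniform boundedness, and the genuine content is entirely the $\tau$-compactness of $a$; assembling the pieces with \cite[Lemma 2.5,(v),(vi)]{FK} (sub-additivity of $s$-numbers under $\mu_{s+t}(x+y) \le \mu_s(x) + \mu_t(y)$) then yields $\mu_{C\eps}(a c_n) \le C'\eps$ for all $n$ large, hence $a c_n \to 0$ in $V(\eps',\delta')$ for all $\eps',\delta'$, which is convergence in the measure topology.
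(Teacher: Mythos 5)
Your reductions are fine as far as they go: passing to $c=0$, getting the uniform bound $\gamma=\sup_n\|c_n\|$, obtaining the second convergence from the first by adjoints, and killing the high-spectrum piece $ap^\perp c_n$ with $\tau(p^\perp)\le\delta/2$ via $\mu_{\delta/2}(ap^\perp c_n)\le\gamma\,\mu_{\delta/2}(ap^\perp)=0$ all match what is needed. The fatal step is your ``resolution'': you claim one can choose a single spectral cut $e$ with \emph{both} $\tau(s(ye))\le\eps$ \emph{and} $\|ye^\perp\|<\eps/\gamma$. For $y\in\overline\fS$ this is impossible in general: a cut $e=e_{(\lambda,\infty)}(|y|)$ satisfies $\tau(e)\le\eps$ only when $\lambda\ge\mu_\eps(y)$, while $\|ye^\perp\|\le\lambda$ is small only when $\lambda$ is small, and these are compatible only if $\mu_\eps(y)<\eps/\gamma$ --- false already for a rank-one projection $y$ in $B(\cH)$ with $\eps=1/2$, where $\mu_{1/2}(y)=1$. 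You yourself note that your argument uses nothing about $c_n$ beyond boundedness; that is the red flag, since it would then give $\mu_{2\eps}(yc_n)\le C\eps$ for \emph{every} uniformly bounded sequence, e.g.\ $c_n\equiv1$, forcing $y=0$. Your intermediate ``clean statement'' (strong convergence of a bounded sequence suffices) is also false: in $B(\ell^2)$ with the usual trace take $y$ the projection onto $\bC e_1$ and $c_n\xi:=(\xi,e_n)e_1$; then $c_n\to0$ strongly with $\|c_n\|=1$, but $yc_n=c_n$ has norm $1$ and the measure topology on $B(\cH)$ is the norm topology. So the strong${}^*$ hypothesis (really, $c_n^*\to c^*$ strongly for the convergence $ac_n\to ac$) is indispensable and must actually be consumed somewhere.

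The missing idea is that the spectrum of $|a|$ must be cut into \emph{three} ranges, not two: $[0,\eps]$ (operator norm $\le\eps$, harmless), $(r,\infty)$ (support projection of trace $<\eps$, harmless --- your $p^\perp$ piece), and the middle range $(\eps,r]$, whose spectral projection has \emph{finite but not small} trace. Writing $a_2$ for the middle piece, one has $\tau(a_2^2)\le r^2\tau(e_{(\eps,r]})<\infty$, i.e.\ $a_2\in L^2(\cM,\tau)$, and then $\|a_2(c_n-c)\|_2^2=\tau\bigl(a_2(c_n-c)(c_n-c)^*a_2\bigr)\to0$ because $(c_n-c)(c_n-c)^*\to0$ $\sigma$-weakly --- this is exactly where the strong${}^*$ convergence enters. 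Since $\|\cdot\|_2$-convergence implies convergence in measure, the three pieces assemble by subadditivity of the generalized $s$-numbers. Your two-piece scheme collapses this middle range into the tail and tries to get it for free, which is why the argument proves too much; with the $L^2$ step inserted for the middle piece the proof goes through and coincides with the one in the paper.
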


\begin{proof}
Note that $\alpha:=\sup_n\|c_n\|<+\infty$ by the uniform boundedness principle.
It suffices to show only the first convergence. Taking
the polar decomposition of $a$ we may assume $a\ge0$.  Take the spectral decomposition
$a=\int_0^\infty\lambda\,de_\lambda$. For any $\eps>0$ choose an $r>\eps$ such that
$\tau(e_r^\perp)<\eps$, and decompose $a$ as $a=a_1+a_2+a_3$ with
$$
a_1:=\int_{[0,\eps]}\lambda\,de_\lambda,\qquad
a_2:=\int_{(\eps,r]}\lambda\,de_\lambda,\qquad
a_3:=\int_{(r,\infty)}\lambda\,de_\lambda.
$$
Then it is clear that $\mu_\eps(a_1)\le\eps$ and $\mu_\eps(a_3)=0$. Since $a\in\overline\fS_+$,
note that $\tau(e_{(\eps,r]})<\infty$ so that $\tau(a_2^2)\le r^2\tau(e_{(\eps,r]})<\infty$.
We have
\begin{align*}
\mu_{3\eps}(a(c_n-c))
&\le\mu_\eps(a_1(c_n-c))+\mu_\eps(a_2(c_n-c))+\mu_\eps(a_3(c_n-c)) \\
&\le2 \alpha\mu_\eps(a_1)+\mu_\eps(a_2(c_n-c))+2\alpha\mu_\eps(a_3) \\
&\le2\alpha\eps+\mu_\eps(a_2(c_n-c)).
\end{align*}
Since $\tau(a_2^2)<\infty$ and $(c_n-c)^*(c_n-c)\to0$ 
weakly (and hence $\sigma$-weakly thanks to $\|c_n\| \leq \alpha$),
we have
$$
\|a_2(c_n-c)\|_2^2=\tau(a_2^2(c_n-c)^*(c_n-c))\,\longrightarrow\,0
$$
so that $a_2(c_n-c)\to0$ in the measure topology. Therefore,
$$
\limsup_{n\to\infty}\mu_{3\eps}(a-e_nae_n)\le2\alpha\eps.
$$
Since $\eps>0$ is arbitrary, this implies that $ac_n\to ac$ in the measure topology.
\end{proof}

\subsection{Definitions}\label{S-3.2}

As in the previous subsection let $\cM$ be a semi-finite von Neumann algebra with a faithful
semi-finite normal trace $\tau$. Throughout this subsection let $\sigma$ be any connection in
the sense of Kubo and Ando \cite{KA} having the expression in \eqref{F-2.8}. We define the
connection $\sigma$ for positive $\tau$-measurable operators in two different ways and prove
that they coincide.

The first definition is the restriction of Definition \ref{D-2.5} to positive forms
corresponding to $\tau$-measurable operators. For this we first give a lemma.

\begin{lemma}\label{L-3.13}
There exist a constant $\lambda>0$, depending on $\sigma$ only, such that
$$
\phi\sigma\psi\le\lambda(\phi+\psi)
$$
for all positive forms $\phi,\psi$.
\end{lemma}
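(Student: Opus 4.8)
The plan is to use the integral expression \eqref{F-2.9} for $\phi\sigma\psi$ together with the corresponding integral representation \eqref{F-2.7} of the representing function $f$, and reduce everything to two elementary facts about parallel sums: that $(t\phi):\psi\le t\phi$ and $(t\phi):\psi\le\psi$, and hence $(t\phi):\psi\le\min\{t,1\}(\phi+\psi)$. These pointwise bounds for positive forms follow directly from the maximality characterization in Theorem \ref{T-2.2} (taking $\xi_1=\xi$, $\xi_2=0$ or $\xi_1=0$, $\xi_2=\xi$), or equivalently from the order-reversing property of the inverse applied to $(t\phi)^{-1}+\psi^{-1}\ge(t\phi)^{-1}$ and $\ge\psi^{-1}$.

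First I would record the estimate: for every $t>0$ and every positive form $\phi,\psi$,
\begin{align*}
(t\phi):\psi\le\min\{t,1\}(\phi+\psi)\le\min\{t,1\}\,\phi+\min\{t,1\}\,\psi,
\end{align*}
and in particular $\frac{1+t}{t}\bigl((t\phi):\psi\bigr)\le\frac{1+t}{t}\min\{t,1\}\,(\phi+\psi)=\min\{1+t,\;t^{-1}+1\}\,(\phi+\psi)$. Note the coefficient $g(t):=\min\{1+t,\,1+t^{-1}\}=1+\min\{t,t^{-1}\}$ is bounded by $2$ on $(0,\infty)$, but that crude bound is not integrable against a general finite measure $\mu$ only if we are careless — actually it is, since $\mu$ is finite and $g\le 2$; so $\int_{(0,\infty)}g(t)\,d\mu(t)\le 2\mu((0,\infty))<\infty$.

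Then I would plug these bounds into \eqref{F-2.9}: for every $\xi\in\cH$,
\begin{align*}
(\phi\sigma\psi)(\xi)
&=\alpha\phi(\xi)+\beta\psi(\xi)+\int_{(0,\infty)}\frac{1+t}{t}\bigl((t\phi):\psi\bigr)(\xi)\,d\mu(t)\\
&\le\alpha\phi(\xi)+\beta\psi(\xi)+\Bigl(\int_{(0,\infty)}g(t)\,d\mu(t)\Bigr)(\phi+\psi)(\xi)\\
&\le\lambda\,(\phi+\psi)(\xi),
\end{align*}
where $\lambda:=\alpha+\beta+\int_{(0,\infty)}g(t)\,d\mu(t)=\alpha+\beta+\int_{(0,\infty)}\bigl(1+\min\{t,t^{-1}\}\bigr)\,d\mu(t)<\infty$, a constant depending only on the data $(\alpha,\beta,\mu)$ of $\sigma$, i.e.\ on $\sigma$ alone. (One can also express $\lambda$ more simply: since $f$ is operator monotone on $(0,\infty)$ with the representation \eqref{F-2.7}, $\lambda=f(1)+$ something, or even just observe $\phi\sigma\psi\le\phi\nabla\psi\cdot C$; but the explicit $\lambda$ above suffices.)

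**Main obstacle.** There is essentially no obstacle here: the only point requiring a moment's care is that $\mu$ is a \emph{finite} measure, so that even the crude bound $g(t)\le 2$ gives a finite integral; one does not need the finer behavior of $g$ near $0$ and $\infty$. The lemma is genuinely a routine consequence of \eqref{F-2.9} and the two one-sided parallel-sum bounds from Theorem \ref{T-2.2}.
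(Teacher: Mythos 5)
Your proof is correct and follows essentially the same route as the paper: both arguments rest on the two one-sided bounds $(t\phi):\psi\le t\phi$ and $(t\phi):\psi\le\psi$ from Theorem \ref{T-2.2}, combined with the finiteness of $\mu$ in the integral expression \eqref{F-2.9}; the paper splits the integral at $t=1$ while you package the same case distinction into the factor $\min\{t,1\}$. The resulting constants differ only cosmetically, so there is nothing to add.
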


\begin{proof}
Let $\phi,\psi$ be arbitrary positive forms. Theorem \ref{T-2.2} in particular implies that
$\phi:\psi\le\phi$ and $\phi:\psi\le\psi$. Therefore, from \eqref{F-2.9} we observe
\begin{align*}
\phi\sigma\psi&\le\alpha\phi+\beta\psi+\int_{(0,1]}{1+t\over t}\,t\phi\,d\mu(t)
+\int_{(1,\infty)}{1+t\over t}\,\psi\,d\mu(t) \\
&\le\alpha\phi+\beta\psi+2\mu((0,1])\phi+2\mu((1,\infty))\psi.
\end{align*}
Hence $\phi\sigma\psi\le\lambda(\phi+\psi)$ with
$\lambda:=\max\{\alpha+2\mu((0,1]),\beta+2\mu((1,\infty))\}$.
\end{proof}

\begin{proposition}\label{P-3.14}
For every $a,b\in\overline\cM_+$ there exist a unique $h\in\overline\cM_+$ such that
$$
q_a\sigma q_b=q_h.
$$
\end{proposition}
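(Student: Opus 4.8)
The plan is to show that the positive form $q_a\sigma q_b$ defined by \eqref{F-2.9} is the form of a $\tau$-measurable operator. By Lemma \ref{L-3.13} there is $\lambda>0$ with $q_a\sigma q_b\le\lambda(q_a+q_b)=q_{\lambda(a+b)}$ (using Lemma \ref{L-3.4}), and since $a+b\in\overline\cM_+$ by the algebraic structure of $\overline\cM$, Lemma \ref{L-3.1} immediately forces the extended positive self-adjoint operator $h$ corresponding to $q_a\sigma q_b$ to lie in $\overline\cM_+$, \emph{provided we first know $h$ is densely defined}. So the real content is just to produce the operator $h$ and verify density of its domain.

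First I would record that $q_a\sigma q_b$ corresponds to \emph{some} $h\in\widehat{B(\cH)}_+$ simply because every positive form does, by the correspondence recalled in \S\ref{S-2.1}. Next, density of $\cD(q_a\sigma q_b)=\cD(h^{1/2})$: from \eqref{F-2.9} one has $q_a\sigma q_b\le\lambda(q_a+q_b)$, hence $\cD(q_a)\cap\cD(q_b)=\cD(q_{a+b})\subseteq\cD(q_a\sigma q_b)$, and $\cD(q_{a+b})=\cD((a+b)^{1/2})$ is dense because $a+b\in\overline\cM_+$ is densely defined. Therefore $h$ is a genuine (densely defined) positive self-adjoint operator. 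Then, since $h\le\lambda(a+b)$ in the form sense and $\lambda(a+b)\in\overline\cM_+$, Lemma \ref{L-3.1} gives $h\in\overline\cM_+$. This establishes existence, and uniqueness is automatic since a positive self-adjoint operator is determined by its form (equivalently, $h$ is recovered from $q_h$ via the spectral theorem / the resolvent $(1+h)^{-1}$).

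One subtlety worth double-checking: $h$ is a priori affiliated with $\cM$. Here I would argue that $q_a\sigma q_b$ is invariant under the natural action of the unitaries $u$ in the commutant $\cM'$. Indeed each building block $(ta):b$ transforms correctly: $q_a, q_b$ are $\cM'$-invariant forms (as $a,b$ are affiliated with $\cM$), the parallel sum is defined purely order-theoretically (Definition \ref{D-2.1}, Theorem \ref{T-2.2}), and $\alpha,\beta,\mu$ are fixed scalars/measure, so the integral \eqref{F-2.9} inherits the invariance. Consequently $(1+h)^{-1}\in\cM$, i.e. $h$ is affiliated with $\cM$; combined with $h\le\lambda(a+b)$ and Lemma \ref{L-3.1} (which is stated for operators affiliated with $\cM$), we conclude $h\in\overline\cM_+$.

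The main obstacle, such as it is, is the bookkeeping around domains and affiliation rather than any hard estimate: one must be careful that the naive pointwise formula \eqref{F-2.9} genuinely lands in $\overline\cM_+$ and not merely in $\widehat{B(\cH)}_+$, and the domination $q_a\sigma q_b\le\lambda(q_a+q_b)$ supplied by Lemma \ref{L-3.13} is exactly what rescues this — it simultaneously yields density of the domain (so $h$ is a true operator) and, via Lemma \ref{L-3.1}, $\tau$-measurability. I expect the write-up to be short: invoke Lemma \ref{L-3.13}, Lemma \ref{L-3.4}, and Lemma \ref{L-3.1} in sequence, with a line on $\cM'$-invariance for affiliation and a line on uniqueness.
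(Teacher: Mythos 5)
Your proposal is correct and follows essentially the same route as the paper: dominate $q_a\sigma q_b$ by $\lambda q_{a+b}$ via Lemmas \ref{L-3.13} and \ref{L-3.4} to get a densely defined positive self-adjoint $h$, check $\cM'$-unitary invariance of each term of \eqref{F-2.9} (the paper cites \cite[Corollary 7]{Ko6} for the parallel-sum part, where you invoke the order-theoretic characterization of Theorem \ref{T-2.2} — an equally valid justification) to conclude $h$ is affiliated with $\cM$, and finish with Lemma \ref{L-3.1}. No gaps.
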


\begin{proof}
By Lemmas \ref{L-3.13} and \ref{L-3.4} we have
$$
q_a\sigma q_b\le\lambda(q_a+q_b)=\lambda q_{a+b}.
$$
This implies that the domain of $q_a\sigma q_b$ is dense in $\cH$. Hence there is a positive
self-adjoint operator $h$ such that $q_a\sigma q_b=q_h$. For every unitary $u'\in\cM'$ it
follows from the definition \eqref{F-2.9} and \cite[Corollary 7]{Ko6} that
$$
u'^*(q_a\sigma q_b)u'=\alpha u'^*q_au'+\beta u'^*q_bu'
+\int_{(0,\infty)}{1+t\over t}((tu'^*q_au'):(u'^*q_bu'))\,d\mu(t).
$$
Since $u'^*q_au'=q_{u'^*au'}=q_a$ and similarly for $q_b$, we have
$u'^*(q_a\sigma q_b)u'=a_a\sigma q_b$, i.e., $u'^*q_hu'=q_h$, which means that $u'^*hu'=h$.
Therefore, $h$ is affiliated with $\cM$. Moreover, since $q_h\le\lambda q_{a+b}$, we have
$h\le\lambda(a+b)$ in the form sense so that $h\in\overline\cM_+$ by Lemma \ref{L-3.1}.
Finally, the uniqueness of $h$ is obvious.
\end{proof}

\begin{definition}[First]\label{D-3.15}\rm
Proposition \ref{P-3.14} says that for each $a,b\in\overline\cM_+$ we can define
$a\sigma b\in\overline\cM_+$ by the condition $q_{a\sigma b}=q_a\sigma q_b$. Thus, we have the
\emph{connection} $\sigma:\overline\cM_+\times\overline\cM_+\to\overline\cM_+$. When
$\cM=B(\cH)=\overline\cM$, this connection $\sigma$ reduces to the connection $\sigma$ in
\cite{KA} on $B(\cH)_+\times B(\cH)_+$, see \cite[\S 3]{Ko6}.
\end{definition}

The second definition is given in a more conventional approach along the lines of
\cite{KA,PW}.

\begin{definition}[Second]\label{D-3.16}\rm
For every $a,b\in\overline\cM_+$ choose an $h\in\overline\cM_+$ such that $a+b\le\lambda h$
for some $\lambda>0$. With $T_{a/h}$ and $T_{b/h}$ (see Definition \ref{D-3.5}) we
define the \emph{connection} $a[\sigma]b\in\overline\cM_+$ by
\begin{align}\label{F-3.7}
a[\sigma]b:=h^{1/2}(T_{a/h}\sigma T_{b/h})h^{1/2},
\end{align}
where $T_{a/h}\sigma T_{b/h}$ is the connection \cite{KA} for bounded positive operators. The
definition is justified by Lemma \ref{L-3.18} below, so we have the connection
$[\sigma]:\overline\cM_+\times\overline\cM_+\to\overline\cM_+$. We use the symbol $[\sigma]$
to distinguish it from $\sigma$ in the previous definition while their coincidence will shortly
be proved.
\end{definition}

Recall \cite{KA} that the \emph{transformer inequality} for connections $\sigma$ on $B(\cH)_+$
holds true in the following slightly more general situation than (II) of \S\ref{S-2.2}:
$$
C^*(A\sigma B)C\le(C^*AC)\sigma(C^*BC),\qquad A,B\in B(\cH)_+,\ C\in B(\cH).
$$
Moreover, concerning the equality case of the transformer inequality, the following
result was shown in \cite[Theorem 3]{Fu}, which we state as a lemma to use in proving
Lemma \ref{L-3.18} and Theorem \ref{T-3.31}:

\begin{lemma}\label{L-3.17}
Let $A,B\in B(\cH)_+$ and $C\in B(\cH)$ be such that $s(A+B)\le s(CC^*)$, where $s(\cdot)$
denotes  the support projection. Then for any connection $\sigma$,
$$
C^*(A\sigma B)C=(C^*AC)\sigma(C^*BC).
$$
\end{lemma}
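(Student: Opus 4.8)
\emph{Proof idea.} The plan is to establish the inequality opposite to the transformer inequality recalled just above, namely $(C^*AC)\sigma(C^*BC)\le C^*(A\sigma B)C$, the other direction being automatic. First I would reduce to the case of a parallel sum: by the integral representation \eqref{F-2.8} one has, for every $\xi\in\cH$,
$$
((A\sigma B)C\xi,C\xi)=\alpha(AC\xi,C\xi)+\beta(BC\xi,C\xi)+\int_{(0,\infty)}{1+t\over t}\bigl(((tA):B)C\xi,C\xi\bigr)\,d\mu(t),
$$
and the same identity with $C$ removed and $A,B$ replaced by $C^*AC,\,C^*BC$. Since $s(tA+B)=s(A+B)$ for every $t>0$ and $C^*(tA)C=t\,C^*AC$, it is then enough to prove
$$
C^*(A:B)C=(C^*AC):(C^*BC)\qquad\text{whenever }s(A+B)\le s(CC^*).
$$

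For this I would invoke the classical variational description of the parallel sum of bounded positive operators (see \cite{AT,An3}, or \eqref{F-2.3}--\eqref{F-2.4}, noting that for bounded operators $\rho_0(\xi)=((X:Y)\xi,\xi)$ so that $\rho_0$ is already continuous):
$$
((X:Y)\xi,\xi)=\inf\{(X\eta,\eta)+(Y\zeta,\zeta);\,\eta+\zeta=\xi\},\qquad\xi\in\cH .
$$
Evaluating both sides at $C\xi$, the required identity becomes
$$
\inf\{(A\eta',\eta')+(B\zeta',\zeta');\,\eta'+\zeta'=C\xi\}=\inf\{(AC\eta,C\eta)+(BC\zeta,C\zeta);\,\eta+\zeta=\xi\}
$$
for every $\xi\in\cH$. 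The inequality $\le$ is immediate: restricting the left-hand infimum to decompositions $\eta'=C\eta$, $\zeta'=C\zeta$ with $\eta+\zeta=\xi$ gives exactly the transformer inequality. For the reverse, set $e:=s(CC^*)$, the projection onto the closure of the range of $C$. Since $s(A),s(B)\le s(A+B)\le e$ we have $Ae=eA=A$ and $Be=eB=B$, hence $(A\eta',\eta')=(Ae\eta',e\eta')$ and $(B\zeta',\zeta')=(Be\zeta',e\zeta')$; moreover $e\eta'+e\zeta'=eC\xi=C\xi$ because $C\xi\in e\cH$. Thus in the left-hand infimum one may assume $\eta',\zeta'\in e\cH=\overline{\mathrm{ran}\,C}$. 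Given such a decomposition $\eta'+\zeta'=C\xi$, choose $\eta_n\in\cH$ with $C\eta_n\to\eta'$ and put $\zeta_n:=\xi-\eta_n$, so that $\eta_n+\zeta_n=\xi$ and $C\zeta_n=C\xi-C\eta_n\to\zeta'$; boundedness of $A$ and $B$ then yields $(AC\eta_n,C\eta_n)+(BC\zeta_n,C\zeta_n)\to(A\eta',\eta')+(B\zeta',\zeta')$, so the right-hand infimum is $\le(A\eta',\eta')+(B\zeta',\zeta')$. Taking the infimum over such decompositions gives $\ge$, which finishes the parallel-sum case and hence, by the reduction in the first paragraph, the lemma.

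I expect the only genuine subtlety to be the non-closedness of the range of $C$: one cannot directly lift a decomposition $\eta'+\zeta'=C\xi$ through $C$, and it is precisely the hypothesis $s(A+B)\le s(CC^*)$ that lets one first push $\eta',\zeta'$ into $\overline{\mathrm{ran}\,C}$ without changing the values $(A\eta',\eta')$ and $(B\zeta',\zeta')$, and only then approximate. (If one prefers to avoid the variational formula, an alternative route is to reduce via the polar decomposition $C=u|C|$ to the case $C\ge0$ and then pass to the limit as $\eps\searrow0$ from the invertible operators $|C|+\eps1$, using norm-continuity of $\sigma$; the argument above, however, seems cleaner and more self-contained.)
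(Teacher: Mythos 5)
Your proof is correct. A preliminary remark: the paper itself gives no proof of this lemma --- it is imported verbatim from \cite[Theorem 3]{Fu} and used as a black box in Lemma \ref{L-3.18} and Theorem \ref{T-3.31} --- so what you have written is a self-contained substitute rather than a variant of an in-paper argument. Both halves of your reduction check out. The integral representation \eqref{F-2.8} converges weakly and $s(tA+B)=s(A)\vee s(B)=s(A+B)$ for every $t>0$, so the statement does reduce to the parallel-sum identity $C^*(A:B)C=(C^*AC):(C^*BC)$. In that case the Anderson--Trapp variational formula \eqref{F-3.11}, together with the observation that $s(A),s(B)\le s(A+B)\le e:=s(CC^*)$ forces $A=eAe$ and $B=eBe$, lets you replace an arbitrary decomposition $\eta'+\zeta'=C\xi$ by $(e\eta',e\zeta')$ without changing the value; since $e\eta',e\zeta'\in\overline{\mathrm{ran}\,C}$, the approximation $C\eta_n\to e\eta'$, $\zeta_n:=\xi-\eta_n$ then shows the restricted infimum is no larger than the unrestricted one. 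This is exactly where the hypothesis $s(A+B)\le s(CC^*)$ is used, and your write-up makes that visible. It is worth noting that when the authors prove the $\tau$-measurable analogue (Theorem \ref{T-3.31}) they proceed differently --- polar decomposition of $c^*h^{1/2}$ to reduce to conjugation by a partial isometry, then an appeal to Lemma \ref{L-3.17} itself --- so your variational route is genuinely independent of the techniques used elsewhere in \S\ref{S-3.2} and of Izumino's framework underlying \cite{Fu}; its only external input is the classical variational formula for the parallel sum.
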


\begin{lemma}\label{L-3.18}
For every $a,b\in\overline\cM_+$ the $a[\sigma]b$ given in \eqref{F-3.7} is an element of
$\overline\cM_+$ determined independently of the choice of $h$ as in Definition \ref{D-3.16}.
\end{lemma}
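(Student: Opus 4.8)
The plan is to verify two things: that $a[\sigma]b$ as defined in \eqref{F-3.7} actually lies in $\overline\cM_+$, and that the element does not depend on the auxiliary operator $h$. For membership in $\overline\cM_+$, I would first observe that $T_{a/h}\sigma T_{b/h}\in B(\cH)_+$ by the Kubo--Ando theory, and in fact $T_{a/h}\sigma T_{b/h}\le\lambda'\,s(h)$ for some constant $\lambda'$ depending only on $\sigma$: indeed $T_{a/h},T_{b/h}\le\lambda s(h)$ by Definition \ref{D-3.5}, so by monotonicity (I) and the bound from Lemma \ref{L-3.13} (applied to bounded operators, or directly to the representing function) we get $T_{a/h}\sigma T_{b/h}\le\lambda''(T_{a/h}+T_{b/h})\le\lambda'' s(h)\cdot(\text{const})$. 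Moreover $s(h)(T_{a/h}\sigma T_{b/h})s(h)=T_{a/h}\sigma T_{b/h}$ since everything lives in $s(h)\cM s(h)$. Conjugating a bounded positive operator dominated by a multiple of $s(h)$ by $h^{1/2}$ then yields, by Lemma \ref{L-3.1} (or Lemma \ref{L-3.3} in reverse), an operator in $\overline\cM_+$ dominated by a multiple of $h$; affiliation with $\cM$ is clear since $h^{1/2}$ is affiliated with $\cM$ and $T_{a/h}\sigma T_{b/h}\in\cM$.

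For independence of $h$, let $h_1,h_2\in\overline\cM_+$ both dominate $a+b$ up to constants. The standard trick is to pass through $h:=h_1+h_2$, which dominates both $h_1$ and $h_2$ (up to constants) and also $a+b$; so it suffices to compare the definition using $h_1$ with the one using $h:=h_1+h_2$ (and symmetrically for $h_2$). Thus assume $h_1\le\lambda h$. By Lemma \ref{L-3.3} there is $c\in s(h)\cM s(h)$ with $h_1^{1/2}=c\,h^{1/2}$, hence $T_{a/h}=c^*T_{a/h_1}c$ and $T_{b/h}=c^*T_{b/h_1}c$: indeed $a=h_1^{1/2}T_{a/h_1}h_1^{1/2}=h^{1/2}c^*T_{a/h_1}c\,h^{1/2}$, and by uniqueness in Definition \ref{D-3.5} this identifies $T_{a/h}=c^*T_{a/h_1}c$ on $s(h)\cH$. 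Now the key point: one has $s(T_{a/h_1}+T_{b/h_1})=s(T_{(a+b)/h_1})\le s(h_1)$, and I claim this support is dominated by $s(cc^*)$; since $cc^*=c\,s(h_1)\,c^*$ acts within $s(h_1)\cH$... more carefully, $h_1^{1/2}=c h^{1/2}$ gives $h_1=ch c^*$, so $s(h_1)=s(ch c^*)\le s(cc^*)$, and also $s(cc^*)\le s(h_1)$, whence $s(cc^*)=s(h_1)\ge s(T_{a/h_1}+T_{b/h_1})$. Therefore Lemma \ref{L-3.17} applies with $C=c^*$ to give
$$
c^*(T_{a/h_1}\sigma T_{b/h_1})c=(c^*T_{a/h_1}c)\sigma(c^*T_{b/h_1}c)=T_{a/h}\sigma T_{b/h}.
$$
Conjugating by $h^{1/2}$ and using $h^{1/2}c^*=h_1^{1/2}$ (from $h_1^{1/2}=c h^{1/2}$ taking adjoints) yields
$$
h^{1/2}(T_{a/h}\sigma T_{b/h})h^{1/2}=h^{1/2}c^*(T_{a/h_1}\sigma T_{b/h_1})c\,h^{1/2}=h_1^{1/2}(T_{a/h_1}\sigma T_{b/h_1})h_1^{1/2},
$$
which is exactly the equality of the two definitions. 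Here the manipulations with unbounded $h^{1/2}$ need the $\tau$-measurable-operator calculus (strong products, density of $\tau$-dense domains, \cite[Chap.~I, Proposition 12]{Te1}) exactly as in the proof of Lemma \ref{L-3.3}.

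I expect the main obstacle to be the support-projection bookkeeping needed to invoke Lemma \ref{L-3.17}: one must be careful that $c$ is only a partial isometry-like element supported on $s(h_1)\cdot\text{(range)}$, that $s(cc^*)$ really does dominate $s(T_{a/h_1}+T_{b/h_1})$, and that all the algebraic identities $a=h^{1/2}c^*T_{a/h_1}c\,h^{1/2}$ hold as genuine strong-operator identities of $\tau$-measurable operators rather than merely on a common core. Once the equality-case lemma is legitimately in force, the rest is a routine conjugation computation.
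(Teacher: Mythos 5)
Your argument is correct and essentially identical to the paper's: reduce to comparing each admissible majorant with their sum, factor the smaller square root through the larger via Lemma \ref{L-3.3}, deduce $T_{a/h}=c^*T_{a/h_1}c$ by uniqueness, check the support condition $s(T_{a/h_1}+T_{b/h_1})\le s(h_1)=s(cc^*)$, and apply the equality case of the transformer inequality (Lemma \ref{L-3.17}); the paper merely adds the one-line commutant argument $u'^*(T_{a/h}\sigma T_{b/h})u'=T_{a/h}\sigma T_{b/h}$ for unitaries $u'\in\cM'$ to justify $T_{a/h}\sigma T_{b/h}\in\cM_+$, which you assert as known from Kubo--Ando theory. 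The only blemish is a labeling slip: Lemma \ref{L-3.17} is being invoked with $C=c$ rather than $C=c^*$, which is exactly what your verified condition $s(cc^*)=s(h_1)$ and your displayed identity $c^*(T_{a/h_1}\sigma T_{b/h_1})c=(c^*T_{a/h_1}c)\sigma(c^*T_{b/h_1}c)$ correspond to.
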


\begin{proof}
For any $h\in\overline\cM_+$ with $a+b\le\lambda h$ for some $\lambda>0$, since
$T_{a/h},T_{b/h}\in\cM_+$, we have for every unitary $u'\in\cM'$,
$$
u'^*(T_{a/h}\sigma T_{b/h})u'=(u'^*T_{a/h}u')\sigma(u'^*T_{b/h}u')
=T_{a/h}\sigma T_{b/h}.
$$
Hence $T_{a/h}\sigma T_{b/h}\in\cM_+$ so that the right hand side of \eqref{F-3.7} is
indeed in $\overline\cM_+$. To prove the independence of the choice of $h$, let
$k\in\overline\cM_+$ be another choice such as $h$. We may assume that $h\le k$. In fact,
we may consider the cases of $h,h+k$ and of $k,h+k$. Then by Lemma \ref{L-3.3} we have an
$x\in s(h)\cM s(k)$ such that $h^{1/2}=xk^{1/2}$. Since
$$
h^{1/2}T_{a/h}h^{1/2}=a=k^{1/2}T_{a/k}k^{1/2}=h^{1/2}x^*T_{a/k}xh^{1/2},
$$
one has $T_{a/h}=x^*T_{a/k}x$ and similarly $T_{b/h}=x^*T_{b/k}x$. Since
$s(a+b)\le s(h)=s(xx^*)$, it follows from Lemma \ref{L-3.17} that
\begin{align*}
k^{1/2}(T_{a/k}\sigma T_{b/k})k^{1/2}&=h^{1/2}x^*(T_{a/k}\sigma T_{b/k})xh^{1/2} \\
&=h^{1/2}((x^*T_{a/k}x)\sigma(x^*T_{b/k}x))h^{1/2} \\
&=h^{1/2}(T_{a/h}\sigma T_{b/h})h^{1/2}.
\end{align*}
\end{proof}

In particular, when $b\le\lambda a$ for some $\lambda>0$, taking $h=a$ in \eqref{F-3.7} we
write
\begin{align}\label{F-3.8}
a[\sigma]b=a^{1/2}f(T_{b/a})a^{1/2}.
\end{align}
Note that $T_{B/A}=A^{-1/2}BA^{-1/2}$ when $A,B\in B(\cH)_+$ with $A$ invertible. Therefore,
\eqref{F-3.8} is an extended version of the familiar formula (for Kubo-Ando connections)
$A\sigma B=A^{1/2}f(A^{-1/2}BA^{-1/2})A^{1/2}$ for $A$ invertible.

In the rest of the subsection we will prove that Definitions \ref{D-3.15} and \ref{D-3.16} are
equivalent. The next lemma is similar to the familiar convergence formula
$A\sigma B=\lim_{\eps\searrow0}(A+\eps I)\sigma(B+\eps I)$ for $A,B\in B(\cH)_+$.

\begin{lemma}\label{L-3.19}
Let $a,b,h\in \overline\cM_+$ be such that $a+b\le\lambda h$ for some $\lambda>0$.
Then we have
\begin{align}\label{F-3.9}
a[\sigma]b=\lim_{\eps\searrow0}(a+\eps h)[\sigma](b+\eps h),
\end{align}
a decreasing limit in the strong resolvent sense.
\end{lemma}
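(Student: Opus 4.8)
The plan is to work relative to the fixed ambient element $h$ and reduce everything to the bounded-operator case via the "contraction'' $T_{\,\cdot\,/h}$. First I would observe that for $\eps>0$ we have $a+\eps h+b+\eps h\le(\lambda+2\eps)h$, so $h$ is an admissible choice in Definition \ref{D-3.16} for the pair $(a+\eps h,\ b+\eps h)$ as well. Since $(a+\eps h)=h^{1/2}(T_{a/h}+\eps s(h))h^{1/2}$, uniqueness in Definition \ref{D-3.5} gives $T_{(a+\eps h)/h}=T_{a/h}+\eps s(h)$ and likewise $T_{(b+\eps h)/h}=T_{b/h}+\eps s(h)$. Hence by \eqref{F-3.7},
\begin{align*}
(a+\eps h)[\sigma](b+\eps h)
=h^{1/2}\bigl((T_{a/h}+\eps s(h))\sigma(T_{b/h}+\eps s(h))\bigr)h^{1/2},
\end{align*}
and $a[\sigma]b=h^{1/2}(T_{a/h}\sigma T_{b/h})h^{1/2}$, all the $T$'s being bounded positive operators in $s(h)\cM s(h)$.

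Next I would invoke the Kubo--Ando axiom (III) from \S\ref{S-2.2}: since $T_{a/h}+\eps s(h)\searrow T_{a/h}$ and $T_{b/h}+\eps s(h)\searrow T_{b/h}$ strongly as $\eps\searrow0$, we get $(T_{a/h}+\eps s(h))\sigma(T_{b/h}+\eps s(h))\searrow T_{a/h}\sigma T_{b/h}$ strongly on $\cH$ (here one should be slightly careful that $s(h)$ is not the identity, but restricting to $s(h)\cH$ and using that everything vanishes on $s(h)^\perp\cH$ handles this). Conjugating a decreasing strongly convergent bounded net by the fixed closed operator $h^{1/2}$ is order-preserving and monotone, so the net $(a+\eps h)[\sigma](b+\eps h)$ is decreasing in $\overline\cM_+$ and, at the level of the associated forms, $q_{(a+\eps h)[\sigma](b+\eps h)}(\xi)=((T_{a/h}+\eps s(h))\sigma(T_{b/h}+\eps s(h))h^{1/2}\xi,h^{1/2}\xi)\searrow(T_{a/h}\sigma T_{b/h}h^{1/2}\xi,h^{1/2}\xi)=q_{a[\sigma]b}(\xi)$ for every $\xi\in\cD(h^{1/2})$, hence for every $\xi\in\cH$ since both sides are $+\infty$ off $\cD(h^{1/2})\subseteq\cD((a[\sigma]b)^{1/2})$. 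By Lemma \ref{L-2.4} (in the form recorded after it, with $\overline\cM_+$-membership supplied by Lemma \ref{L-3.1}) this says exactly that $(a+\eps h)[\sigma](b+\eps h)\searrow a[\sigma]b$ in the strong resolvent sense, which is \eqref{F-3.9}.

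The main obstacle I anticipate is the bookkeeping with the support projection $s(h)$: the Kubo--Ando axioms as stated in \S\ref{S-2.2} concern operators on all of $\cH$ with $I\sigma I$ etc., whereas here the relevant operators live on $s(h)\cH$ and the perturbation is by $\eps s(h)$ rather than $\eps 1$. One must check that $(T_{a/h}+\eps s(h))\sigma(T_{b/h}+\eps s(h))$ still decreases strongly to $T_{a/h}\sigma T_{b/h}$; this follows because $\sigma$ restricted to $B(s(h)\cH)_+$ is again a Kubo--Ando connection (the representing function is unchanged), or alternatively by noting that on $s(h)^\perp\cH$ every term is $0$ while on $s(h)\cH$ axiom (III) applies verbatim. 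A secondary, purely technical point is the passage from decreasing convergence of the bounded "middle'' operators to decreasing convergence in the strong resolvent sense of their $h^{1/2}$-conjugations; this is most cleanly phrased through the forms $q_{\,\cdot\,}$ and Lemma \ref{L-2.4} as above, so that no direct manipulation of resolvents of the (unbounded) conjugated operators is needed.
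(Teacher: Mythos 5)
Your reduction to the bounded operators $T_{a/h},T_{b/h}$, the identification $T_{(a+\eps h)/h}=T_{a/h}+\eps s(h)$ via Lemma \ref{L-3.18}, and the appeal to axiom (III) all match the paper's argument, and your remarks about the support projection $s(h)$ are correctly handled (on $s(h)\cH$ the axiom applies verbatim, and in the degenerate case $f(1)=0$ the connection is identically $0$). The divergence, and the gap, is in the last step. The paper concludes by citing Lemma \ref{L-3.9}, which identifies the strong resolvent limit of $h^{1/2}(T_{a/h}+\eps s(h))\sigma(T_{b/h}+\eps s(h))h^{1/2}$ as $h^{1/2}(T_{a/h}\sigma T_{b/h})h^{1/2}$ by a two-sided resolvent comparison. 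You instead argue at the level of forms and assert that $q_{(a+\eps h)[\sigma](b+\eps h)}(\xi)\searrow q_{a[\sigma]b}(\xi)$ for \emph{every} $\xi\in\cH$ ``since both sides are $+\infty$ off $\cD(h^{1/2})$.'' That claim is false in general, and it contradicts the inclusion $\cD(h^{1/2})\subseteq\cD((a[\sigma]b)^{1/2})$ that you yourself record: this inclusion can be strict. For instance, take $\sigma$ the parallel sum, $b=0$ and $h$ unbounded; then $a[\sigma]b=0$, so $q_{a[\sigma]b}\equiv0$ on all of $\cH$, while $q_{(a+\eps h):(\eps h)}(\xi)\ge(\eps/2)q_h(\xi)=+\infty$ off $\cD(h^{1/2})$. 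Hence the pointwise infimum of the forms is \emph{not} $q_{a[\sigma]b}$; only the regularized infimum $\Inf$ is, and Lemma \ref{L-2.4} requires the latter.

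The argument can be repaired without Lemma \ref{L-3.9}, but it needs an extra observation you omitted: since $a[\sigma]b$ is defined as the strong product $h^{1/2}(T_{a/h}\sigma T_{b/h})h^{1/2}$, the operator $(a[\sigma]b)^{1/2}$ is the closure of $(T_{a/h}\sigma T_{b/h})^{1/2}h^{1/2}$ restricted to $\cD(h^{1/2})$, so $\cD(h^{1/2})$ is a \emph{form core} for $a[\sigma]b$. Monotonicity gives $q_{a[\sigma]b}\le\Inf_\eps q_\eps$; conversely any positive form $q\le\inf_\eps q_\eps$ satisfies $q\le q_{a[\sigma]b}$ on $\cD(h^{1/2})$, and the core property together with lower semi-continuity of $q$ propagates this to all of $\cH$, whence $\Inf_\eps q_\eps=q_{a[\sigma]b}$ and Lemma \ref{L-2.4} applies. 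Either supply this core argument or simply invoke Lemma \ref{L-3.9} as the paper does; as written, the identification of the limit is unjustified.
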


\begin{proof}
For any $\eps>0$ Lemma \ref{L-3.18} gives
\begin{align*}
(a+\eps h)[\sigma](b+\eps h)&=h^{1/2}(T_{(a+\eps h)/h}\sigma T_{(b+\eps h)/h})h^{1/2} \\
&=h^{1/2}((T_{a/h}+\eps s(h))\sigma(T_{b/h}+\eps s(h)))h^{1/2}.
\end{align*}
Since $T_{a/h}+\eps s(h)\searrow T_{a/h}$ and $T_{b/h}+\eps s(h)\searrow T_{b/h}$ strongly
(even in the operator norm) as $\eps\searrow0$, we have
$(T_{a/h}+\eps s(h))\sigma(T_{b/h}+\eps s(h))\searrow T_{a/h}\sigma T_{b/h}$ strongly as
$\eps\searrow0$ (see (III) of \S\ref{S-2.2}). Hence the assertion follows from Lemma \ref{L-3.9}.
\end{proof}

\begin{remark}\label{R-3.20}\rm
A typical choice of $h$ in the above lemma is $h=a+b$. In this case, when both of $a,b$ are
in $\overline\fS_+$, it follows from Theorem \ref{T-3.8} that \eqref{F-3.9} becomes the
limit in the measure topology. Moreover, when $a,b\in L^p(M,\tau)_+$ with $p\in(0,\infty)$ and
$h=a+b$, \eqref{F-3.9} becomes the limit in the (quasi-)norm $\|\cdot\|_p$.
\end{remark}

\begin{lemma}\label{L-3.21}
Let $a,b\in\overline\cM_+$. For every $\xi\in\cD(a^{1/2})\cap\cD(b^{1/2})$ we have
\begin{align}\label{F-3.10}
q_{a[:]b}(\xi)=\inf\{q_a(\eta)+q_b(\zeta);\,\eta,\zeta\in\cH,\,\eta+\zeta=\xi\},
\end{align}
where $a[:]b$ is given in Definition \ref{D-3.16} for parallel sum.
\end{lemma}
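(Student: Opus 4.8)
The plan is to transport the classical variational formula for the parallel sum of \emph{bounded} positive operators through the factorization devices of \S\ref{S-3.1}. Fix $h:=a+b\in\overline\cM_+$, so that $a\le h$, $b\le h$, and put $A:=T_{a/h}$, $B:=T_{b/h}\in(s(h)\cM s(h))_+$ (Definition~\ref{D-3.5}); by Definition~\ref{D-3.16}, $a[:]b=h^{1/2}(A:B)h^{1/2}$, where $A:B$ is the parallel sum of the bounded operators $A,B$. Since $A:B\le A\le s(h)$ we also get $a[:]b\le h$. First I would set up a dictionary on $\cD(h^{1/2})$: by Lemma~\ref{L-3.4}, $\cD(a^{1/2})\cap\cD(b^{1/2})=\cD((a+b)^{1/2})=\cD(h^{1/2})$, and applying Lemma~\ref{L-3.3} to $a\le h$, $b\le h$, $a[:]b\le h$ produces $x,y,w\in s(h)\cM s(h)$ with $a^{1/2}=xh^{1/2}$, $b^{1/2}=yh^{1/2}$, $(a[:]b)^{1/2}=wh^{1/2}$ and $x^*x=A$, $y^*y=B$, $w^*w=A:B$. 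Hence for every $\eta\in\cD(h^{1/2})$,
$$
q_a(\eta)=(Ah^{1/2}\eta,h^{1/2}\eta),\qquad q_b(\eta)=(Bh^{1/2}\eta,h^{1/2}\eta),\qquad q_{a[:]b}(\eta)=((A:B)h^{1/2}\eta,h^{1/2}\eta).
$$

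Next I would cut the infimum in \eqref{F-3.10} down to decompositions inside $\cD(h^{1/2})$. If $\eta+\zeta=\xi$ with $q_a(\eta)+q_b(\zeta)<\infty$, then $\eta\in\cD(a^{1/2})$ and $\zeta\in\cD(b^{1/2})$; since $\xi\in\cD(b^{1/2})$ this forces $\eta=\xi-\zeta\in\cD(b^{1/2})$, so $\eta\in\cD(a^{1/2})\cap\cD(b^{1/2})=\cD(h^{1/2})$, and likewise $\zeta\in\cD(h^{1/2})$; the decomposition $(\xi,0)$ shows moreover that the infimum is finite. Parametrizing such decompositions by $u:=h^{1/2}\zeta$ (with $\eta=\xi-\zeta$ and $\psi:=h^{1/2}\xi\in s(h)\cH$), the dictionary above turns $q_a(\eta)+q_b(\zeta)$ into $F(u):=(A(\psi-u),\psi-u)+(Bu,u)$, and $u$ ranges exactly over the range $R:=\{h^{1/2}\eta:\eta\in\cD(h^{1/2})\}$ of $h^{1/2}$. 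Thus the right-hand side of \eqref{F-3.10} equals $\inf_{u\in R}F(u)$, while its left-hand side equals $q_{a[:]b}(\xi)=((A:B)\psi,\psi)$.

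Finally I would invoke the bounded theory. The map $F$ is a norm-continuous (quadratic) function on $s(h)\cH$, and $R$ is dense in $s(h)\cH$ because $h^{1/2}$ has trivial kernel there; hence $\inf_{u\in R}F(u)=\inf_{u\in s(h)\cH}F(u)$. By the classical variational characterization of the parallel sum of bounded positive operators (Anderson--Trapp; see \cite{AT,An3}), $\inf_{u\in s(h)\cH}F(u)=\inf\{q_A(\psi_1)+q_B(\psi_2):\psi_1+\psi_2=\psi\}=((A:B)\psi,\psi)$. Combining the three displays yields \eqref{F-3.10}. I expect the only delicate point to be this last passage: one must push decompositions of $\xi$ \emph{forward} under $h^{1/2}$ and then exploit density of $R$ together with continuity of $F$, rather than trying to \emph{lift} an optimal decomposition of $h^{1/2}\xi$ back through $h^{1/2}$, whose range need not be closed; everything else is bookkeeping with Lemmas~\ref{L-3.3} and \ref{L-3.4}.
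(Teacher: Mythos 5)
Your proof is correct and follows essentially the same route as the paper's: both reduce to the Anderson--Trapp variational formula for the bounded operators $T_{a/h}$, $T_{b/h}$ evaluated at $h^{1/2}\xi$, both use $\cD(a^{1/2})\cap\cD(b^{1/2})=\cD(h^{1/2})$ to restrict the competing decompositions, and both bridge the gap between decompositions of $\xi$ and decompositions of $h^{1/2}\xi$ via the density of the range of $h^{1/2}$ in $s(h)\cH$. Your packaging of that last step as ``infimum of a continuous quadratic functional over a dense subset equals the infimum over the closure'' is a tidier way of expressing the paper's explicit sequence approximation $h^{1/2}\eta_n\to s(h)\eta$, but the content is the same.
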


\begin{proof}
Firstly we recall the well-known variational formula of $A:B$ for $A,B\in B(\cH)_+$, that is,
for every $\xi\in\cH$,
\begin{align}\label{F-3.11}
((A:B)\xi,\xi)=\inf\{(A\eta,\eta)+(B\zeta,\zeta);\,\eta,\zeta\in\cH,\,\eta+\zeta=\xi\}
\end{align}
(see \cite[Theorem 9]{AT} and also \cite[Lemma 3.1.5]{Hi2}).
For any $\xi\in\cH$ write $q(\xi)$ for the right hand side of \eqref{F-3.10}. Let $h:=a+b$ so
that $\cD(h^{1/2})=\cD(a^{1/2})\cap\cD(b^{1/2})$ (see Lemma \ref{L-3.4}).
Assume $\xi\in\cD(a^{1/2})\cap\cD(b^{1/2})$.
From \eqref{F-3.7} for parallel sum we have
\begin{align*}
q_{a[:]b}(\xi)&=\|(a[:]b)^{1/2}\xi\|^2=\|(T_{a/h}:T_{b/h})^{1/2}h^{1/2}\xi\|^2 \\
&=\inf\{\|T_{a/h}^{1/2}\eta\|^2+\|T_{b/h}^{1/2}\zeta\|^2;\,
\eta,\zeta\in\cH,\,\eta+\zeta=h^{1/2}\xi\}
\end{align*}
thanks to \eqref{F-3.11}.
For every $\eta,\zeta\in\cH$ with $\eta+\zeta=h^{1/2}\xi$ choose a sequence
$\eta_n\in\cD(h^{1/2})$ such that $h^{1/2}\eta_n\to s(h)\eta$. Let
$\zeta_n:=\xi-\eta_n\in\cD(h^{1/2})$; then $\eta_n+\zeta_n=\xi$ and
$$
h^{1/2}\zeta_n=h^{1/2}\xi-h^{1/2}\eta_n\,\longrightarrow\,
h^{1/2}\xi-s(h)\eta=s(h)(h^{1/2}\xi-\eta)=s(h)\zeta.
$$
We hence find that
\begin{align*}
\|T_{a/h}^{1/2}\eta\|^2+\|T_{b/h}^{1/2}\zeta\|^2
&=\|T_{a/h}^{1/2}s(h)\eta\|^2+\|T_{b/h}^{1/2}s(h)\zeta\|^2 \\
&=\lim_{n\to\infty}\bigl(\|T_{a/h}^{1/2}h^{1/2}\eta_n\|^2
+\|T_{b/h}^{1/2}h^{1/2}\zeta_n\|^2\bigr) \\
&=\lim_{n\to\infty}(q_a(\eta_n)+q_b(\zeta_n))
\ge q(\xi),
\end{align*}
which implies $q_{a[:]b}(\xi)\ge q(\xi)$. Conversely, let $\eta,\zeta\in\cH$ with
$\eta+\zeta=\xi$. If $\eta\not\in\cD(a^{1/2})$ or $\zeta\not\in\cD(b^{1/2})$, then
$q_a(\eta)+q_b(\zeta)=\infty$. So we may assume that $\eta\in\cD(a^{1/2})$ and
$\zeta\in\cD(b^{1/2})$. Then, since $\xi\in\cD(a^{1/2})\cap\cD(b^{1/2})$, we must have
$\eta,\zeta\in\cD(a^{1/2})\cap\cD(b^{1/2})=\cD(h^{1/2})$ and therefore
\begin{align*}
q_a(\eta)+q_b(\zeta)&=\|T_{a/h}^{1/2}h^{1/2}\eta\|^2+\|T_{b/h}^{1/2}h^{1/2}\zeta\|^2 \\
&\ge\|(T_{a/h}:T_{b/h})^{1/2}h^{1/2}\xi\|^2=\|(a[:]b)^{1/2}\xi\|^2.
\end{align*}
Hence $q(\xi)\ge q_{a[:]b}(\xi)$ follows and the assertion has been shown.
\end{proof}

\begin{lemma}\label{L-3.22}
Let $a,b\in\overline\cM_+$. Then $q_{a[:]b}$ coincides with $q_a:q_b$ given in Definition
\ref{D-2.1}. Hence we have $a[:]b=a:b$.
\end{lemma}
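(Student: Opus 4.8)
The plan is to reduce the statement to the equality of the two positive forms $q_{a[:]b}$ and $q_a:q_b$ on all of $\cH$; once this is shown, $a[:]b=a:b$ follows immediately, since a $\tau$-measurable positive operator is determined by its form, and $q_a:q_b\le q_a$ (Theorem \ref{T-2.2}) together with Lemma \ref{L-3.1} ensures that the operator associated with $q_a:q_b$ lies in $\overline\cM_+$. Throughout I work with $h:=a+b\in\overline\cM_+$, so that $\cD(h^{1/2})=\cD(a^{1/2})\cap\cD(b^{1/2})$ by Lemma \ref{L-3.4}, a dense subspace of $\cH$; and Lemma \ref{L-3.21} already gives $q_{a[:]b}(\xi)=\rho_0(\xi)$ for all $\xi\in\cD(h^{1/2})$, where $\rho_0$ is the quadratic functional $\rho_0(\xi)=\inf\{q_a(\eta)+q_b(\zeta):\eta+\zeta=\xi\}$ from \eqref{F-2.3}. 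The whole difficulty is that Lemma \ref{L-3.21} only describes $q_{a[:]b}$ on $\cD(h^{1/2})$, which may be strictly smaller than the form domains of $a^{1/2}$, $b^{1/2}$ and $(a[:]b)^{1/2}$, so first I need a device for passing to all of $\cH$.

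That device is a core property. Since $a[:]b\le a\le h$ and $b\le h$ in $\overline\cM_+$ (the first inequality because $T_{a/h}:T_{b/h}\le T_{a/h}$, conjugation by $h^{1/2}$ preserving the order), Lemma \ref{L-3.3} represents each of $(a[:]b)^{1/2}$, $a^{1/2}$ and $b^{1/2}$ as a strong product $x\,h^{1/2}$ with $x\in\cM$. A strong product $x\,h^{1/2}$ is, by definition, the closure of the ordinary product on $\cD(h^{1/2})$; hence $\cD(h^{1/2})$ is a core for each of these three operators. In particular, every vector in the form domain of $a^{1/2}$ (resp. $b^{1/2}$, resp. $(a[:]b)^{1/2}$) is approximated by some $\xi_n\in\cD(h^{1/2})$ along which the corresponding square root also converges.

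For the inequality $q_{a[:]b}\le q_a:q_b$ I check the estimate in the maximality statement of Theorem \ref{T-2.2}, i.e. $q_{a[:]b}(\xi+\eta)\le q_a(\xi)+q_b(\eta)$ for all $\xi,\eta\in\cH$; one may assume $\xi\in\cD(a^{1/2})$ and $\eta\in\cD(b^{1/2})$, the right-hand side being $+\infty$ otherwise. Using the core property, pick $\xi_n\in\cD(h^{1/2})$ with $\xi_n\to\xi$ and $a^{1/2}\xi_n\to a^{1/2}\xi$, and $\eta_n\in\cD(h^{1/2})$ with $\eta_n\to\eta$ and $b^{1/2}\eta_n\to b^{1/2}\eta$. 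Then $\xi_n+\eta_n\in\cD(h^{1/2})$, so Lemma \ref{L-3.21} gives $q_{a[:]b}(\xi_n+\eta_n)\le q_a(\xi_n)+q_b(\eta_n)\to q_a(\xi)+q_b(\eta)$, and lower semicontinuity of $q_{a[:]b}$ yields the desired estimate; Theorem \ref{T-2.2} then forces $q_{a[:]b}\le q_a:q_b$.

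For the reverse inequality, recall from Theorem \ref{T-2.2} (or \eqref{F-2.4}) that $q_a:q_b\le\rho_0$ pointwise on $\cH$, so that by Lemma \ref{L-3.21} we get $(q_a:q_b)(\xi)\le\rho_0(\xi)=q_{a[:]b}(\xi)$ for $\xi\in\cD(h^{1/2})$. For arbitrary $\xi\in\cH$: if $\xi$ is not in the form domain of $a[:]b$ then $q_{a[:]b}(\xi)=+\infty$ and there is nothing to prove; otherwise the core property supplies $\xi_n\in\cD(h^{1/2})$ with $\xi_n\to\xi$ and $q_{a[:]b}(\xi_n)\to q_{a[:]b}(\xi)$, and lower semicontinuity of $q_a:q_b$ gives $(q_a:q_b)(\xi)\le\liminf_n(q_a:q_b)(\xi_n)\le\lim_n q_{a[:]b}(\xi_n)=q_{a[:]b}(\xi)$. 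Hence $q_{a[:]b}=q_a:q_b$, and therefore $a[:]b=a:b$. The main obstacle is genuinely the core property of the second paragraph: everything else is bookkeeping with lower semicontinuity, but one must be careful to invoke the maximality half of Theorem \ref{T-2.2} for the inequality "$\le$" and the domination $q_a:q_b\le\rho_0$ for "$\ge$", since these are the two distinct faces of the variational description of the parallel sum of forms.
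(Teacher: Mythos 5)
Your proof is correct, but it takes a genuinely different route from the paper's. The paper first treats the mutually dominated case $\lambda^{-1}b\le a\le\lambda b$, where $\cD(a^{1/2})=\cD(b^{1/2})=\cD((a[:]b)^{1/2})$ forces both sides of \eqref{F-3.10} to be $+\infty$ off the common domain; the quadratic functional $\rho_0$ is then already lower semi-continuous on all of $\cH$ and equals $q_{a[:]b}$, so Theorem \ref{T-2.2} applies at once. The general case is then reached by the regularization $a+\eps h$, $b+\eps h$ together with Lemma \ref{L-3.19} and the decreasing-limit identity \eqref{F-2.6} of Theorem \ref{T-2.3}. You instead work with general $a,b$ directly, the key observation being that $\cD(h^{1/2})$ is a common core for $a^{1/2}$, $b^{1/2}$ and $(a[:]b)^{1/2}$ -- which is indeed what the factorization $a^{1/2}=xh^{1/2}$ of Lemma \ref{L-3.3} delivers, since the strong product is the closure of the ordinary product on $\cD(h^{1/2})$ -- and then you split the identity into two inequalities, proving ``$\le$'' from the maximality half of Theorem \ref{T-2.2} and ``$\ge$'' from $q_a:q_b\le\rho_0$, each time closing the gap between $\cD(h^{1/2})$ and the full space by graph-norm approximation plus lower semi-continuity. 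Your argument is self-contained and avoids the machinery of Theorems \ref{T-2.3} and \ref{T-3.10} entirely; the paper's regularization device has the advantage of being the same trick reused in the proof of Theorem \ref{T-3.23}, so the two lemmas there share infrastructure. One cosmetic remark: your closing observation that $q_a:q_b$ corresponds to an element of $\overline\cM_+$ via Lemma \ref{L-3.1} is not strictly needed, since once $q_{a[:]b}=q_a:q_b$ is proved the operator in question is $a[:]b\in\overline\cM_+$ by Lemma \ref{L-3.18}.
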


\begin{proof}
First, assume that $\lambda^{-1}b\le a\le\lambda b$ for some $\lambda>0$. Then, from
\eqref{F-3.8} it is immediately verified that $(1:\lambda^{-1})a\le a[:]b\le(1:\lambda)a$, so
we have $\cD(a^{1/2})=\cD(b^{1/2})=\cD((a[:]b)^{1/2})$. Hence both sides of \eqref{F-3.10} are
$\infty$ when $\xi\in\cH\setminus\cD(a^{1/2})$, so the equality \eqref{F-3.10} holds for all
$\xi\in\cH$. This means that when $\lambda^{-1}b\le a\le\lambda b$, the quadratic form
defined as in \eqref{F-2.3} (for $q_a,q_b$), i.e., the right hand side of \eqref{F-3.10}
for all $\xi\in\cH$ is equal to $q_{a[:]b}$. Since the latter is lower semi-continuous on $\cH$,  the
procedure of $\displaystyle\inf_{\xi_n\to\xi}\liminf_{n\to\infty}$ in \eqref{F-2.4} is not
needed, and by Theorem \ref{T-2.2} we have $q_{a[:]b}=q_a:q_b$ in this case.

For general $a,b\in\overline\cM_+$ let $h:=a+b$. The convergence in \eqref{F-3.9} for parallel sum
is rewritten as
$$
q_{a[:]b}=\Inf_{\eps>0}q_{(a+\eps h)[:](b+\eps h)},
$$
as well as $q_a=\Inf_{\eps>0}q_{a+\eps h}$ and $q_b=\Inf_{\eps>0}q_{b+\eps h}$.
For each $\eps>0$, since $\lambda^{-1}(b+\eps h)\le a+\eps h\le\lambda(b+\eps h)$ for some
$\lambda>0$, it follows from the above case that
$$
q_{(a+\eps h)[:](b+\eps h)}=q_{a+\eps h}:q_{b+\eps h}.
$$
Hence by \eqref{F-2.6} we obtain
$$
q_{a[:]b}=\Inf_{\eps>0}\,(q_{a+\eps h}:q_{b+\eps h})
=\Bigl(\Inf_{\eps>0}q_{a+\eps h}\Bigr):\Bigl(\Inf_{\eps>0}q_{b+\eps h}\Bigr)
=q_a:q_b,
$$
and the assertion has been shown.
\end{proof}

Our main result of the subsection is the following:

\begin{theorem}\label{T-3.23}
For every $a,b\in\overline\cM_+$ we have $q_{a[\sigma]b}=q_a\sigma q_b$ and hence
$$
a[\sigma]b=a\sigma b.
$$
\end{theorem}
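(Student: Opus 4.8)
The plan is to reduce the general case to the integral representation \eqref{F-2.9} of $\sigma$ for positive forms, using the decreasing-approximation Lemma \ref{L-3.19} together with the already-established identification of parallel sums (Lemma \ref{L-3.22}). First I would fix $a,b\in\overline\cM_+$ and set $h:=a+b$, and write $A:=T_{a/h}$, $B:=T_{b/h}$, which are commuting-free bounded positive operators in $s(h)\cM s(h)$. For the bounded operators $A,B$ (on $s(h)\cH$) the Kubo--Ando expression \eqref{F-2.8} gives
$$
A\sigma B=\alpha A+\beta B+\int_{(0,\infty)}\frac{1+t}{t}\,((tA):B)\,d\mu(t),
$$
and conjugating by $h^{1/2}$ (strictly, computing $q_{h^{1/2}(A\sigma B)h^{1/2}}$) I want to push the $h^{1/2}(\cdot)h^{1/2}$ through each term. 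The terms $\alpha A$ and $\beta B$ conjugate to $\alpha a$ and $\beta b$ by the very definition of $T_{a/h}$, $T_{b/h}$. For the parallel-sum term inside the integral, the key is that $h^{1/2}((tA):B)h^{1/2}$ should equal $(ta)[:]b$ evaluated via Definition \ref{D-3.16} with the same $h$ — this is essentially the definition \eqref{F-3.7} applied to the pair $(ta,b)$, since $T_{(ta)/h}=tT_{a/h}=tA$ and $T_{b/h}=B$. Then Lemma \ref{L-3.22} identifies $(ta)[:]b$ with the positive-form parallel sum $(ta):b$, i.e. $q_{(ta)[:]b}=q_{ta}:q_b=(tq_a):q_b$.

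The remaining issue is the interchange of the integral $\int_{(0,\infty)}$ with the conjugation/form passage — i.e., showing
$$
q_{a[\sigma]b}(\xi)=\alpha q_a(\xi)+\beta q_b(\xi)+\int_{(0,\infty)}\frac{1+t}{t}\,((tq_a):q_b)(\xi)\,d\mu(t)
$$
for all $\xi$, which is exactly the right-hand side of \eqref{F-2.9} defining $q_a\sigma q_b$. To handle this cleanly I would first treat the special case $\lambda^{-1}b\le a\le\lambda b$ (equivalently $\lambda^{-1}h\le a$ and similarly for $b$, say by taking $h=a+b$ and noting $a,b$ are then form-comparable to $h$): here $A,B$ are bounded with bounded inverses on $s(h)\cH$, the integrand $t\mapsto ((tA):B)$ is dominated in operator norm (uniformly on bounded $t$-ranges, using $(tA):B\le tA$ and $\le B$), and one can simply apply, for each fixed $\xi\in s(h)\cH=\cD(h^{1/2})^{\perp\perp}$, the monotone/dominated convergence theorem to the scalar integrand $t\mapsto \frac{1+t}{t}(((tA):B)h^{1/2}\xi,h^{1/2}\xi)$, matching it termwise against $\frac{1+t}{t}((tq_a):q_b)(\xi)$ via Lemma \ref{L-3.22}. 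This gives $q_{a[\sigma]b}=q_a\sigma q_b$ in the comparable case.

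For the general case I would use the approximation $a_\eps:=a+\eps h$, $b_\eps:=b+\eps h$ with $h=a+b$, which satisfy the comparability hypothesis, so $q_{a_\eps[\sigma]b_\eps}=q_{a_\eps}\sigma q_{b_\eps}$ by the previous step. As $\eps\searrow0$, Lemma \ref{L-3.19} gives $a_\eps[\sigma]b_\eps\searrow a[\sigma]b$ in the strong resolvent sense, i.e. $q_{a[\sigma]b}=\Inf_\eps q_{a_\eps[\sigma]b_\eps}$ by Lemma \ref{L-2.4}. On the other side, $q_{a_\eps}=\Inf_\eps q_{a+\eps h}\searrow q_a$ and likewise for $b$ (the computation at the end of the proof of Lemma \ref{L-3.22}), and the parallel-sum continuity Theorem \ref{T-2.3} gives $\Inf_\eps\,((tq_{a_\eps}):q_{b_\eps})=(tq_a):q_b$ for each $t$; combined with the monotone convergence theorem applied to \eqref{F-2.9} (exactly as in the proof of Proposition \ref{P-2.8}), this yields $\Inf_\eps\,(q_{a_\eps}\sigma q_{b_\eps})=q_a\sigma q_b$. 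Hence $q_{a[\sigma]b}=q_a\sigma q_b$, and by Definition \ref{D-3.15} this means $a[\sigma]b=a\sigma b$. The main obstacle is the bookkeeping in the comparable case — verifying the domination needed to pass the integral through the form evaluation and checking that $h^{1/2}((tA):B)h^{1/2}$ really is the positive form $(tq_a):q_b$ rather than merely dominating it — but Lemma \ref{L-3.22} does precisely this and removes the only genuine difficulty.
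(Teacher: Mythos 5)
Your comparable case is essentially the paper's own opening computation: for $\xi\in\cD(h^{1/2})$ one evaluates $((T_{a/h}\sigma T_{b/h})h^{1/2}\xi,h^{1/2}\xi)$ term by term against \eqref{F-2.9} using Lemma \ref{L-3.22}, and when $\lambda^{-1}b\le a\le\lambda b$ both forms are $+\infty$ off $\cD(h^{1/2})$ (since then $a[\sigma]b\ge\eps f(1)h$ and $(tq_a):q_b\ge\eps{t\over1+t}q_h$), so the identity holds everywhere. (Two small slips: $s(h)\cH$ is not $\cD(h^{1/2})^{\perp\perp}$ --- the latter is all of $\cH$ --- and the relevant set of vectors is $\cD(h^{1/2})$; also you never actually address $\xi\notin\cD(h^{1/2})$ in the comparable case, though that is easily repaired as above.) The genuine gap is in your passage to the general case. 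You assert $\Inf_{\eps}(q_{a_\eps}\sigma q_{b_\eps})=q_a\sigma q_b$ by combining the termwise convergences $\Inf_\eps q_{a_\eps}=q_a$ and $\Inf_\eps((tq_{a_\eps}):q_{b_\eps})=(tq_a):q_b$ with ``monotone convergence as in Proposition \ref{P-2.8}.'' But Proposition \ref{P-2.8} treats an \emph{increasing} limit, where the pointwise supremum of closed forms is again closed and the monotone convergence theorem applies directly. For a decreasing family, $\Inf$ is the closed regularization of the pointwise infimum and does \emph{not} commute with sums or integrals: the paper warns, immediately after Theorem \ref{T-2.3}, that $\Inf_n(\phi_n+\psi_n)$ may differ from $\Inf_n\phi_n+\Inf_n\psi_n$. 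The interchange $\Inf_\eps\int(\cdots)\,d\mu(t)=\int\Inf_\eps(\cdots)\,d\mu(t)$ is therefore exactly the kind of identity that must be argued, not invoked.

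Concretely, the problem sits at vectors $\xi\notin\cD(h^{1/2})=\cD(a^{1/2})\cap\cD(b^{1/2})$. There $(q_{a_\eps}\sigma q_{b_\eps})(\xi)=\infty$ for every $\eps>0$ (because $a_\eps,b_\eps\ge\eps h$), so the pointwise infimum carries no information, while $(q_a\sigma q_b)(\xi)$ can be finite when $\alpha=\beta=0$ (e.g.\ for $\#_\alpha$; already in the commutative case $a\#b$ can be bounded while $a,b$ are unbounded). Even charitably completed, your argument only yields $q_{a[\sigma]b}(\xi)=(q_a\sigma q_b)(\xi)$ on $\cD(h^{1/2})$, and two closed forms agreeing on a dense subspace need not coincide. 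One extra idea is required to close this: either the paper's device of replacing $f$ by $f(t)+1+t$ (so that $\alpha,\beta>0$ forces both forms to be $+\infty$ off $\cD(h^{1/2})$) followed by cancellation of $a+b$ in $\overline\cM_+$; or the observation that both $a[\sigma]b$ and $a\sigma b$ are of the form $h^{1/2}Th^{1/2}$ with $T\in s(h)\cM s(h)_+$ bounded, so that agreement of the forms on $\cD(h^{1/2})$ already forces $T_{a[\sigma]b/h}=T_{a\sigma b/h}$ and hence equality of the operators. As written, neither step is present in your proposal.
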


\begin{proof}
Consider the integral expressions in \eqref{F-2.8} and \eqref{F-2.9} for $\sigma$. First assume
$\alpha,\beta>0$. Let $h:=a+b$. For every $\xi\in\cD(h^{1/2})=\cD(a^{1/2})\cap\cD(b^{1/2})$ we
have
\begin{align*}
q_{a[\sigma]b}(\xi)&=\|(a[\sigma]b)^{1/2}\xi\|^2
=\bigl((T_{a/h}\sigma T_{b/h})h^{1/2}\xi,h^{1/2}\xi\bigr) \\
&=\alpha\|T_{a/h}^{1/2}h^{1/2}\xi\|^2+\beta\|T_{b/h}^{1/2}h^{1/2}\xi\|^2 \\
&\qquad+\int_{(0,\infty)}{1+t\over t}\,\|((tT_{a/h}):T_{b/h})^{1/2}h^{1/2}\xi\|^2
\,d\mu(t) \\
&=\alpha q_a(\xi)+\beta q_b(\xi)+\int_{(0,\infty)}{1+t\over t}\,q_{(ta)[:]b}(\xi)\,d\mu(t) \\
&=\alpha q_a(\xi)+\beta q_b(\xi)+\int_{(0,\infty)}{1+t\over t}((tq_a):q_b)(\xi)\,d\mu(t) \\
&=(q_a\sigma q_b)(\xi).
\end{align*}
In the above we have used $q_{(ta)[:]b}=(tq_a):q_b$ due to Lemma \ref{L-3.22}. Since
$a[\sigma]b\ge h^{1/2}(\alpha T_{a/h}+\beta T_{b/h})h^{1/2}=\alpha a+\beta b$, note that
$q_{a[\sigma]b}(\xi)=\infty$ for all $\xi\in\cH\setminus\cD(h^{1/2})$. Since
$(q_a\sigma q_b)(\xi)\ge\alpha q_a(\xi)+\beta q_b(\xi)$, note also that
$(q_a\sigma q_b)(\xi)=\infty$ for all $\xi\in\cH\setminus\cD(h^{1/2})$. Therefore,
$q_{a[\sigma]b}(\xi)=(q_a\sigma q_b)(\xi)$ for all $\xi\in\cH$.

Next, for general $\sigma$ with the representing operator monotone function $f$ on $(0,\infty)$
we consider the connection $\sigma_1$ with representing function $f_1(t):=f(t)+1+t$. The above
proved case gives
\begin{align}\label{F-3.12}
q_{a[\sigma_1]b}=q_a\sigma_1q_b.
\end{align}
We moreover have
\begin{align}\label{F-3.13}
q_a\sigma_1q_b=q_a\sigma q_b+q_a+q_b=q_{a\sigma b}+q_a+q_b=q_{a\sigma b+a+b},
\end{align}
where the second equality is Definition \ref{D-3.15} and the last equality is due to Lemma
\ref{L-3.4}. On the other hand, letting $h:=a+b$, by the definition \eqref{F-3.7} we have
\begin{align*}
(a+\eps h)[\sigma_1](b+\eps h)
&=h^{1/2}(T_{(a+\eps h)/h}\sigma_1T_{(b+\eps h)/h})h^{1/2} \\
&=h^{1/2}(T_{(a+\eps h)/h}\sigma T_{(b+\eps h)/h}+T_{(a+\eps h)/h}+T_{(b+\eps h)/h})h^{1/2} \\
&=(a+\eps h)[\sigma](b+\eps h)+(a+\eps h)+(b+\eps h)
\end{align*}
for every $\eps>0$. Hence by Lemma \ref{L-3.19} and Theorem \ref{T-3.10}, letting
$\eps\searrow0$ we have $a[\sigma_1]b=a[\sigma]b+a+b$, that is,
\begin{align}\label{F-3.14}
q_{a[\sigma_1]b}=q_{a[\sigma]b+a+b}.
\end{align}
Combining \eqref{F-3.12}--\eqref{F-3.14} implies that $a[\sigma]b+a+b=a\sigma b+a+b$ (in
$\overline\cM_+$), from which $a[\sigma]b=a\sigma b$ follows.
\end{proof}

From now on we will use the same symbol $\sigma$ for both definitions of Definitions
\ref{D-3.15} and \ref{D-3.16}. From the definition \eqref{F-3.7} the following is clear:

\begin{proposition}\label{P-3.24}
Let $\sigma_1,\sigma_2$ be connections with the representing operator monotone functions
$f_1,f_2$ on $(0,\infty)$  respectively. If $f_1(t)\le f_2(t)$ for all $t\in(0,\infty)$, then
$a\sigma_1b\le a\sigma_2b$ for all $a,b\in\overline\cM_+$.
\end{proposition}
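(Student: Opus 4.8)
The plan is to reduce the statement to the corresponding fact for bounded positive operators, which is already part of the Kubo--Ando theory, by using the second definition of the connection (Definition \ref{D-3.16}) together with the fact, established in Theorem \ref{T-3.23}, that the two definitions coincide. Concretely, given $a,b\in\overline\cM_+$, I would put $h:=a+b$, so that $a\le h$ and $b\le h$, and write both connections in the form \eqref{F-3.7}:
\begin{align*}
a\sigma_1 b=h^{1/2}(T_{a/h}\sigma_1 T_{b/h})h^{1/2},\qquad
a\sigma_2 b=h^{1/2}(T_{a/h}\sigma_2 T_{b/h})h^{1/2}.
\end{align*}

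Here $T_{a/h}$ and $T_{b/h}$ are honest bounded positive operators in $s(h)\cM s(h)$ (see Definition \ref{D-3.5}), so $T_{a/h}\sigma_1 T_{b/h}$ and $T_{a/h}\sigma_2 T_{b/h}$ are the Kubo--Ando connections of bounded positive operators. Thus the next step is to invoke the monotonicity of the Kubo--Ando correspondence $f\mapsto\sigma$ at the bounded level: if $f_1\le f_2$ pointwise on $(0,\infty)$ then $A\sigma_1 B\le A\sigma_2 B$ for all $A,B\in B(\cH)_+$. This is standard in \cite{KA} (it follows, for instance, from the integral representation \eqref{F-2.8} since $f_1\le f_2$ forces the corresponding parameters to satisfy $\alpha_1\le\alpha_2$, $\beta_1\le\beta_2$, $\mu_1\le\mu_2$, and each parallel sum term is monotone in the measure). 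Applying this with $A=T_{a/h}$, $B=T_{b/h}$ gives $T_{a/h}\sigma_1 T_{b/h}\le T_{a/h}\sigma_2 T_{b/h}$ in $B(\cH)_+$.

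Finally, conjugating by $h^{1/2}$ preserves the order in the form sense, hence (by Lemma \ref{L-3.2}) in $\overline\cM_+$: from $X\le Y$ in $B(\cH)_+$ we get $h^{1/2}Xh^{1/2}\le h^{1/2}Yh^{1/2}$ as positive forms, i.e.\ $q_{a\sigma_1 b}\le q_{a\sigma_2 b}$, and therefore $a\sigma_1 b\le a\sigma_2 b$ in $\overline\cM_+$. I do not anticipate a genuine obstacle here; the only point requiring a little care is making sure the "clear from \eqref{F-3.7}" claim in the paper is backed by the well-definedness of $a\sigma_i b$ independently of $h$ (Lemma \ref{L-3.18}) and by the bounded-operator monotonicity statement, so that the argument does not secretly depend on the choice $h=a+b$. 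If one prefers to avoid even citing the bounded monotonicity as a black box, one can instead run the argument directly on the integral expression \eqref{F-2.9} for positive forms: $f_1\le f_2$ yields $\alpha_1\le\alpha_2$, $\beta_1\le\beta_2$ and $\mu_1\le\mu_2$, and since $\alpha\phi(\xi)$, $\beta\psi(\xi)$ and $\int\frac{1+t}{t}((t\phi){:}\psi)(\xi)\,d\mu(t)$ are each monotone in the respective parameter (the last because the integrand is nonnegative), one gets $(q_a\sigma_1 q_b)(\xi)\le(q_a\sigma_2 q_b)(\xi)$ for every $\xi$, hence $q_{a\sigma_1 b}\le q_{a\sigma_2 b}$ by Definition \ref{D-3.15}.
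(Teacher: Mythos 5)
Your main reduction is exactly what the paper intends: Proposition \ref{P-3.24} is given there with no argument beyond ``clear from \eqref{F-3.7}'', and writing $a\sigma_i b=h^{1/2}(T_{a/h}\sigma_i T_{b/h})h^{1/2}$ with $h=a+b$, invoking the order-preserving correspondence $f\leftrightarrow\sigma$ of \cite{KA} for the bounded operators $T_{a/h},T_{b/h}$, and then conjugating by $h^{1/2}$ (Lemma \ref{L-3.2}) is the intended proof. As long as the bounded-level monotonicity is cited from \cite{KA} as a known fact, this is correct and complete.

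However, the justification you offer for that bounded-level fact is false, and the ``alternative argument'' at the end, which relies on the same claim, does not work. It is not true that $f_1\le f_2$ pointwise forces $\mu_1\le\mu_2$ for the representing measures in \eqref{F-2.7}: take $f_1(s)=2s/(1+s)$ (the harmonic mean, with $\alpha_1=\beta_1=0$ and $\mu_1=\delta_1$) and $f_2(s)=(1+s)/2$ (the arithmetic mean, with $\alpha_2=\beta_2=1/2$ and $\mu_2=0$); then $f_1\le f_2$ on $(0,\infty)$ but $\mu_1=\delta_1\not\le0=\mu_2$. (The scalar comparisons do survive, since $\alpha_i=f_i(0^+)$ and $\beta_i=\lim_{s\to\infty}f_i(s)/s$, but the measures are not comparable in general, so neither the parenthetical remark nor the term-by-term estimate on \eqref{F-2.9} goes through.) The correct route to $A\sigma_1B\le A\sigma_2B$ for $A,B\in B(\cH)_+$ is the one used in \cite{KA}: for $A$ invertible, $A\sigma_iB=A^{1/2}f_i(A^{-1/2}BA^{-1/2})A^{1/2}$ and $f_1(X)\le f_2(X)$ by the spectral theorem, since $f_1\le f_2$ extends to $[0,\infty)$ via $f_i(0):=f_i(0^+)$; the general case follows by letting $A+\eps1\searrow A$ and using property (III). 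With that substitution your proof coincides with the paper's and is complete.
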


In particular, for every symmetric (i.e., $\tilde\sigma=\sigma$) operator mean $\sigma$ we have
\begin{align}\label{F-3.15}
2(a:b)\le a\sigma b\le{a+b\over2},\qquad a,b\in\overline\cM_+.
\end{align}
Furthermore, we note that \eqref{F-3.15} holds for general positive forms $\phi,\psi$, that is,
$2(\phi:\psi)\le\phi\sigma\psi\le(\phi+\psi)/2$. (The proof of this is not difficult from the
definition \eqref{F-2.9} and left to the reader.) But we do not know whether Proposition
\ref{P-3.24} holds for positive forms in general.

\subsection{Properties}\label{S-3.3}

In this subsection we extend properties of connections for bounded positive operators to those
for $\tau$-measurable operators.

Let $\sigma$ be any connection corresponding to an operator monotone function $f>0$ on
$[0,\infty)$, originally defined on $B(\cH)_+$ in \cite{KA} and extended to $\overline\cM_+$
in the previous subsection.

\begin{theorem}\label{T-3.25}
Let $a,b,a_i,b_i\in\overline\cM_+$.
\begin{itemize}
\item[\rm(1)] \emph{(Monotonicity).}\enspace
If $a_1\le a_2$ and $b_1\le b_2$, then $a_1\sigma b_1\le a_2\sigma b_2$.
\item[\rm(2)] \emph{(Decreasing convergence).}\enspace
If $a_n,b_n\in\overline\cM_+$ $(n\in\bN)$, $a_n\searrow a$ and $b_n\searrow b$ in the strong
resolvent sense, then $a_n\sigma b_n\searrow a\sigma b$ in the strong resolvent sense and hence
\begin{align}\label{F-3.16}
\Inf_n\,(a_n\sigma b_n)=\Bigl(\Inf_na_n\Bigr)\sigma\Bigl(\Inf_nb_n\Bigr).
\end{align}
\item[\rm(3)] \emph{(Concavity).}\enspace
We have $(a_1+a_2)\sigma(b_1+b_2)\ge a_1\sigma b_1+a_2\sigma b_2$.
\item[\rm(4)] \emph{(Transpose).}\enspace
We have $a\tilde\sigma b=b\sigma a$.
\end{itemize}
\end{theorem}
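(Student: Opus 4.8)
The plan is to reduce each of the four assertions to the corresponding known fact for Kubo--Ando connections on $B(\cH)_+$ by passing through the common ``denominator'' operator $h$ and the bounded operators $T_{a/h},T_{b/h}$, using the equivalence of the two definitions (Theorem \ref{T-3.23}) freely.

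For (1), monotonicity, I would first note that by Lemma \ref{L-3.2} the order on $\overline\cM_+$ is the form order, hence $q_{a_1}\le q_{a_2}$ and $q_{b_1}\le q_{b_2}$. Working with Definition \ref{D-3.15} via \eqref{F-2.9}, monotonicity is immediate once one knows that $\phi\mapsto(t\phi):\psi$ and $\psi\mapsto(t\phi):\psi$ are monotone; the latter is a direct consequence of Theorem \ref{T-2.2} (the parallel sum is the maximum over a set of forms that grows with $\phi$ and $\psi$). Alternatively, choose $h:=a_2+b_2$ (which dominates $a_1+b_1$ as well), so all four operators have bounded $T_{\cdot/h}$, and then $T_{a_1/h}\le T_{a_2/h}$, $T_{b_1/h}\le T_{b_2/h}$ in $\cM_+$, and the bounded-operator monotonicity (I) of \S\ref{S-2.2} together with conjugation by $h^{1/2}$ gives the claim. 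I expect this to be routine.

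For (2), decreasing convergence, I would take $h:=a_1+b_1$, set $T_n:=T_{a_n/h}$, $S_n:=T_{b_n/h}$, so by Lemma \ref{L-3.9} $T_n\searrow T:=T_{a/h}$ and $S_n\searrow S:=T_{b/h}$ strongly. The bounded-operator axiom (III) then yields $T_n\sigma S_n\searrow T\sigma S$ strongly, and conjugating by $h^{1/2}$ and invoking Lemma \ref{L-3.9} once more gives $a_n\sigma b_n\searrow a\sigma b$ in the strong resolvent sense; the reformulation \eqref{F-3.16} is just Lemma \ref{L-2.4}. The one subtlety to verify carefully is that $T_{(a_n\sigma b_n)/h}=T_n\sigma S_n$, which follows from Definition \ref{D-3.16} with this same $h$ and Lemma \ref{L-3.18}. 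For (4), transpose, since $\tilde f(t)=tf(1/t)$ corresponds on the bounded level to $A\tilde\sigma B=B\sigma A$, conjugating $T_{a/h}\tilde\sigma T_{b/h}=T_{b/h}\sigma T_{a/h}$ by $h^{1/2}$ gives $a\tilde\sigma b=b\sigma a$ directly from \eqref{F-3.7}; this is essentially immediate.

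The main obstacle is (3), concavity (superadditivity). Here the two summands $a_1\sigma b_1$ and $a_2\sigma b_2$ and the joint connection $(a_1+a_2)\sigma(b_1+b_2)$ naturally involve \emph{different} denominators, so a single choice of $h$ does not present everything at once. The strategy I would use is to take $h:=a_1+a_2+b_1+b_2$, which dominates each of $a_1+b_1$, $a_2+b_2$, and $(a_1+a_2)+(b_1+b_2)$, so that $T_{a_1/h},T_{a_2/h},T_{b_1/h},T_{b_2/h}$ are all bounded, and $T_{(a_1+a_2)/h}=T_{a_1/h}+T_{a_2/h}$, similarly for the $b$'s. Then the inequality to prove becomes, after conjugation by $h^{1/2}$, exactly the concavity of the Kubo--Ando connection on $B(\cH)_+$, namely $(A_1+A_2)\sigma(B_1+B_2)\ge A_1\sigma B_1+A_2\sigma B_2$ for bounded positive operators, which is part of the classical \cite{KA} theory (it follows from the transformer inequality (II)). The only care needed is to check that $T_{(a_i\sigma b_i)/h}=T_{a_i/h}\sigma T_{b_i/h}$ for $i=1,2$ with this common $h$ --- again a consequence of Definition \ref{D-3.16} and the well-definedness Lemma \ref{L-3.18} --- and then the superadditivity of the $T$'s transfers through $h^{1/2}(\,\cdot\,)h^{1/2}$, which is monotone, to give the stated inequality in $\overline\cM_+$ via Lemma \ref{L-3.2}.
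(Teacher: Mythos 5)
Your proposal is correct and follows essentially the same route as the paper: the paper also reduces everything to the Kubo--Ando properties of $T_{a/h}\sigma T_{b/h}$ under a common denominator $h$ (noting that (1), (3), (4) can alternatively be read off from Definition \ref{D-3.15} via positive forms), and its detailed argument for (2) is exactly yours — $h:=a_1+b_1$, Lemma \ref{L-3.9} twice, and axiom (III) for bounded operators. Your extra care in (3) with the common denominator $h:=a_1+a_2+b_1+b_2$ and the identity $T_{(a_1+a_2)/h}=T_{a_1/h}+T_{a_2/h}$ simply fills in what the paper leaves as "immediately seen."
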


\begin{proof}
All of (1)--(4) are immediately seen by applying the same properties for bounded positive
operators to $T_{a/h}\sigma T_{b/h}$ in the definition \eqref{F-3.7}. The properties
(1), (3) and (4) are also obvious by Definition \ref{D-3.15} since connections of positive
forms satisfy those (see \cite{Ko6} and Proposition \ref{P-2.6}).  We here prove (2) for
instance. Let $h:=a_1+b_1$. Since $T_{a_n/h}\searrow T_{a/h}$ and $T_{b_n/h}\searrow T_{b/h}$
strongly by Lemma \ref{L-3.9}, one has $T_{a_n/h}\sigma T_{b_n/h}\searrow T_{a/h}\sigma T_{b/h}$
strongly. Therefore, by Lemma \ref{L-3.9} again one has
$$
a_n\sigma b_n=h^{1/2}(T_{a_n/h}\sigma T_{b_n/h})h^{1/2}
\,\searrow\, h^{1/2}(T_{a/h}\sigma T_{b/h})h^{1/2}=a\sigma b
$$
in the strong resolvent sense, which is also written as \eqref{F-3.16} due to Lemma \ref{L-2.4}.
\end{proof}

Let $\overline\cM_{++}:=\{a\in\overline\cM_+;\,a^{-1}\in\overline\cM_+\}$. The next
proposition extends the formula $A\sigma^*B=(A^{-1}\sigma B^{-1})^{-1}$ for $A,B\in B(\cH)_{++}$
to $\tau$-measurable operators.

\begin{proposition}[Adjoint]\label{P-3.26}
Let $\sigma^*$ be the adjoint of $\sigma$. For every $a,b\in\overline\cM_+$ we have
$(q_a\sigma^*q_b)^{-1}=q_a^{-1}\sigma q_b^{-1}$. Moreover, if $a,b\in\overline\cM_{++}$, then
$a\sigma^*b,a^{-1}\sigma b^{-1}\in\overline\cM_{++}$ and
$$
a\sigma^*b=(a^{-1}\sigma b^{-1})^{-1}.
$$
\end{proposition}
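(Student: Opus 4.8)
The plan is to first establish the form-level identity $(q_a\sigma^* q_b)^{-1} = q_a^{-1}\sigma q_b^{-1}$ for arbitrary $a,b\in\overline\cM_+$, and then upgrade it to the operator equation $a\sigma^* b = (a^{-1}\sigma b^{-1})^{-1}$ under the extra hypothesis $a,b\in\overline\cM_{++}$. For the form-level identity, I would reduce to the case where $a,b$ are bounded from below by a positive multiple of the identity via an approximation: set $h:=a+b$ and $a_\eps:=a+\eps h$, $b_\eps:=b+\eps h$, so that $a_\eps\searrow a$, $b_\eps\searrow b$ in the strong resolvent sense, hence $q_a = \Inf_\eps q_{a_\eps}$ and $q_a^{-1} = \sup_\eps q_{a_\eps}^{-1}$ (using \eqref{F-2.5}), and similarly for $b$. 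On the $a_\eps,b_\eps$ side one has genuine boundedness-from-below (at least on the support of $h$), so Proposition \ref{P-2.7} applies — or, more directly, one runs the same computation as in Proposition \ref{P-2.7}/\ref{P-2.8} with the roles arranged so that $q_{a_\eps}^{-1}\sigma q_{b_\eps}^{-1} = (q_{a_\eps}\sigma^* q_{b_\eps})^{-1}$. Then one passes to the limit: by Theorem \ref{T-3.25}(2) applied to $\sigma^*$, $q_{a_\eps}\sigma^* q_{b_\eps} = q_{a_\eps\sigma^* b_\eps}$ decreases to $q_{a\sigma^* b}$ in the $\Inf$ sense, so its inverse is the supremum $\sup_\eps (q_{a_\eps}\sigma^* q_{b_\eps})^{-1} = \sup_\eps (q_{a_\eps}^{-1}\sigma q_{b_\eps}^{-1})$; and the monotone-convergence argument of Proposition \ref{P-2.8}, combined with \eqref{F-2.6}, identifies this last supremum with $q_a^{-1}\sigma q_b^{-1}$.

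A technical subtlety is that $a$ or $b$ need not have full support, so $h=a+b$ need not be injective and $q_{a_\eps}^{-1}$ involves $\infty$ on $\cK^\perp$; this is exactly the reason Proposition \ref{P-2.7} restricts to the bounded case, and why in Proposition \ref{P-2.8} one gets only the inequality $\phi\sigma^*\psi\le(\phi^{-1}\sigma\psi^{-1})^{-1}$ for general forms (cf. Remark \ref{R-2.10}). The point that saves us in the $\tau$-measurable setting is Remark \ref{R-3.11}: the identity \eqref{F-3.6}, i.e. $\Inf_n(q_{a_n}+q_{b_n}) = \Inf_n q_{a_n} + \Inf_n q_{b_n}$, does hold for $\tau$-measurable operators (it fails for general positive forms), and this additivity is precisely what is needed to carry the sum inside the $\Inf$ in the computation and get equality rather than merely $\ge$. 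So the first step, while following the template of Propositions \ref{P-2.7}–\ref{P-2.8}, genuinely uses the $\tau$-measurability through Theorem \ref{T-3.10}/Remark \ref{R-3.11}. I expect this verification — making sure every supremum/infimum interchange is licensed and that no spurious $\infty$ survives — to be the main obstacle.

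For the second assertion, assume $a,b\in\overline\cM_{++}$, so $a^{-1},b^{-1}\in\overline\cM_+$ and $q_a^{-1} = q_{a^{-1}}$, $q_b^{-1} = q_{b^{-1}}$ (the inverse of a positive form corresponding to a non-singular affiliated operator corresponds to the operator inverse). By Proposition \ref{P-3.14} applied to $\sigma$, $q_{a^{-1}}\sigma q_{b^{-1}} = q_c$ for a unique $c := a^{-1}\sigma b^{-1}\in\overline\cM_+$; I must check $c\in\overline\cM_{++}$. Since $f>0$ on $(0,\infty)$ the connection $\sigma$ satisfies a two-sided estimate on strictly positive bounded operators, and concretely $a^{-1}\sigma b^{-1}\ge (\text{const})\,(a^{-1}:b^{-1})$ together with $a^{-1}:b^{-1} = (a\,\dot+\,b)^{-1}\cdot(\text{something invertible})$ — more cleanly, $2(a^{-1}:b^{-1})\le a^{-1}\sigma b^{-1}$ when $\sigma$ is between harmonic and arithmetic, and in general $f(t)\ge c_0\min(1,t)$ for some $c_0>0$ on the (bounded, bounded-below) spectrum of $T_{b^{-1}/h}$ — so $c$ is bounded below by a positive multiple of an element of $\overline\cM_{++}$, hence $c\in\overline\cM_{++}$ by Lemma \ref{L-3.1}. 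Then from the form identity, $q_{a\sigma^* b}^{-1} = q_{a^{-1}}\sigma q_{b^{-1}} = q_c$, i.e. $q_{a\sigma^* b} = q_c^{-1} = q_{c^{-1}}$, whence $a\sigma^* b = c^{-1} = (a^{-1}\sigma b^{-1})^{-1}$ in $\overline\cM_+$; since $c\in\overline\cM_{++}$ this also shows $a\sigma^* b\in\overline\cM_{++}$. The only mildly delicate point here is the lower-boundedness of $a^{-1}\sigma b^{-1}$, which I would handle by choosing $\lambda$ with $\lambda^{-1}(a^{-1})\le b^{-1}\le\lambda(a^{-1})$ fails in general — instead use $h = a^{-1}+b^{-1}$, note $T_{a^{-1}/h} + T_{b^{-1}/h} = s(h) = 1$, and use that $f$ is bounded below on $[0,1]$ by a positive constant to conclude $T_{a^{-1}/h}\sigma T_{b^{-1}/h} = f$-applied-to-something $\ge \eps_0\cdot 1$, giving $c\ge\eps_0 h\in\overline\cM_{++}$.
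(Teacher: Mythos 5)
Your overall strategy for the first identity --- approximate $a,b$ from above by operators to which Proposition \ref{P-2.7} applies, pass to the limit via Theorem \ref{T-3.25},(2) and \eqref{F-2.5}, and close the remaining inequality with Proposition \ref{P-2.8} --- is exactly the paper's, but your choice of approximants breaks the one step that matters. With $h=a+b$ you only have $a+\eps h\ge\eps h$, and $h$ need not be bounded below by a positive constant even on its support (its spectrum may accumulate at $0$), nor need $s(h)=1$. So $a+\eps h$ and $b+\eps h$ are neither bounded nor bounded from below, and neither case of Proposition \ref{P-2.7} gives $(q_{a_\eps}\sigma^*q_{b_\eps})^{-1}=q_{a_\eps}^{-1}\sigma q_{b_\eps}^{-1}$; the fallback ``run the same computation as in Propositions \ref{P-2.7}--\ref{P-2.8}'' is not a proof, since that computation yields only an inequality in general (Remark \ref{R-2.10}). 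The paper takes $a_n:=(1/n)e_{1/n}+\int_{(1/n,\infty)}\lambda\,de_\lambda$ (one could equally use $a+\eps1$): because $a\in\overline\cM_+$ is densely defined, $a_n\ge(1/n)1$ on all of $\cH$, so Proposition \ref{P-2.7} applies verbatim, and then $(q_a\sigma^*q_b)^{-1}=\sup_n(q_{a_n}^{-1}\sigma q_{b_n}^{-1})\le q_a^{-1}\sigma q_b^{-1}$ by monotonicity, with Proposition \ref{P-2.8} supplying the reverse inequality. Your appeal to Remark \ref{R-3.11}/Theorem \ref{T-3.10} as ``precisely what is needed'' is a red herring: the additivity of $\Inf$ under sums plays no role in this argument.

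The second assertion also has a gap. Your route to $a^{-1}\sigma b^{-1}\in\overline\cM_{++}$ rests on the claim that the representing function $f$ is bounded below on $[0,1]$ by a positive constant, so that $T_{a^{-1}/h}\sigma T_{b^{-1}/h}\ge\eps_0\,1$. This is false for many connections: $f(t)=t$ vanishes at $0$, and for the geometric mean one gets $S\#(1-S)=(S(1-S))^{1/2}$ with $S=T_{a^{-1}/h}$, which is not bounded below whenever the spectrum of $S$ accumulates at $0$ or $1$ --- something that can happen even with $a,b\in\overline\cM_{++}$. No such estimate is needed. Once the form identity is in hand, $(q_{a\sigma^*b})^{-1}=q_{a^{-1}}\sigma q_{b^{-1}}=q_{a^{-1}\sigma b^{-1}}$ with $a^{-1}\sigma b^{-1}\in\overline\cM_+$ densely defined; a form inverse can be densely defined only if the underlying operator has trivial kernel, so $a\sigma^*b$ is non-singular and $(a\sigma^*b)^{-1}=a^{-1}\sigma b^{-1}\in\overline\cM_+$, i.e.\ $a\sigma^*b\in\overline\cM_{++}$, and then $a^{-1}\sigma b^{-1}$ has $\tau$-measurable inverse $a\sigma^*b$, so it lies in $\overline\cM_{++}$ as well. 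This is how the paper concludes, with no spectral lower bound on $f$.
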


\begin{proof}
Let $a,b\in\overline\cM_+$. With the spectral decomposition
$a=\int_0^\infty\lambda\,de_\lambda$ set $a_n:=(1/n)e_n+\int_{(1/n,\infty)}\lambda\,de_\lambda$
and similarly $b_n$ for $n\in\bN$. Since $a_n\searrow a$ and $b_n\searrow b$ in the strong
resolvent sense (even in the measure topology), it follows from Theorem \ref{T-3.25},(2)
that $a_n\sigma^*b_n\searrow a\sigma^*b$ in the strong resolvent sense, that is,
$q_a\sigma^*q_b=\Inf_n(q_{a_n}\sigma^*q_{b_n})$. Therefore,
$$
(q_a\sigma^*q_b)^{-1}=\sup_n(q_{a_n}\sigma^*q_{b_n})^{-1}
=\sup_n(q_{a_n}^{-1}\sigma q_{b_n}^{-1})\le q_a^{-1}\sigma q_b^{-1},
$$
where we have used \cite[Lemma 16,(ii)]{Ko6} for the first equality and Proposition \ref{P-2.7}
for the second. Since Proposition \ref{P-2.8} gives the reverse inequality, the first assertion
follows.

Next, assume $a,b\in\overline\cM_{++}$. Then
$a\sigma^*b,a^{-1}\sigma b^{-1}\in\overline\cM_+$. In this case, the first assertion means that
$q_{(a\sigma^*b)^{-1}}=q_{a^{-1}\sigma b^{-1}}$ and so $(a\sigma^*b)^{-1}=a^{-1}\sigma b^{-1}$.
This implies the latter assertion.
\end{proof}

We note that the convergence $a_n\sigma b_n\searrow a\sigma b$ in the measure topology does not
hold in general even when $a_n\searrow a$ and $b_n\searrow b$ in the measure topology. In fact,
it is known \cite[Example 4.5]{Hi1} that there are $A,B\in B(\cH)_+$ such that
$\lim_{\eps\searrow0}\|(A+\eps I)\#B\|>\|A\#B\|$. The situation being so, the next result
seems rather best possible for the convergence in the measure topology.

\begin{proposition}\label{P-3.27}
Let $a_n,b_n\in\overline\cM_+$ be such that $a_n\searrow a$ and $b_n\searrow b$ in the strong
resolvent sense. Assume that one of the following conditions is satisfied:
\begin{itemize}
\item[\rm(1)] $a_1,b_1\in\overline\fS$,
\item[\rm(2)] $a_1\in\overline\fS$ and $\lim_{t\to\infty}f(t)/t=0$,
\item[\rm(3)] $b_1\in\overline\fS$ and $f(0^+)=0$.
\end{itemize}
Then $a_n\sigma b_n\in\overline\fS$ for all $n$ and $a_n\sigma b_n\searrow a\sigma b$ in the
measure topology.
\end{proposition}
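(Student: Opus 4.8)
The plan is to reduce the statement, via Theorems \ref{T-3.25} and \ref{T-3.8}, to the single claim that $a_1\sigma b_1\in\overline\fS$. Since $\{a_n\}$ and $\{b_n\}$ are decreasing, Theorem \ref{T-3.25}\,(1) makes $\{a_n\sigma b_n\}$ a decreasing sequence in $\overline\cM_+$, and Theorem \ref{T-3.25}\,(2) gives $a_n\sigma b_n\searrow a\sigma b$ in the strong resolvent sense. Once we know $a_1\sigma b_1\in\overline\fS_+$, the inequality $a_n\sigma b_n\le a_1\sigma b_1$ together with $\mu_t(x)\le\mu_t(y)$ for $0\le x\le y$ forces $a_n\sigma b_n\in\overline\fS$ for every $n$, and then Theorem \ref{T-3.8}, applied to the decreasing sequence $\{a_n\sigma b_n\}$ whose first term lies in $\overline\fS_+$, delivers $a_n\sigma b_n\to a\sigma b$ in the measure topology. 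So everything comes down to verifying $a_1\sigma b_1\in\overline\fS$ under each of (1)--(3).

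Under (1) this is immediate: Lemma \ref{L-3.13} gives $a_1\sigma b_1\le\lambda(a_1+b_1)$ for some $\lambda>0$, and $a_1+b_1\in\overline\fS$ because $\overline\fS$ is a linear subspace.

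The heart of the matter is case (2), where I would prove the following: \emph{if $f$ is the representing function of $\sigma$ and $\lim_{t\to\infty}f(t)/t=0$, then $c\sigma d\in\overline\fS$ whenever $c\in\overline\fS_+$ and $d\in\overline\cM_+$.} Being operator monotone, $f$ is nondecreasing with finite value $f(T)$ at every $T>0$; combined with $\lim_{t\to\infty}f(t)/t=0$, for each $\eps>0$ one can choose $T_\eps$ with $f(t)\le\eps t$ for $t\ge T_\eps$, and then $C_\eps:=f(T_\eps)$ satisfies $f(t)\le\eps t+C_\eps$ for all $t>0$. The affine function $t\mapsto\eps t+C_\eps$ is operator monotone, and by \eqref{F-2.7}--\eqref{F-2.8} its connection is $(A,B)\mapsto C_\eps A+\eps B$; hence Proposition \ref{P-3.24} yields $c\sigma d\le C_\eps c+\eps d$ in $\overline\cM_+$. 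By subadditivity of generalized $s$-numbers, $\mu_{2s}(c\sigma d)\le C_\eps\mu_s(c)+\eps\mu_s(d)$ for all $s>0$. Choosing $s_0$ with $\mu_{s_0}(d)<\infty$, we get $\mu_{2s}(c\sigma d)\le C_\eps\mu_s(c)+\eps\mu_{s_0}(d)$ for $s\ge s_0$; letting $s\to\infty$ (so $\mu_s(c)\to0$, since $c\in\overline\fS$) and then $\eps\to0$ shows $\lim_{s\to\infty}\mu_s(c\sigma d)=0$, i.e.\ $c\sigma d\in\overline\fS$. Taking $c=a_1$, $d=b_1$ disposes of case (2).

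Case (3) follows from case (2) by transposition. By Theorem \ref{T-3.25}\,(4), $a_n\sigma b_n=b_n\tilde\sigma a_n$, and the representing function $\tilde f(t)=tf(t^{-1})$ of $\tilde\sigma$ satisfies $\lim_{t\to\infty}\tilde f(t)/t=\lim_{t\to\infty}f(t^{-1})=f(0^+)=0$ by hypothesis, so the estimate just proved (applied to $\tilde\sigma$, with first argument $b_1\in\overline\fS$) gives $b_1\tilde\sigma a_1=a_1\sigma b_1\in\overline\fS$. The only genuinely new ingredient is the $s$-number estimate of case (2); the remaining point worth watching is the consistent identification $f(0^+)=\alpha$ and $\lim_{t\to\infty}f(t)/t=\beta$ coming from \eqref{F-2.7}, which is precisely what makes (2) the condition ``$\beta=0$'', (3) the condition ``$\alpha=0$'', and the transposition step in case (3) legitimate.
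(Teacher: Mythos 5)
Your proof is correct, but the key step is handled by a genuinely different device than the one in the paper. Both proofs make the same reduction: by Theorem \ref{T-3.25} the sequence $a_n\sigma b_n$ decreases to $a\sigma b$ in the strong resolvent sense, and everything hinges on showing $a_1\sigma b_1\in\overline\fS$, after which Theorem \ref{T-3.8} finishes the job; case (1) is disposed of identically via Lemma \ref{L-3.13}. The difference is in which of (2), (3) is proved directly and how. The paper proves (3) first: choosing $h\ge1$ with $a_1+b_1\le h$, it writes $a_1\sigma b_1=h^{1/2}(T_{a_1/h}\sigma T_{b_1/h})h^{1/2}$, bounds $T_{a_1/h}\sigma T_{b_1/h}\le 1\sigma T_{b_1/h}=f(T_{b_1/h})$, and invokes the identity $\mu_t(f(x))=f(\mu_t(x))$ (valid since $f(0^+)=0$ and $f$ is increasing) to conclude $f(T_{b_1/h})\in\overline\fS$; case (2) then follows by transposition. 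You instead prove (2) directly from the scalar estimate $f(t)\le\eps t+C_\eps$ (available exactly when $\lim_{t\to\infty}f(t)/t=0$), upgrade it to $c\sigma d\le C_\eps c+\eps d$ via Proposition \ref{P-3.24}, and use subadditivity of generalized $s$-numbers to get $\limsup_{s\to\infty}\mu_{2s}(c\sigma d)\le\eps\mu_{s_0}(d)$ for every $\eps>0$; case (3) is then the transposed statement. Your route is somewhat more elementary --- it avoids the normalization $h\ge1$, the factorization through $T_{a/h}$, and the $\mu_t\circ f=f\circ\mu_t$ formula from Fack--Kosaki, needing only the comparison principle for connections and the basic $s$-number inequalities --- and it yields along the way the slightly more quotable intermediate fact that $c\sigma d\in\overline\fS$ whenever $c\in\overline\fS_+$, $d\in\overline\cM_+$ and $\lim_{t\to\infty}f(t)/t=0$. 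The paper's argument, by contrast, stays entirely inside the structural framework of Definition \ref{D-3.16} that it uses throughout the section. Both are complete; your identification $f(0^+)=\alpha$, $\lim_{t\to\infty}f(t)/t=\beta$ from \eqref{F-2.7}, which legitimizes the transposition step, is also correct.
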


\begin{proof}
By Theorem \ref{T-3.25},(2) 
we have $a_n\sigma b_n\searrow a\sigma b$ in the strong
resolvent sense. Once $a_1\sigma b_1\in\overline\fS$ is shown, by Theorem \ref{T-3.8} we
have $a_n\sigma b_n\searrow a\sigma b$ in the measure topology. The case (1) is obvious since
$a_1\sigma b_1\le\lambda(a_1+b_1)$ for some $\lambda>0$ by Lemmas \ref{L-3.13} and \ref{L-3.2}.
Assume (3). Choose an $h\in\overline\cM_+$ such that $h\ge1$ and $a_1+b_1\le h$. We write
$a_1\sigma b_1=h^{1/2}(T_{a_1/h}\sigma T_{b_1/h})h^{1/2}$. Since $b_1=h^{1/2}T_{b_1/h}h^{1/2}$
is in $\overline\fS$ and $h\ge1$, it follows that $T_{b_1/h}\in\cM_+$ is in $\overline\fS$ as
well. Note that
\begin{align}\label{F-3.17}
T_{a_1/h}\sigma T_{b_1/h}\le1\sigma T_{b_1/h}=f(T_{b_1/h}).
\end{align}
Since $f(0^+)=0$ and $f$ is strictly increasing on $(0,\infty)$, by \cite[Lemma 2.5,(iv)]{FK}
we have $\mu_t(f(T_{b_1/h}))=f(\mu_t(T_{b_1/h}))$ for all $t>0$, which implies that
$f(T_{b_1/h})\in\overline\fS$ and hence $T_{a_1/h}\sigma T_{b_1/h}\in\overline\fS$ thanks to
\eqref{F-3.17}. Therefore, $a_1\sigma b_1\in\overline\fS$ follows as well. The case (2) is
shown immediately from (3) since $a_1\sigma b_1=b_1\tilde\sigma a_1$ and
$\tilde f(0^+)=\lim_{t\to\infty}f(t)/t=0$, where $\tilde f$ is the transpose of $f$.
\end{proof}

\begin{theorem}[Transformer inequality]\label{T-3.28}
For every $a,b\in\overline\cM_+$ and $c\in\overline\cM$ we have
\begin{align}\label{F-3.18}
c^*(a\sigma b)c\le(c^*ac)\sigma(c^*bc).
\end{align}
\end{theorem}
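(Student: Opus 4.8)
The plan is to reduce the $\tau$-measurable case to the known transformer inequality for bounded positive operators (stated just before Lemma~\ref{L-3.17}) by a factorization trick, exactly in the spirit of Definition~\ref{D-3.16}. First I would choose $h\in\overline\cM_+$ large enough to dominate everything in sight: since $a,b,c^*ac,c^*bc\in\overline\cM_+$ and $c^*(a+b)c=c^*ac+c^*bc$, pick $h\in\overline\cM_+$ with $a+b\le\lambda h$ and $c^*ac+c^*bc\le\lambda h$ for some $\lambda>0$ (e.g.\ $h=a+b+c^*ac+c^*bc$). Write $a=h^{1/2}T_{a/h}h^{1/2}$, $b=h^{1/2}T_{b/h}h^{1/2}$ with $T_{a/h},T_{b/h}\in(s(h)\cM s(h))_+$, so that $a\sigma b=h^{1/2}(T_{a/h}\sigma T_{b/h})h^{1/2}$ by Definition~\ref{D-3.16} (justified by Lemma~\ref{L-3.18}).

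The key step is to rewrite $c^*(a\sigma b)c$ so that the bounded transformer inequality applies. We have
\begin{align*}
c^*(a\sigma b)c = c^*h^{1/2}(T_{a/h}\sigma T_{b/h})h^{1/2}c = (h^{1/2}c)^*(T_{a/h}\sigma T_{b/h})(h^{1/2}c).
\end{align*}
Here $h^{1/2}c$ is a priori unbounded; the point is that $T_{a/h}\sigma T_{b/h}$ is supported on $s(h)\cH$, and $h^{1/2}c$ maps into (the closure of) the range of $h^{1/2}$. To make this rigorous I would first prove the inequality under the extra hypothesis that $h$ is bounded and bounded below (so $h^{1/2}c\in\overline\cM$ and all operators involved are controllable), then remove it by approximation. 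Under that hypothesis, $(h^{1/2}c)^*$ is a legitimate operator and the transformer inequality for bounded connections — or rather its $\tau$-measurable analogue, which one gets from the bounded case by the same factorization — gives
\begin{align*}
(h^{1/2}c)^*(T_{a/h}\sigma T_{b/h})(h^{1/2}c) \le \bigl((h^{1/2}c)^*T_{a/h}(h^{1/2}c)\bigr)\sigma\bigl((h^{1/2}c)^*T_{b/h}(h^{1/2}c)\bigr) = (c^*ac)\sigma(c^*bc),
\end{align*}
the last equality because $(h^{1/2}c)^*T_{a/h}(h^{1/2}c)=c^*h^{1/2}T_{a/h}h^{1/2}c=c^*ac$, and likewise for $b$.

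For the approximation removing the boundedness of $h$, I would use the spectral cutoffs: with $h=\int_0^\infty\lambda\,de_\lambda$ set $h_n:=(1/n)e_{1/n}+\int_{(1/n,n]}\lambda\,de_\lambda+n\,e_{(n,\infty)}$ (or a similar truncation making $h_n$ bounded and $\ge 1/n$), and replace $a,b$ by $a_n:=e_{(1/n,n]}\,a\,e_{(1/n,n]}$-type truncations so that $a_n\le\lambda h_n$, $a_n\nearrow a$ in an appropriate monotone sense. Then $a_n\sigma b_n\nearrow a\sigma b$ and $c^*a_nc\nearrow c^*ac$, $c^*b_nc\nearrow c^*bc$ by Lemma~\ref{L-3.4}, and the monotonicity and continuity properties of $\sigma$ (Theorem~\ref{T-3.25}(1)--(2), together with the increasing version in Lemma~\ref{L-2.4}) let me pass to the limit on both sides of the already-established truncated inequality. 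Alternatively, and perhaps more cleanly, one can argue entirely at the level of positive forms: by Definition~\ref{D-3.15} it suffices to show $q_{c^*(a\sigma b)c}\le q_{(c^*ac)\sigma(c^*bc)}$, and since $q_{c^*xc}(\xi)=q_x(\overline{c}\,\xi)$ where $\overline c$ is the closure of $c$ restricted appropriately, this transports the transformer inequality for connections of positive forms (which follows from \eqref{F-2.9}, the corresponding inequality for parallel sums in \cite{Ko6}, and the fact that $q_x(c\,\cdot)$ behaves correctly under the substitution) — here one uses that $(t\phi):\psi$ satisfies $q_{c^*(\,\cdot\,)c}$-compatible transformer bounds, which is \cite[Corollary 7]{Ko6} or its parallel-sum analogue.

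The main obstacle I anticipate is the domain/closability bookkeeping for the unbounded "middle" operator $h^{1/2}c$: one must make sense of $(h^{1/2}c)^*(T_{a/h}\sigma T_{b/h})(h^{1/2}c)$ as an inequality between positive forms without assuming $h^{1/2}c\in\overline\cM$, and verify that the bounded transformer inequality genuinely survives the limit rather than only giving a one-sided form-domain inclusion in the wrong direction. The cleanest route around this is the positive-form formulation in the last paragraph, where everything reduces to the form inequality $(\phi\sigma\psi)(c\xi)\le((\phi\circ c)\,\sigma\,(\psi\circ c))(\xi)$ for positive forms, proved termwise in \eqref{F-2.9} using the corresponding property of parallel sums from \cite{Ko6}; then Definition~\ref{D-3.15} and the uniqueness in Proposition~\ref{P-3.14} convert it back to the operator inequality \eqref{F-3.18}.
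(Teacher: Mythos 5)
Your first route (factor $c^*(a\sigma b)c=(h^{1/2}c)^*(T_{a/h}\sigma T_{b/h})(h^{1/2}c)$ and invoke the bounded transformer inequality) is circular as stated: even after truncating $h$ to be bounded and bounded below, the operator $h^{1/2}c$ is still only $\tau$-measurable, so what you need is the transformer inequality for bounded $A,B$ and \emph{unbounded} $C\in\overline\cM$ --- a statement of essentially the same difficulty as the theorem itself, not the Kubo--Ando inequality quoted before Lemma~\ref{L-3.17}. Saying one "gets it from the bounded case by the same factorization" does not discharge this.

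Your second route is the one the paper actually takes (Lemma~\ref{L-3.29} for parallel sums via the maximality characterization of Theorem~\ref{T-2.2}, then termwise integration in \eqref{F-2.9}), but it hides the real work in the phrase $q_{c^*xc}(\xi)=q_x(\overline c\,\xi)$ "restricted appropriately". For unbounded $c$ this identity is \emph{false} in general: $(c^*ac)^{1/2}=|a^{1/2}c|$ is the closure of $a^{1/2}c$, and $\{\xi\in\cD(c):c\xi\in\cD(a^{1/2})\}$ is only a core of it, so $q_{c^*ac}(\xi)$ can be finite at vectors where $q_a(c\xi)$ is undefined (Remark~\ref{R-3.30} makes exactly this point --- the identity holds only when $c\in\cM$). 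The paper's proof consists almost entirely of repairing this: choosing approximating sequences $\eta_n\to\eta$ in the core with $q_{c^*ac}(\eta_n)\to q_{c^*ac}(\eta)$ and using lower semi-continuity of the left-hand form, both in Lemma~\ref{L-3.29} and again in the main proof. Moreover, to run the termwise argument one must first know that finiteness of $q_{(c^*ac)\sigma(c^*bc)}(\xi)$ forces $\xi$ into (the closure domain of) $(a+b)^{1/2}c$; the paper secures this by first assuming $\alpha,\beta>0$ in \eqref{F-2.7} and then removing that assumption with the $f\mapsto f+1+t$ trick from the proof of Theorem~\ref{T-3.23}. Neither the approximation step nor the $\alpha,\beta>0$ reduction appears in your proposal, and without them the argument does not close.
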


We first prove the case of parallel sum as a lemma.

\begin{lemma}\label{L-3.29}
For every $a,b\in\overline\cM_+$ and $c\in\overline\cM$ we have $c^*(a:b)c\le(c^*ac):(c^*bc)$.
\end{lemma}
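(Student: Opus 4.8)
The plan is to reduce the $\tau$-measurable statement to the variational characterization of the parallel sum already available for positive forms (Theorem \ref{T-2.2}) together with the identification $q_{a:b} = q_a : q_b$ proved in Lemma \ref{L-3.22}. First I would recall that for $a,b\in\overline\cM_+$ and $c\in\overline\cM$ the operator $c^*ac$ is again $\tau$-measurable positive: indeed $c^*ac = (a^{1/2}c)^*(a^{1/2}c)$, and since $\overline\cM$ is a $*$-algebra, $a^{1/2}c\in\overline\cM$ and hence $c^*ac\in\overline\cM_+$; the associated positive form is $q_{c^*ac}(\xi) = \|a^{1/2}c\xi\|^2 = q_a(c\xi)$ for $\xi\in\cD(c)\cap\cD(a^{1/2}c)$, with the appropriate care that $\cD((c^*ac)^{1/2})$ is the closure-domain on which $a^{1/2}c$ is defined as a $\tau$-measurable operator. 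Similarly $q_{c^*bc}(\xi) = q_b(c\xi)$.

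The key step is then the following chain: by Lemma \ref{L-3.22} it suffices to show $q_{c^*ac}:q_{c^*bc} \ge q_{c^*(a:b)c}$ as positive forms, since the order $a\le b$ in $\overline\cM_+$ is the form order (Lemma \ref{L-3.2}). By the maximality statement of Theorem \ref{T-2.2}, it is enough to verify that the positive form $q_{c^*(a:b)c}$ satisfies the defining inequality of the parallel sum $q_{c^*ac}:q_{c^*bc}$, namely
\begin{align*}
q_{c^*(a:b)c}(\xi_1+\xi_2) \le q_{c^*ac}(\xi_1) + q_{c^*bc}(\xi_2), \qquad \xi_1,\xi_2\in\cH.
\end{align*}
Writing $\eta_i := c\xi_i$ (for $\xi_i$ in the appropriate domain, and using that both sides are $+\infty$ otherwise), the left side is $q_{a:b}(c\xi_1 + c\xi_2) = (q_a:q_b)(\eta_1+\eta_2)$, and the right side is $q_a(\eta_1) + q_b(\eta_2)$; so the inequality to prove is exactly $(q_a:q_b)(\eta_1+\eta_2) \le q_a(\eta_1) + q_b(\eta_2)$, which is precisely the defining property of $q_a:q_b$ from Theorem \ref{T-2.2} applied to the decomposition $\eta_1+\eta_2$. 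This closes the argument.

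The main obstacle I anticipate is the domain bookkeeping in the step $q_{c^*ac}(\xi) = q_a(c\xi)$: one must check that $c$ maps $\cD((c^*ac)^{1/2})$ appropriately into $\cD(a^{1/2})$ and that the values agree, which is a statement about the polar decomposition of the $\tau$-measurable operator $a^{1/2}c$ and the equality $(c^*ac)^{1/2} = |a^{1/2}c|$. This is handled by the standard machinery for $\tau$-measurable operators — in particular the fact that $\tau$-dense domains and strong products behave well (cf.\ \cite{Ne,Te1} and \cite[Chap.~I, Proposition 12]{Te1} as used in Lemma \ref{L-3.3}) — so that $q_{c^*ac}$ is literally the form $\xi\mapsto q_a(c\xi)$ with the convention that it is $+\infty$ off the relevant domain. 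Once this identification is in place, the rest is the purely formal maximality argument above, with no further analysis needed; the full transformer inequality of Theorem \ref{T-3.28} then follows by feeding this lemma into the integral expression \eqref{F-2.9} (or \eqref{F-3.7}) as in the proof of Theorem \ref{T-3.23}.
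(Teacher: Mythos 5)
Your overall strategy is the paper's: reduce, via Theorem \ref{T-2.2} and Lemma \ref{L-3.2}, to the pointwise form inequality $q_{c^*(a:b)c}(\xi_1+\xi_2)\le q_{c^*ac}(\xi_1)+q_{c^*bc}(\xi_2)$ and then transport it through $c$. But the step you flag as ``domain bookkeeping'' and then dismiss is exactly where the proof has real content, and the claim you use to dismiss it is false. For $c\in\overline\cM$ genuinely unbounded, $a^{1/2}c$ denotes the \emph{strong} product, i.e.\ the closure of the algebraic product, and $\cD((c^*ac)^{1/2})=\cD(|a^{1/2}c|)$ can strictly contain $\{\xi\in\cD(c);\,c\xi\in\cD(a^{1/2})\}$ — it may even contain vectors not in $\cD(c)$. (A commutative example: $a^{1/2}$ multiplication by $1/f$ and $c$ multiplication by $f$ on $L^2[0,1]$ with $f$ and $1/f$ both unbounded; then $c^*ac=1$ but the naive form $\xi\mapsto q_a(c\xi)$ is $+\infty$ off $\cD(c)$.) So $q_{c^*ac}$ is \emph{not} ``literally the form $\xi\mapsto q_a(c\xi)$''; it is only its lower semi-continuous closure, and one has $q_{c^*ac}(\xi)\le q_a(c\xi)$ with possibly strict inequality. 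The paper's Remark \ref{R-3.30} makes precisely this distinction: the pointwise identity holds for all $\xi$ only when $c\in\cM$. Since you replace the right-hand side $q_{c^*ac}(\xi_1)+q_{c^*bc}(\xi_2)$ by the larger quantity $q_a(c\xi_1)+q_b(c\xi_2)$, your argument proves a weaker inequality and gives nothing for those $\xi_1,\xi_2$ where the substituted terms are $+\infty$ but the true ones are finite.

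The repair is the one the paper carries out: observe that $\{\xi\in\cD(c);\,c\xi\in\cD(a^{1/2})\}$ is a \emph{core} of $(c^*ac)^{1/2}=|a^{1/2}c|=v^*a^{1/2}c$, choose $\eta_n\to\eta$ from this core with $q_{c^*ac}(\eta_n)\to q_{c^*ac}(\eta)$ (and similarly $\zeta_n\to\zeta$ for $b$), note that on the cores the identities $q_{c^*ac}(\eta_n)=q_a(c\eta_n)$ and $q_{c^*(a:b)c}(\eta_n+\zeta_n)=q_{a:b}(c\eta_n+c\zeta_n)$ do hold (the latter because $a:b\le a,b$ forces $c\eta_n,c\zeta_n\in\cD((a:b)^{1/2})$), apply Theorem \ref{T-2.2} at the level of $c\eta_n,c\zeta_n$ as you intended, and finally use the lower semi-continuity of $q_{c^*(a:b)c}$ to pass to the limit on the left-hand side. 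Without this approximation-plus-semicontinuity step the lemma is not established for unbounded $c$.
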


\begin{proof}
By Theorem \ref{T-2.2} it suffices to prove that, for every $\eta,\zeta\in\cH$,
$$
q_{c^*(a:b)c}(\eta+\zeta)\le q_{c^*ac}(\eta)+q_{c^*bc}(\zeta).
$$
Let the above right hand side be finite, that is, $\eta\in\cD((c^*ac)^{1/2})$ and
$\zeta\in\cD((c^*bc)^{1/2})$. Take the polar decomposition $a^{1/2}c=v|a^{1/2}c|$. Since
$(c^*ac)^{1/2}=|a^{1/2}c|=v^*a^{1/2}c$ (the strong product), note that
$\{\xi\in\cH;\,\xi\in\cD(c),\,c\xi\in\cD(a^{1/2})\}$ is a core of $(c^*ac)^{1/2}$. Hence one
can choose a sequence $\eta_n$ such that $\eta_n\to\eta$, $\eta_n\in\cD(c)$,
$c\eta_n\in\cD(a^{1/2})$, and $q_{c^*ac}(\eta_n)\to q_{c^*ac}(\eta)$. For each $\eta_n$ one has
also
\begin{align}\label{F-3.19}
q_{c^*ac}(\eta_n)=\|v^*a^{1/2}c\eta_n\|^2=\|a^{1/2}c\eta_n\|^2=q_a(c\eta_n).
\end{align}
Similarly, one can choose a sequence $\zeta_n$ such that $\zeta_n\to\zeta$, $\zeta_n\in\cD(c)$,
$c\zeta_n\in\cD(b^{1/2})$, $q_{c^*bc}(\zeta_n)\to q_{c^*bc}(\zeta)$, and
$q_{c^*bc}(\zeta_n)=q_b(c\zeta_n)$. Since $a:b\le a,b$, note that
$c\eta_n,c\zeta_n\in\cD((a:b)^{1/2})$ and a similar argument as in \eqref{F-3.19} gives
$q_{c^*(a:b)c}(\eta_n+\zeta_n)=q_{a:b}(c\eta_n+c\zeta_n)$. From the lower semi-continuity of
$q_{c^*(a:b)c}$ it follows that
\begin{align*}
q_{c^*(a:b)c}(\eta+\zeta)
&\le\liminf_{n\to\infty}q_{c^*(a:b)c}(\eta_n+\zeta_n)
=\liminf_{n\to\infty}q_{a:b}(c\eta_n+c\zeta_n) \\
&\le\liminf_{n\to\infty}(q_a(c\eta_n)+q_b(c\zeta_n))
=q_{c^*ac}(\eta)+q_{c^*bc}(\zeta)
\end{align*}
due to Theorem \ref{T-2.2} for the latter inequality.
\end{proof}

\begin{proof}[Proof of Theorem \ref{T-3.28}]
To prove this, we use the definition \eqref{F-2.9} and Definition \ref{D-3.15}. Assume first that
$\alpha,\beta>0$ in \eqref{F-2.9}. We need to prove that
\begin{align}\label{F-3.20}
q_{c^*(a\sigma b)c}(\xi)\le q_{(c^*ac)\sigma(c^*bc)}(\xi),\qquad\xi\in\cH.
\end{align}
Let the right hand side be finite. Then by the assumption $\alpha,\beta>0$ we have
$q_{c^*ac}(\xi)<\infty$ and $q_{c^*bc}(\xi)<\infty$ so that $q_{c^*(a+b)c}(\xi)<\infty$ by
Lemma \ref{L-3.4}. This implies that $\xi\in\cD((a+b)^{1/2}c)$. Hence one can choose a sequence
$\xi_n$ such that $\xi_n\in\cD(c)$, $c\xi_n\in\cD((a+b)^{1/2})$ and
$\|(a+b)^{1/2}c(\xi_n-\xi)\|\to0$. By Lemmas \ref{L-3.13} and \ref{L-3.4} one has
\begin{align*}
q_{(c^*ac)\sigma(c^*bc)}(\xi_n-\xi)&\le\lambda(q_{c^*ac}+q_{c^*bc})(\xi_n-\xi)
=\lambda q_{c^*(a+b)c}(\xi_n-\xi) \\
&=\lambda\|(a+b)^{1/2}c(\xi_n-\xi)\|^2\,\longrightarrow\,0.
\end{align*}
Moreover, since $q_{a\sigma b}\le\lambda q_{a+b}$ means that $a\sigma b\le\lambda(a+b)$ (in the
form sense), one has
$$
q_{c^*(a\sigma b)c}(\xi_n-\xi)=\|(a\sigma b)^{1/2}c(\xi_n-\xi)\|^2
\le\lambda\|(a+b)^{1/2}c(\xi_n-\xi)\|^2\,\longrightarrow\,0.
$$
These imply that
$$
q_{(c^*ac)\sigma(c^*bc)}(\xi)=\lim_{n\to\infty}q_{(c^*ac)\sigma(c^*bc)}(\xi_n),\qquad
q_{c^*(a\sigma b)c}(\xi)=\lim_{n\to\infty}q_{c^*(a\sigma b)c}(\xi_n).
$$
Hence we may prove \eqref{F-3.20} for $\xi$ such that $\xi\in\cD(c)$ and
$c\xi\in\cD((a+b)^{1/2})$. For each such $\xi$ we have by \eqref{F-2.9}
\begin{align*}
q_{c^*(a\sigma b)c}(\xi)&=\|(a\sigma b)^{1/2}c\xi\|^2=q_{a\sigma b}(c\xi) \\
&=\alpha q_a(c\xi)+\beta q_b(c\xi)
+\int_{(0,\infty)}{1+t\over t}\,q_{(ta):b}(c\xi)\,d\mu(t) \\
&=\alpha q_{c^*ac}(\xi)+\beta q_{c^*bc}(\xi)
+\int_{(0,\infty)}{1+t\over t}\,q_{c^*((ta):b)c}(\xi)\,d\mu(t) \\
&\le\alpha q_{c^*ac}(\xi)+\beta q_{c^*bc}(\xi)
+\int_{(0,\infty)}{1+t\over t}\,q_{(c^*(ta)c):(c^*bc)}(\xi)\,d\mu(t) \\
&=q_{(c^*ac)\sigma(c^*bc)}(\xi),
\end{align*}
where Lemma \ref{L-3.29} has been used for the above inequality. Therefore, \eqref{F-3.20} has
been shown so that \eqref{F-3.18} follows.

For general $\sigma$ we use the same trick as in the proof of Theorem \ref{T-3.23}. Let
$\sigma_1$ be the connection corresponding to $f(t)+1+t$. The above proved case gives
$c^*(a\sigma_1b)c\le(c^*ac)\sigma_1(c^*bc)$. Since $c^*(a\sigma_1b)c=c^*(a\sigma b+a+b)c$ and
$(c^*ac)\sigma_1(c^*bc)=(c^*ac)\sigma(c^*bc)+c^*ac+c^*bc$ as in \eqref{F-3.13}, we obtain
\eqref{F-3.18} for general $\sigma$.
\end{proof}

\begin{remark}\label{R-3.30}\rm
When $\phi,\psi$ are general positive forms and $c$ is a bounded operator, the transformer
inequality
$$
c^*(\phi:\psi)c\le(c^*\phi c):(c^*\psi c)
$$
was shown in \cite[Corollary 7]{Ko6} 
with the definition $(c^*\phi c)(\xi):=\phi(c\xi)$, $\xi\in\cH$.
This transformer inequality immediately extends
to $\phi\sigma\psi$ from the definition \eqref{F-2.9}. Note that if $a\in\overline\cM_+$ and
$c\in\cM$, then $q_a(c\xi)=q_{c^*ac}(\xi)$ for all $\xi\in\cH$ since
$\cD(a^{1/2}c)=\{\xi\in\cH;\,c\xi\in\cD(a^{1/2})\}$ (i.e., $a^{1/2}c$ is closed without taking
closure) in this case. Therefore, Theorem \ref{T-3.28} is included in \cite{Ko6} when $c\in\cM$.
\end{remark}

The next theorem shows the equality case in the transformer inequality \eqref{F-3.18}.

\begin{theorem}\label{T-3.31}
Let $a, b\in\overline\cM_+$ and $c\in\overline\cM$ be such that $s(a+b)\le s(cc^*)$. Then
$$
c^*(a\sigma b)c=(c^*ac)\sigma(c^*bc)
$$
for any connection $\sigma$.
\end{theorem}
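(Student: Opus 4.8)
The plan is to evaluate both sides directly from the second description of the connection (Definition~\ref{D-3.16}, which coincides with the first by Theorem~\ref{T-3.23}), taking the auxiliary operator to be $h:=a+b$. I would first write $a=h^{1/2}T_{a/h}h^{1/2}$ and $b=h^{1/2}T_{b/h}h^{1/2}$ with $T_{a/h},T_{b/h}\in(s(h)\cM s(h))_+$ and $T_{a/h}+T_{b/h}=s(h)$ (Definition~\ref{D-3.5}), so that $a\sigma b=h^{1/2}(T_{a/h}\sigma T_{b/h})h^{1/2}$. Putting $d:=h^{1/2}c\in\overline\cM$ (a strong product) and using associativity of multiplication in $\overline\cM$ together with $(h^{1/2}c)^*=c^*h^{1/2}$, this yields
$$
c^*(a\sigma b)c=d^*(T_{a/h}\sigma T_{b/h})d .
$$

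Next I would bring the right-hand side into the same shape. From $c^*ac=d^*T_{a/h}d$, $c^*bc=d^*T_{b/h}d$, and $s(h)d=s(h)h^{1/2}c=d$, the sum is $k:=c^*ac+c^*bc=d^*s(h)d=d^*d\in\overline\cM_+$. I would then take a polar decomposition $d=w|d|$ with $w\in\cM$ a partial isometry, so that $|d|=k^{1/2}$, $w^*w=s(k)$, $ww^*=s(dd^*)$, and $c^*ac=k^{1/2}(w^*T_{a/h}w)k^{1/2}$ with $w^*T_{a/h}w\in(s(k)\cM s(k))_+$; by the uniqueness in Definition~\ref{D-3.5} this forces $T_{(c^*ac)/k}=w^*T_{a/h}w$ and likewise $T_{(c^*bc)/k}=w^*T_{b/h}w$. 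Evaluating Definition~\ref{D-3.16} for $(c^*ac)\sigma(c^*bc)$ with $k$ as auxiliary operator then gives
$$
(c^*ac)\sigma(c^*bc)=k^{1/2}\bigl((w^*T_{a/h}w)\sigma(w^*T_{b/h}w)\bigr)k^{1/2}.
$$

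The decisive step is to pass the partial isometry $w$ through $\sigma$ by the equality case of the transformer inequality for bounded operators, Lemma~\ref{L-3.17}, applied with $A=T_{a/h}$, $B=T_{b/h}$, $C=w$. Its hypothesis $s(A+B)\le s(CC^*)$ reads $s(h)=s(T_{a/h}+T_{b/h})\le ww^*=s(dd^*)$, and this is exactly where the assumption $s(a+b)\le s(cc^*)$ must enter; I expect verifying this support inequality to be the main obstacle. It should follow from the support calculus for products of $\tau$-measurable operators --- namely $s\bigl((xy)(xy)^*\bigr)=s(xx^*)$ whenever $s(x^*x)\le s(yy^*)$ --- applied to $x=h^{1/2}$ and $y=c$, since then $s(x^*x)=s(h)$ and $s(yy^*)=s(cc^*)$, so that $s(a+b)\le s(cc^*)$ gives $s(dd^*)=s(h)$. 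Granting this, Lemma~\ref{L-3.17} yields $(w^*T_{a/h}w)\sigma(w^*T_{b/h}w)=w^*(T_{a/h}\sigma T_{b/h})w$, and therefore, since $wk^{1/2}=w|d|=d$,
$$
(c^*ac)\sigma(c^*bc)=k^{1/2}w^*(T_{a/h}\sigma T_{b/h})wk^{1/2}=d^*(T_{a/h}\sigma T_{b/h})d=c^*(a\sigma b)c ,
$$
which is the desired identity. Alternatively one may invoke Theorem~\ref{T-3.28} for the inequality $c^*(a\sigma b)c\le(c^*ac)\sigma(c^*bc)$ and prove only the reverse one; the computation above delivers equality outright.
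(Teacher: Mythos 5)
Your proof is correct and follows essentially the same route as the paper's: both reduce the identity to the bounded equality case (Lemma \ref{L-3.17}) via the polar decomposition of $h^{1/2}c$ --- you decompose $h^{1/2}c=w(c^*hc)^{1/2}$ and work with $k=c^*ac+c^*bc$, while the paper decomposes the adjoint $c^*h^{1/2}=v(h^{1/2}cc^*h^{1/2})^{1/2}$ and introduces auxiliary operators $\tilde a,\tilde b$ --- and both use the hypothesis $s(a+b)\le s(cc^*)$ only to verify the support condition of that lemma. The support fact you single out as the main obstacle, namely $s\bigl((h^{1/2}c)(h^{1/2}c)^*\bigr)=s(h)$, is exactly the step the paper also relies on (asserting $s(k)=s(h)$ there without further comment), so your argument is complete.
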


\begin{proof}
Let $h:=a+b$. For any connection $\sigma$, by Definition \ref{D-3.16} we write
$$
c^*(a\sigma b)c=c^*h^{1/2}(T_{a/h}\sigma T_{b/h})h^{1/2}c.
$$
Let $c^*h^{1/2}=vk^{1/2}$ be the polar decomposition with $k^{1/2}:=|c^*h^{1/2}|$ so that
$k=h^{1/2}cc^*h^{1/2}$ and $s(k)=s(h)$ since $s(h)\le s(cc^*)$. Furthermore, set
$\tilde a:=k^{1/2}T_{a/h}k^{1/2}$ and $\tilde b:=k^{1/2}T_{b/h}k^{1/2}$. We then have
$\tilde a+\tilde b=k^{1/2}(T_{a/h}+T_{b/h})k^{1/2}=k^{1/2}s(h)k^{1/2}=k$,
$T_{\tilde a/k}=T_{a/h}$ and $T_{\tilde b/k}=T_{b/h}$ since
$s(T_{a/h}),s(T_{b/h})\le s(h)=s(k)$ (see Definition \ref{D-3.5}). Therefore,
$$
\tilde a\sigma\tilde b=k^{1/2}(T_{\tilde a/k}\sigma T_{\tilde b/k})k^{1/2}
=v^*c^*h^{1/2}(T_{a/h}\sigma T_{b/h})h^{1/2}cv=v^*c^*(a\sigma b)cv
$$
so that
$$
c^*(a\sigma b)c=v(\tilde a\sigma\tilde b)v^*.
$$
Since
$$
v\tilde av^*=vk^{1/2}T_{a/h}k^{1/2}v^*=c^*h^{1/2}T_{a/h}h^{1/2}c=c^*ac
$$
and similarly $v\tilde bv^*=c^*bc$, it suffices to show that
\begin{align}\label{F-3.21}
v(\tilde a\sigma\tilde b)v^*=(v\tilde av^*)\sigma(v\tilde bv^*).
\end{align}
Note that $v\tilde av^*+v\tilde bv^*=c^*hc=vkv^*$ and $(vkv^*)^{1/2}=vk^{1/2}v^*$, and
it is immediate to see that $T_{v\tilde av^*/vkv^*}=vT_{\tilde a/k}v^*$ and
$T_{v\tilde bv^*/vkv^*}=vT_{\tilde b/k}v^*$. Hence \eqref{F-3.21} follows since
\begin{align*}
(v\tilde av^*)\sigma(v\tilde bv^*)
&=vk^{1/2}v^*((vT_{\tilde a/k}v^*)\sigma(vT_{\tilde b/k}v^*))vk^{1/2}v^* \\
&=vk^{1/2}v^*v(T_{\tilde a/k}\sigma T_{\tilde b/k})v^*vk^{1/2}v^* \\
&=vk^{1/2}(T_{\tilde a/k}\sigma T_{\tilde b/k})k^{1/2}v^*=v(\tilde a\sigma\tilde b)v^*,
\end{align*}
where the second equality in the above is due to Lemma \ref{L-3.17} since
$s(T_{\tilde a/k}+T_{\tilde b/k})=s(\tilde a+\tilde b)=s(k)=s(v^*v)$.
\end{proof}

In particular, when $a\in\overline\cM_{++}$, Theorem \ref{T-3.31} shows
\begin{align}\label{F-3.22}
a\sigma b=a^{1/2}f(a^{-1/2}ba^{-1/2})a^{1/2},
\end{align}
which is the generalization of the familiar expression of $A\sigma B$ for $A,B\in B(\cH)_+$
with $A$ invertible.

Now, we consider $\cM\otimes M_2$, the tensor product of $\cM$ and the $2\times2$ matrix
algebra $M_2=M_2(\bC)$, equipped with the faithful semi-finite normal trace $\tau\otimes\Tr$,
where $\Tr$ is the usual trace on $M_2$. The space $\overline{\cM\otimes M_2}$ of
$\tau\otimes\Tr$-measurable operators is 
identified with $\overline\cM\otimes M_2$
in the natural way. 

The following result is the $\tau$-measurable operator version of
\cite[Theorem I.1]{An2}:

\begin{lemma}\label{L-3.32}
For every $a,b,c\in\overline\cM_+$, $\begin{bmatrix}a&c\\c^*&b\end{bmatrix}\ge0$ in
$\overline{\cM\otimes M_2}$ if and only if $a,b\in\overline\cM_+$ and $c=a^{1/2}zb^{1/2}$
for some contraction $z\in\cM$.
\end{lemma}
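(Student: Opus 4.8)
The plan is to transfer everything to the semifinite algebra $\cM\otimes M_2$ equipped with the trace $\tau\otimes\Tr$, whose $\tau\otimes\Tr$-measurable operators are $\overline\cM\otimes M_2$, and to apply the factorization Lemma \ref{L-3.3} there. Throughout write $M:=\begin{bmatrix}a&c\\c^*&b\end{bmatrix}$ and $D:=\begin{bmatrix}a&0\\0&b\end{bmatrix}\in\overline{\cM\otimes M_2}_+$.

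For the ``if'' part, suppose $c=a^{1/2}zb^{1/2}$ with $z\in\cM$, $\|z\|\le1$. Put $X:=\begin{bmatrix}1&z\\z^*&1\end{bmatrix}\in(\cM\otimes M_2)_+$ (positivity being the elementary bounded fact coming from $\|z\|\le1$) and $d:=D^{1/2}=\begin{bmatrix}a^{1/2}&0\\0&b^{1/2}\end{bmatrix}\in\overline{\cM\otimes M_2}_+$. Then $dXd=(X^{1/2}d)^*(X^{1/2}d)\ge0$ in $\overline{\cM\otimes M_2}$, and computing the strong matrix products, $dXd=\begin{bmatrix}a^{1/2}a^{1/2}&a^{1/2}zb^{1/2}\\b^{1/2}z^*a^{1/2}&b^{1/2}b^{1/2}\end{bmatrix}=\begin{bmatrix}a&c\\c^*&b\end{bmatrix}=M$, using $a^{1/2}\cdot a^{1/2}=a$ and $b^{1/2}\cdot b^{1/2}=b$ as strong products; hence $M\ge0$. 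The only thing to watch here is the bookkeeping of strong products and adjoints of $\tau$-measurable operators (cf.\ \cite{Ne,Te1}), so that this matrix identity holds literally and not merely on a core.

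For the ``only if'' part, conjugating $M$ by the self-adjoint unitary $U:=\begin{bmatrix}1&0\\0&-1\end{bmatrix}$ gives $UMU=2D-M\ge0$, so $0\le M\le2D$ (and taking the diagonal corners of $M$ also recovers $a,b\in\overline\cM_+$). By Lemma \ref{L-3.3} applied in $\overline{\cM\otimes M_2}$ there is a unique $w\in s(D)(\cM\otimes M_2)s(D)$ with $M^{1/2}=wD^{1/2}$, so $M=D^{1/2}(w^*w)D^{1/2}=D^{1/2}TD^{1/2}$ with $T:=w^*w=T_{M/D}\in\bigl(s(D)(\cM\otimes M_2)s(D)\bigr)_+$ in the sense of Definition \ref{D-3.5}. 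Since $s(D)=\begin{bmatrix}s(a)&0\\0&s(b)\end{bmatrix}$, writing $T=\begin{bmatrix}t_{11}&t_{12}\\t_{12}^*&t_{22}\end{bmatrix}$ we get $t_{11}\in(s(a)\cM s(a))_+$, $t_{22}\in(s(b)\cM s(b))_+$ and $t_{12}\in s(a)\cM s(b)$. Expanding $D^{1/2}TD^{1/2}=M$ entrywise yields $a^{1/2}t_{11}a^{1/2}=a$, $b^{1/2}t_{22}b^{1/2}=b$, and $c=a^{1/2}t_{12}b^{1/2}$. Now $T_{a/a}=s(a)$ and $T_{b/b}=s(b)$, so the uniqueness in Definition \ref{D-3.5} forces $t_{11}=s(a)$ and $t_{22}=s(b)$; thus $\begin{bmatrix}s(a)&t_{12}\\t_{12}^*&s(b)\end{bmatrix}=T\ge0$, and testing this on vectors of the form $(t_{12}\eta,-\eta)$ (the elementary computation behind the bounded case \cite[Theorem I.1]{An2}) gives $\|t_{12}\eta\|\le\|\eta\|$, i.e.\ $\|t_{12}\|\le1$. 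Taking $z:=t_{12}$ completes the proof.

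The step I expect to be the crux is the ``only if'' passage from $M=D^{1/2}TD^{1/2}$ to the entrywise relations together with the identification $t_{11}=s(a)$, $t_{22}=s(b)$: one has to be careful that the strong products of the unbounded pieces $a^{1/2},b^{1/2}$ against the bounded $2\times2$ block $T$ genuinely obey the matrix-algebra formalism, and that recovering $a$ as $a^{1/2}t_{11}a^{1/2}$ indeed pins $t_{11}$ down via the uniqueness clause of Definition \ref{D-3.5}. Once Lemma \ref{L-3.3} is available inside $\overline{\cM\otimes M_2}$, the remaining ingredients are just the elementary facts about positive $2\times2$ (scalar-block) matrices and the standard $\tau$-measurable operator calculus.
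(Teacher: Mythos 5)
Your proof is correct, and while the ``if'' direction coincides with the paper's (the factorization $\begin{bmatrix}a&c\\c^*&b\end{bmatrix}=D^{1/2}\begin{bmatrix}1&z\\z^*&1\end{bmatrix}D^{1/2}$), your ``only if'' direction takes a genuinely different and arguably cleaner route. The paper extracts the square root $\begin{bmatrix}h&m\\m^*&k\end{bmatrix}$ of the matrix, squares it to get $a=h^2+mm^*$, $b=m^*m+k^2$, $c=hm+mk$, applies Lemma \ref{L-3.3} separately to the four majorizations $h^2,mm^*\le a$ and $k^2,m^*m\le b$, assembles $z$ from polar decompositions of $m$ and $m^*$, and then needs an $\eps\searrow0$ regularization (multiplying by $(\eps+a^{1/2})^{-1}$, $(\eps+b^{1/2})^{-1}$) to establish $\begin{bmatrix}e&z\\z^*&f\end{bmatrix}\ge0$ and hence $\|z\|\le1$. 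You instead observe $M\le 2D$ via conjugation by $\mathrm{diag}(1,-1)$, invoke Lemma \ref{L-3.3} exactly once in the amplified algebra to get $M=D^{1/2}T_{M/D}D^{1/2}$, and read everything off the block structure of $T_{M/D}$: positivity of the bounded $2\times2$ symbol is automatic (it is $w^*w$), so the limiting argument disappears, at the cost of having to pin down the diagonal blocks as $s(a)$ and $s(b)$ via the uniqueness clause of Definition \ref{D-3.5} --- which you do correctly, since $t_{11}\in(s(a)\cM s(a))_+$ and $a^{1/2}t_{11}a^{1/2}=a$ force $t_{11}=T_{a/a}=s(a)$. The points you flag as delicate (entrywise validity of strong matrix products under the identification $\overline{\cM\otimes M_2}\cong\overline\cM\otimes M_2$, and the uniqueness argument) are exactly the right ones, and both go through. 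One net gain of your argument is that it localizes all the unbounded-operator bookkeeping in a single application of Lemma \ref{L-3.3}; the paper's version is closer in spirit to the classical bounded proof in Ando's notes but requires more piecewise assembly.
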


\begin{proof}
Assume that $a,b\ge0$ and $c=a^{1/2}zb^{1/2}$ with $z\in\cM$, $\|z\|\le1$. Then
$$
\begin{bmatrix}a&c\\c^*&b\end{bmatrix}
=\begin{bmatrix}a&a^{1/2}zb^{1/2}\\b^{1/2}z^*a^{1/2}&b\end{bmatrix}
=\begin{bmatrix}a^{1/2}&0\\0&b^{1/2}\end{bmatrix}\begin{bmatrix}1&z\\z^*&1\end{bmatrix}
\begin{bmatrix}a^{1/2}&0\\0&b^{1/2}\end{bmatrix}\ge0.
$$
Conversely, assume that $\begin{bmatrix}a&c\\c^*&b\end{bmatrix}\ge0$. Let $e:=s(a)$ and
$f:=s(b)$. Setting $\begin{bmatrix}h&m\\m^*&k\end{bmatrix}=
\begin{bmatrix}a&c\\c^*&b\end{bmatrix}^{1/2}$ we write
$$
\begin{bmatrix}a&c\\c^*&b\end{bmatrix}=\begin{bmatrix}h&m\\m^*&k\end{bmatrix}^2
=\begin{bmatrix}h^2+mm^*&hm+mk\\m^*h+km^*&m^*m+k^2\end{bmatrix},
$$
so that
$$
a=h^2+mm^*,\quad b=m^*m+k^2,\quad c=hm+mk.
$$
Since $h^2,mm^*\le a$, there are $x,x_1\in e\cM e$ such that $h=xa^{1/2}$ and $|m^*|=x_1a^{1/2}$.
Since $k^2,m^*m\le b$, there are $y,y_1\in f\cM f$ such that $k=yb^{1/2}$ and $|m|=y_1b^{1/2}$.
With the polar decompositions $m=v|m|$ and $m^*=w|m^*|$ we write
$$
c=hv|m|+|m^*|w^*k=a^{1/2}x^*vy_1b^{1/2}+a^{1/2}x_1^*w^*yb^{1/2}=a^{1/2}zb^{1/2}
$$
with $z:=x^*vy_1+x_1^*w^*y\in e\cM f$. Therefore,
$$
0\le\begin{bmatrix}a&c\\c^*&b\end{bmatrix}
=\begin{bmatrix}a&a^{1/2}zb^{1/2}\\b^{1/2}z^*a^{1/2}&b\end{bmatrix}
=\begin{bmatrix}a^{1/2}&0\\0&b^{1/2}\end{bmatrix}\begin{bmatrix}e&z\\z^*&f\end{bmatrix}
\begin{bmatrix}a^{1/2}&0\\0&b^{1/2}\end{bmatrix},
$$
and it remains to show $\|z\| \leq 1$.
For every $\eps>0$, multiplying $\begin{bmatrix} \! (\eps+a^{1/2})^{-1}&\!\!\! 0\\ \! 0& \!\!\! (\eps+b^{1/2})^{-1}
\end{bmatrix}$ to the above both sides, we have
$$
0\le\begin{bmatrix}{a^{1/2}\over\eps+a^{1/2}}&0\\0&{b^{1/2}\over\eps+b^{1/2}}\end{bmatrix}
\begin{bmatrix}e&z\\z^*&f\end{bmatrix}
\begin{bmatrix}{a^{1/2}\over\eps+a^{1/2}}&0\\0&{b^{1/2}\over\eps+b^{1/2}}\end{bmatrix}.
$$
Letting $\eps\searrow0$ gives
$$
0\le\begin{bmatrix}e&0\\0&f\end{bmatrix}\begin{bmatrix}e&z\\z^*&f\end{bmatrix}
\begin{bmatrix}e&0\\0&f\end{bmatrix}=\begin{bmatrix}e&z\\z^*&f\end{bmatrix}
$$
thanks to $z\in e\cM f$. This implies $\begin{bmatrix}1&z\\z^*&1\end{bmatrix}\ge0$, which is
equivalent to $\|z\|\le1$.
\end{proof}

The following proposition extends the well-known characterization of the
geometric mean, proved by Pusz and Woronowicz \cite{PW} (see also \cite{An2}),
in terms of $2\times2$ operator matrices to the present setting of $\tau$-measurable operators:

\begin{proposition}\label{P-3.33}
For every $a,b\in\overline\cM_+$ we have
$$
a\#b=\max\biggl\{c\in\overline\cM_+;\,
\begin{bmatrix}a&c\\c&b\end{bmatrix}\ge0\ \mbox{in $\overline{\cM\otimes M_2}$}\biggr\}.
$$
\end{proposition}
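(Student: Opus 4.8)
The plan is to reduce everything to the bounded case through the parametrization $h:=a+b$ of Definition~\ref{D-3.16}. Since $a\le h$ and $b\le h$, the bounded positive operators $T_{a/h},T_{b/h}\in(s(h)\cM s(h))_+$ are defined, with $h^{1/2}T_{a/h}h^{1/2}=a$, $h^{1/2}T_{b/h}h^{1/2}=b$ and, by definition, $a\#b=h^{1/2}(T_{a/h}\#T_{b/h})h^{1/2}$, where $T_{a/h}\#T_{b/h}$ is the geometric mean of bounded positive operators.

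That $a\#b$ belongs to the set on the right would follow by conjugation: by the Pusz--Woronowicz characterization \cite{PW} for bounded positive operators (the case $\cM=B(\cH)$ of the present statement), $\left[\begin{smallmatrix}T_{a/h}&T_{a/h}\#T_{b/h}\\T_{a/h}\#T_{b/h}&T_{b/h}\end{smallmatrix}\right]\ge0$, and conjugating by the self-adjoint $\tau\otimes\Tr$-measurable operator $\mathrm{diag}(h^{1/2},h^{1/2})$ gives $\left[\begin{smallmatrix}a&a\#b\\a\#b&b\end{smallmatrix}\right]\ge0$ in $\overline{\cM\otimes M_2}$; conjugation by such an operator preserves positivity because, for bounded $X\ge0$, $y^*Xy=(X^{1/2}y)^*(X^{1/2}y)$ with $X^{1/2}y$ again $\tau$-measurable. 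For maximality, take $c\in\overline\cM_+$ with $\left[\begin{smallmatrix}a&c\\c&b\end{smallmatrix}\right]\ge0$. By Lemma~\ref{L-3.32}, $c=a^{1/2}zb^{1/2}$ for a contraction $z\in\cM$, hence $2c=a^{1/2}zb^{1/2}+b^{1/2}z^*a^{1/2}\le a^{1/2}zz^*a^{1/2}+b\le a+b=h$; thus $c\le\tfrac12 h$ and $T_{c/h}\in(s(h)\cM s(h))_+$ is defined with $h^{1/2}T_{c/h}h^{1/2}=c$. The key step is to transfer positivity in the reverse direction, namely $\left[\begin{smallmatrix}T_{a/h}&T_{c/h}\\T_{c/h}&T_{b/h}\end{smallmatrix}\right]\ge0$. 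This operator annihilates $s(h)^\perp\cH\oplus s(h)^\perp\cH$ and preserves $s(h)\cH\oplus s(h)\cH$, so it suffices to test it against the dense set of vectors $(h^{1/2}\xi,h^{1/2}\eta)$ with $\xi,\eta\in\cD(h^{1/2})$; writing $a^{1/2}=x_a h^{1/2}$, $c^{1/2}=x_c h^{1/2}$, $b^{1/2}=x_b h^{1/2}$ as in Lemma~\ref{L-3.3}, this test value equals $q_a(\xi)+2\Re(c^{1/2}\xi,c^{1/2}\eta)+q_b(\eta)$, which is the value at $(\xi,\eta)$ of the closed form of $\left[\begin{smallmatrix}a&c\\c&b\end{smallmatrix}\right]$ and hence $\ge0$. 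Then the bounded Pusz--Woronowicz theorem gives $T_{c/h}\le T_{a/h}\#T_{b/h}$, and conjugating by $h^{1/2}$ (which preserves $\overline\cM_+$-order) yields $c\le a\#b$. Together these show $a\#b$ is the maximum.

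I expect the main obstacle to be precisely this reverse transfer of positivity: converting $\left[\begin{smallmatrix}a&c\\c&b\end{smallmatrix}\right]\ge0$ --- an inequality between possibly ill-behaved unbounded operators, interpreted in the form sense --- into honest positivity of the bounded matrix of the $T$'s. The density argument sketched above handles it, but one must keep careful track of form domains and of the fact that $T_{a/h},T_{b/h},T_{c/h}$ all lie in $s(h)\cM s(h)$. The conjugations by the unbounded operators $\mathrm{diag}(h^{1/2},h^{1/2})$ and $h^{1/2}$ cause no trouble, because the operator being conjugated is bounded and so every strong product involved is automatically well behaved.
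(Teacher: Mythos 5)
Your overall strategy coincides with the paper's: both directions are reduced to the bounded Pusz--Woronowicz characterization via the parametrization $T_{a/h},T_{b/h},T_{c/h}\in s(h)\cM s(h)$ with $h=a+b$, using Lemma \ref{L-3.32} to produce $T_{c/h}$, and the forward direction and the final conjugation by $h^{1/2}$ are handled exactly as in the paper. The problem is the step you yourself single out as the key one. You assert that the test value
$q_a(\xi)+2\Re(c^{1/2}\xi,c^{1/2}\eta)+q_b(\eta)$ ``is the value at $(\xi,\eta)$ of the closed form of $\left[\begin{smallmatrix}a&c\\c&b\end{smallmatrix}\right]$ and hence $\ge0$.'' That identification is precisely the nontrivial content of the reverse transfer, and it is not justified by anything you write. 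The matrix $M=\left[\begin{smallmatrix}a&c\\c&b\end{smallmatrix}\right]$ is built from \emph{strong} sums in $\overline{\cM\otimes M_2}$, and the only result in the paper identifying the form of a strong sum with the sum of forms is Lemma \ref{L-3.4}, which applies to sums of \emph{positive} operators; the off-diagonal summand $\left[\begin{smallmatrix}0&c\\c&0\end{smallmatrix}\right]$ is self-adjoint but not positive, so the closed form $q_M$ (defined through $M^{1/2}$) is not tautologically given by the naive algebraic expression on $\cD(h^{1/2})\oplus\cD(h^{1/2})$. Proving that it is requires an argument --- e.g.\ noting $M\le\lambda\,\mathrm{diag}(h,h)$ and factoring $M^{1/2}=W\,\mathrm{diag}(h^{1/2},h^{1/2})$ via Lemma \ref{L-3.3}, whereupon the uniqueness in Definition \ref{D-3.5} forces $W^*W=\left[\begin{smallmatrix}T_{a/h}&T_{c/h}\\T_{c/h}&T_{b/h}\end{smallmatrix}\right]$ --- but that argument already yields the positivity of the bounded $2\times2$ matrix outright, so your density-and-testing detour both hides the real work and becomes redundant once the work is done.

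The paper closes this step differently and more economically: from $0\le M=\mathrm{diag}(h^{1/2},h^{1/2})\,N\,\mathrm{diag}(h^{1/2},h^{1/2})$ it conjugates by the \emph{bounded} operators $\mathrm{diag}\bigl((\eps+h^{1/2})^{-1},(\eps+h^{1/2})^{-1}\bigr)$, obtaining $0\le\frac{h^{1/2}}{\eps+h^{1/2}}\,N\,\frac{h^{1/2}}{\eps+h^{1/2}}$ (entrywise), and lets $\eps\searrow0$ to get $N\ge0$ since all entries of $N$ lie in $e\cM e$ with $e=s(h)$. You should either adopt this regularization or supply the $M^{1/2}=W\,\mathrm{diag}(h^{1/2},h^{1/2})$ factorization; as written, the crucial inequality is assumed rather than proved. (Your preliminary observation that $2c\le h$, hence $T_{c/h}\ge0$ exists, is a correct and slightly different route to what the paper obtains by showing $\hat z=x^*zy\ge0$ via the same $\eps$-trick.)
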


\begin{proof}
Let $h:=a+b$ and $a^{1/2}=xh^{1/2}$, $b^{1/2}=yh^{1/2}$ with $x,y\in e\cM e$, where $e:=s(h)$.
Since $a\#b=h^{1/2}(T_{a/h}\#T_{b/h})h^{1/2}$, one has
\begin{align*}
\begin{bmatrix}a&a\#b\\a\#b&b\end{bmatrix}
&=\begin{bmatrix}h^{1/2}T_{a/h}h^{1/2}&
h^{1/2}(T_{a/h}\#T_{b/h})h^{1/2}\\h^{1/2}(T_{a/h}\#T_{b/h})h^{1/2}
&h^{1/2}T_{b/h}h^{1/2}\end{bmatrix} \\
&=\begin{bmatrix}h^{1/2}&0\\0&h^{1/2}\end{bmatrix}
\begin{bmatrix}T_{a/h}&T_{a/h}\#T_{b/h}\\T_{a/h}\#T_{b/h}&T_{b/h}\end{bmatrix}
\begin{bmatrix}h^{1/2}&0\\0&h^{1/2}\end{bmatrix}\ge0.
\end{align*}

Let $c\in\overline\cM_+$ be such that $\begin{bmatrix}a&c\\c&b\end{bmatrix}\ge0$. By Lemma
\ref{L-3.32} there is a contraction $z\in\cM$ such that $c=a^{1/2}zb^{1/2}$ and so
$c=h^{1/2}x^*zyh^{1/2}=h^{1/2}\hat zh^{1/2}$, where $\hat z:=x^*zy\in e\cM e$.
Since
$$
{h^{1/2}\over\eps+h^{1/2}}\,\hat z\,{h^{1/2}\over\eps+h^{1/2}}
=(\eps+h^{1/2})^{-1}c(\eps+h^{1/2})^{-1}\ge0,\qquad\eps>0,
$$
one has $\hat z\ge0$ by letting $\eps\searrow0$. Furthermore, since
\begin{align*}
0\le\begin{bmatrix}a&c\\c&b\end{bmatrix}
&=\begin{bmatrix}h^{1/2}T_{a/h}h^{1/2}&h^{1/2}\hat zh^{1/2}\\
h^{1/2}\hat zh^{1/2}&h^{1/2}T_{b/h}h^{1/2}\end{bmatrix} \\
&=\begin{bmatrix}h^{1/2}&0\\0&h^{1/2}\end{bmatrix}
\begin{bmatrix}T_{a/h}&\hat z\\\hat z&T_{b/h}\end{bmatrix}
\begin{bmatrix}h^{1/2}&0\\0&h^{1/2}\end{bmatrix}
\end{align*}
and $\begin{bmatrix}T_{a/h}&\hat z\\\hat z&T_{b/h}\end{bmatrix}\in
\begin{bmatrix}e&0\\0&e\end{bmatrix}(\cM\otimes M_2)
\begin{bmatrix}e&0\\0&e\end{bmatrix}$, it follows as above that
$\begin{bmatrix}T_{a/h}&\hat z\\\hat z&T_{b/h}\end{bmatrix}\ge0$, which implies that
$\hat z\le T_{a/h}\#T_{b/h}$. This gives $c\le h^{1/2}(T_{a/h}\#T_{b/h})h^{1/2}=a\#b$.
\end{proof}

The following is the extension of a similar characterization of the parallel sum
(see \cite{An2}) to the setting of $\tau$-measurable operators:

\begin{proposition}\label{P-3.34}
For every $a,b\in\overline\cM_+$,
$$
a:b=\max\biggl\{c\in\overline\cM_+;\,
\begin{bmatrix}a&0\\0&b\end{bmatrix}\ge\begin{bmatrix}c&c\\c&c\end{bmatrix}
\ \mbox{in $\overline{\cM\otimes M_2}$}\biggr\}.
$$
\end{proposition}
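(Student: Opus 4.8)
The plan is to transfer the classical argument (see \cite{An2} and \cite[Theorem 9]{AT}) to $\overline{\cM\otimes M_2}$ by converting the operator-matrix inequality into an inequality of positive forms on $\cH\oplus\cH$ and then invoking Theorem \ref{T-2.2} together with Lemma \ref{L-3.22}.

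First I would compute the positive form attached to $\begin{bmatrix}c&c\\c&c\end{bmatrix}$ for $c\in\overline\cM_+$. Let $w:=\tfrac1{\sqrt2}\begin{bmatrix}1&1\\1&-1\end{bmatrix}\in M_2$, a self-adjoint unitary, and write $1\otimes w$ for the corresponding unitary in $\cM\otimes M_2$. A direct $2\times2$ computation gives
\begin{equation*}
\begin{bmatrix}c&c\\c&c\end{bmatrix}=(1\otimes w)\begin{bmatrix}2c&0\\0&0\end{bmatrix}(1\otimes w),
\end{equation*}
which in particular exhibits $\begin{bmatrix}c&c\\c&c\end{bmatrix}$ as an element of $(\overline\cM\otimes M_2)_+\cong\overline{\cM\otimes M_2}_+$. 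Since $q_{uhu^*}(\cdot)=q_h(u^*\cdot)$ for a unitary $u$ and $q_{2c}=2q_c$, it follows that $\xi\oplus\eta$ lies in the form domain precisely when $\xi+\eta\in\cD(c^{1/2})$, and
\begin{equation*}
q_{\begin{bmatrix}c&c\\c&c\end{bmatrix}}(\xi\oplus\eta)=q_c(\xi+\eta),\qquad\xi,\eta\in\cH,
\end{equation*}
whereas clearly $q_{\begin{bmatrix}a&0\\0&b\end{bmatrix}}(\xi\oplus\eta)=q_a(\xi)+q_b(\eta)$.

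Next, applying Lemma \ref{L-3.2} to the semi-finite algebra $\cM\otimes M_2$ with trace $\tau\otimes\Tr$, the inequality $\begin{bmatrix}a&0\\0&b\end{bmatrix}\ge\begin{bmatrix}c&c\\c&c\end{bmatrix}$ in $\overline{\cM\otimes M_2}$ is equivalent to $q_{\begin{bmatrix}c&c\\c&c\end{bmatrix}}\le q_{\begin{bmatrix}a&0\\0&b\end{bmatrix}}$ as positive forms on $\cH\oplus\cH$, i.e.\ to
\begin{equation*}
q_c(\xi+\eta)\le q_a(\xi)+q_b(\eta),\qquad\xi,\eta\in\cH.
\end{equation*}
By Theorem \ref{T-2.2} the largest positive form satisfying this family of inequalities is $q_a:q_b$, and by Lemma \ref{L-3.22} we have $q_a:q_b=q_{a:b}$ with $a:b\in\overline\cM_+$. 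Hence for $c\in\overline\cM_+$ the matrix inequality holds iff $q_c\le q_{a:b}$, i.e.\ (Lemma \ref{L-3.2} once more) iff $c\le a:b$ in $\overline\cM_+$; moreover $c=a:b$ itself satisfies it, since $q_{a:b}=q_a:q_b$ obeys the defining inequality of Theorem \ref{T-2.2}. Therefore $a:b$ is the maximum of the set in question.

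This proof has no essential difficulty beyond the bookkeeping in the first step: one must check that $\begin{bmatrix}c&c\\c&c\end{bmatrix}$ genuinely defines an element of $\overline{\cM\otimes M_2}$ and that its form domain is tracked correctly through the unitary conjugation, and one must read ``$\ge$ in $\overline{\cM\otimes M_2}$'', via Lemma \ref{L-3.2}, as the corresponding inequality of positive forms. Everything else is a direct appeal to the form calculus of \S\ref{S-2.1} and to Lemma \ref{L-3.22}. (Alternatively one could argue through Lemma \ref{L-3.32}, rewriting the condition as a Schur-complement statement for $\begin{bmatrix}a-c&-c\\-c&b-c\end{bmatrix}\ge0$, but the form approach above is shorter.)
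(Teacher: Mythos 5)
Your proof is correct and follows essentially the same route as the paper's: both reduce the matrix inequality $\begin{bmatrix}a&0\\0&b\end{bmatrix}\ge\begin{bmatrix}c&c\\c&c\end{bmatrix}$ to the form inequality $q_c(\xi+\eta)\le q_a(\xi)+q_b(\eta)$ and then invoke Theorem \ref{T-2.2} together with the identification $q_{a:b}=q_a:q_b$. The only (cosmetic) difference is that you identify the form of $\begin{bmatrix}c&c\\c&c\end{bmatrix}$ by conjugating with the Hadamard-type unitary, whereas the paper writes down its square root $\tfrac1{\sqrt2}\begin{bmatrix}c^{1/2}&c^{1/2}\\c^{1/2}&c^{1/2}\end{bmatrix}$ directly and tracks the domains by hand.
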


\begin{proof}
Let $h:=\begin{bmatrix}a&0\\0&b\end{bmatrix}$ and
$k:=\begin{bmatrix}a:b&a:b\\a:b&a:b\end{bmatrix}$. Then $h,k\ge0$ in
$\overline{\cM\otimes M_2}$ and
$$
h^{1/2}=\begin{bmatrix}a^{1/2}&0\\0&b^{1/2}\end{bmatrix},\qquad
k^{1/2}={1\over\sqrt2}\begin{bmatrix}(a:b)^{1/2}&(a:b)^{1/2}\\
(a:b)^{1/2}&(a:b)^{1/2}\end{bmatrix}.
$$
Hence $\cD(h^{1/2})=\cD(a^{1/2})\oplus\cD(b^{1/2})$ and
$\cD(k^{1/2})=\cD((a:b)^{1/2})\oplus\cD((a:b)^{1/2})$. Note that
$\cD(a^{1/2})\subseteq\cD((a:b)^{1/2})$ and $\cD(b^{1/2})\subseteq\cD((a:b)^{1/2})$ so that
$\cD(h^{1/2})\subseteq\cD(k^{1/2})$. For every $\eta\in\cD(a^{1/2})$ and $\zeta\in\cD(b^{1/2})$
we have
\begin{align*}
\|h^{1/2}(\eta\oplus\zeta)\|^2&=\|a^{1/2}\eta\|^2+\|b^{1/2}\zeta\|^2
=q_a(\eta)+q_b(\zeta) \\
&\ge q_{a:b}(\eta+\zeta)=\|k^{1/2}(\eta\oplus\zeta)\|^2.
\end{align*}
In the above we have used Theorem \ref{T-2.2} together with Definition \ref{D-3.15}.
Therefore, $h\ge k$ follows (see Lemma \ref{L-3.2}).

Let $c\in\overline\cM_+$ be such that $\begin{bmatrix}a&0\\0&b\end{bmatrix}\ge
\begin{bmatrix}c&c\\c&c\end{bmatrix}$. Let $z:=\begin{bmatrix}c&c\\c&c\end{bmatrix}$. Note that
$z^{1/2}={1\over\sqrt2}\begin{bmatrix}c^{1/2}&c^{1/2}\\c^{1/2}&c^{1/2}\end{bmatrix}$ and
$\cD(z^{1/2})=\cD(c^{1/2})\oplus\cD(c^{1/2})$. Hence $h\ge z$ means that
$\cD(a^{1/2})\subseteq\cD(c^{1/2})$, $\cD(b^{1/2})\subseteq\cD(c^{1/2})$ and for every
$\eta\in\cD(a^{1/2})$ and $\zeta\in\cD(b^{1/2})$,
$$
\|a^{1/2}\eta\|^2+\|b^{1/2}\zeta\|^2=\|h^{1/2}(\eta\oplus\zeta)\|^2
\ge\|z^{1/2}(\eta\oplus\zeta)\|^2=\|c^{1/2}(\eta+\zeta)\|^2.
$$
This implies that $q_c(\eta+\zeta)\le q_a(\eta)+q_b(\zeta)$ for all $\eta,\zeta\in\cH$. Hence
Theorem \ref{T-2.2} gives $q_c\le q_a:q_b$, that is, $c\le a:b$.
\end{proof}

\subsection{Connections on $L^1(\cM,\tau)+\cM$}\label{S-3.4}

The subspace $L^1(\cM,\tau)+\cM$ of $\overline\cM$ appears in several situations related
to $(\cM,\tau)$, for instance, in interpolation theory (e.g., \cite{Ko0,Te2}), majorization
theory (e.g., \cite{Hi0}, \cite[Chap.~3]{LSZ}), symmetric operator ideals (e.g.,
\cite[Chap.~2]{LSZ}) and so on. Indeed, the integral $\int_0^s\mu_t(a)\,dt$ in (ii) below
is of fundamental importance  in a twofold sense. Firstly it is a natural continuous
analogue of the Ky Fan norm (see \cite{Bh}) for compact operators (i.e., the sum of the first
$s$ largest eigenvalues), and secondly  it is nothing but the $K$-functional in real
interpolation theory (see \cite{BL} and \cite[p.~289]{FK}):
$$
\inf\{ \|b\|_1+t\|c\|;\, a=b+c\} \ \left(=K(t,a;L^1(\cM,\tau),\cM) \right).
$$
In this subsection we present further results on connections $a\sigma b$ when
$a,b\in\overline\cM_+$ are in $L^1(\cM)+\cM$.

It is well-known \cite[Proposition 1.2]{Hi0} that the following conditions for
$a\in\overline\cM$ are equivalent:
\begin{itemize}
\item[(i)] $a\in L^1(\cM,\tau)+\cM$;
\item[(ii)] $\int_0^t\mu_s(a)\,ds<+\infty$ for some (equivalently, for all) $t>0$;
\item[(iii)] $|a|e_{(r,\infty)}(|a|)\in L^1(\cM,\tau)$ for some $r\ge0$.
\end{itemize}

In the rest of this subsection, for notational brevity we write $L^p$ for $L^p(\cM,\tau)$
and $(L^1+\cM)_+$ for $(L^1+\cM)\cap\overline\cM_+$. Note that
$L^1\cap\cM\subseteq L^p \subseteq L^1+\cM$ for all $p\in[1,\infty]$. Since $ax,xa\in L^1$
for any $a\in L^1+\cM$ and $x\in L^1\cap\cM$, one can define a duality pairing between
$L^1+\cM$ and $L^1\cap\cM$ by
$$
\<a,x\>_\tau:=\tau(ax)\ (=\tau(xa)),\qquad a\in L^1+\cM,\ x\in L^1\cap\cM.
$$
By the condition (iii) it is clear that $(L^1+\cM)_+=L^1_++\cM_+$ and so $\<a,x\>_\tau\ge0$ for
all $x\in(L^1+\cM)_+$ and $x\in L^1\cap\cM_+$. In particular, when $\tau(1)<+\infty$, of
course $L^1+\cM=L^1$ and $L^1\cap\cM=\cM$ so that the above duality pairing is the usual
duality between $L^1\cong\cM_*$ and $\cM$.

The first lemma might be well-known to experts while we give a proof for completeness.

\begin{lemma}\label{L-3.35}
Let $a,b\in(L^1+\cM)_+$. If $\<a,x\>_\tau\le\<b,x\>_\tau$ for all $x\in L^1\cap\cM_+$, then
$a\le b$. Hence, if $\<a,x\>_\tau=\<b,x\>_\tau$ for all $x\in L^1\cap\cM_+$, then $a=b$.
\end{lemma}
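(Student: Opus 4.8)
The plan is to reduce the statement to a family of ordinary (bounded) positive-operator inequalities by testing against suitable $x \in L^1\cap\cM_+$ built from spectral projections of $a$ and $b$. The key point is that the duality pairing $\langle\,\cdot\,,\,\cdot\,\rangle_\tau$ is rich enough to separate elements of $(L^1+\cM)_+$: if $a \not\le b$, I must exhibit an $x \in L^1\cap\cM_+$ with $\langle a,x\rangle_\tau > \langle b,x\rangle_\tau$. First I would record the elementary reformulation: it suffices to prove that $\langle a,x\rangle_\tau \le \langle b,x\rangle_\tau$ for all $x \in L^1\cap\cM_+$ forces $q_a \le q_b$ in the form sense (equivalently $(1+b)^{-1}\le(1+a)^{-1}$, equivalently $b-a\in\overline\cM_+$ by Lemma \ref{L-3.2}), and then the second assertion is immediate by symmetry.

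The main step is a truncation argument. For a projection $e \in \cM$ with $\tau(e) < \infty$ and any bounded positive $y \in e\cM e$, the element $x := eye$ lies in $L^1\cap\cM_+$, and $\langle a,x\rangle_\tau = \tau(ae y e) = \tau(e a e\, y)$ provided $eae$ makes sense as a bounded operator — which it does once $e \le e_{[0,r]}(|a|)$ for some $r$, since then $ae$ is bounded. So first I would fix $n \in \bN$ and set $e_n := e_{[0,n]}(a)$, $f_n := e_{[0,n]}(b)$, and $p_n := e_n \wedge f_n$; note $p_n a p_n$ and $p_n b p_n$ are bounded positive operators in $p_n\cM p_n$. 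However $p_n$ need not be $\tau$-finite, so to land in $L^1\cap\cM$ one further compresses: for any $\tau$-finite projection $g \le p_n$, and any $y\in(g\cM g)_+$, the hypothesis applied to $x = gyg$ gives $\tau((gag)y)\le \tau((gbg)y)$ for all such $y$, whence $gag \le gbg$ as bounded operators in $g\cM g$ (testing bounded positive operators against all positive elements of a von Neumann algebra, even just against $\tau$-finite ones after a further cut, recovers the order). Letting $g \nearrow p_n$ along $\tau$-finite projections (possible since $\tau$ is semi-finite) yields $p_n a p_n \le p_n b p_n$ in $B(\cH)_+$.

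Finally I would pass to the limit $n\to\infty$. Since $a_n := p_n a p_n \nearrow$ and one checks $a_n \to a$ in the strong resolvent sense (indeed $p_n \nearrow 1$ strongly and $p_n a p_n$ is an increasing approximation of $a$ compatible with the form $q_a$; concretely $q_{p_n a p_n}(\xi)\nearrow q_a(\xi)$ for $\xi$ in a core, and one invokes Lemma \ref{L-2.4} in its increasing form), and likewise $b_n := p_n b p_n \to b$, one gets $(1+a_n)^{-1}\searrow(1+a)^{-1}$ and $(1+b_n)^{-1}\searrow(1+b)^{-1}$ strongly; the inequalities $(1+b_n)^{-1}\le(1+a_n)^{-1}$ survive the strong limit, giving $(1+b)^{-1}\le(1+a)^{-1}$, i.e. $a\le b$.

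The step I expect to be the main obstacle is the bookkeeping that the compressions $p_nap_n$ genuinely increase to $a$ in a form-compatible way and that the hypothesis — stated only for $x\in L^1\cap\cM_+$, hence only for $\tau$-finite cuts — still pins down the order of the unbounded operators $a,b$ on all of $\cH$, including on the "infinite part" of their spectra. The condition (iii) characterizing $L^1+\cM$ (namely $|a|e_{(r,\infty)}(|a|)\in L^1$) is what guarantees there is enough room: the tail $a\,e_{(r,\infty)}(a)$ is itself in $L^1$, so one may directly use $x = e_{(r,\infty)}(a)$ (which is $\tau$-finite) as a test element to control the large-spectrum part, and combine with the bounded-part argument above. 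Assembling these two regimes and checking the strong-resolvent convergence carefully is the technical heart; the rest is routine.
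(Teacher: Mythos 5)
Your reduction to bounded compressions is where the argument breaks. The step ``$gag\le gbg$ for all $\tau$-finite $g\le p_n$, hence $p_nap_n\le p_nbp_n$'' is sound, but the final passage from $p_nap_n\le p_nbp_n$ to $a\le b$ does not go through. First, $p_nap_n$ is \emph{not} increasing in $n$: for projections $p\le q$ one does not have $pap\le qaq$ in general (already for $a=\left[\begin{smallmatrix}1&1\\1&1\end{smallmatrix}\right]$ and $p=\left[\begin{smallmatrix}1&0\\0&0\end{smallmatrix}\right]$ the difference $a-pap$ has a negative eigenvalue), so the monotone machinery of Lemma \ref{L-2.4} is unavailable. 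Second, and more seriously, compression by a projection that does not commute with $b$ can \emph{increase} the form value at a fixed vector: in the same example, $q_a(\xi)=0$ but $q_a(p\xi)=1$ for $\xi=(1,-1)$. Hence from $\|a^{1/2}p_n\xi\|\le\|b^{1/2}p_n\xi\|$ and lower semicontinuity of $q_a$ you only obtain $q_a(\xi)\le\liminf_n q_b(p_n\xi)$, which need not be $\le q_b(\xi)$; and the claimed strong resolvent convergences $p_nap_n\to a$, $p_nbp_n\to b$ for the mutually non-commuting projections $p_n=e_{[0,n]}(a)\wedge e_{[0,n]}(b)$ are unproved and doubtful. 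This limiting step is a genuine gap, not bookkeeping.

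The paper's proof avoids any limit over compressions. Take the Jordan decomposition $b-a=(b-a)_+-(b-a)_-$ in $\overline\cM$, set $h:=(b-a)_-$ and $e:=s(h)$; then $h=e(a-b)e$ holds \emph{exactly}, so for every $x\in L^1\cap\cM_+$ one gets $0\le\<h,x\>_\tau=\<a-b,exe\>_\tau\le0$, i.e.\ $\<h,x\>_\tau=0$. It then remains to show that $h\in(L^1+\cM)_+$ with $\<h,x\>_\tau=0$ for all $x\in L^1\cap\cM_+$ must vanish, and this is precisely your ``two regimes'' idea carried out: choose $r$ with $f:=e_{(r,\infty)}(h)$ $\tau$-finite, $hf\in L^1_+$ and $hf^\perp\in\cM_+$; test $hf$ against $f\cM_+f\subseteq L^1\cap\cM_+$ to get $hf=0$, and test $hf^\perp$ against $L^1\cap\cM_+$, which is dense in $L^1_+$, to get $hf^\perp=0$. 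If you want to rescue your approach, replace the family $p_n(\cdot)p_n$ by the single compression $e(\cdot)e$ attached to the obstruction $h$ itself; that is exactly what makes the argument close without any resolvent limit.
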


\begin{proof}
Take the Jordan decomposition $b-a=(b-a)_+-(b-a)_-$. Let $h:=(b-a)_-$ and $e:=s(h)$. For every
$x\in L^1\cap\cM_+$ we note that
$$
0\le\<h,x\>_\tau=\<e(a-b)e,x\>_\tau=\<a-b,exe\>_\tau\le0,
$$
where the inequality follows from the assumption since
$e(L^1\cap\cM_+)e\subseteq L^1\cap\cM_+$ obviously. Hence $\<h,x\>_\tau=0$ for all
$x\in L^1\cap\cM_+$. Now let us show $h=0$ (which means $a\le b$).

By the condition (iii) above we can choose an $r\ge0$ for which $f:=e_{(r,\infty)}(h)$ satisfies
$\tau(f)<+\infty$, $hf\in L^1_+$ and $hf^\perp\in\cM_+$. Since
$f\cM_+f\subset L^1\cap\cM_+$, one has $\tau(hfx)=\tau(hfxf)=0$ for all $x\in\cM_+$ so that
$hf=0$. One also has $\tau(hf^\perp x)=\tau(hf^\perp xf^\perp)=0$ for all $x\in L^1\cap\cM_+$.
Since $L^1\cap\cM_+$ is dense in $L^1_+$, it follows that $\tau(hf^\perp x)=0$ for all
$x\in L^1_+$ and hence $hf^\perp=0$. Therefore, $h=hf+hf^\perp=0$.
\end{proof}

The following two lemmas will be needed in proving subsequent theorems:

\begin{lemma}\label{L-3.36}
Let $a\in(L^1+\cM)_+$, $x_1,x_2\in L^2\cap\cM$ and $k_1,k_2\in\cM$. Then
$x_2^*x_1a^{1/2}k_1k_2^*a^{1/2}\in L^1$, $x_1a^{1/2}k_1,x_2a^{1/2}k_2\in L^2$ and
$$
\tau(x_2^*x_1a^{1/2}k_1k_2^*a^{1/2})=\tau(k_2^*a^{1/2}x_2^*x_1a^{1/2}k_1)
=(x_1a^{1/2}k_1,x_2a^{1/2}k_2)
$$
with the inner product $(\cdot,\cdot)$ in $L^2$.
\end{lemma}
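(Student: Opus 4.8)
The plan is to reduce all three assertions to H\"older's inequality for the non-commutative $L^p$-spaces $L^p=L^p(\cM,\tau)$ and to the tracial property of $\tau$ on products of H\"older-conjugate factors, after splitting $a^{1/2}$ into an $L^2$-part plus a bounded part.

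First I would record two preliminary observations. Since $a\in(L^1+\cM)_+$ and $a\ge0$, the condition $|a|e_{(r,\infty)}(|a|)\in L^1$ recalled above provides an $r\ge0$ with $ae_{(r,\infty)}(a)\in L^1_+$; setting $p:=e_{(r,\infty)}(a)$, $b:=a^{1/2}p$ and $c:=a^{1/2}p^\perp$ one has $b^*b=pap=ae_{(r,\infty)}(a)\in L^1_+$, hence $b\in L^2$, while $\|c\|\le r^{1/2}$, hence $c\in\cM$; thus
$$
a^{1/2}=b+c,\qquad b\in L^2,\quad c\in\cM .
$$
Secondly, $x_2^*x_1\in L^1\cap\cM$, since $x_1,x_2\in\cM$ and, as $x_1,x_2^*\in L^2$, also $x_2^*x_1\in L^2\cdot L^2\subseteq L^1$.

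Using $a^{1/2}=b+c$, the $\cM$-bimodule property of $L^2$, and $x_i\in L^2\cap\cM$, I would then check that
$$
u:=x_1a^{1/2}k_1=x_1bk_1+x_1ck_1\in L^2,\qquad v:=x_2a^{1/2}k_2\in L^2,
$$
which is the middle assertion of the lemma. Next $P:=x_2^*x_1a^{1/2}k_1=x_2^*u\in L^1\cap L^2$ (product of $x_2^*\in L^2\cap\cM$ with $u\in L^2$), and $Q:=k_2^*a^{1/2}=Q_1+Q_2$ with $Q_1:=k_2^*b\in L^2$ and $Q_2:=k_2^*c\in\cM$. The first membership then follows from $x_2^*x_1a^{1/2}k_1k_2^*a^{1/2}=PQ=PQ_1+PQ_2$, where $PQ_1\in L^2\cdot L^2\subseteq L^1$ (using $P\in L^2$) and $PQ_2\in L^1\cdot\cM\subseteq L^1$ (using $P\in L^1$). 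Likewise $v^*u=k_2^*a^{1/2}x_2^*x_1a^{1/2}k_1\in L^1$ because $u,v\in L^2$, and by the very definition of the inner product of $L^2$ this already yields $(u,v)=\tau(v^*u)=\tau(k_2^*a^{1/2}x_2^*x_1a^{1/2}k_1)$, i.e.\ the second displayed equality.

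It remains to prove $\tau(PQ)=\tau(QP)$, the first displayed equality. Here I would apply the tracial identity $\tau(ST)=\tau(TS)$ separately to the H\"older-conjugate pairs $(P,Q_1)$ with $P,Q_1\in L^2$, and $(P,Q_2)$ with $P\in L^1$, $Q_2\in\cM$ --- noting that $Q_1P\in L^2\cdot L^2$ and $Q_2P\in\cM\cdot L^1$ both lie in $L^1$, so the additions make sense --- to get
$$
\tau(PQ)=\tau(PQ_1)+\tau(PQ_2)=\tau(Q_1P)+\tau(Q_2P)=\tau(QP).
$$
I expect this final step to be the only delicate point: since $a^{1/2}$ lies in $L^2+\cM$ rather than in a single $L^p$, one cannot invoke a one-line "$\tau(ST)=\tau(TS)$ for $S\in L^p$, $T\in L^{p'}$'' statement, so cyclicity must be applied componentwise to $Q=Q_1+Q_2$ after checking that every resulting product lands in $L^1$; the decomposition $a^{1/2}=b+c$ with $b\in L^2$, $c\in\cM$ is exactly what keeps all these pairings H\"older-admissible.
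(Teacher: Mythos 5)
Your proof is correct and follows essentially the same route as the paper's: the same decomposition $a^{1/2}=a^{1/2}e_{(r,\infty)}(a)+a^{1/2}e_{(r,\infty)}(a)^\perp\in L^2+\cM$, the same observation that $x_2^*x_1\in L^1\cap\cM\subseteq L^2$, and the same H\"older-type membership checks. Your componentwise justification of the cyclicity $\tau(PQ)=\tau(QP)$ is a little more explicit than the paper's one-line assertion, but it is the same argument.
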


\begin{proof}
Choose an $r\ge0$ such that $af\in L^1_+$ and $af^\perp\in\cM_+$ with $f:=e_{(r,\infty)}(a)$.
Then $a^{1/2}=a^{1/2}f+a^{1/2}f^\perp\in L^2+\cM$ so that $a^{1/2}k_1, k_2^*a^{1/2}\in L^2+\cM$.
Since $x_2^*x_1\in L^1\cap\cM\subseteq L^2$ obviously, one has $x_2^*x_1a^{1/2}k_1\in L^1\cap L^2$
(noting $L^2\cdot L^2\subseteq L^1$, $L^1\cdot\cM\subseteq L^1$, $\cM\cdot L^2\subseteq L^2$
and $L^2\cdot\cM\subseteq L^2$). Therefore, $x_2^*x_1a^{1/2}k_1k_2^*a^{1/2}\in L^1$ and
$$
\tau(x_2^*x_1a^{1/2}k_1k_2^*a^{1/2})=\tau(k_2^*a^{1/2}x_2^*x_1a^{1/2}k_1).
$$
Moreover, one has $x_1a^{1/2}k_1,x_2a^{1/2}k_2\in L^2$ so that the above quantity is written as
$(x_1a^{1/2}k_1,x_2a^{1/2}k_2)$.
\end{proof}

\begin{lemma}\label{L-3.37}
Let $a,a_n\in(L^1+\cM)_+$ for $n\in\bN$. If $a_n\searrow a$ in the strong resolvent sense, then
$\<a_n,x\>_\tau\searrow\<a,x\>_\tau$ for every $x\in L^1\cap\cM_+$.
\end{lemma}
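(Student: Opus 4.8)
The plan is to reduce the assertion, via Lemma~\ref{L-3.9}, to the elementary fact that a normal positive functional on $\cM$ is order-continuous along decreasing sequences. First note that $\{\<a_n,x\>_\tau\}$ is automatically decreasing and bounded below by $\<a,x\>_\tau$: indeed $a_n\searrow a$ forces $a_n\ge a_{n+1}\ge a$ in $\overline\cM_+$ (Lemma~\ref{L-3.2}), and $a_n-a_{n+1},\,a_n-a\in(L^1+\cM)_+$, so pairing with $x\in L^1\cap\cM_+$ gives $\<a_n,x\>_\tau\ge\<a_{n+1},x\>_\tau\ge\<a,x\>_\tau$. Thus it suffices to identify $\lim_n\<a_n,x\>_\tau$ with $\<a,x\>_\tau$. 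Since $a_n\le a_1$ for every $n$, Lemma~\ref{L-3.9} (with $b=a_1$, $\lambda=1$) tells us that $T_n:=T_{a_n/a_1}$ is a decreasing sequence in $(s(a_1)\cM s(a_1))_+$ with $\|T_n\|\le1$, that the strong limit $T:=\lim_nT_n$ exists, and that $a_n\searrow a_1^{1/2}Ta_1^{1/2}$ in the strong resolvent sense; by uniqueness of strong resolvent limits this yields $a=a_1^{1/2}Ta_1^{1/2}$, while $a_n=a_1^{1/2}T_na_1^{1/2}$ by the definition of $T_n$ (Definition~\ref{D-3.5}).

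Next I would check that $\rho:=a_1^{1/2}xa_1^{1/2}$ lies in $L^1(\cM,\tau)_+$. Write $x=x^{1/2}x^{1/2}$ with $x^{1/2}\in L^2\cap\cM$, and choose (as in condition~(iii) of \S\ref{S-3.4}) an $r\ge0$ with $f:=e_{(r,\infty)}(a_1)$, $a_1f\in L^1$; then $a_1^{1/2}f\in L^2$ since $(a_1^{1/2}f)^*(a_1^{1/2}f)=a_1f\in L^1$, and $a_1^{1/2}f^\perp\in\cM$ since $\|a_1^{1/2}f^\perp\|^2=\|a_1f^\perp\|\le r$. Using the Hölder-type inclusions $\cM\cdot L^2\subseteq L^2$, $L^2\cdot\cM\subseteq L^2$ and $L^2\cdot L^2\subseteq L^1$ one gets $v:=a_1^{1/2}x^{1/2}=(a_1^{1/2}f)x^{1/2}+(a_1^{1/2}f^\perp)x^{1/2}\in L^2$, whence $\rho=vv^*\in L^1(\cM,\tau)_+$. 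It is here that the hypothesis $a_1\in(L^1+\cM)_+$ is used in an essential way. Consequently $\psi:=\tau(\rho\,\cdot\,)$ is a normal positive linear functional on $\cM$.

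Finally, using the trace (cyclicity) property for $\tau$-measurable operators whose relevant products lie in $L^1$ — exactly as exploited in Lemma~\ref{L-3.36} — one computes
\begin{align*}
\<a_n,x\>_\tau=\tau(a_nx)=\tau\bigl(a_1^{1/2}T_na_1^{1/2}x\bigr)
=\tau\bigl(T_n\,a_1^{1/2}xa_1^{1/2}\bigr)=\tau(T_n\rho)=\psi(T_n),
\end{align*}
and likewise $\<a,x\>_\tau=\psi(T)$. Since $\{T_n\}$ is a bounded decreasing sequence in $\cM_+$ with $T_n\searrow T$ strongly, hence $T=\inf_nT_n$, normality of $\psi$ gives $\psi(T_n)\searrow\psi(T)$, which is precisely the claim. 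The two points needing a little care — the membership $\rho\in L^1_+$ and the cyclicity passing from $\tau(a_1^{1/2}T_na_1^{1/2}x)$ to $\tau(T_n\rho)$ — are routine consequences of the $L^p$-calculus recalled in \S\ref{S-3.1} and \S\ref{S-3.4}; the substantive step is the simultaneous factorization $a_n=a_1^{1/2}T_na_1^{1/2}$, $a=a_1^{1/2}Ta_1^{1/2}$ through the single bounded sequence $\{T_n\}$ supplied by Lemma~\ref{L-3.9}, after which normality of $\psi$ does all the remaining work.
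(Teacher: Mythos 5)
Your proof is correct, but it takes a genuinely different route from the one in the paper. You factor the whole sequence through $a_1$: Lemma~\ref{L-3.9} gives $a_n=a_1^{1/2}T_na_1^{1/2}$ with $T_n=T_{a_n/a_1}$ decreasing to $T$ strongly and $a=a_1^{1/2}Ta_1^{1/2}$ (by uniqueness of strong resolvent limits), and then the cyclicity computation of Lemma~\ref{L-3.36} converts $\<a_n,x\>_\tau$ into $\psi(T_n)$ for the single normal positive functional $\psi=\tau(a_1^{1/2}xa_1^{1/2}\,\cdot)$, so normality finishes the argument. The paper instead regularizes each $a_n$ by $a_{n,\delta}=(1+\delta a_n)^{-1}a_n\in\cM_+$, uses normality only for these bounded truncations, and controls the error term $\<\delta(1+\delta a_n)^{-1}a_n^2,x\>_\tau$ uniformly in $n$ via generalized $s$-numbers and the integrability $\int_0^r\mu_t(a_1)\,dt\to0$ as $r\searrow0$ (condition~(ii) of \S\ref{S-3.4}). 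Your version is shorter and more structural, reusing Lemma~\ref{L-3.9} exactly as it is used elsewhere in \S\ref{S-3.1}, and it localizes the use of the hypothesis $a_1\in(L^1+\cM)_+$ to the single membership $a_1^{1/2}xa_1^{1/2}\in L^1(\cM,\tau)_+$; the paper's version is more quantitative and stays within the $\mu_t$-calculus that the rest of \S\ref{S-3.4} is built on. The only points you should make fully explicit are the ones you already flag: that $0\le b\le c$ with $c\in(L^1+\cM)_+$ forces $b\in(L^1+\cM)_+$ (via $\mu_s(b)\le\mu_s(c)$ and condition~(ii)), and that the cyclicity step is precisely Lemma~\ref{L-3.36} applied with $x_1=x_2=x^{1/2}$, $k_1=T_n$, $k_2=1$. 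With those references in place the argument is complete.
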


\begin{proof}
By the assumption it is clear that $\<a_n,x\>_\tau$ decreases and $\<a_n,x\>_\tau\ge\<a,x\>_\tau$.
So it suffices to show that $\inf_n\<a_n,x\>_\tau=\<a,x\>_\tau$. For each $\delta>0$ let
$a_\delta:=(1+\delta a)^{-1}a$ ($=\delta^{-1} (1-(1+\delta a)^{-1}))$ and $a_{n,\delta}:=(1+\delta a_n)^{-1}a_n$.
Note that
\begin{align}
\<a_n,x\>_\tau&=\<a_n-a_{n,\delta},x\>_\tau+\<a_{n,\delta},x\>_\tau \nonumber\\
&=\<\delta(1+\delta a_n)^{-1}a_n^2,x\>_\tau+\<a_{n,\delta},x\>_\tau. \label{F-3.23}
\end{align}
Moreover, by the assumption note that for each $\delta>0$ we have $a_{n,\delta}\searrow a_\delta$
strongly in $\cM_+$ so that
\begin{equation}\label{F-3.24}
\<a_{n,\delta},x\>_\tau=\tau(a_{n,\delta}x)
\,\searrow\,\tau(a_\delta x)=\<a_\delta,x\>_\tau
\end{equation}
as $n\to\infty$. Since
$s\ge0\mapsto\delta(1+\delta s)^{-1}s^2$ is a continuous increasing function, by
\cite[Lemma 2.5,(iv), (iii)]{FK} we have
$$
\mu_t(\delta(1+\delta a_n)^{-1}a_n^2)={\delta\mu_t(a_n)^2\over1+\delta\mu_t(a_n)}
\le{\delta\mu_t(a_1)^2\over1+\delta\mu_t(a_1)},\qquad t>0.
$$
Therefore,
\begin{align}
&\<\delta(1+\delta a_n)^{-1}a_n^2,x\>_\tau
\le\tau\bigl(\big|\delta(1+\delta a_n)^{-1}a_n^2x\big|\bigr) \nonumber\\
&\quad=\int_0^\infty\mu_t\bigl(\delta(1+\delta a_n)^{-1}a_n^2x\bigr)\,dt
\quad\mbox{(by \cite[Proposition 2.7]{FK})} \nonumber\\
&\quad\le\int_0^\infty\mu_{t/2}\bigl(\delta(1+\delta a_n)^{-1}a_n^2\bigr)\mu_{t/2}(x)\,dt
\quad\mbox{(by \cite[Lemma 2.5,(vii)]{FK})} \nonumber\\
&\quad=2\int_0^\infty\mu_t\bigl(\delta(1+\delta a_n)^{-1}a_n^2\bigr)\mu_t(x)\,dt \nonumber\\
&\quad\le2\int_0^\infty{\delta\mu_t(a_1)^2\over1+\delta\mu_t(a_1)}\,\mu_t(x)\,dt \nonumber\\
&\quad\le2\|x\|_\infty\int_0^r\mu_t(a_1)\,dt+2\delta\mu_r(a_1)^2\int_r^\infty\mu_t(x)\,dt
\nonumber\\
&\quad\le2\|x\|_\infty\int_0^r\mu_t(a_1)\,dt+2\delta\mu_r(a_1)^2\|x\|_1 \label{F-3.25}
\end{align}
for any $r>0$. Since $\lim_{r\searrow0}\int_0^r\mu_t(a_1)\,dt=0$
(due to the condition (ii) at the beginning of this subsection), for each $\eps>0$ there is
an $r>0$ such that $2\|x\|_\infty\int_0^r\mu_t(a_1)\,dt\le\eps/2$. Then there is a $\delta>0$
such that $2\delta\mu_r(a_1)^2\|x\|_1\le\eps/2$. For such a $\delta>0$ it follows from
\eqref{F-3.23} and \eqref{F-3.25} that
$$
\<a_n,x\>_\tau\le\eps+\<a_{n,\delta},x\>_\tau
$$
so that
$$
\inf_n\<a_n,x\>_\tau\le\eps+\<a_\delta,x\>_\tau\le\eps+\<a,x\>_\tau
$$
thanks to \eqref{F-3.24}, which implies $\inf_n\<a_n,x\>_\tau=\<a,x\>_\tau$ as desired.
\end{proof}

Note that if $a,b\in(L^1+\cM)_+$ then $a\sigma b\in(L^1+\cM)_+$ for any connection $\sigma$
(see Lemma \ref{L-3.13}). The next theorem is an extension of the variational formula
\eqref{F-3.11} (for bounded positive operators) to $a:b$ for $a,b\in(L^1+\cM)_+$.

\begin{theorem}\label{T-3.38}
For every $a,b\in(L^1+\cM)_+$ and $x\in L^2\cap\cM$ we have
\begin{align}\label{F-3.26}
\<a:b,x^*x\>_\tau=\inf\{\<a,y^*y\>_\tau+\<b,z^*z\>_\tau;\,y,z\in L^2\cap\cM,\,y+z=x\}.
\end{align}
\end{theorem}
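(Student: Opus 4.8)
I would transport everything to the Hilbert space $L^2:=L^2(\cM,\tau)$. For $c\in(L^1+\cM)_+$ and $y\in L^2\cap\cM$ one has $yc^{1/2}\in L^2$ (split $c^{1/2}=c^{1/2}e_{[0,r]}(c)+c^{1/2}e_{(r,\infty)}(c)\in\cM+L^2$ via condition (iii) above) and $c^{1/2}y^*yc^{1/2}\in L^1$, so $\<c,y^*y\>_\tau=\tau(c^{1/2}y^*yc^{1/2})=\|yc^{1/2}\|_2^2$. Hence the left side of \eqref{F-3.26} equals $\|x(a:b)^{1/2}\|_2^2$, and for $x=y+z$ with $y,z\in L^2\cap\cM$ the right-hand summand equals $\|ya^{1/2}\|_2^2+\|zb^{1/2}\|_2^2$. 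The inequality ``$\le$'' in \eqref{F-3.26} then follows from Proposition \ref{P-3.34}: conjugating $\begin{bmatrix}a&0\\0&b\end{bmatrix}\ge\begin{bmatrix}a:b&a:b\\a:b&a:b\end{bmatrix}$ in $\overline{\cM\otimes M_2}$ by $\begin{bmatrix}y^*&0\\z^*&0\end{bmatrix}$ and looking at the $(1,1)$ corner gives $yay^*+zbz^*\ge x(a:b)x^*$ in $\overline\cM_+$, and applying $\tau$ (with $\tau(yay^*)=\|ya^{1/2}\|_2^2$, etc.) yields $\<a,y^*y\>_\tau+\<b,z^*z\>_\tau\ge\<a:b,x^*x\>_\tau$.

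For the reverse inequality I would put $h:=a+b$, $T_a:=T_{a/h}$, $T_b:=T_{b/h}\in\cM_+$, so that $T_a+T_b=s(h)=:e$, $a=h^{1/2}T_ah^{1/2}$, $b=h^{1/2}T_bh^{1/2}$, and (since $T_aT_b=T_bT_a$) $a:b=h^{1/2}(T_a:T_b)h^{1/2}=h^{1/2}T_aT_bh^{1/2}$ by Lemma \ref{L-3.22} and Definition \ref{D-3.16}. Set $v:=xh^{1/2}\in L^2$; note $vs(h)=v$. Routine trace manipulations (all valid because the products involved lie in $L^1$) give, for $y\in L^2\cap\cM$ and $z:=x-y$, that $\<a,y^*y\>_\tau=\|yh^{1/2}T_a^{1/2}\|_2^2$, $\<b,z^*z\>_\tau=\|(v-yh^{1/2})T_b^{1/2}\|_2^2$ and $\<a:b,x^*x\>_\tau=\tau(v^*vT_aT_b)$. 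Writing $R_c$ ($c\in\cM$) for right multiplication by $c$ on $L^2$ — a normal anti-homomorphism with $R_c^*=R_{c^*}$, $R_{T_a}+R_{T_b}=R_e$ a projection and $R_ev=v$ — the quantity to minimise over $y\in L^2\cap\cM$ becomes $F(yh^{1/2})$, where $F(u):=\<R_{T_a}u,u\>+\<R_{T_b}(v-u),v-u\>$ for $u\in L^2$. Now $F$ is a continuous positive quadratic functional with $F(u)=F(R_eu)$, and $R_{T_a}=R_e-R_{T_b}$ on $R_eL^2$; completing the square shows that $F$ attains its minimum over $L^2$ at $u_\star:=R_{T_b}v$ with value $\<R_{T_b}R_{T_a}v,v\>=\<R_{T_aT_b}v,v\>=\tau(v^*vT_aT_b)=\<a:b,x^*x\>_\tau$. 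Hence \eqref{F-3.26} follows once one knows that $\{yh^{1/2}:y\in L^2\cap\cM\}$ is $\|\cdot\|_2$-dense in $R_eL^2=\overline{\mathrm{Ran}(R_{h^{1/2}})}$ (by continuity of $F$ its infimum over this set then agrees with $\min_{L^2}F$); here $R_{h^{1/2}}$ is the self-adjoint operator of right multiplication by $h^{1/2}$ on $L^2$, and $L^2\cap\cM\subseteq\cD(R_{h^{1/2}})$ by the splitting of $h^{1/2}$ above.

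This density statement is the main obstacle. Using the canonical conjugation $J$ on $L^2$ ($Jw=w^*$, so $R_{h^{1/2}}=JL_{h^{1/2}}J$ and $J(L^2\cap\cM)=L^2\cap\cM$) it is equivalent to the assertion that $\mathcal C:=\mathrm{span}\{e_{[0,n]}(h)y:y\in L^2\cap\cM,\ n\in\bN\}\subseteq L^2\cap\cM$ is a core for $L_{h^{1/2}}$, left multiplication by $h^{1/2}$. I would prove this by truncating twice: for $\xi\in\cD(L_{h^{1/2}})$, first $e_{[0,n]}(h)\xi\to\xi$ in the graph norm, because $e_{[0,n]}(h)$ commutes with $h^{1/2}$ and $e_{[0,n]}(h)\nearrow1$ $\sigma$-strongly (normality of the left regular representation), whence $L_{h^{1/2}}(e_{[0,n]}(h)\xi)=e_{[0,n]}(h)(L_{h^{1/2}}\xi)\to L_{h^{1/2}}\xi$; secondly, with $n$ fixed, since $h^{1/2}e_{[0,n]}(h)\in\cM$ (of norm $\le n^{1/2}$) and $L^2\cap\cM$ is dense in $L^2$, approximating $\xi$ in $\|\cdot\|_2$ by $z_k\in L^2\cap\cM$ makes $e_{[0,n]}(h)z_k\in\mathcal C$ converge to $e_{[0,n]}(h)\xi$ in the graph norm; a diagonal argument then finishes. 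The delicate point — and the reason for passing through $L^2$ and invoking the $J$-symmetry — is that the naive truncations $y\mapsto ye_{[0,n]}(h)$ fail to stay in $L^2\cap\cM$ unless the vector is first made bounded, and these two truncations cannot in general be made to converge simultaneously in the graph norm, whereas truncation by the spectral projections of $h$, which commute with $h^{1/2}$, can.
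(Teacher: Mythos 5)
Your argument is correct, and at its core it is the same as the paper's: both identify $\<a,y^*y\>_\tau+\<b,z^*z\>_\tau$ with a quadratic functional of $yh^{1/2}\in L^2(\cM,\tau)$ (where $h=a+b$), minimize it over $L^2e$ with $e=s(h)$, and rest on the same key density lemma, namely that $\{yh^{1/2};\,y\in L^2\cap\cM\}$ is $\|\cdot\|_2$-dense in $L^2e$. Where you deviate, you deviate soundly, and in one place you genuinely streamline the proof: the paper first reduces to the comparable case $\lambda^{-1}b\le a\le\lambda b$ (via Theorem \ref{T-3.25},(2) and Lemma \ref{L-3.37} applied to $a+\eps h$, $b+\eps h$) so as to have $T_{a/h}=k^*k$ with $k$ invertible in $e\cM e$ and $a:b=a^{1/2}(e-kk^*)a^{1/2}$, whereas you bypass the regularization entirely by using only $T_{a/h}+T_{b/h}=e$, which gives $T_{a/h}:T_{b/h}=T_{a/h}T_{b/h}$ and hence $a:b=h^{1/2}T_{a/h}T_{b/h}h^{1/2}$ with no invertibility needed. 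Your derivation of $\<a:b,x^*x\>_\tau\le\<a,y^*y\>_\tau+\<b,z^*z\>_\tau$ from Proposition \ref{P-3.34} by conjugation and monotonicity of $\tau$ on $\overline\cM_+$ is a clean substitute for the paper's route (the identity \eqref{F-3.27} plus the Schwarz inequality), and your completion of the square in the right-multiplication operators is essentially that same identity in operator-theoretic dress, with the correct minimizer $u_\star=xh^{1/2}T_{b/h}$. As for the density lemma, the paper proves it by an annihilator argument with the spectral cut-offs $f_n=e_n-e_{1/n}$ of $h$, while you conjugate by $J$ and prove a core statement for left multiplication by $h^{1/2}$ using the cut-offs $e_{[0,n]}(h)$; both work, and your two-stage truncation is valid. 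The only nits: the word ``equivalent'' in your core reformulation overstates what you need and prove (only that the core property implies the density), and the ``routine trace manipulations'' should, in a written version, be pinned down via Lemma \ref{L-3.36} as the paper does. Neither is a gap.
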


\begin{proof}
Let $h:=a+b$. By Theorem \ref{T-3.25},(2) and Lemma \ref{L-3.37} note that, with
$x,y,z\in L^2\cap\cM$,
$$
\<a:b,x^*x\>_\tau=\inf_{\eps>0}\<(a+\eps h):(b+\eps h),x^*x\>_\tau
$$
and
\begin{align*}
\inf_{y+z=x}\{\<a,y^*y\>_\tau+\<b,z^*z\>_\tau\}
&=\inf_{y+z=x}\inf_{\eps>0}\{\<a+\eps h,y^*y\>_\tau+\<b+\eps h,z^*z\>_\tau\} \\
&=\inf_{\eps>0}\inf_{y+z=x}\{\<a+\eps h,y^*y\>_\tau+\<b+\eps h,z^*z\>_\tau\}.
\end{align*}
So it suffices to prove \eqref{F-3.26} when $\lambda^{-1}b\le a\le\lambda b$ for some
$\lambda>0$. In this case, let $e:=s(a)=s(b)\in\cM$ and take a $k\in e\cM e$ with
$a^{1/2}=kh^{1/2}$ and $T_{a/h}=k^*k$. As immediately seen, note that $k$ is invertible in
$e\cM e$. Since $T_{a/h}+T_{b/h}=e$ (see Definition \ref{D-3.5}), we have
\begin{align*}
a:b&=h^{1/2}(T_{a/h}:T_{b/h})h^{1/2}
=h^{1/2}T_{a/h}(T_{a/h}+T_{b/h})^{-1}T_{b/h}h^{1/2} \\
&=h^{1/2}k^*k(e-k^*k)h^{1/2}
=h^{1/2}k^*(e-kk^*)kh^{1/2} \\
&=a^{1/2}(e-kk^*)a^{1/2}.
\end{align*}
Now let $x,y,z\in L^2\cap\cM$ with $y+z=x$, and using Lemma \ref{L-3.36} we compute
\begin{align}
&\<a,y^*y\>_\tau+\<b,z^*z\>_\tau-\<a:b,x^*x\>_\tau \nonumber\\
&\quad=\tau((x-z)^*(x-z)a)+\tau(z^*zb)-\tau(x^*x(a-a^{1/2}kk^*a^{1/2})) \nonumber\\
&\quad=\tau(z^*za)+\tau(z^*zb)-\tau(x^*za)-\tau(z^*xa)+\tau(x^*xa^{1/2}kk^*a^{1/2}) \nonumber\\
&\quad=\tau(z^*zh)+\tau(k^*a^{1/2}x^*xa^{1/2}k)-2\Re\tau(x^*za^{1/2}k^{*-1}k^*a^{1/2}) \nonumber\\
&\quad=\|zh^{1/2}\|_2^2+\|xa^{1/2}k\|_2^2-2\Re(za^{1/2}k^{*-1},xa^{1/2}k) \nonumber\\
&\quad=\|zh^{1/2}\|_2^2+\|xa^{1/2}k\|_2^2-2\Re(zh^{1/2},xa^{1/2}k) \label{F-3.27}
\end{align}
thanks to $h^{1/2}=k^{-1}a^{1/2}=a^{1/2}k^{*-1}$ with inverses $k^{-1},k^{*-1}$ in $e\cM e$.
The Schwarz inequality gives
$$
\<a,y^*y\>_\tau+\<b,z^*z\>_\tau-\<a:b,x^*x\>_\tau\ge0.
$$

We note that $\{zh^{1/2}:z\in L^2\cap\cM\} \subseteq  L^2e$. Let us show that the left hand side
here is dense in $L^2e$ in the norm $\|\cdot\|_2$. For this it suffices to show that if
$w\in L^2$ annihilates the above left hand side (i.e., $\tau(wzh^{1/2})=0$ for all
$z\in L^2\cap\cM$), then $w$ annihilates $L^2e$, equivalently, $ew=0$. So assume that
$w\in L^2$ satisfies $\tau(wzh^{1/2})=0$ for all $z\in L^2\cap\cM$. Take the spectral
decomposition $h=\int_0^\infty\lambda\,de_\lambda$. For each $n\in\bN$ let $f_n:=e_n-e_{1/n}$.
Then, for every $z\in L^2\cap\cM$, from $zf_n\in L^2\cap\cM$ we have $\tau(wzf_nh^{1/2})=0$.
Note that $f_nh^{1/2}$ is bounded and invertible in $f_n\cM f_n$. Hence $f_nh^{1/2}w\in L^2$
and $\tau(f_nh^{1/2}wz)=0$ for all $z\in L^2\cap\cM$. Since $L^2\cap\cM$ is dense in $L^2$, it
follows that $f_nh^{1/2}w=0$ and hence $f_nw=0$, so $\tau(f_nww^*)=0$. Since $f_n\nearrow e$,
we have $\tau(eww^*)=0$, i.e., $ew=0$.

Since $xa^{1/2}k\in L^2e$ by Lemma \ref{L-3.36}, from what we have just shown one can
choose a sequence $z_n\in L^2\cap\cM$ such that $\|z_nh^{1/2}-xa^{1/2}k\|_2\to0$. Letting
$y_n:=x-z_n$ and putting $y=y_n$, $z=z_n$ in \eqref{F-3.27} one has
\begin{align*}
&\<a,y_n^*y_n\>_\tau+\<b,z_n^*z_n\>_\tau-\<a:b,x^*x\>_\tau \\
&\quad=\|z_nh^{1/2}\|_2^2+\|xa^{1/2}k\|_2^2-2\Re(z_nh^{1/2},xa^{1/2}k)\,\longrightarrow0.
\end{align*}
Thus, the expression \eqref{F-3.26} has been obtained.
\end{proof}

Note by Lemma \ref{L-3.35} that $a:b$ for $a,b\in(L^1+\cM)_+$ can be determined by the
variational expression \eqref{F-3.26}.

\subsection{Comparison between $a\#_\alpha b$ and $|a^{1-\alpha}b^\alpha|$}\label{S-3.5}

Consider the following conditions for $a\in\overline\cM$:
\begin{itemize}
\item[(i)] $\int_0^\delta\log^+\mu_s(a)\,ds<\infty$ for some (equivalently, any)
$\delta>0$, where $\log^+t$ stands for $\max\{\log t,0\}$ for $t\ge0$. 
\item[(ii)] $\int_0^\delta\mu_s(a)^q\,ds<\infty$ for some (equivalently, any) $\delta>0$
and some $q>0$.
\end{itemize}

The implication (ii)$\implies$(i) is immediately seen. If
$\mu_s(a)\le Cs^{-\beta}$ ($0<s<\delta$) for some (equivalently, any) $\delta>0$
and some $C,\beta>0$ (in particular, if $a\in L^p(\cM,\tau)$ for some
$p\in(0,\infty]$, see \eqref{F-3.2}), then (ii) is satisfied. Whenever
$a\in\overline\cM$ satisfies (i), we define $\Lambda_t(a)\in[0,\infty)$ for $t>0$ by
$$
\Lambda_t(a):=\exp\int_0^t\log\mu_s(a)\,ds.
$$
This is a generalization of the \emph{Fuglede-Kadison determinant} $\Delta(a)$
\cite{FuKa} for $a\in\cM$. 
In fact, we have $\Delta_{\tau(1)}(a)=\Delta(a)$ when $\cM$ is finite and $\tau(1) < \infty$.
The majorization properties in terms of $\Lambda_t(a)$
($t>0$) are useful \cite{Fa,FK} to derive trace and norm inequalities for $\tau$-measurable
operators. We remark that 
the Fuglede-Kadison determinant in such a generalized setting 
and certain related estimates (in logarithmic submajorization) 
are subjects in the recent article \cite{DDSZ3}.

\begin{lemma}\label{L-3.39}
If $a,b\in\overline\cM$ satisfy $(2^\circ)$, then $a+b$, $ab$ and $a^\alpha$ $(\alpha>0)$
satisfy the same.
\end{lemma}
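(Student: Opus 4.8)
Since the condition in the lemma --- that $\int_0^\delta\mu_s(a)^q\,ds<\infty$ for some $q>0$, which is condition (ii) above --- involves only the generalized $s$-numbers, the plan is to run everything through the standard $s$-number estimates of \cite[Lemma 2.5]{FK}: $\mu_{s+t}(a+b)\le\mu_s(a)+\mu_t(b)$, $\mu_{s+t}(ab)\le\mu_s(a)\mu_t(b)$, and $\mu_t(a^\alpha)=\mu_t(a)^\alpha$ for $\alpha>0$ (the last one from \cite[Lemma 2.5,(iv)]{FK} applied to $f(x)=x^\alpha$; for non-positive $a$ one reads $a^\alpha$ as $|a|^\alpha$). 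The first thing I would record is that the exponent in (ii) may be shrunk at will: since $s\mapsto\mu_s(a)$ is decreasing, if $a$ satisfies (ii) with some $q_a>0$ and $0<q\le q_a$, then $\mu_s(a)^q\le1+\mu_s(a)^{q_a}$ for all $s>0$, whence $\int_0^\delta\mu_s(a)^q\,ds\le\delta+\int_0^\delta\mu_s(a)^{q_a}\,ds<\infty$. Thus, given $a,b$ both satisfying (ii), I may fix one common exponent $q>0$ that works simultaneously for $a$ and $b$.

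With that normalization, the three assertions become short computations. For $a^\alpha$ one has $\mu_s(a^\alpha)^{q/\alpha}=\mu_s(a)^q$, so $a^\alpha$ satisfies (ii) with exponent $q/\alpha>0$. For $a+b$, from $\mu_{2s}(a+b)\le\mu_s(a)+\mu_s(b)$, the elementary bound $(x+y)^q\le2^q(x^q+y^q)$ ($x,y\ge0$), and the substitution $u=s/2$,
\begin{align*}
\int_0^{2\delta}\mu_s(a+b)^q\,ds
&\le2^q\int_0^{2\delta}\bigl(\mu_{s/2}(a)^q+\mu_{s/2}(b)^q\bigr)\,ds \\
&=2^{q+1}\int_0^{\delta}\bigl(\mu_u(a)^q+\mu_u(b)^q\bigr)\,du<\infty ,
\end{align*}
so $a+b$ satisfies (ii) with exponent $q$. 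For $ab$, from $\mu_{2s}(ab)\le\mu_s(a)\mu_s(b)$ and the Cauchy--Schwarz inequality on $(0,\delta)$,
\begin{align*}
\int_0^{2\delta}\mu_s(ab)^{q/2}\,ds
&\le2\int_0^{\delta}\mu_u(a)^{q/2}\mu_u(b)^{q/2}\,du \\
&\le2\Bigl(\int_0^{\delta}\mu_u(a)^q\,du\Bigr)^{1/2}\Bigl(\int_0^{\delta}\mu_u(b)^q\,du\Bigr)^{1/2}<\infty ,
\end{align*}
so $ab$ satisfies (ii) with exponent $q/2>0$. One could equally skip the preliminary normalization and apply Hölder directly with conjugate exponents $q_a/r,q_b/r$ where $r=q_aq_b/(q_a+q_b)$.

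I do not expect a genuine obstacle here. The only step requiring a moment's care is that the exponent in (ii) is quantified existentially, so one must first harmonize the exponents of $a$ and $b$ and then accept that the exponent degrades under the product (it halves). Everything else is a straightforward application of the submultiplicativity and subadditivity of generalized $s$-numbers together with elementary or Hölder-type inequalities on $(0,\delta)$.
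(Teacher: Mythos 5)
Your proof is correct and follows essentially the same route as the paper: both arguments rest on the Fack--Kosaki estimates $\mu_{s+t}(a+b)\le\mu_s(a)+\mu_t(b)$, $\mu_{s+t}(ab)\le\mu_s(a)\mu_t(b)$, $\mu_s(a^\alpha)=\mu_s(a)^\alpha$, followed by a H\"older-type inequality on $(0,\delta)$ (the paper applies H\"older directly with $1/r=1/p+1/q$ and takes $r=\min\{p,q\}$ for the sum, while you first harmonize the exponents via $x^q\le1+x^{q_a}$ and then use Cauchy--Schwarz; your closing remark describes the paper's version exactly). No gaps.
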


\begin{proof}
Assume that $\int_0^\delta\mu_s(a)^p\,ds<\infty$ and $\int_0^\delta\mu_s(b)^q\,ds<\infty$ for
some $\delta,p,q>0$. By \cite[Lemma 2.5]{FK} note that $\mu_s(a+b)\le\mu_{s/2}(a)+\mu_{s/2}(b)$
and $\mu_s(ab)\le\mu_{s/2}(a)\mu_{s/2}(b)$ for all $s>0$. With $1/r=1/p+1/q$ H\"older's
inequality gives
\begin{align*}
\int_0^\delta\mu_s(ab)^r\,ds
&\le\biggl(\int_0^\delta\mu_{s/2}(a)^p\,ds\biggr)^{r/p}
\biggl(\int_0^\delta\mu_{s/2}(b)^q\,ds\biggr)^{r/q} \\
&=\biggl(2\int_0^{\delta/2}\mu_s(a)^p\,ds\biggr)^{r/p}
\biggl(2\int_0^{\delta/2}\mu_s(b)^q\,ds\biggr)^{r/q}<\infty.
\end{align*}
With $r:=\min\{p,q\}$ one has
\begin{align*}
\int_0^\delta\mu_s(a+b)^r\,ds
&\le2^r\int_0^\delta(\mu_{s/2}(a)^r+\mu_{s/2}(b)^r)\,ds \\
&\le2^r\biggl(\int_0^\delta(1+\mu_{s/2}(a)^p)\,ds
+\int_0^\delta(1+\mu_{s/2}(b)^q)\,ds\biggr)<\infty.
\end{align*}
The assertion for $a^\alpha$ is clear since $\mu_s(a^\alpha)=\mu_s(a)^\alpha$.
\end{proof}

The next theorem is the main result of \cite{Ko4} while it was shown in the case
$a,b\in\overline\fS_+$, but the whole proof is valid in the present setting. The proof in
\cite{Ko4} starts with the inequality
$$
\Lambda_t(ab)\le\Lambda_t(a)\Lambda_t(b),\qquad t>0,
$$
for $a,b\in\fS$ ($:=\cM\cap\overline\fS$) in \cite{Fa}, which indeed holds true for
$a,b\in\overline\cM$ satisfying (ii) as shown in \cite[pp.\ 287--288]{FK}. In the
proofs in \cite{FK,Ko4} the equality
\begin{align*}
&\lim_{p\searrow0}\bigg(\int_0^t\phi(s)^p\,{ds\over t}\biggr)^{1/p}
=\exp\int_0^t\log\phi(s)\,ds \\
&\qquad\mbox{if}\ \ \int_0^t\phi(s)^q\,ds<\infty\ \ \mbox{for some $q>0$}
\end{align*}
is applied to $\phi(s)=\mu_s(ab)$, $\mu_s(|ab|^r)$, etc., where the condition (ii) is
essential.

\begin{theorem}[\cite{Ko4}]\label{T-3.40}
If $a,b\in\overline\cM_+$ satisfy $(ii)$, then
\begin{align}\label{F-3.28}
\Lambda_t(|ab|^r)\le\Lambda_t(a^rb^r),\qquad t>0,\ r\ge1,
\end{align}
or equivalently,
\begin{align}\label{F-3.29}
\Lambda_t(a^rb^r)\le\Lambda_t(|ab|^r),\qquad t>0,\ 0<r\le1.
\end{align}
\end{theorem}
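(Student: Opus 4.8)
The plan is to run the proof of \cite{Ko4}, where the theorem is established for $a,b\in\overline\fS_+$, and to verify that every step survives once the hypothesis ``$a,b\in\overline\fS_+$'' is weakened to the integrability condition (ii); by Lemma \ref{L-3.39}, if $a,b$ satisfy (ii) then so do $ab$, $a^rb^r$, $|ab|^r$ and all further products and powers that occur, so each generalized Fuglede--Kadison determinant appearing below is well defined. First I would dispatch the routine reductions. From $\mu_s(c)=\mu_s(|c|)$ and $\mu_s(c^\alpha)=\mu_s(c)^\alpha$ ($\alpha>0$) one gets $\Lambda_t(|c|)=\Lambda_t(c)$ and $\Lambda_t(c^\alpha)=\Lambda_t(c)^\alpha$, hence $\Lambda_t(|ab|^r)=\Lambda_t(ab)^r$; thus \eqref{F-3.28} is equivalent to
\[
\Lambda_t(ab)^r\le\Lambda_t(a^rb^r),\qquad t>0,\ r\ge1.
\]
Moreover this inequality is in turn equivalent to \eqref{F-3.29}: applying it with exponent $1/r\ge1$ to the pair $a^r,b^r$ gives $\Lambda_t(a^rb^r)^{1/r}\le\Lambda_t(ab)$, i.e.\ $\Lambda_t(a^rb^r)\le\Lambda_t(ab)^r=\Lambda_t(|ab|^r)$ for $0<r\le1$, and the reverse implication is the same substitution run backwards. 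So it suffices to prove the displayed inequality for $a,b\in\overline\cM_+$ satisfying (ii).

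Next I would bring in the two tools stressed before the statement. By Fack \cite{Fa} together with \cite[pp.\ 287--288]{FK}, the submultiplicativity $\Lambda_t(xy)\le\Lambda_t(x)\Lambda_t(y)$ holds for all $x,y\in\overline\cM$ satisfying (ii); by Lemma \ref{L-3.39} every product and power of $a,b$ that appears again satisfies (ii), so this is available at every stage. The elementary limit identity recalled above the theorem expresses $\Lambda_t$ of each of $ab$, $a^rb^r$, $|ab|^r$, etc., as a $p\searrow0$ limit of the $L^p$-type integrals $\int_0^t\mu_s(\cdot)^p\,ds$, and its integrability hypothesis holds precisely because $a,b$ satisfy (ii). With these two facts in hand for all the operators in play, the argument of \cite{Ko4} goes through: the inequality is reduced, through the $p\searrow0$ passage, to Araki--Lieb--Thirring-type estimates for the $L^p$-integrals, whose proof there rests on the submultiplicativity of $\Lambda_t$ and standard manipulations of generalized $s$-numbers, and one recovers \eqref{F-3.28} by letting $p\searrow0$.

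The step I expect to demand the most care is not any new estimate but the bookkeeping in the preceding sentence: one must audit \cite{Ko4} for its uses of the hypothesis $a_1\in\overline\fS_+$ and check that each such use serves only to guarantee finiteness of some $\tau$-measure $\tau(e_{(\eps,\infty)}(\cdot))$ or the convergence of a decreasing sequence of the kind treated in Theorem \ref{T-3.8}, and that in every instance the genuinely needed input is the weaker statement ``$\int_0^\delta\mu_s(\cdot)^q\,ds<\infty$ for some $q>0$'', which is condition (ii) and is stable under sums, products and $\alpha$-th powers by Lemma \ref{L-3.39}; in particular no estimate there relies on $\lim_{t\to\infty}\mu_t(\cdot)=0$. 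For perspective I would also remark that the inequality degenerates to an equality when $\cM$ is finite and $t=\tau(1)$, where $\Lambda_t$ is the multiplicative Fuglede--Kadison determinant $\Delta$, so that all of the content sits in the ``partial'' determinants $\Lambda_t$ with $t<\tau(1)$, for which submultiplicativity is the only available surrogate for multiplicativity.
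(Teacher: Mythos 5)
Your proposal is correct and follows essentially the same route as the paper: the paper likewise treats the theorem as an import from \cite{Ko4}, observing that the only two inputs needing re-examination are the submultiplicativity $\Lambda_t(xy)\le\Lambda_t(x)\Lambda_t(y)$ (valid under condition (ii) by \cite[pp.~287--288]{FK}) and the limit identity $\lim_{p\searrow0}\bigl(\int_0^t\phi(s)^p\,ds/t\bigr)^{1/p}=\exp\int_0^t\log\phi(s)\,ds$, whose integrability hypothesis is exactly what (ii) (stable under sums, products and powers by Lemma \ref{L-3.39}) supplies. Your additional remarks on the equivalence of \eqref{F-3.28} and \eqref{F-3.29} and the identity $\Lambda_t(|ab|^r)=\Lambda_t(ab)^r$ are correct but not needed beyond what the paper already asserts.
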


The aim of this subsection is to compare $a\#_\alpha b$ (see Example \ref{E-2.9}) and
$|a^{1-\alpha}b^\alpha|$ for $0\le\alpha\le1$ as follows:

\begin{theorem}\label{T-3.41}
If $a,b\in\overline\cM_+$ satisfy $(2^\circ)$, then
\begin{align}\label{F-3.30}
\Lambda_t(a\#_\alpha b)\le\Lambda_t(a^{1-\alpha}b^\alpha),\qquad t>0,\ 0\le\alpha\le1.
\end{align}
\end{theorem}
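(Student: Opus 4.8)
The plan is to reduce \eqref{F-3.30} to the already-established inequality \eqref{F-3.29} of Theorem \ref{T-3.40} by passing through the integral representation \eqref{F-2.12} of the weighted geometric mean and estimating $\Lambda_t$ of a parallel sum. First I would dispose of the trivial cases $\alpha=0$ and $\alpha=1$, where both sides agree. For $0<\alpha<1$ and $a,b\in\overline\cM_+$ satisfying $(2^\circ)$ (the condition (ii) of \S\ref{S-3.5}), Lemma \ref{L-3.39} guarantees that all operators appearing below---$a\#_\alpha b$, $a^{1-\alpha}b^\alpha$, the parallel sums $(ta):b$, and their powers---again satisfy $(2^\circ)$, so $\Lambda_t$ is defined throughout.

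The key analytic step is a pointwise (in $t$) comparison of $s$-numbers. Recall $a\#_\alpha b = a^{1/2}f(T_{b/a})a^{1/2}$ when $b\le\lambda a$ (formula \eqref{F-3.8} with $f(s)=s^\alpha$); in general one works with $h=a+b$ via \eqref{F-3.7}. The idea I would pursue is: since $\Lambda_t$ is multiplicative in the sense coming from the logarithmic-majorization results of \cite{FK}, and since $\mu_s((ta):b)$ is dominated in a controllable way by $\mu_{s/2}(ta)$ and by $\mu_{s/2}(b)$ (using $(ta):b\le ta$ and $(ta):b\le b$ together with \cite[Lemma 2.5]{FK}), one can hope to show
\begin{align*}
\Lambda_t(a\#_\alpha b)\le \Lambda_t(a)^{1-\alpha}\Lambda_t(b)^\alpha.
\end{align*}
This would follow by taking logarithms in \eqref{F-2.12}, applying Jensen's inequality for the probability-like behaviour of the kernel $\frac{\sin\alpha\pi}{\pi}\frac{1}{t^{2-\alpha}}\,dt$ after normalization, and using the scaling $\Lambda_t((ta):b)$ in $t$. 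Then, by \eqref{F-3.29} applied with $r=1-\alpha$ (or by the submajorization $\Lambda_t(a^{1-\alpha}b^\alpha)\ge\Lambda_t(a)^{1-\alpha}\Lambda_t(b)^\alpha$, which itself comes from \eqref{F-3.28}--\eqref{F-3.29} of Theorem \ref{T-3.40} by a limiting/Hölder argument on $\mu_s$), we would get $\Lambda_t(a)^{1-\alpha}\Lambda_t(b)^\alpha\le\Lambda_t(a^{1-\alpha}b^\alpha)$, and chaining the two inequalities yields \eqref{F-3.30}.

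An alternative, and probably cleaner, route that I would actually carry out: reduce directly to the \emph{bounded} case via the structure $a\#_\alpha b=h^{1/2}(T_{a/h}\#_\alpha T_{b/h})h^{1/2}$ with $h=a+b$, approximate $a,b$ by $a+\eps h$, $b+\eps h$ (Lemma \ref{L-3.19}, Theorem \ref{T-3.8}), invoke the known matrix/operator inequality $\Lambda_t(A\#_\alpha B)\le\Lambda_t(A^{1-\alpha}B^\alpha)$ for bounded invertible positive operators (which is the Araki--Lieb--Thirring-type estimate underlying \cite{Ko4}), and then pass to the limit using continuity of $\mu_s$ under $\|\cdot\|_p$-convergence (Theorem \ref{T-3.8}, Remark \ref{R-3.20}) together with Fatou's lemma for $\int_0^t\log\mu_s(\cdot)\,ds$. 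Care is needed because $\log$ is not bounded below, so I would truncate: work with $\int_0^t\log^+\mu_s$ and $\int_0^t\log^-\mu_s$ separately, controlling the negative part via condition $(2^\circ)$ and the estimate $\log^- x\le \frac{1}{q}x^{-q}$.

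The main obstacle I anticipate is precisely this last point: establishing semicontinuity of $a\mapsto\Lambda_t(a)$ along the approximating sequences in a direction strong enough to preserve the inequality. Convergence of $a_n\#_\alpha b_n$ to $a\#_\alpha b$ holds only in the strong resolvent sense in general (Theorem \ref{T-3.25}(2)), not in measure, so one does not get convergence of $\mu_s$ directly; I would need to exploit that $a_n=a+\eps_n h\searrow a$ \emph{monotonically}, so $\mu_s(a_n)\searrow\mu_s(a)$ for a.e.\ $s$ by \cite[Lemma 2.5]{FK}, and likewise for the geometric means by monotonicity of $\#_\alpha$ (Theorem \ref{T-3.25}(1)), which upgrades the convergence to the pointwise decreasing convergence of $s$-numbers needed to pass $\Lambda_t$ through the limit by monotone convergence. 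Handling the interplay of the $t$-integral in \eqref{F-2.12} with the $s$-integral defining $\Lambda_t$---i.e., justifying the Jensen step rigorously in the unbounded setting---is the delicate part of the bookkeeping.
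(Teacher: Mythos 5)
There is a genuine gap in both of your routes. In the first route, the second link of your chain, $\Lambda_t(a)^{1-\alpha}\Lambda_t(b)^\alpha\le\Lambda_t(a^{1-\alpha}b^\alpha)$, is false: submultiplicativity $\Lambda_t(xy)\le\Lambda_t(x)\Lambda_t(y)$ together with $\mu_s(a^r)=\mu_s(a)^r$ gives exactly the reverse inequality $\Lambda_t(a^{1-\alpha}b^\alpha)\le\Lambda_t(a)^{1-\alpha}\Lambda_t(b)^\alpha$, and the inequality is strict in general (take $a=\mathrm{diag}(2,1)$, $b=\mathrm{diag}(1,2)$, $\alpha=1/2$, $t=1$: then $\Lambda_1(a^{1/2}b^{1/2})=\sqrt2$ while $\Lambda_1(a)^{1/2}\Lambda_1(b)^{1/2}=2$). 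So the quantity $\Lambda_t(a)^{1-\alpha}\Lambda_t(b)^\alpha$ sits \emph{above} both sides of \eqref{F-3.30} and cannot be used as an intermediary. In the second route you invoke ``the known inequality $\Lambda_t(A\#_\alpha B)\le\Lambda_t(A^{1-\alpha}B^\alpha)$ for bounded invertible positive operators'' as a black box, but this is precisely the content of the theorem being proved; it is not contained in \cite{Ko4} (whose result is Theorem \ref{T-3.40}, comparing $\Lambda_t(|ab|^r)$ with $\Lambda_t(a^rb^r)$), and the matrix proof via antisymmetric tensor powers is explicitly unavailable in the von Neumann algebra setting (see Problem \ref{Q-3.43}). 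The missing core is the bridge from Theorem \ref{T-3.40} to the geometric mean: after reducing to $1/2\le\alpha<1$ by the transpose symmetry $a\#_\alpha b=b\#_{1-\alpha}a$ and replacing $a$ by the invertible $a_n=(1/n)+a$, one writes $a_n\#_\alpha b=a_n^{1/2}(a_n^{-1/2}ba_n^{-1/2})^\alpha a_n^{1/2}$ via \eqref{F-3.22}, recognizes this as $|X^*|^2$ with $X=(a_n^{-1/2}ba_n^{-1/2})^{\alpha/2}a_n^{1/2}$, applies \eqref{F-3.29} with $r=\alpha\le1$ and then \eqref{F-3.28} with $r=2\alpha\ge1$ to arrive at $\Lambda_t(a_n^{1-\alpha}b^\alpha)$. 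Without this computation the proof is not there.

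On the limiting step you anticipate as the main obstacle: it is in fact lighter than you fear, because only $a$ needs to be perturbed. Monotonicity gives $\Lambda_t(a\#_\alpha b)\le\Lambda_t(a_n\#_\alpha b)$ for every fixed $n$ with no limit taken on the geometric-mean side at all; the only limit required is $\Lambda_t(a_n^{1-\alpha}b^\alpha)\to\Lambda_t(a^{1-\alpha}b^\alpha)$, which follows since $|a_n^{1-\alpha}b^\alpha|^2=b^\alpha a_n^{2(1-\alpha)}b^\alpha$ decreases in the measure topology, so $\mu_s(a_n^{1-\alpha}b^\alpha)\searrow\mu_s(a^{1-\alpha}b^\alpha)$ a.e.\ and the monotone convergence theorem applies to $\log^+\mu_s(a_1^{1-\alpha}b^\alpha)-\log\mu_s(a_n^{1-\alpha}b^\alpha)\ge0$ using condition $(2^\circ)$. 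Your idea of using monotone decrease of $s$-numbers is the right one, but it is needed only on the $a^{1-\alpha}b^\alpha$ side, not for the geometric means.
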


\begin{proof}
The case $\alpha=0$ or $1$ is trivial, so assume $0<\alpha<1$. Since
$\Lambda_t(a\#_\alpha b)=\Lambda_t(b\#_{1-\alpha}a)$ and
$\Lambda_t(a^{1-\alpha}b^\alpha)=\Lambda_t(b^\alpha a^{1-\alpha})$, we may assume
$1/2\le\alpha<1$. For each $n\in\bN$ let $a_n:=(1/n)+a$. By Lemma \ref{L-3.39} note that $a_n$
and $a_n^{-1/2}ba_n^{-1/2}$ satisfy (ii). We have
\begin{align*}
\Lambda_t(a\#_\alpha b)&\le\Lambda_t(a_n\#_\alpha b) \\
&=\Lambda_t(a_n^{1/2}(a_n^{-1/2}ba_n^{1/2})^\alpha a_n^{1/2})
\quad\mbox{(by \eqref{F-3.22})} \\
&=\Lambda_t((a_n^{-1/2}ba_n^{-1/2})^{\alpha/2}a_n^{1/2})^2 \\
&\le\Lambda_t(|(a_n^{-1/2}ba_n^{-1/2})^{1/2}a_n^{1/2\alpha}|^\alpha)^2
\quad\mbox{(by \eqref{F-3.29})} \\
&=\Lambda_t(a_n^{1/2\alpha}(a_n^{-1/2}ba_n^{-1/2})a_n^{1/2\alpha})^\alpha \\
&=\Lambda_t(|b^{1/2}a_n^{1-\alpha\over2\alpha}|^{2\alpha}) \\
&\le\Lambda_t(b^\alpha a_n^{1-\alpha})\quad\mbox{(by \eqref{F-3.28} since $2\alpha\ge1$)} \\
&=\Lambda_t(a_n^{1-\alpha}b^\alpha).
\end{align*}
Now, to prove \eqref{F-3.30}, it suffices to show that
\begin{align}\label{F-3.31}
\Lambda_t(a^{1-\alpha}b^\alpha)=\lim_{n\to\infty}\Lambda_t(a_n^{1-\alpha}b^\alpha).
\end{align}
Recalling continuity of an operator function in the measure topology \cite{Ti} (as long as a
function in question is continuous), we have
$$
|a_n^{1-\alpha}b^\alpha|^2=b^\alpha a_n^{2(1-\alpha)}b^\alpha
\,\searrow\,b^\alpha a^{2(1-\alpha)}b^\alpha=|a^{1-\alpha}b|^2
$$
in the measure topology as $n\to\infty$. Hence by \cite[Lemma 3.4,(ii)]{FK},
$\mu_s(a_n^{1-\alpha}b^\alpha)\searrow\mu_s(a^{1-\alpha}b^\alpha)$ as $n\to\infty$ for
a.e.\ $s\in(0,t)$. Since $\int_0^t\log^+\mu_s(a_1^{1-\alpha}b^\alpha)\,ds<\infty$ from (ii),
one can apply the monotone convergence theorem to
$\log^+\mu_s(a_1^{1-\alpha}b^\alpha)-\log\mu_s(a_n^{1-\alpha}b^\alpha)\ge0$, and then
\eqref{F-3.31} follows.
\end{proof}

\begin{remark}\label{R-3.42}\rm
Let $a,b$ be as stated in Theorems \ref{T-3.40} and \ref{T-3.41}. Theorem \ref{T-3.40} says
that $r>0\mapsto\Lambda_t(|a^rb^r|^{1/r})$ is monotone increasing for each $t>0$. Hence Theorem
\ref{T-3.41} furthermore implies that for $0<r\le q$,
\begin{align}\label{F-3.32}
\Lambda_t((a^r\#_\alpha b^r)^{1/r})\le\Lambda_t(|a^{q(1-\alpha)}b^{q\alpha}|^{1/q}),
\qquad t>0,\ 0\le\alpha\le1.
\end{align}
\end{remark}

\begin{problem}\label{Q-3.43}\rm
For any $a,b$ as above we conjecture that $r>0\mapsto\Lambda_t((a^r\#_\alpha b^r)^{1/r})$ is
monotone decreasing for each $t>0$. This is well-known for the $B(\cH)$ case (in particular,
for matrices) as log-majorization, whose proof is based on the anti-symmetric tensor power
technique, see, e.g., \cite{Hi1,Hi2}. The technique is not at our disposal in the von Neumann
algebra setting. 
If the conjecture is true, then \eqref{F-3.32}  holds for all $r,q>0$ without restriction.
\end{problem}

\section{Connections on Haagerup's $L^p$-spaces}\label{S-4}

In this section let $\cM$ be a general von Neumann algebra on a Hilbert space $\cH$, and we
will discuss connections on Haagerup's $L^p$-spaces. A brief description of Haagerup's
$L^p$-spaces $L^p(\cM)$ for $0<p\le\infty$ is given in Appendix \ref{S-A} for the reader's
convenience. The basis of Haagerup's $L^p(\cM)$ is the crossed product von Neumann algebra
$\cR:=\cM\rtimes_\sigma\bR$ with respect to the modular automorphism group $\sigma_t$ for
a faithful normal semi-finite weight $\ffi_0$ on $\cM$. Note that $\cR$ is semi-finite with the canonical
trace $\tau$, and consider the space $\overline\cR$ of $\tau$-measurable operators affiliated
with $\cR$. Then $L^p(\cM)$'s are constructed inside $\overline\cR$ by \eqref{F-A.4} in terms
of the dual action $\theta_s$ (see Appendix \ref{S-A}).

In particular, when $\cM$ is semi-finite with a faithful semi-finite normal trace $\tau_0$, we
note that all the results in this section hold true in the setting of the conventional
$L^p$-spaces $L^p(\cM,\tau_0)$ (\cite{Di,Y}) with respect to $\tau_0$.
Note that Haagerup's $L^p(\cM)$ in this situation is identified (up to an isometric isomorphism)
with $L^p(\cM,\tau_0)$, and all the results below reduce to those for $L^p(\cM,\tau_0)$.
In fact, some of them are included in \S\ref{S-3}, otherwise proofs can easily be modified
to the $L^p(\cM,\tau_0)$ setting.

Now, let $\sigma$ be any connection. As defined in the previous section, for every
$a,b\in\overline\cR_+$ we have $a\sigma b\in\overline\cR_+$. The next lemma says that we have
the connection $\sigma:L^p(\cM)_+\times L^p(\cM)_+\to L^p(\cM)_+$ by restricting $\sigma$ to
$L^p(\cM)_+$ for any $p\in(0,\infty]$.

\begin{lemma}\label{L-4.1}
Let $0<p\le\infty$. If $a,b\in L^p(\cM)_+$, then $a\sigma b\in L^p(\cM)_+$.
\end{lemma}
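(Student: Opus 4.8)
The plan is to characterize membership in $L^p(\cM)_+$ via the dual action $\theta_s$ (as recalled in Appendix~\ref{S-A}, formula \eqref{F-A.4}): an element $c\in\overline\cR_+$ lies in $L^p(\cM)_+$ precisely when $\theta_s(c)=e^{-s/p}c$ for all $s\in\bR$. So the whole task reduces to showing that the connection $\sigma$ commutes with the dual action in the appropriate scaled sense, namely that $\theta_s(a\sigma b)=\theta_s(a)\sigma\theta_s(b)$ for $a,b\in\overline\cR_+$, together with the trivial scaling homogeneity of connections. First I would record that each $\theta_s$ is a trace-scaling automorphism of $\cR$ (it scales $\tau$ by $e^{-s}$), hence extends to an automorphism of $\overline\cR$ that is continuous in the measure topology; this is standard and is reviewed in Appendix~\ref{S-A}.

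The key step is the identity $\theta_s(a\sigma b)=\theta_s(a)\sigma\theta_s(b)$. I would prove this directly from the second definition (Definition~\ref{D-3.16}). Given $a,b\in\overline\cR_+$, pick $h\in\overline\cR_+$ with $a+b\le\lambda h$; then $\theta_s(a)+\theta_s(b)\le\lambda\theta_s(h)$, and since $\theta_s$ is an automorphism carrying $s(h)\cR s(h)$ onto $s(\theta_s(h))\cR s(\theta_s(h))$ and preserving square roots (functional calculus is preserved by automorphisms), one checks from the defining relation $a=h^{1/2}T_{a/h}h^{1/2}$ that $T_{\theta_s(a)/\theta_s(h)}=\theta_s(T_{a/h})$, and similarly for $b$. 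Because $\theta_s$ restricted to $\cR$ is a $*$-isomorphism, it intertwines the bounded Kubo--Ando connection: $\theta_s(T_{a/h}\sigma T_{b/h})=\theta_s(T_{a/h})\sigma\theta_s(T_{b/h})$ (the Kubo--Ando connection of bounded positive operators is preserved by $*$-isomorphisms, e.g.\ via the integral expression \eqref{F-2.8} and the fact that parallel sums and scalar multiples are algebraic/order notions). Applying $\theta_s$ to \eqref{F-3.7} then yields
\[
\theta_s(a\sigma b)=\theta_s(h)^{1/2}\bigl(\theta_s(T_{a/h})\sigma\theta_s(T_{b/h})\bigr)\theta_s(h)^{1/2}
=\theta_s(a)\sigma\theta_s(b).
\]

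Now suppose $a,b\in L^p(\cM)_+$, so $\theta_s(a)=e^{-s/p}a$ and $\theta_s(b)=e^{-s/p}b$. Using the homogeneity of connections for bounded positive operators, $(rA)\sigma(rB)=r(A\sigma B)$ for $r\ge 0$ (immediate from \eqref{F-3.7} or from \eqref{F-2.8}), one gets $\theta_s(a\sigma b)=\theta_s(a)\sigma\theta_s(b)=(e^{-s/p}a)\sigma(e^{-s/p}b)=e^{-s/p}(a\sigma b)$ for every $s\in\bR$. By the characterization \eqref{F-A.4} of $L^p(\cM)$ inside $\overline\cR$, this exactly says $a\sigma b\in L^p(\cM)$, and it is positive since $\sigma$ maps $\overline\cR_+\times\overline\cR_+$ into $\overline\cR_+$ (Definition~\ref{D-3.16}). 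For $p=\infty$ the condition is $\theta_s(c)=c$, i.e.\ $c\in\cM$, and the same argument applies verbatim.

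The main obstacle I anticipate is purely bookkeeping: verifying cleanly that an automorphism $\theta_s$ of $\cR$, extended to $\overline\cR$, satisfies $\theta_s(T_{a/h})=T_{\theta_s(a)/\theta_s(h)}$ and commutes with the bounded connection. Both facts are ``morally obvious'' because $T_{a/h}$ and $A\sigma B$ are defined by order- and algebra-theoretic conditions that any $*$-isomorphism respects (using uniqueness in Lemma~\ref{L-3.3} for the former, and the integral formula \eqref{F-2.8} together with preservation of parallel sums, scalar multiples and strong limits for the latter), but the argument must be stated with enough care that the extension from $\cR$ to $\overline\cR$ and the interaction with the measure topology are not glossed over. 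Everything else is routine.
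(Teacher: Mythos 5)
Your proof is correct, and it is in fact the alternative route that the paper itself endorses: the covariance $\theta_s(x\sigma y)=\theta_s(x)\sigma\theta_s(y)$ for all $x,y\in\overline\cR_+$ is recorded later in \S\ref{S-4} as \eqref{F-4.4} (proved exactly as you sketch, via $\theta_s(T_{x/h})=T_{\theta_s(x)/\theta_s(h)}$ and the fact that a $*$-isomorphism intertwines the bounded Kubo--Ando connection), and the authors remark there that Lemma \ref{L-4.1} is immediate from it. The paper's actual proof of the lemma is organized differently and is a bit more economical: taking $h:=a+b\in L^p(\cM)_+$ and writing $a^{1/2}=xh^{1/2}$ as in Lemma \ref{L-3.3}, one applies $\theta_s$ and uses the \emph{specific} scaling $\theta_s(a)=e^{-s/p}a$, $\theta_s(h)=e^{-s/p}h$ together with the uniqueness of $x$ to conclude $\theta_s(x)=x$, i.e.\ $T_{a/h}=x^*x\in\cM_+$ (and likewise $T_{b/h}\in\cM_+$); then $a\sigma b=h^{1/2}(T_{a/h}\sigma T_{b/h})h^{1/2}$ is visibly in $L^p(\cM)_+$ since the middle factor lies in $\cM_+=L^\infty(\cM)_+$ and $h^{1/2}\in L^{2p}(\cM)_+$. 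Your version requires the extra (routine but nontrivial) bookkeeping you flag -- that $\theta_s$ preserves $T_{\cdot/\cdot}$ and the bounded connection for arbitrary measurable arguments, plus the equal-scalar homogeneity $(rA)\sigma(rB)=r(A\sigma B)$ -- but in exchange it establishes the reusable identity \eqref{F-4.4}, which the paper needs anyway for Theorem \ref{T-4.6}. Both arguments are sound; there is no gap.
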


\begin{proof}
Let $h:=a+b\in L^p(\cM)_+$. Let $x,y\in s(h)\cR s(h)$ such that $a^{1/2}=xh^{1/2}$ and
$b^{1/2}=yh^{/12}$ as in Definition \ref{D-3.5}. 
Here note that the support projection $s(h)$ is
in $\cM$. Applying $\theta_s$ to $a^{1/2}=xh^{1/2}$ gives
$\theta_s(a)^{1/2}=\theta_s(x)\theta_s(h)^{1/2}$. Since $\theta_s(a)=e^{-s/p}a$ and
$\theta_s(h)=e^{-s/p}h$ for all $s\in\bR$, one has $a^{1/2}=\theta_s(x)h^{1/2}$. Hence from
the uniqueness of $x$ it follows that $\theta_s(x)=x$ for all $s\in\bR$ so that
$T_{a/h}=x^*x\in \cM_+$, and similarly $T_{b/h}=y^*y\in \cM_+$. Therefore, one has
$a\sigma b=h^{1/2}(T_{a/h}\sigma T_{b/h})h^{1/2}\in L^p(\cM)_+$ immediately.
\end{proof}

The connection $a\sigma b$ for $a,b\in L^p(\cM)_+$ does not depend upon the choice of a
faithful semi-finite normal weight $\ffi_0$ on $\cM$. In fact, for another faithful semi-finite
normal weight $\ffi_1$ on $\cM$ we have the (canonical) isomorphism
$\kappa:\cR\to\cR_1:=\cM\rtimes_{\sigma^{\ffi_1}}\bR$. The $\kappa$ induces an isometric
isomorphism from $L^p(\cM)$ in $\overline\cR$ and that in $\overline\cR_1$ (see the remark at
the end of Appendix \ref{S-A}), for which we have $\kappa(a\sigma b)=\kappa(a)\sigma\kappa(b)$
for any $a,b\in L^p(\cM)_+$.

By restricting the properties of Theorem \ref{T-3.25} (for $\overline\cR_+$) to $L^p(\cM)_+$ for
any $p\in(0,\infty]$, it is clear that all of them hold for $a,b,a_i,b_i\in L^p(\cM)_+$. But the
decreasing convergence holds more strongly in the norm $\|\cdot\|_p$ as follows.
For this it is worthwhile to first give the variant of Theorem \ref{T-3.8} for a
decreasing sequence in $L^p(\cM)_+$.

\begin{proposition}\label{P-4.2}
Let $0<p<\infty$. If $a_n\in L^p(\cM)_+$ $(n\in\bN)$ and $a_n\searrow a$ in the strong resolvent
sense, then $a\in L^p(\cM)_+$ and $\|a_n-a\|_p\to0$.
\end{proposition}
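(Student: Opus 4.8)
The plan is to reduce the statement to the already-established Theorem \ref{T-3.8}, which is the $\tau$-measurable operator version of exactly this kind of monotone convergence, applied inside the semi-finite algebra $\cR$ equipped with its canonical trace $\tau$. The one thing that is genuinely new here, compared with \S\ref{S-3}, is that membership in Haagerup's $L^p(\cM)$ is \emph{not} the same as membership in the tracial $L^p(\cR,\tau)$: an element of $L^p(\cM)$ is a $\tau$-measurable operator $a$ affiliated with $\cR$ satisfying the homogeneity condition $\theta_s(a)=e^{-s/p}a$ for all $s\in\bR$ (see \eqref{F-A.4}). So the two points to check are (1) that the strong-resolvent limit $a$ again lies in $L^p(\cM)$, and (2) that convergence holds in the $L^p(\cM)$-(quasi-)norm $\|\cdot\|_p$, which on $L^p(\cM)$ is the Haagerup norm rather than the tracial one.

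First I would note that by Lemma \ref{L-3.1} (applied in $\cR$) the limit $a$ belongs to $\overline\cR_+$, since $a\le a_1$ in the form sense and $a_1\in L^p(\cM)_+\subseteq\overline\cR_+$. Next, to see $a\in L^p(\cM)_+$: the dual action $\theta_s$ is a trace-scaling automorphism of $\cR$, hence extends to a homeomorphism of $\overline\cR$ for the measure topology and preserves the order and the functional calculus; in particular $\theta_s$ commutes with taking resolvents, so $(1+a_n)^{-1}\nearrow(1+a)^{-1}$ strongly forces $(1+\theta_s(a_n))^{-1}\nearrow(1+\theta_s(a))^{-1}$ strongly, i.e. $\theta_s(a_n)\searrow\theta_s(a)$ in the strong resolvent sense. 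But $\theta_s(a_n)=e^{-s/p}a_n\searrow e^{-s/p}a$ in the strong resolvent sense as well (scalar multiples of a strong-resolvent-convergent decreasing sequence converge to the corresponding scalar multiple — immediate from Lemma \ref{L-2.4}, or directly). By uniqueness of the strong-resolvent limit, $\theta_s(a)=e^{-s/p}a$ for every $s$, so $a\in L^p(\cM)_+$.

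Then I would invoke Theorem \ref{T-3.8} in $(\cR,\tau)$: since $a_1\in L^p(\cM)_+\subseteq L^p(\cR,\tau)_+$ (elements of Haagerup $L^p$ have finite $\tau$-measurable ``$L^p$-type'' behaviour — more precisely $a_1\in\overline\fS_+$ because $\mu_t(a_1)\to0$, as $a_1$ has a well-defined nonzero ``$L^p$-norm'' in Haagerup's sense which is finite), Theorem \ref{T-3.8} gives $a_n\to a$ in the measure topology of $\overline\cR$. Finally, convergence in the Haagerup norm $\|\cdot\|_p$ on $L^p(\cM)$: here I would use that on $L^p(\cM)$ the norm is given by $\|x\|_p=\|h_x\|_1^{1/p}$-type formulas, or more usefully via generalized $s$-numbers through the identification recalled in Appendix \ref{S-A}, so that $\|a_n-a\|_p^p$ is controlled by $\int_0^\infty\mu_t(a_n-a)^p\,dt$ up to the normalizing structure; then the argument is the same as the last line of the proof of Theorem \ref{T-3.8}: measure convergence of the decreasing sequence $\{a_n\}$ dominated by $a_1\in L^p$ upgrades to $\|\cdot\|_p$-convergence by \cite[Theorem 3.6]{FK} (monotone decreasing, dominated, converging in measure $\Rightarrow$ converging in $L^p$).

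The main obstacle I expect is purely bookkeeping: matching the Haagerup $L^p$-norm on $L^p(\cM)$ against the tracial quantities $\mu_t(\cdot)$ and $\|\cdot\|_{p,\tau}$ on $\overline\cR$, since \cite[Theorem 3.6]{FK} is stated for the tracial $L^p(\cR,\tau)$, and one must check that the dominated-convergence step transfers cleanly to the Haagerup setting. This is handled by the standard dictionary between Haagerup's $L^p$ and generalized $s$-numbers (reviewed in Appendix \ref{S-A}): for $x\in L^p(\cM)$ one has $\|x\|_p$ expressible through the distribution of $x$ relative to $\tau$ in a way compatible with \cite[Lemma 2.5]{FK}, so that the monotone/dominated convergence theorem applies verbatim. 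Everything else is a direct transcription of the $\tau$-measurable results of \S\ref{S-3.1} into the algebra $\cR$, together with the $\theta_s$-equivariance check above.
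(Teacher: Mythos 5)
Your reduction to Theorem \ref{T-3.8} and the $\theta_s$-equivariance argument showing $\theta_s(a)=e^{-s/p}a$ (hence $a\in L^p(\cM)_+$) are fine, and the observation that $\mu_t(a_1)\to0$, i.e.\ $a_1\in\overline\fS_+$, is exactly what is needed to extract measure-topology convergence $a_n\to a$ from Theorem \ref{T-3.8}. But the final step, upgrading measure convergence to $\|\cdot\|_p$-convergence, rests on a false premise. You write $a_1\in L^p(\cM)_+\subseteq L^p(\cR,\tau)_+$ and propose to control $\|a_n-a\|_p^p$ by $\int_0^\infty\mu_t(a_n-a)^p\,dt$ and then invoke the dominated-convergence theorem \cite[Theorem 3.6]{FK} as in the last line of the proof of Theorem \ref{T-3.8}. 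This cannot work: for a nonzero $x\in L^p(\cM)$ one has the exact identity $\mu_t(x)=t^{-1/p}\|x\|_p$ (a consequence of the trace-scaling property $\tau\circ\theta_s=e^{-s}\tau$ together with $\theta_s(x)=e^{-s/p}x$), so that $\int_0^\infty\mu_t(x)^p\,dt=\|x\|_p^p\int_0^\infty t^{-1}\,dt=+\infty$. In other words $L^p(\cM)\cap L^p(\cR,\tau)=\{0\}$: Haagerup's $L^p$-space sits inside $\overline\fS$ but meets the tracial $L^p$-space of $(\cR,\tau)$ only in $0$, and the tracial dominated-convergence mechanism is simply unavailable.

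The fix is much simpler than what you attempt. The same identity $\mu_t(x)=t^{-1/p}\|x\|_p$ for $x\in L^p(\cM)$ shows that the $\|\cdot\|_p$-topology on $L^p(\cM)$ coincides with the topology induced from the measure topology of $\overline\cR$ (for instance $\|a_n-a\|_p=\mu_1(a_n-a)$), which is exactly the route the paper takes, citing \cite[Chap.~II, Proposition 26]{Te1} or \cite[Lemma B]{Ko8}. Once one knows $a\in L^p(\cM)$ --- either by your $\theta_s$-equivariance argument, or by noting that $L^p(\cM)$, being $\|\cdot\|_p$-complete, is closed in $\overline\cR$ for the measure topology --- the measure convergence furnished by Theorem \ref{T-3.8} is literally the same thing as $\|a_n-a\|_p\to0$, and no dominated-convergence step is needed.
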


\begin{proof}
Since $L^p(\cM)\subseteq\overline\fS$ (defined inside $\overline\cR$) by \cite[Lemma B]{Ko8}, 
Theorem \ref{T-3.8} implies that $a_n\searrow a$ in the measure topology. By
\cite[Chap.~II, Proposition 26]{Te1} or \cite[Lemma B]{Ko8} note that the $\|\cdot\|_p$-norm
topology on $L^p(\cM)$ coincides with the relative topology induced from  the measure topology
on $\overline\cR$. (More precisely we have $\mu_t(x)=t^{-1/p}\|x\|_p$ for $x \in L^p(\cM)$.)
Hence we have the conclusion.
\end{proof}

Now, the next theorem follows from Propositions \ref{P-3.27} and \ref{P-4.2}.

\begin{theorem}\label{T-4.3}
Let $0<p<\infty$. If $a_n,b_n\in L^p(\cM)_+$ $(n\in\bN)$, $a_n\searrow a$ and $b_n\searrow b$
in the strong resolvent sense, then $a,b\in L^p(\cM)_+$ and $\|a_n\sigma b_n-a\sigma b\|_p\to0$.
\end{theorem}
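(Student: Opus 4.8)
The plan is to reduce the statement to Proposition \ref{P-4.2}, applied twice, together with the stability of $L^p(\cM)_+$ under $\sigma$ (Lemma \ref{L-4.1}) and the decreasing convergence of connections (Theorem \ref{T-3.25},(2)).

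First I would record, as in the proof of Proposition \ref{P-4.2}, that $L^p(\cM)\subseteq\overline\fS$ inside $\overline\cR$; in particular $a_1,b_1\in\overline\fS_+$. Applying Proposition \ref{P-4.2} to the decreasing sequences $\{a_n\}$ and $\{b_n\}$ separately, one obtains that the strong-resolvent limits $a$ and $b$ already belong to $L^p(\cM)_+$ (and, as a byproduct not needed here, $\|a_n-a\|_p\to0$ and $\|b_n-b\|_p\to0$).

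Next, since $a,b\in L^p(\cM)_+$, Lemma \ref{L-4.1} gives $a_n\sigma b_n\in L^p(\cM)_+$ for every $n$ and $a\sigma b\in L^p(\cM)_+$. By the decreasing-convergence property of connections on $\overline\cR_+$ (Theorem \ref{T-3.25},(2)), the hypothesis $a_n\searrow a$ and $b_n\searrow b$ in the strong resolvent sense yields $a_n\sigma b_n\searrow a\sigma b$ in the strong resolvent sense; alternatively, since $a_1,b_1\in\overline\fS$, Proposition \ref{P-3.27},(1) even gives $a_n\sigma b_n\in\overline\fS$ for all $n$ with $a_n\sigma b_n\to a\sigma b$ in the measure topology. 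In either case $\{a_n\sigma b_n\}$ is a decreasing sequence in $L^p(\cM)_+$ converging in the strong resolvent sense to $a\sigma b\in L^p(\cM)_+$. I would then invoke Proposition \ref{P-4.2} a second time, now applied to the sequence $\{a_n\sigma b_n\}\subseteq L^p(\cM)_+$, to conclude $\|a_n\sigma b_n-a\sigma b\|_p\to0$.

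There is essentially no obstacle here: the analytic core — upgrading strong-resolvent (equivalently measure-topology) convergence of a monotone $L^p$-sequence to $\|\cdot\|_p$-convergence — is already contained in Proposition \ref{P-4.2} (hence in Theorem \ref{T-3.8} and the corresponding result of \cite{FK}), while the facts that $\sigma$ preserves $L^p(\cM)_+$ and commutes with the relevant decreasing limits are Lemma \ref{L-4.1} and Theorem \ref{T-3.25},(2). The only point requiring a little care is the identification of the strong-resolvent limit of $a_n\sigma b_n$ with $a\sigma b$, which is exactly the content of Theorem \ref{T-3.25},(2) and uses that the connection on $L^p(\cM)_+$ is the restriction of the one on $\overline\cR_+$.
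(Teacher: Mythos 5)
Your argument is correct and follows essentially the same route as the paper, which derives Theorem \ref{T-4.3} precisely from Proposition \ref{P-3.27} (to identify the decreasing strong-resolvent limit of $a_n\sigma b_n$ as $a\sigma b$) together with Proposition \ref{P-4.2} (to upgrade to $\|\cdot\|_p$-convergence), with Lemma \ref{L-4.1} ensuring membership in $L^p(\cM)_+$. No gaps.
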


For $a,b\in L^p(\cM)_+$ and $c\in L^q(\cM)$ with $0<p,q\le\infty$ we have the transformer
inequality $c^*(a\sigma b)c\le(c^*ac)\sigma(c^*bc)$ in $L^r(\cM)$ ($1/p+2/q=1/r$) by
Theorem \ref{T-3.28} (and H\"older's inequality for Haagerup's $L^p$-spaces). Furthermore,
Theorem \ref{T-3.31} shows that $c^*(a\sigma b)c=(c^*ac)\sigma(c^*bc)$ in $L^r(\cM)$ whenever
$s(a+b)\le s(cc^*)$. In particular, when $p=1$ and $r=\infty$, this says that
\begin{align}\label{F-4.1}
c^*(\phi\sigma\psi)c=(c^*\phi c)\sigma(c^*\psi c)
\end{align}
for every $\phi,\psi\in\cM_*^+$ and $c\in\cM$ with $s(\phi+\psi)\le s(cc^*)$.

The following proposition is the variational expression of the parallel sum of
$a,b\in L^p(\cM)$, $1\le p<\infty$, in terms of the $L^p$-$L^q$-duality. When $p=1$, this
reduces to the expression of $\phi:\psi$ for $\phi,\psi\in\cM_*^+$ as
\begin{equation}\label{F-4.2} 
(\phi:\psi)(x^*x)=\inf\{\phi(y^*y)+\psi(z^*z);\,
y,z\in\cM,\,y+z=x\},\qquad x\in\cM.
\end{equation}

\begin{proposition}\label{P-4.4}
Let $1\le p\le\infty$ and $1/p+1/q=1$. For every $a,b\in L^p(\cM)_+$ we have
$$
\<a:b,x^*x\>_{p,q}=\inf\{\<a,y^*y\>_{p,q}+\<b,z^*z\>_{p,q};\,
y,z\in L^{2q}(\cM),\,y+z=x\}
$$
for every $x\in L^{2q}(\cM)$, where $\<a,c\>_{p,q}=\tr(ac)$ for $a\in L^p(\cM)$ and
$c\in L^q(\cM)$ $($see \eqref{F-A.5}$)$.
\end{proposition}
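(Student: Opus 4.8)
The plan is to reduce Proposition \ref{P-4.4} to the $\tau$-measurable operator case already handled in Theorem \ref{T-3.38}. Recall that Haagerup's $L^p(\cM)$ lives inside $\overline\cR$, the $\tau$-measurable operators affiliated with the semi-finite crossed product $\cR = \cM\rtimes_\sigma\bR$, and that for $1\le p\le\infty$ with conjugate $q$ the $L^p$-$L^q$-duality pairing $\<a,c\>_{p,q}=\tr(ac)$ is the restriction (via the Haagerup trace functional $\tr$) of something closely related to the trace pairing on $\overline\cR$. So first I would set things up inside $\overline\cR$: given $a,b\in L^p(\cM)_+$ and $x\in L^{2q}(\cM)$, note $x^*x\in L^q(\cM)$ and $a:b\in L^p(\cM)_+$ (the parallel sum is well-defined in $\overline\cR_+$ by \S\ref{S-3}, and stays in $L^p(\cM)$ since $a:b\le a$, or by Lemma \ref{L-4.1} applied to a suitable connection). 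The products $a^{1/2}y$, $b^{1/2}z$ etc.\ are $\tau$-measurable, and Hölder's inequality for Haagerup's $L^p$-spaces guarantees the relevant products land in $L^1(\cM)$ so that $\tr$ of them makes sense.

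Next I would prove the inequality ``$\le$'' (i.e.\ that the left side is at most each term of the infimum). For any $y,z\in L^{2q}(\cM)$ with $y+z=x$, apply the transformer inequality for parallel sums (Lemma \ref{L-3.29}, or its $L^p$-version via Theorem \ref{T-3.28}) in the form $y^*(a:b)y \le \ldots$; more directly, use the characterization of $a:b$ from Theorem \ref{T-2.2}/Lemma \ref{L-3.21}: writing $h:=a+b$ and using $q_{a:b}(\xi)\le q_a(\eta)+q_b(\zeta)$ whenever $\eta+\zeta=\xi$, one gets $\<a:b,x^*x\>_{p,q}\le\<a,y^*y\>_{p,q}+\<b,z^*z\>_{p,q}$ after expressing these pairings as sums/integrals of inner products. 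Actually the cleanest route: mimic the $\eps$-regularization from the proof of Theorem \ref{T-3.38} — write $(a+\eps h):(b+\eps h)$, reduce to the case $\lambda^{-1}b\le a\le\lambda b$ where $a:b = a^{1/2}(e-kk^*)a^{1/2}$ with $k\in e\cR e$, $a^{1/2}=kh^{1/2}$, and carry out the same algebraic identity computation as in \eqref{F-3.27}, now interpreting the $\|\cdot\|_2$-norms as $L^2(\cM)$-norms (via the $L^{2q}$-$L^2$-Hölder bookkeeping). The Schwarz inequality in $L^2(\cM)$ then gives ``$\ge 0$'', i.e.\ the inequality ``$\le$'' in the statement.

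For the reverse inequality ``$\ge$'', in the reduced case $\lambda^{-1}b\le a\le\lambda b$ I would show the infimum is attained (in the limit): using \eqref{F-3.27} one needs to approximate $x a^{1/2}k$ in $L^2(\cM)$-norm by elements of the form $z h^{1/2}$ with $z\in L^{2q}(\cM)$. This is the density step and is the main obstacle: in Theorem \ref{T-3.38} the relevant density ($\{z h^{1/2}: z\in L^2\cap\cM\}$ dense in $L^2 e$) was proved by a spectral-cutoff argument on $h$, exploiting that $L^2\cap\cM$ is dense in $L^2$. Here one must check that $\{z h^{1/2}: z\in L^{2q}(\cM)\}$ is dense in $L^2(\cM)s(h)$; the same cutoff $f_n:=e_n - e_{1/n}$ of $h=\int_0^\infty\lambda\,de_\lambda$ works since $f_n h^{1/2}$ is bounded and boundedly invertible in $f_n\cR f_n$, and since $L^{2q}(\cM)$ is dense in $L^2(\cM)$ in the measure-topology/norm sense (for $q\ge 1$, via Hölder; the case $q=\infty$, $p=1$ needs $\cM$ dense in $L^2(\cM)$, which is standard). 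Having the density, pick $z_n\in L^{2q}(\cM)$ with $\|z_n h^{1/2} - x a^{1/2}k\|_2\to 0$, set $y_n:=x-z_n$, and \eqref{F-3.27} shows $\<a,y_n^*y_n\>+\<b,z_n^*z_n\>-\<a:b,x^*x\>\to 0$. Finally, lift from the reduced case to general $a,b$ by the $\eps\searrow 0$ argument exactly as at the start of the proof of Theorem \ref{T-3.38}, using Theorem \ref{T-3.25}(2) (decreasing convergence of $a:b$) together with a continuity statement for the pairing $\<\cdot,x^*x\>_{p,q}$ under strong-resolvent decreasing limits in $L^p(\cM)_+$ — this last continuity is the $L^p$-analogue of Lemma \ref{L-3.37} and follows from Proposition \ref{P-4.2} ($\|\cdot\|_p$-convergence) plus Hölder's inequality. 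I expect the density step and the careful Hölder bookkeeping for the $q=\infty$ endpoint to be where most of the work lies; everything else transcribes from \S\ref{S-3.4}.
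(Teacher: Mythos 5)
Your overall strategy is the same as the paper's: reduce to the case $\lambda^{-1}b\le a\le\lambda b$ by the $\eps h$-regularization and the $\|\cdot\|_p$-continuity of decreasing limits (Theorem \ref{T-4.3}/Proposition \ref{P-4.2}), use the factorization $a^{1/2}=kh^{1/2}$ with $k\in e\cM e$ to write $a:b=a^{1/2}(e-kk^*)a^{1/2}$, transcribe the identity \eqref{F-3.27} into the $\<\cdot,\cdot\>_{p,q}$-pairing, and finish with Schwarz plus a density statement. (The paper also splits off $p=\infty$, where the claim is just the classical formula \eqref{F-3.11}; your reduction via Proposition \ref{P-4.2} is stated only for $p<\infty$, but this endpoint is harmless.) The problem is the density step, which you correctly single out as the crux and then justify incorrectly.

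Your argument that $L^{2q}(\cM)h^{1/2}$ is dense in $L^2(\cM)e$ rests on two claims that fail in the Haagerup setting. First, ``$L^{2q}(\cM)$ is dense in $L^2(\cM)$'' (and, for $p=1$, ``$\cM$ is dense in $L^2(\cM)$'') is not merely unproved but false: distinct Haagerup $L^p$-spaces intersect trivially, since $a\in L^p(\cM)\cap L^r(\cM)$ forces $e^{-s/p}a=\theta_s(a)=e^{-s/r}a$ for all $s$ and hence $a=0$ when $p\ne r$. There is no inclusion $L^{2q}\subseteq L^2$ to exploit, in contrast with the tracial setting of Theorem \ref{T-3.38} where $L^1\cap\cM\subseteq L^2$. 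Second, the spectral-cutoff trick does not transfer: the projections $f_n=e_n(h)-e_{1/n}(h)$ are spectral projections of $h\in L^p(\cM)_+$ and hence lie in $\cR$ but not in $\cM$ (only $s(h)$ is $\theta$-invariant), so for $z\in L^{2q}(\cM)$ the product $zf_n$ loses the homogeneity $\theta_s(\cdot)=e^{-s/2q}(\cdot)$ and leaves $L^{2q}(\cM)$; you can no longer feed $zf_n$ back into the hypothesis $\tr(wzh^{1/2})=0$. The density itself is true and is best proved by an annihilator argument: if $w\in L^2(\cM)$ satisfies $\tr((zh^{1/2})w^*)=\tr(h^{1/2}w^*z)=0$ for all $z\in L^{2q}(\cM)$, then H\"older gives $h^{1/2}w^*\in L^{(2q)'}(\cM)$ (note $1/(2p)+1/2=1-1/(2q)$), so the $L^{(2q)'}$-$L^{2q}$ duality yields $h^{1/2}w^*=0$, hence $s(h)w^*=0$, i.e.\ $we=0$; thus the orthogonal complement of $L^{2q}(\cM)h^{1/2}$ is exactly $L^2(\cM)(1-e)$. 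With this replacement your proof closes; as written, the key step does not.
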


\begin{proof}
The case $p=\infty$ (hence $q=1$) is the well-known variational expression (see \eqref{F-3.11})
since $\cM=L^\infty(\cM)$ standardly acts on $L^2(\cM)$ (by left mulitplication) and
$\<a,x^*x\>_{\infty,1}=\tr(ax^*x)=(ax^*,x^*)$ for $x\in L^2(\cM)$.

Assume $1\le p<\infty$. Let $h:=a+b$ and $e:=s(h)\in\cM$. The proof below is similar to that of
Theorem \ref{T-3.38} (with $\<\cdot,\cdot\>_{p,q}$ in place of $\<\cdot,\cdot\>_\tau$),
so we only sketch it. As before (using Theorem \ref{T-4.3} in the present case) we may assume that
$\lambda^{-1}b\le a\le\lambda b$ for some $\lambda>0$, and take a $k\in e\cM e$, invertible in $e\cM e$,
such that $a^{1/2}=kh^{1/2}$ and $T_{a/h}=k^*k$ (see the proof of Lemma \ref{L-4.1}).  As in the
previous proof we have $a:b=a^{1/2}(e-kk^*)a^{1/2}$ and for $y,z\in L^{2q}(\cM)$ with $y+z=x$,
\begin{align*}
&\<a,y^*y\>_{p,q}+\<b,z^*z\>_{p,q}-\<a:b,x^*x\>_{p,q} \\
&\qquad=\|zh^{1/2}\|_2^2+\|xa^{1/2}k\|_2^2-2\Re(zh^{1/2},xa^{1/2}k).
\end{align*}
The remaining proof is the same as in the proof of Theorem \ref{T-3.38} since
$L^{2q}(\cM)h^{1/2}$ is dense in $L^2(\cM)e$ and $xa^{1/2}k\in L^2(\cM)e$.
\end{proof}

The integral in the definition \eqref{F-2.9} of $\phi\sigma\psi$ is given in the weak sense (i.e.,
in the evaluation at each $\xi\in\cH$). In the following we give the integral expression of
$a\sigma b$ for $a,b\in L^p(\cM)$, $1\le p<\infty$, in the strong sense of Bochner integral
(see, e.g., \cite{DU}):

\begin{proposition}\label{P-4.5}
Let $1\le p<\infty$ and $a,b\in L^p(\cM)$. Then the $L^p(\cM)$-valued function
$t\mapsto{1+t\over t}((ta):b)$ is Bochner integrable on $(0,\infty)$ with respect to
the measure $\mu$ and
$$
a\sigma b=\alpha a+\beta b+\int_{(0,\infty)}{1+t\over t}((ta):b)\,d\mu(t).
$$
\end{proposition}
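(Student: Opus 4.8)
The plan is to first establish integrability of the integrand, then identify its Bochner integral with the pointwise (weak) formula \eqref{F-2.9} restricted to $L^p(\cM)$. For integrability, I would begin with the pointwise bound $0\le((ta):b)\le ta$ and $((ta):b)\le b$, which come from Theorem \ref{T-2.2} (maximality of the parallel sum) together with Lemma \ref{L-3.2}; in operator terms these give $0 \le (ta):b \le \min\{t,1\}\, (a+b)$ in the form sense, hence in $L^p(\cM)_+$ by Lemma \ref{L-3.1}. Consequently
$$
\left\|\tfrac{1+t}{t}((ta):b)\right\|_p \le \tfrac{1+t}{t}\min\{t,1\}\,\|a+b\|_p
= \min\{1+t,\,\tfrac{1+t}{t}\}\,\|a+b\|_p,
$$
and the function $t\mapsto\min\{1+t,(1+t)/t\}$ is bounded on $(0,1]$ by $2$ and on $(1,\infty)$ by $(1+t)/t\le 2$, so the norm of the integrand is bounded by $2\|a+b\|_p$ uniformly in $t$. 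Since $\mu$ is a finite measure on $(0,\infty)$, this gives a uniform $L^1(\mu)$ bound on the scalar function $t\mapsto\|\,\cdot\,\|_p$.

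Next I would check strong measurability of $t\mapsto((ta):b)$ as an $L^p(\cM)$-valued map. Using the factorization $h:=a+b$, $a=h^{1/2}T_{a/h}h^{1/2}$, $b=h^{1/2}T_{b/h}h^{1/2}$ from Definition \ref{D-3.5}, Lemma \ref{L-3.22} gives $(ta):b = h^{1/2}\big((tT_{a/h}):T_{b/h}\big)h^{1/2}$ with $T_{a/h},T_{b/h}$ bounded operators in $s(h)\cM s(h)$. The bounded-operator-valued map $t\mapsto (tT_{a/h}):T_{b/h}$ is norm-continuous on $(0,\infty)$ (the parallel sum for bounded positive operators is jointly continuous in the strong topology under the relevant monotone limits, and here one has the explicit continuity via $(tT_{a/h}):T_{b/h} = tT_{a/h} - tT_{a/h}(tT_{a/h}+T_{b/h})^{-1}\cdot$ etc., or simply invoke (III) of \S\ref{S-2.2} combined with monotone squeezing), so composing with the bounded maps $x\mapsto h^{1/2}xh^{1/2}$ from $s(h)\cM s(h)$ into $L^p(\cM)$ (bounded by $\|h\|_p$ via Hölder for Haagerup $L^p$) yields a continuous, hence strongly measurable, $L^p(\cM)$-valued function. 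Together with the uniform norm bound, Bochner's theorem gives Bochner integrability.

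Finally I would identify the Bochner integral with $a\sigma b - \alpha a - \beta b$. Since evaluation (pairing with an element of $L^q(\cM)$, or more concretely with vectors of the standard form) is a bounded linear functional on $L^p(\cM)$, it commutes with the Bochner integral; applying Theorem \ref{T-3.23}/Definition \ref{D-3.15} (so that $q_{a\sigma b}=q_a\sigma q_b$) and the pointwise formula \eqref{F-2.9} for $\phi\sigma\psi$ evaluated at each $\xi\in\cH$, one sees that $\alpha a+\beta b + \int_{(0,\infty)}\tfrac{1+t}{t}((ta):b)\,d\mu(t)$ and $a\sigma b$ have the same associated positive form, hence coincide in $L^p(\cM)_+$ (uniqueness of the $\tau$-measurable operator representing a given form, Proposition \ref{P-3.14}). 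I expect the main obstacle to be the measurability/continuity verification of $t\mapsto (tT_{a/h}):T_{b/h}$ as a Banach-space-valued map — once that reduction to bounded operators is in place via the $T_{\cdot/h}$ device, everything else is routine Bochner-integral bookkeeping and an appeal to the already-established pointwise formula.
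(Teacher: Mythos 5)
Your treatment of integrability and measurability is sound and follows essentially the paper's route: the pointwise bound on $(ta):b$ coming from maximality of the parallel sum plus finiteness of $\mu$ gives the $L^1(\mu)$ norm estimate (the paper uses the slightly sharper $(ta):b\le(t(a+b)):(a+b)=\tfrac{t}{1+t}(a+b)$, but yours suffices), and strong measurability is obtained from the factorization $(ta):b=h^{1/2}\bigl((tT_{a/h}):T_{b/h}\bigr)h^{1/2}$ with $h=a+b$. Your continuity claim for $t\mapsto(tT_{a/h}):T_{b/h}$ is legitimate: since $T_{a/h}+T_{b/h}=s(h)$, the operator $tT_{a/h}+T_{b/h}$ is invertible in the corner $s(h)\cM s(h)$ and the explicit rational formula applies, or one can use the squeeze $(sA):B\le(tA):B\le(t/s)\bigl((sA):B\bigr)$ for $0<s<t$. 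This is in fact a little more direct than the paper's argument, which first treats the case $\lambda^{-1}b\le a\le\lambda b$ and then gets measurability in general as a pointwise $\|\cdot\|_p$-limit of $(ta+\eps h):(b+\eps h)$ via Theorem \ref{T-4.3}.

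The gap is in the identification step. The form evaluation $X\mapsto q_X(\xi)=\|X^{1/2}\xi\|^2$ is \emph{not} a bounded linear functional on $L^p(\cM)$: it takes the value $+\infty$ and, under $\|\cdot\|_p$-convergence (which only yields convergence in measure and hence strong resolvent convergence), it is merely lower semicontinuous. So it does not commute with the Bochner integral, and you cannot match the pointwise formula \eqref{F-2.9} against $\alpha a+\beta b+\int_{(0,\infty)}\tfrac{1+t}{t}((ta):b)\,d\mu(t)$ in the way described. What does commute with the Bochner integral is the pairing $X\mapsto\tr(cX)$ with $c\in L^q(\cM)$; but then you need the scalar identity $\tr(c(a\sigma b))=\alpha\tr(ca)+\beta\tr(cb)+\int_{(0,\infty)}\tfrac{1+t}{t}\tr(c((ta):b))\,d\mu(t)$, which is not literally \eqref{F-2.9} and still requires an interchange of a functional with the Kubo--Ando integral. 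The paper supplies this via the factorization you already set up: $\tr(c(a\sigma b))=\tr\bigl((h^{1/2}ch^{1/2})(T_{a/h}\sigma T_{b/h})\bigr)$, where $h^{1/2}ch^{1/2}\in L^1(\cM)\cong\cM_*$ is a \emph{normal} functional on $\cM$, and a normal functional may be taken inside the integral \eqref{F-2.8} for the bounded operators $T_{a/h},T_{b/h}$; undoing the factorization term by term gives the scalar identity, and separation of points by $L^q(\cM)$ finishes the proof. With this replacement your argument is complete.
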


\begin{proof}
Let $h:=a+b$. From  the expression \eqref{F-2.8} applied to $A=T_{a/h}$ and $B=T_{b/h}$
we have the integral expression of $a\sigma b$ in the weak sense in terms of the $L^p$-$L^q$-duality.
That is, for every $c\in L^q(\cM)$, $1/p+1/q=1$, since $h^{1/2}ch^{1/2}\in L^1(\cM)$ ($\cong\cM_*$),
we have
\begin{align*}
\tr\,c(a:b)&=\tr\,(h^{1/2}ch^{1/2})(T_{a/h}\sigma T_{b/h}) \\
&=\alpha\tr\,(h^{1/2}ch^{1/2})T_{a/h}+\beta\tr\,(h^{1/2}ch^{1/2})T_{b/h} \\
&\qquad+\int_{(0,\infty)}{1+t\over t}\,\tr\,(h^{1/2}ch^{1/2})((tT_{a/h}):T_{b/h})\,d\mu(t) \\
&=\alpha\tr\,ca+\beta\tr\,cb+\int_{(0,\infty)}{1+t\over t}\,\tr\,c((ta):b)\,d\mu(t).
\end{align*}
Let us show that the integral $\int_{(0,\infty)}{1+t\over t}((ta):b)\,d\mu(t)$ exists as
a Bochner integral. When $\lambda^{-1}\le a\le\lambda b$ for some $\lambda>0$, since
$(ta):b=b^{1/2}((tT_{a/b}):e)b^{1/2}$ (where $e:=s(b)$) and $t\mapsto(tT_{a/b}):e$ is continuous
in the operator norm, it follows that $t\mapsto(ta):b$ is continuous in the norm $\|\cdot\|_p$.
For general $a,b\in L^p(\cM)_+$, since $(ta+\eps h):(b+\eps h)\to(ta):b$ in the norm
$\|\cdot\|_p$ as $\eps\searrow0$ by Theorem \ref{T-4.3}, the function $t\mapsto(ta):b$ is
strongly measurable on $(0,\infty)$ in the norm $\|\cdot\|_p$. Furthermore, since
$$
\|(ta):b\|_p\le\|(t(a+b)):(a+b)\|_p={t\over1+t}\,\|a+b\|_p,
$$
we have $\int_{(0,\infty)}{1+t\over t}\,\|(ta):b\|_p\,d\mu(t)<\infty$. Hence the result follows.
\end{proof}

Consider the tensor products $\cM\otimes M_2$ with $\ffi_0\otimes\Tr$, where $\ffi_0$ is a
faithful semi-finite normal weight on $\cM$ and $\Tr$ is the usual trace on $M_2=M_2(\bC)$.
Since $\sigma_t^{\ffi_0\otimes\Tr}=\sigma_t^{\ffi_0}\otimes\mathrm{id}_2$ with the identity map
$\mathrm{id}_2$ on $M_2$, we have
$$
(\cM\otimes M_2)\rtimes_{\sigma^{\ffi_0}\otimes\mathrm{id}_2}\bR=\cR\otimes M_2,
$$
and the dual action on $\cR\otimes M_2$ is $\theta_s\otimes\mathrm{id}_2$, where $\theta_s$ is
the dual action on $\cR$. Hence $L^p(\cM\otimes M_2)$ is identified with $L^p(\cM)\otimes M_2$
and its positive part is $(L^p(\cM)\otimes M_2)\cap(\overline\cR\otimes M_2)_+$. In this
setting we can write the expressions of Propositions \ref{P-3.33} and \ref{P-3.34} restricted
to $a,b\in L^p(\cM)_+$, $0<p\le\infty$, as follows:
\begin{align*}
a\#b&=\max\biggl\{c\in L^p(\cM)_+;\,
\begin{bmatrix}a&c\\c&b\end{bmatrix}\ge0
\ \mbox{in $L^p(\cM)\otimes M_2$}\biggr\}, \\
a:b&=\max\biggl\{c\in L^p(\cM)_+;\,
\begin{bmatrix}a&0\\0&b\end{bmatrix}\ge\begin{bmatrix}c&c\\c&c\end{bmatrix}
\ \mbox{in $L^p(\cM)\otimes M_2$}\biggr\}.
\end{align*}

The next proposition gives a Haagerup's $L^p$-norm inequality for the weighted
geometric mean, which is a consequence of Theorem \ref{T-3.41}. Similarly to
\cite[Theorem 4]{Ko4} a key idea of the proof is to use the following formulas
(see \cite{Ko8,FK}) for every $a\in L^p(\cM)$ where $0<p\le\infty$:
\begin{align}\label{F-4.3} 
\mu_t(a)=t^{-1/p}\|a\|_p,\qquad\Lambda_t(a)=((et^{-1})^{1/p}\|a\|_p)^t,\qquad t>0,
\end{align}
as already mentioned in the proof of Proposition \ref{P-4.2}. Note that they
hold for the case $p=\infty$ as well. Indeed, if $a\in L^\infty(\cM)=\cM$, then any
spectral projection of $|a|$ is $\theta$-invariant, so for any $s<\|a\|_\infty$ we have
$\tau(e_{(s,\|a\|_\infty]}(|a|)=\infty$. Hence $\mu_t(a)=\|a\|_\infty$ for all $t>0$, showing
the formulas in \eqref{F-4.3} for $p=\infty$.

Another fact we need in the proof below is that for any connection $\sigma$,
\begin{align}\label{F-4.4} 
\theta_s(x\sigma y)=\theta_s(x)\sigma\theta_s(y),\qquad
s\in\bR,\ x,y\in\overline\cR_+,
\end{align}
which is easily verified as follows: With $h:=x+y$ we have
$$
\theta_s(T_{x/h}\sigma T_{y/h})=\theta_s(T_{x/h})\sigma\theta_s(T_{y/h})
=T_{\theta_s(x)/\theta_s(h)}\sigma T_{\theta_s(y)/\theta_s(h)}
$$
so that
\begin{align*}
\theta_s(x\sigma y)&=\theta_s(h^{1/2}(T_{x/h}\sigma T_{y/h})h^{1/2}) \\
&=\theta_s(h)^{1/2}(T_{\theta_s(x)/\theta_s(h)}\sigma T_{\theta_s(y)/\theta_s(h)})
\theta_s(h)^{1/2}=\theta_s(x)\sigma\theta_s(y).
\end{align*}
(Note that Lemma \ref{L-4.1} is also immediate from the fact \eqref{F-4.4}.)

\begin{theorem}\label{T-4.6}
Let $p_1,p_2,p\in(0,\infty]$ and $0\le\alpha\le1$ be such that $1/p=(1-\alpha)/p_1+\alpha/p_2$.
Assume $a\in L^{p_1}(\cM)_+$ and $b\in L^{p_2}(\cM)_+$. Then we have $a\#_\alpha b$,
$|a^{1-\alpha}b^\alpha|\in L^p(\cM)_+$ and
\begin{align}\label{F-4.5} 
\|a\#_\alpha b\|_p\le\|a^{1-\alpha}b^\alpha\|_p\le\|a\|_{p_1}^{1-\alpha}\|b\|_{p_2}^\alpha.
\end{align}
\end{theorem}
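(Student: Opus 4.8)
The plan is to assemble the estimate from three ingredients that are already available: H\"older's inequality for Haagerup's $L^p$-spaces, the $\theta_s$-covariance of the weighted geometric mean, and the Fuglede--Kadison-type inequality of Theorem~\ref{T-3.41}. The cases $\alpha=0$ and $\alpha=1$ are trivial (with the convention $x^0=1$, exactly as in Theorem~\ref{T-3.41}), so throughout I assume $0<\alpha<1$.

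First I would dispose of the second inequality together with the membership of the ``product'' term. Since $\theta_s(a^{1-\alpha})=\theta_s(a)^{1-\alpha}=e^{-s(1-\alpha)/p_1}a^{1-\alpha}$, we have $a^{1-\alpha}\in L^{p_1/(1-\alpha)}(\cM)_+$ with $\|a^{1-\alpha}\|_{p_1/(1-\alpha)}=\|a\|_{p_1}^{1-\alpha}$, and likewise $b^\alpha\in L^{p_2/\alpha}(\cM)_+$ with $\|b^\alpha\|_{p_2/\alpha}=\|b\|_{p_2}^\alpha$. As $\tfrac{1-\alpha}{p_1}+\tfrac{\alpha}{p_2}=\tfrac1p$, H\"older's inequality for Haagerup's $L^p$-spaces gives $a^{1-\alpha}b^\alpha\in L^p(\cM)$ and $\|a^{1-\alpha}b^\alpha\|_p\le\|a\|_{p_1}^{1-\alpha}\|b\|_{p_2}^\alpha$; hence also $|a^{1-\alpha}b^\alpha|\in L^p(\cM)_+$ with the same norm. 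For the mean, note that $a\#_\alpha b\in\overline\cR_+$ directly from Definition~\ref{D-3.16}. Using the covariance \eqref{F-4.4}, the eigenrelations $\theta_s(a)=e^{-s/p_1}a$ and $\theta_s(b)=e^{-s/p_2}b$, and the homogeneity \eqref{F-2.13} in its $\tau$-measurable operator form (which follows at once from $q_{ra}=r\,q_a$ together with Definition~\ref{D-3.15}), I compute
\begin{align*}
\theta_s(a\#_\alpha b)&=\theta_s(a)\#_\alpha\theta_s(b)=(e^{-s/p_1}a)\#_\alpha(e^{-s/p_2}b)\\
&=e^{-s((1-\alpha)/p_1+\alpha/p_2)}(a\#_\alpha b)=e^{-s/p}(a\#_\alpha b)
\end{align*}
for all $s\in\bR$, whence $a\#_\alpha b\in L^p(\cM)_+$ by the description of $L^p(\cM)$ recalled in Appendix~\ref{S-A}.

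Next I would invoke Theorem~\ref{T-3.41}, applied with the semi-finite algebra $\cR$ and its canonical trace $\tau$ in place of $(\cM,\tau)$. The only hypothesis to check is that $a$ and $b$, viewed in $\overline\cR_+$, satisfy condition (ii) of the list opening \S\ref{S-3.5}; this is immediate from $\mu_t(a)=t^{-1/p_1}\|a\|_{p_1}$ (formula \eqref{F-4.3}, with the understanding that $\mu_t(a)=\|a\|_\infty$ for all $t$ when $p_1=\infty$), since then $\int_0^\delta\mu_s(a)^q\,ds<\infty$ for any $0<q<p_1$, and similarly for $b$. Theorem~\ref{T-3.41} then yields $\Lambda_t(a\#_\alpha b)\le\Lambda_t(a^{1-\alpha}b^\alpha)$ for all $t>0$.

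Finally, since $a\#_\alpha b$ and $a^{1-\alpha}b^\alpha$ lie in $L^p(\cM)$ with the \emph{same} exponent $p$, formula \eqref{F-4.3} (together with $\Lambda_t(c)=\Lambda_t(|c|)$) gives $\Lambda_t(x)=\bigl((e/t)^{1/p}\|x\|_p\bigr)^t$ for $x=a\#_\alpha b$ and $x=|a^{1-\alpha}b^\alpha|$; specializing the $\Lambda_t$-inequality at $t=1$ produces $e^{1/p}\|a\#_\alpha b\|_p\le e^{1/p}\|a^{1-\alpha}b^\alpha\|_p$, i.e.\ $\|a\#_\alpha b\|_p\le\|a^{1-\alpha}b^\alpha\|_p$, which is the first inequality. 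The substantive input is entirely Theorem~\ref{T-3.41}; the only thing requiring genuine care here is the $\theta_s$-covariance computation for $a\#_\alpha b$ and the bookkeeping of the edge cases $p_i=\infty$ and $\alpha\in\{0,1\}$, so I do not anticipate a real obstacle.
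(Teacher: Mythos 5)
Your proposal is correct and follows essentially the same route as the paper's own proof: H\"older's inequality for the second inequality and the membership of $a^{1-\alpha}b^\alpha$, the $\theta_s$-covariance \eqref{F-4.4} combined with the homogeneity \eqref{F-2.13} to place $a\#_\alpha b$ in $L^p(\cM)_+$, and Theorem \ref{T-3.41} applied in $\overline\cR_+$ together with \eqref{F-4.3} at $t=1$ for the first inequality. Your extra bookkeeping (the explicit verification of condition (ii) when $p_i=\infty$ and the remark $\Lambda_t(c)=\Lambda_t(|c|)$) is consistent with what the paper leaves implicit.
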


\begin{proof}
Since the case $\alpha=0$ or $1$ is trivial, we may assume $0<\alpha<1$. From \eqref{F-4.3}
note that $x\in\overline\cR$ satisfies the condition (ii) of \S\ref{S-3.5} whenever $x\in L^p(\cM)$
for some $p\in(0,\infty]$. From \eqref{F-4.4} and \eqref{F-2.13} we have
\begin{align*}
\theta_s(a\#_\alpha b)&=(e^{-s/p_1}a)\#_\alpha(e^{-s/p_2}b) \\
&=e^{-s((1-\alpha)/p_1+\alpha/p_2)}(a\#_\alpha b)=e^{-s/p}(a\#_\alpha b),
\qquad s\in\bR,
\end{align*}
showing $a\#_\alpha b\in L^p(M)$. On the other hand, from H\"older's inequality for Haagerup's
$L^p$-spaces, we find that $a^{1-\alpha}b^\alpha\in L^p(\cM)$ and
\begin{align*}
\|a^{1-\alpha}b^\alpha\|_p
\le\|a^{1-\alpha}\|_{p_1/(1-\alpha)}\|b^\alpha\|_{p_2/\alpha}
=\|a\|_{p_1}^{1-\alpha}\|b\|_{p_2}^\alpha,
\end{align*}
which is the second inequality of \eqref{F-4.5}. By \eqref{F-3.30} (for $a,b\in\overline\cR_+$)
and \eqref{F-4.3} we further have
$$
e^{1/p}\|a\#_\alpha b\|_p=\Lambda_1(a\#_\alpha b)
\le\Lambda_1(a^{1-\alpha}b^\alpha)=e^{1/p}\|a^{1-\alpha}b^\alpha\|_p,
$$
which gives the first inequality of \eqref{F-4.5}.
\end{proof}

\begin{remark}\label{R-4.7}\rm
In the situation of Theorem \ref{T-4.6}, for any $r>0$ we have
$$
\|(a^r\#_\alpha b^r)^{1/r}\|_p\le\|\,|a^{r(1-\alpha)}b^{r\alpha}|^{1/r}\|_p
\le\|a\|_{p_1}^{1-\alpha}\|b\|_{p_2}^\alpha
$$
by applying \eqref{F-4.5} to $a^r,b^r$ in view of $1/(p/r)=(1-\alpha)/(p_1/r)+\alpha/(p_2/r)$.
Since $r>0\mapsto\|\,|a^{r(1-\alpha)}b^{r\alpha}|^{1/r}\|_p$ is monotone increasing
\cite[Theorem 4]{Ko4}, we have
$$
\|(a^r\#_\alpha b^r)^{1/r}\|_p\le\|\,|a^{q(1-\alpha)}b^{q\alpha}|^{1/q}\|_p,
\qquad0<r\le q.
$$
If the conjecture of Problem \ref{Q-3.43} is true, then  it follows that
$r>0\mapsto\|(a^r\#_\alpha b^r)^{1/r}\|_p$ is monotone decreasing and the above inequality
holds for all independent $r,q>0$.
\end{remark}

\section{Parallel sums of weights}\label{S-5}

We have studied connections for various classes of unbounded objects.
In every case parallel sums are building blocks for connections
(see  Definition \ref{D-2.5}, \S\ref{S-3.2} and Proposition \ref{P-4.5}).
In this section we will study a notion of parallel sums for semi-finite normal weights
on a von Neumann algebra.
A general theory for parallel sums for weights will be presented in \S\ref{S-5.1} and \S\ref{S-5.2}.
In the final subsection \S\ref{S-5.3}  connections for weights will be considered 
based on usual integral expressions.

Throughout the  section let $\cM$ be a von Neumann algebra on a Hilbert space $\cH$.
We fix a  faithful  semi-finite normal weight $\chi$ on the commutant $\cM'$ and use
Connes' spatial derivatives (see Appendix \S\ref{S-B.1}).

\subsection{Parallel sums of spatial derivatives and weights}\label{S-5.1}

For notational convenience we set
$$
P_0(\cM, \bC):=\mbox{the set of all semi-finite normal weights on $\cM$}.
$$
A (densely defined) positive self-adjoint operator  $T$ acting on $\cH$ is said to be
\emph{$(-1)$-homogeneous} (relative to $\chi$) if
$$
T^{it} y'= \sigma'_{-t}(y')T^{it} 
$$
for each $t \in \bR$ and $y' \in \cM'$.
Here, $\{\sigma'_t\}_{t \in\bR}$ is the modular automorphism group on $\cM'$ induced by $\chi$,
and $T^{it}$ is understood to be defined on the support of $T$.
We take a weight $\phi \in P_0(\cM, \bC)$ and the spatial derivative $d\phi/d\chi$ will be 
considered (see Appendix \S\ref{S-B.1}).
When $\phi$ is faithful, we have
$$
\sigma'_t(y')=(d\phi/d\chi)^{-it}y'(d\phi/d\chi)^{it} \quad \mbox{for $y' \in \cM'$}
$$
(\cite[Theorem 9]{C3}, and also see (i), (ii) at the beginning of \S\ref{S-B.3}) and hence 
$$
(d\phi/d\chi)^{it} y'=(d\phi/d\chi)^{it} y'(d\phi/d\chi)^{-it } (d\phi/d\chi)^{it}=\sigma'_{-t}(y')(d\phi/d\chi)^{it},
$$
that is, $d\phi/d\chi$ is $(-1)$-homogeneous.
The $(-1)$-homogeneity condition is known to characterize positive self-adjoint operators $T$ of the form
$T=d\phi/d\chi$ for some $\phi \in P_0(\cM, \bC)$ (see \cite[Theorem 13]{C3}).

We set
$$
\overline{\cM^+_{-1}}:=
\mbox{the set of all $(-1)$-homogeneous positive self-adjoint operators on $\cH$}.
$$
From the explanation so far 
\begin{equation}\label{F-5.1} 
\phi \in P_0(\cM, \bC)\ \longleftrightarrow\ d\phi/d\chi \in \overline{\cM^+_{-1}}
\end{equation}
is an order preserving one-to-one correspondence.

Here is our strategy (for investigating a notion of  parallel sums of weights): 
A reasonable theory on parallel sums of positive forms was worked out in \cite{Ko6}.
A positive form means a lower semi-continuous positive quadratic form defined 
everywhere in $\cH$ with the value $+\infty$ allowed, i.e., an element in the
extended positive part $\widehat{B(\cH)}_+$ (see \S\ref{S-2.1}).
Positive self-adjoint operators are positive forms so that their parallel sums can be defined. 
Thus, one hopes that the above one-to-one correspondence \eqref{F-5.1} can be used 
to define and investigate ``parallel sums of weights".

We start from two semi-finite normal weights $\phi, \psi \in P_0(\cM, \bC)$
so that we have (densely defined) positive self-adjoint operators $d\phi/d\chi$ and $d\psi/d\chi$.
Regard them as positive forms, which means that $d\phi/d\chi$ for instance is identified with
$$
q_{d\phi/d\chi}(\xi)=
\left\{
\begin{array}{cc}
\Vert (d\phi/d\chi)^{1/2}\xi \Vert^2& \mbox{when $\xi \in {\mathcal D}((d\phi/d\chi)^{1/2})$},
\\[2mm]
+\infty & \mbox{when $\xi \not\in {\mathcal D}((d\phi/d\chi)^{1/2})$}.
\end{array}
\right.
$$
As positive forms we define their parallel sum, i.e., 
$$
(d\phi/d\chi):(d\psi/d\chi)=\bigl( (d\phi/d\chi)^{-1}+(d\psi/d\chi)^{-1} \bigr)^{-1}.
$$
Since $(d\phi/d\chi):(d\psi/d\chi)$ is majorized by $d\phi/d\chi$ and $d\psi/d\chi$,
this parallel sum (defined as a positive form) corresponds to a (densely defined) positive self-adjoint operator. 
Recall that spatial derivatives $d\phi/d\chi$ and $d\psi/d\chi$ are $(-1)$-homogeneous.
If the parallel sum $(d\phi/d\chi):(d\psi/d\chi)$ is  proved to be $(-1)$-homogeneous,
then it corresponds to some semi-finite normal weight 
by the correspondence \eqref{F-5.1}. 
The difficulty here is that computation of its $it$-power $\bigr((d\phi/d\chi):(d\psi/d\chi)\bigr)^{it}$
seems impossible so that checking $(-1)$-homogeneity looks hopeless in this approach
(at least to the authors).
We will overcome this difficulty by making use of Haagerup's method of identifying weights 
with certain positive self-adjoint operators (which plays a crucial role in his theory 
on non-commutative $L^p$-spaces). 

Here, $\varphi_0$ is a faithful normal semi-finite weight on $\cM$, and let $\cR:=\cM\rtimes_\sigma\bR$
be the crossed product acting on $L^2(\bR,\cH)=\cH\otimes L^2(\bR,dt)$ with respect to the modular
automorphism group $\sigma_t=\sigma_t^{\varphi_0}$. Recall that $\cR$ is semi-finite with the
canonical trace $\tau$ and the dual action $\theta_s$ is given on $\cR$. These materials are
essential to define Haagerup's $L^p$-spaces as explained in Appendix \ref{S-A}. We set
\begin{eqnarray*}
&&{\bold H}={\bold H}(\cR,\theta)
:=\mbox{the set of all positive self-adjoint operators $h$ affiliated with $\cR$}\\
&& \qquad \qquad \qquad\qquad \mbox{satisfying $\theta_s(h)=e^{-s}h$ for each $s \in \bR$}.
\end{eqnarray*}
From the discussions so far and in Appendix \ref{S-A}, by combining \eqref{F-5.1} with \eqref{F-A.3}
we have the following order preserving one-to-one bijective correspondence:
\begin{equation}\label{F-5.2} 
h_{\phi} \in {\bold H}  \ \longleftrightarrow \ d\phi/d\chi \in \overline{\cM^+_{-1}}.
\end{equation}
This correspondence admits the following explicit description (see \cite[Theorem 2]{H} 
and also \cite[Chap.~IV, Proposition 4]{Te1}):
\begin{lemma}\label{L-5.1}
With the unitary operator $U$ on $L^2(\bR, \cH)$ defined by
$$
(U\xi)(t)=(d\varphi_0/d\chi)^{it}\xi(t) \quad (\xi \in L^2(\bR, \cH))
$$
we have
$$
Uh_{\phi}U^*=(d\phi/d\chi) \otimes H \quad \mbox{for each $\phi \in P_0(\cM, \bC)$},
$$
where $H$ is the generator of the translations $\lambda(t)\xi=\xi(\cdot-t)$,
$\xi\in L^2(\bR,\cH)$, i.e., $\lambda(t)=H^{it}$ $($see Appendix \ref{S-A}$)$.
\end{lemma}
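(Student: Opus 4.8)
The plan is to unwind the definition of the Haagerup operator $h_\phi$ associated to a weight $\phi$ and to the reference weight $\varphi_0$, and to check directly that conjugating by the unitary $U$ turns it into the tensor product $(d\phi/d\chi)\otimes H$. Recall that $h_\phi$ is characterized by the fact that the dual weight $\tilde\phi$ on $\cR$ equals $\tau(h_\phi\,\cdot\,)$, equivalently by the Connes cocycle identity $(D\tilde\phi:D\tilde\varphi_0)_t = h_\phi^{it}h_{\varphi_0}^{-it}$ together with $h_{\varphi_0}^{it}=\lambda(t)$ acting as the canonical implementation of the dual weight of $\varphi_0$. First I would record the standard facts from Appendix~\ref{S-A}: in the crossed product picture, $h_{\varphi_0}^{it}=\lambda(t)=H^{it}$, so $h_{\varphi_0}=H$, and for general $\phi$ one has $h_\phi^{it}=(D\tilde\phi:D\tilde\varphi_0)_t\,H^{it}$.

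Next I would compute the Connes cocycle $(D\tilde\phi:D\tilde\varphi_0)_t$. By Connes' spatial theory (see Appendix~\ref{S-B.1}), the cocycle between two weights on $\cM$ can be written in terms of spatial derivatives against the fixed weight $\chi$ on $\cM'$: $(D\phi:D\varphi_0)_t = (d\phi/d\chi)^{it}(d\varphi_0/d\chi)^{-it}$, and this element of $\cM$ survives to the crossed product as $(D\tilde\phi:D\tilde\varphi_0)_t$ under the canonical embedding $\cM\hookrightarrow\cR$. Hence
$$
h_\phi^{it} = (d\phi/d\chi)^{it}(d\varphi_0/d\chi)^{-it}\cdot H^{it},
$$
where $(d\phi/d\chi)^{it}$ acts on the $\cH$-factor of $L^2(\bR,\cH)=\cH\otimes L^2(\bR,dt)$ through the representation of $\cM$ in $\cR$ — and crucially the representation of $\cM$ in $\cR$ is \emph{not} simply $x\mapsto x\otimes 1$, but is twisted precisely by the modular flow: $\pi(x)\xi(t)=\sigma_{-t}^{\varphi_0}(x)\xi(t)$. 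This is where $U$ enters: $U^*\pi(x)U=x\otimes 1$ by the very definition $(U\xi)(t)=(d\varphi_0/d\chi)^{it}\xi(t)$ together with $\sigma_t^{\varphi_0}(x)=(d\varphi_0/d\chi)^{it}x(d\varphi_0/d\chi)^{-it}$ (Connes' Theorem~9, quoted in the excerpt).

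Then I would assemble the pieces: conjugating the one-parameter unitary group $t\mapsto h_\phi^{it}$ by $U$, the factor $\lambda(t)=H^{it}$ becomes $1\otimes H^{it}$ (since $U$ commutes with $1\otimes H^{it}$ up to the cocycle bookkeeping — one has to check $U\lambda(t)U^*$ carefully, it acquires a factor $(d\varphi_0/d\chi)^{is}(d\varphi_0/d\chi)^{-is}$ type cancellation), while the twisted $\cM$-part $(D\tilde\phi:D\tilde\varphi_0)_t$ becomes $(d\phi/d\chi)^{it}(d\varphi_0/d\chi)^{-it}\otimes 1$, and the two $(d\varphi_0/d\chi)^{-it}$ contributions cancel, leaving $U h_\phi^{it}U^* = (d\phi/d\chi)^{it}\otimes H^{it} = \bigl((d\phi/d\chi)\otimes H\bigr)^{it}$. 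By uniqueness of the generator of a strongly continuous one-parameter unitary group (Stone's theorem), $U h_\phi U^* = (d\phi/d\chi)\otimes H$, which is the claim. As a sanity check one verifies the $\theta_s$-homogeneity: $\theta_s$ acts trivially on $d\phi/d\chi$ and scales $H$ by $e^{-s}$, matching $\theta_s(h_\phi)=e^{-s}h_\phi$, consistent with $h_\phi\in\mathbf{H}$.

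The main obstacle I expect is the careful bookkeeping of \emph{where} each operator acts and which convention is in force — specifically, pinning down that $U\lambda(t)U^*$ and $U\pi(x)U^*$ come out exactly right, because the translation group $\lambda(t)$ does not commute with the multiplication operator $(d\varphi_0/d\chi)^{it}$ (they interact through the cocycle identity for $\sigma^{\varphi_0}$), and tracking this non-commutation is precisely what produces the cancellation of the spurious $(d\varphi_0/d\chi)^{\pm it}$ factors. A secondary point requiring care is justifying that the Connes cocycle $(D\tilde\phi:D\tilde\varphi_0)_t$ of the dual weights on $\cR$ coincides with the embedded image of $(D\phi:D\varphi_0)_t$ and that the latter equals $(d\phi/d\chi)^{it}(d\varphi_0/d\chi)^{-it}$; both are standard (the first is a basic property of dual weights, the second is Connes' chain rule for spatial derivatives), so I would cite \cite{C3,H,Te1} rather than reprove them. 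Once the operator-location conventions are fixed, the computation is essentially a one-line cancellation followed by Stone's theorem.
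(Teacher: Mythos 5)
The paper does not actually prove Lemma \ref{L-5.1}: it quotes the identification from Hilsum \cite[Theorem 2]{H} and Terp \cite[Chap.~IV, Proposition 4]{Te1}. Your argument is, in substance, the standard proof of that cited result: write $h_\phi^{it}=(D\widehat{\phi}:D\widehat{\varphi}_0)_t\,\lambda(t)$ using $h_{\varphi_0}=H$ and the trace property of $\tau$, identify the dual-weight cocycle with the embedded cocycle $\pi\bigl((D\phi:D\varphi_0)_t\bigr)$, express the latter via spatial derivatives as $(d\phi/d\chi)^{it}(d\varphi_0/d\chi)^{-it}$, conjugate by $U$, and invoke Stone's theorem. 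This is correct and is essentially how the identification is established in the references, so there is no real divergence of method --- you are simply supplying the computation that the paper outsources to a citation.

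Two points of bookkeeping deserve correction. First, your intermediate claim ``$U^*\pi(x)U=x\otimes1$'' has the conjugation in the wrong direction relative to your final assembly $Uh_\phi^{it}U^*$: with $(\pi(x)\xi)(t)=\sigma^{\varphi_0}_{-t}(x)\xi(t)$ and $\sigma^{\varphi_0}_t=\mathrm{Ad}\,(d\varphi_0/d\chi)^{it}$ one gets $U\pi(x)U^*=x\otimes1$, whereas $U^*\pi(x)U$ picks up $\mathrm{Ad}\,(d\varphi_0/d\chi)^{-2it}$. Likewise the precise statement you need is $U\lambda(s)U^*=(d\varphi_0/d\chi)^{is}\otimes\lambda(s)$; the extra factor $(d\varphi_0/d\chi)^{is}$ on the $\cH$-leg is exactly what cancels the $(d\varphi_0/d\chi)^{-it}$ coming from the cocycle, so your description of the cancellation mechanism is right but the formula should be recorded in this form. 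Second, for non-faithful $\phi$ the cocycle $(D\phi:D\varphi_0)_t$ and the group $t\mapsto h_\phi^{it}$ live only on the support projection, so the Stone's-theorem step should be phrased for one-parameter groups on $s(\phi)\cH\otimes L^2(\bR)$ (or for groups of partial isometries); this is routine but worth a sentence. Neither point affects the validity of the approach.
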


The images (under this transformation $U\cdot U^*$) of the standard
generators in the crossed product $\cR=\cM \rtimes_{\sigma} \bR$ 
can be also explicitly written down (see \cite[Chap.~IV, Proposition 3]{Te1}) although
they are not needed here.

So far (also in Appendix \ref{S-A}) we have used the identification
$L^2(\bR, \cH) \cong \cH \otimes L^2(\bR,dt)$. 
In the proof of the next lemma we will use 
$L^2(\bR, \cH) \cong L^2(\bR,dt) \otimes \cH$ instead,
which seems more fitting to present arguments.
We take the Fourier transform on $L^2(\bR,dt)$ and arrive at
\begin{equation}\label{F-5.3} 
Uh_{\phi}U^*=e^t \otimes d\phi/d\chi,
\end{equation}
where $e^t$ means the multiplication operator $m_{e^{\cdot}}$
on $L^2(\bR,dt)$ induced by the exponential
function $t \mapsto e^t$. We note that the action of  $e^t \otimes d\phi/d\chi$ to a vector
$\xi \in  L^2(\bR, \cH)$ looks like
\begin{equation}\label{F-5.4} 
\left (\left( e^t \otimes d\phi/d\chi \right) \xi\right)(t)=e^t\,(d\phi/d\chi)\xi(t).
\end{equation}
When one identifies $L^2(\bR,dt) \otimes \cH$ with the disintegration 
${\displaystyle \int_{\bR}^{\oplus} \cH\, dt}$, this means 
$$
Uh_{\phi}U^*=\int_{\bR}^{\, \oplus} e^t(d\phi/d\chi)\, dt
$$
(see \cite{L, Nu} for instance). 
However, for computations of parallel sums
we will have to regard the positive self-adjoint 
operator $e^t \otimes d\phi/d\chi$ as a positive form,
and probably reduction theory for positive forms has 
not been properly formulated. 
Thus, reduction picture for positive forms will not be used in the poof of the next lemma.
However, we will use this picture to express spectral projections of relevant positive forms,
which is perfectly legitimate.

\begin{lemma}\label{L-2.3}
For each $\phi$ and $\psi$ we have
\begin{equation}\label{F-5.5} 
\left( e^t \otimes (d\phi/d\chi) \right)
:
\left( e^t \otimes (d\psi/d\chi)\right)
=
e^t \otimes \left((d\phi/d\chi):(d\psi/d\chi)\right).
\end{equation}
\end{lemma}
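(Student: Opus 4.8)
The plan is to prove the factorization identity \eqref{F-5.5} by computing the parallel sum at the level of positive forms on $L^2(\bR,\cH)\cong L^2(\bR,dt)\otimes\cH$. First I would recall from Definition \ref{D-2.1} that $\phi:\psi=(\phi^{-1}+\psi^{-1})^{-1}$, so it suffices to track the inverse operation and the (point-wise) sum through the tensor-with-$e^t$ construction. The key observation is that the multiplication operator $m_{e^\cdot}$ on $L^2(\bR,dt)$ is injective with everywhere-defined (unbounded) inverse $m_{e^{-\cdot}}$, so the positive form associated to $e^t\otimes(d\phi/d\chi)$ has a transparent description. Using the explicit action \eqref{F-5.4}, for $\xi\in L^2(\bR,\cH)$ the form $q_{e^t\otimes(d\phi/d\chi)}(\xi)$ equals $\int_\bR e^t\,q_{d\phi/d\chi}(\xi(t))\,dt$ (with the usual $+\infty$ convention), since $(e^t\otimes T)^{1/2}=e^{t/2}\otimes T^{1/2}$ for a positive self-adjoint $T$.

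Next I would compute the inverse form. Either directly from the spectral picture or from the variational formula \eqref{F-2.2}, one checks that
$$
q_{e^t\otimes(d\phi/d\chi)}^{-1}(\xi)=\int_\bR e^{-t}\,q_{d\phi/d\chi}^{-1}(\xi(t))\,dt,
$$
i.e., inversion commutes with the $e^t\otimes(-)$ construction up to replacing $e^t$ by $e^{-t}$ — intuitively because $(e^t\otimes T)^{-1}=e^{-t}\otimes T^{-1}$ as a positive form. The cleanest way to justify this rigorously is to use the disintegration picture for the point-wise supremum and infimum together with \eqref{F-2.5}: if $q_n\downarrow$ are positive forms with a "fibered'' structure over $(\bR,dt)$, then $\Inf_n q_n$ and $\sup_n q_n^{-1}$ respect the fibers, and since inversion of $d\phi/d\chi$ (a genuine positive self-adjoint operator with dense domain) is unproblematic, one gets the fiberwise inverse formula. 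Once this is in hand, adding two such forms gives
$$
q_{e^t\otimes(d\phi/d\chi)}^{-1}+q_{e^t\otimes(d\psi/d\chi)}^{-1}
=\int_\bR e^{-t}\bigl(q_{d\phi/d\chi}^{-1}+q_{d\psi/d\chi}^{-1}\bigr)(\xi(t))\,dt,
$$
and inverting once more, using the fiberwise inverse formula backwards, yields exactly $q_{e^t\otimes((d\phi/d\chi):(d\psi/d\chi))}$. Since $(d\phi/d\chi):(d\psi/d\chi)$ is majorized by $d\phi/d\chi$ and $d\psi/d\chi$ it is a genuine densely defined positive self-adjoint operator (as noted just before Lemma \ref{L-2.3}), so the resulting positive form corresponds to the stated operator, proving \eqref{F-5.5}.

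The main obstacle I anticipate is making the "fiberwise'' reasoning precise for positive \emph{forms} rather than for self-adjoint operators, since — as the authors themselves remark in the paragraph before the lemma — reduction (disintegration) theory for positive forms has not been properly formulated, and the infimum $\Inf$ of a decreasing sequence of forms need not be computed fiberwise in a naive way. I would sidestep this by never disintegrating: instead work globally on $L^2(\bR,\cH)$, writing $e^t\otimes T$ via its spectral decomposition, noting that $e^t\otimes T$ and $e^t\otimes S$ commute strongly iff $T,S$ do — which is not assumed — so the real work is the inverse formula. For that I would either (a) use \eqref{F-2.2} directly, computing $\sup_{\zeta}|(\xi,\zeta)|^2/q_{e^t\otimes T}(\zeta)$ by optimizing $\zeta$ fiber by fiber (a legitimate pointwise optimization of an integral of nonnegative terms, no form-disintegration needed), or (b) approximate: put $T_\eps:=(d\phi/d\chi+\eps)\wedge\eps^{-1}$ bounded and invertible, so $e^t\otimes T_\eps$ has a clean bounded inverse $e^{-t}\wedge\eps^{-1}\otimes$-type expression, apply Theorem \ref{T-2.3} / \eqref{F-2.6} and Lemma \ref{L-2.4} to pass $\eps\to 0$ in the strong resolvent sense on both sides, and conclude. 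Approach (a) is shorter and I would present that, falling back to (b) only if the supremum computation needs the lower-semicontinuous regularization of \eqref{F-2.4}, which here it does not because $q_{e^t\otimes T}$ is already the form of a self-adjoint operator.
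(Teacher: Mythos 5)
Your overall strategy is the same as the paper's: unwind $\,:\,$ into invert--add--invert via Definition \ref{D-2.1}, identify the positive form of $e^t\otimes T$ as $\xi\mapsto\int_\bR e^t q_T(\xi(t))\,dt$, and show that each of the three steps respects this fibered structure. Where you differ is in how the crucial inversion step is justified. The paper never touches the variational formula; it computes the spectral projections $E_{\{0\}}$, $E_{\{+\infty\}}$, $E_{[\lambda,\infty)}$ of $e^t\otimes(d\phi/d\chi)$ via the disintegration \eqref{F-5.6}, reads off the spectral projections of the inverse form from those, and repeats the manipulation for the sum and the second inversion — the point being that expressing \emph{spectral projections} fiberwise is unobjectionable even though a reduction theory for forms is not. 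Your route (a) instead applies \eqref{F-2.2} directly, which is arguably cleaner in that it works uniformly for the second inversion as well, where the fiberwise object $q_{d\phi/d\chi}^{-1}+q_{d\psi/d\chi}^{-1}$ is a genuine element of $\widehat{B(\cH)}_+$ with possibly non-dense domain rather than a self-adjoint operator; the paper handles that case by tracking the extra projections \eqref{F-5.7}, \eqref{F-5.8}. Your fallback (b) is closer in spirit to how the paper proves Propositions \ref{P-2.7} and \ref{P-2.8}, but is not what it does here.

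One caveat on route (a): the phrase ``optimizing $\zeta$ fiber by fiber'' undersells the work. In
$$
\sup_{\zeta}\frac{\bigl|\int_\bR(\xi(t),\zeta(t))\,dt\bigr|^2}{\int_\bR e^t\,q_T(\zeta(t))\,dt}
$$
the numerator is the square of an integral, not an integral of squares, so the supremum does not literally decouple over fibers. You need a Cauchy--Schwarz step,
$$
\Bigl|\int_\bR(\xi(t),\zeta(t))\,dt\Bigr|^2
\le\biggl(\int_\bR\frac{|(\xi(t),\zeta(t))|^2}{e^t\,q_T(\zeta(t))}\,dt\biggr)
\biggl(\int_\bR e^t\,q_T(\zeta(t))\,dt\biggr),
$$
to get the upper bound $\int_\bR e^{-t}q_T^{-1}(\xi(t))\,dt$, and then an explicit near-optimal choice of $\zeta$ (plus attention to the conventions $0/0=0$, $\alpha/0=\infty$ on the null-space fibers) for the reverse inequality. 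With that supplied, your argument goes through and yields the same conclusion as the paper's spectral-projection computation.
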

\begin{proof}
Our strategy is to compare spectral projections $E_I(e^t \otimes (d\phi/d\chi))$
and $e_I(d\phi/d\chi)$ (with $I \subseteq [0,+\infty]$).  
Since $e^t \otimes (d\phi/d\chi)$ is actually
 a (densely defined) positive self-adjoint operator on $L^2(\bR,dt) \otimes \cH$, we have
$$
E_{\{+\infty\}} (e^t \otimes (d\phi/d\chi))=0.
$$
Also, since the support of $d\phi/d\chi$ is $s(\phi)$ (i.e., the support as a weight), we have
$$
E_{\{0\}}(e^t \otimes (d\phi/d\chi))=1_{L^2(\bR,dt)} \otimes (1-s(\phi)).
$$
For each $\lambda>0$, the expression \eqref{F-5.4} shows
\begin{equation}\label{F-5.6} 
E_{[\lambda, \infty)}
(e^t \otimes (d\phi/d\chi)) 
=\int^{\oplus}_{\bR} e_{[e^{-t}\lambda, \infty)}(d\phi/d\chi)\,dt,
\end{equation}
where a disintegration symbol is used to express the projection
$E_{[\lambda, \infty)}(e^t \otimes (d\phi/d\chi))$.
Thus, spectral projections $E_I(( e^t \otimes (d\phi/d\chi))^{-1})$ for the
positive form  $( e^t \otimes (d\phi/d\chi) )^{-1}$ (on $L^2(\bR,dt)\otimes \cH$)
are given by
\begin{eqnarray*}
&&
E_{\{ +\infty \}} (( e^t \otimes (d\phi/d\chi))^{-1})
=E_{\{0\}}(e^t \otimes (d\phi/d\chi))=1_{L^2(\bR,dt)} \otimes (1-s(\phi)),\\
&&
E_{\{0\}} (( e^t \otimes (d\phi/d\chi))^{-1})
=E_{\{+\infty\}} (e^t \otimes (d\phi/d\chi))=0,\\
&&
E_{(0, \lambda]}(( e^t \otimes (d\phi/d\chi))^{-1} )
=E_{ [1/\lambda,\infty) } (e^t \otimes (d\phi/d\chi) )\\
&&
\qquad \qquad
=\int^{\oplus}_{\bR} 
e_{[e^{-t}/\lambda, \infty)}
(d\phi/d\chi)\,dt
=\int^{\oplus}_{\bR} 
e_{(0, e^{t}\lambda]}
((d\phi/d\chi)^{-1})\,dt.
\end{eqnarray*}
Note that \eqref{F-5.6} was used, and  $e_I((d\phi/d\chi)^{-1})$ of course means a
spectral projection for $(d\phi/d\chi)^{-1}$.
The above $E_{(0, \lambda]}
(( e^t \otimes (d\phi/d\chi))^{-1})$
becomes the spectral projection
of the tensor product of the multiplication operator $e^{-t} \left(=m_{e^{-\cdot}}\right)$
and the inverse of $d\phi/d\chi$ (considered as a non-singular operator on $s(\phi)\cH$).
From the above computations it is easy to see that the positive form
$\left( e^t \otimes (d\phi/d\chi)\right)^{-1}$ (on $L^2(\bR,dt) \otimes \cH$) is given by
$$
\left(( e^t \otimes (d\phi/d\chi))^{-1} \xi,\xi\right)
=\int_{\bR}e^{-t}( (d\phi/d\chi)^{-1}\xi(t),\xi(t))\,dt
$$
for $\xi \in  L^2(\bR,dt) \otimes \cH$.
Here, $(d\phi/d\chi)^{-1}$ should be understood as a positive form 
(with $e_{\{+\infty\}}((d\phi/d\chi)^{-1})=e_{\{0\}}(d\phi/d\chi)=1-s(\phi)$).

We obviously have the same formula as above for $d\psi/d\chi$ so that the (form) sum
$\left( e^t \otimes (d\phi/d\chi)\right)^{-1}+\left( e^t \otimes (d\psi/d\chi)\right)^{-1}$ 
(on $L^2(\bR,dt) \otimes \cH$) is given by
\begin{eqnarray*}
&&
\left( (( e^t \otimes (d\phi/d\chi))^{-1}+( e^t \otimes (d\psi/d\chi))^{-1} )\xi,\xi \right)
\\
&&
\qquad
=\int_{\bR}e^{-t}( (d\phi/d\chi)^{-1}\xi(t),\xi(t))\,dt+\int_{\bR}e^{-t}( (d\psi/d\chi)^{-1}\xi(t),\xi(t))\,dt\\
&&
\qquad
=\int_{\bR}e^{-t} ( ( (d\phi/d\chi)^{-1} + (d\psi/d\chi)^{-1})   \xi(t),\xi(t))\,dt
\end{eqnarray*}
with the form sum $(d\phi/d\chi)^{-1} + (d\psi/d\chi)^{-1}$ (on $\cH$).
Thus, spectral projections for
$( e^t \otimes (d\phi/d\chi))^{-1}+( e^t \otimes (d\psi/d\chi))^{-1}$ and those for
$(d\phi/d\chi)^{-1} + (d\psi/d\chi)^{-1}$ are related in the following fashion:
$$
E_{(0, \lambda]}
(( e^t \otimes (d\phi/d\chi))^{-1}+( e^t \otimes (d\psi/d\chi))^{-1})
=\int^{\oplus}_{\bR}
e_{(0,e^t\lambda]}
((d\phi/d\chi)^{-1} + (d\psi/d\chi)^{-1})\,dt
$$
with
\begin{eqnarray*}
&&
E_{\{0\}} (( e^t \otimes (d\phi/d\chi))^{-1}+( e^t \otimes (d\psi/d\chi))^{-1} )=0,\\
&& 
E_{\{+\infty\}}
(( e^t \otimes (d\phi/d\chi))^{-1}+( e^t \otimes (d\psi/d\chi))^{-1})\\
&& \qquad \qquad \qquad
=1_{L^2(\bR,dt)} \otimes 
e_{\{+\infty\}}
((d\phi/d\chi)^{-1}+(d\psi/d\chi)^{-1}).
\end{eqnarray*}
Hence, spectral projections for the parallel sum in question are given by
\begin{eqnarray*}
&&
E_{[\lambda,\infty)}
(
(
( e^t \otimes (d\phi/d\chi))^{-1}+( e^t \otimes (d\psi/d\chi))^{-1}
)^{-1}
)\\
&& \qquad \qquad
=E_{(0, 1/\lambda]}
(
( e^t \otimes (d\phi/d\chi))^{-1}+( e^t \otimes (d\psi/d\chi))^{-1}
)
\\
&&
\qquad \qquad
=\int^{\oplus}_{\bR}
e_{(0,e^t/\lambda]}
(
(d\phi/d\chi)^{-1} + (d\psi/d\chi)^{-1}
)
\,dt
\\
&& \qquad \qquad
=\int^{\oplus}_{\bR}
e_{[e^{-t}\lambda, \infty)}
(
(
(d\phi/d\chi)^{-1} + (d\psi/d\chi)^{-1}
)^{-1}
)
\,dt.
\end{eqnarray*}
We also have 
\begin{eqnarray}
&&
E_{\{+\infty\}}
(
(
( e^t \otimes (d\phi/d\chi))^{-1}+( e^t \otimes (d\psi/d\chi))^{-1}
)^{-1}
)
=0,
\label{F-5.7}\\
&&
E_{\{0\}}
(
(
( e^t \otimes (d\phi/d\chi))^{-1}+( e^t \otimes (d\psi/d\chi))^{-1}
)^{-1}
)
\nonumber
\\
&& \qquad \qquad \qquad
=1_{L^2(\bR,dt)} \otimes 
e_{\{+\infty\}}
(
(d\phi/d\chi)^{-1}+(d\psi/d\chi)^{-1}
)
\nonumber
\\
&&
\qquad \qquad \qquad
=1_{L^2(\bR,dt)} \otimes 
e_{\{0\}}
(
(
(d\phi/d\chi)^{-1} + (d\psi/d\chi)^{-1}
)^{-1}
).
\label{F-5.8} 
\end{eqnarray}
Thus, we conclude
$$
\left(\left( e^t \otimes (d\phi/d\chi)\right)^{-1}+\left( e^t \otimes (d\psi/d\chi)\right)^{-1}\right)^{-1}
=
e^t \otimes \left((d\phi/d\chi)^{-1} + (d\psi/d\chi)^{-1}\right)^{-1},
$$
which is exactly \eqref{F-5.5}.
\end{proof}

The relation \eqref{F-5.7} corresponds to the fact that the parallel sum is a
densely defined positive self-adjoint operator while \eqref{F-5.8} indicates possibility
of a non-trivial kernel. We are now ready to prove the next result.

\begin{proposition}\label{P-5.3}
For $\phi$, $\psi$ in $P_0(\cM, \bC)$ the parallel sum 
$$
(d\phi/d\chi):(d\psi/d\chi)=\bigl( (d\phi/d\chi)^{-1}+(d\psi/d\chi)^{-1} \bigr)^{-1}
$$
$($which is defined in \S\ref{S-2.1} as a positive form, but actually a densely defined positive self-adjoint 
operator as was pointed out before$)$ is $(-1)$-homogeneous relative to $\chi$.
\end{proposition}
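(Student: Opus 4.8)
The plan is to transfer the $(-1)$-homogeneity of $d\phi/d\chi$ and $d\psi/d\chi$ to their parallel sum by using the identification via Haagerup's operators $h_\phi$, $h_\psi$ in $\bold H$ and Lemma~\ref{L-2.3}. Recall from \eqref{F-5.2} that $(-1)$-homogeneous positive self-adjoint operators on $\cH$ correspond bijectively to elements of $\bold H$, i.e.\ to positive self-adjoint operators $h$ affiliated with $\cR$ with $\theta_s(h)=e^{-s}h$. So it suffices to show that the positive self-adjoint operator on $L^2(\bR,\cH)$ obtained by conjugating $(d\phi/d\chi):(d\psi/d\chi)$ by $U^*$ (and multiplying by the translation generator, in the picture of Lemma~\ref{L-5.1}) is in fact in $\bold H$; equivalently, that it is affiliated with $\cR$ and scales correctly under the dual action $\theta_s$.

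First I would invoke Lemma~\ref{L-2.3}: in the picture $L^2(\bR,\cH)\cong L^2(\bR,dt)\otimes\cH$ after the Fourier transform, $Uh_\phi U^*=e^t\otimes(d\phi/d\chi)$ and $Uh_\psi U^*=e^t\otimes(d\psi/d\chi)$ by \eqref{F-5.3}, and
$$
\bigl(e^t\otimes(d\phi/d\chi)\bigr):\bigl(e^t\otimes(d\psi/d\chi)\bigr)
=e^t\otimes\bigl((d\phi/d\chi):(d\psi/d\chi)\bigr).
$$
Since the parallel sum (as a positive form) is monotone in each argument and $e^t\otimes(d\phi/d\chi):e^t\otimes(d\psi/d\chi)$ is dominated by $e^t\otimes(d\phi/d\chi)=Uh_\phi U^*$, and since $h_\phi$ is affiliated with $\cR$, the parallel sum on the left is affiliated with $\cR$ as well (using Lemma~\ref{L-3.1}, or rather its form-sense analogue for a general semi-finite algebra: domination in the form sense by an operator affiliated with $\cR$ forces affiliation with $\cR$; this is exactly the argument used in the proof of Proposition~\ref{P-3.14} via invariance under unitaries in $\cR'$). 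Hence $U^*\bigl(e^t\otimes((d\phi/d\chi):(d\psi/d\chi))\bigr)U$ defines an element $h\in\bold H$ provided the scaling $\theta_s(h)=e^{-s}h$ holds.

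For the scaling, note that $\theta_s$ acts on $\bold H$ and, because the parallel sum is defined purely in terms of the order structure and inverses (Definition~\ref{D-2.1}), and $\theta_s$ is an automorphism of $\cR$ that respects the order and scales $h_\phi,h_\psi$ by $e^{-s}$, one gets $\theta_s(h_\phi:h_\psi)=\theta_s(h_\phi):\theta_s(h_\psi)=(e^{-s}h_\phi):(e^{-s}h_\psi)=e^{-s}(h_\phi:h_\psi)$, the last equality because parallel sum is positively homogeneous of degree one (immediate from $(\lambda\phi)^{-1}=\lambda^{-1}\phi^{-1}$ and the definition). Therefore $h:=h_\phi:h_\psi\in\bold H$, and under the correspondence \eqref{F-5.2} it corresponds to some $(-1)$-homogeneous positive self-adjoint operator on $\cH$; by Lemma~\ref{L-2.3} and Lemma~\ref{L-5.1} that operator is precisely $(d\phi/d\chi):(d\psi/d\chi)$. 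This proves the claim.

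The main obstacle I anticipate is the careful bookkeeping of the three pictures of $L^2(\bR,\cH)$ (the disintegration picture, $L^2(\bR,dt)\otimes\cH$, and $\cH\otimes L^2(\bR,dt)$) together with the conjugation by $U$ and the Fourier transform, and making precise that ``parallel sum commutes with the disintegration'' is what Lemma~\ref{L-2.3} actually supplies — so one does not need any reduction theory for positive forms. Also, one should state clearly why a positive form on $L^2(\bR,\cH)$ dominated by an operator affiliated with $\cR$ is itself affiliated with $\cR$ (invariance under unitaries of $\cR'$, exactly as in the proof of Proposition~\ref{P-3.14}), and why $\theta_s$ applied to a parallel sum of operators affiliated with $\cR$ distributes over the parallel sum — both of these are short but need to be spelled out rather than left implicit. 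Everything else is a direct assembly of \eqref{F-5.2}, Lemma~\ref{L-5.1}, and Lemma~\ref{L-2.3}.
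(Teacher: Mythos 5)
Your proposal follows the same route as the paper's proof: reduce, via \eqref{F-5.3} and Lemma \ref{L-2.3}, to showing that $h_\phi:h_\psi$ lies in the set $\mathbf{H}$, and then verify the two defining properties, namely affiliation with $\cR$ and the scaling $\theta_s(h)=e^{-s}h$. The paper carries out this verification directly at the level of the quadratic forms $q_\phi,q_\psi$: it checks $q_\phi(u'\xi)=q_\phi(\xi)$ for unitaries $u'\in\cR'$ and $q_\phi(\mu(s)^*\xi)=e^{-s}q_\phi(\xi)$, and pushes both invariances through the inverse (via the variational formula \eqref{F-2.2}) and through the form sum. Your phrasing --- that the parallel sum commutes with conjugation by unitaries of $\cR'$ and with $\theta_s=\mathrm{Ad}\,\mu(s)$, and is positively homogeneous of degree one --- is the same computation packaged differently, so the argument goes through.

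One sentence in your write-up is false as stated, however: domination in the form sense by an operator affiliated with $\cR$ does \emph{not} force affiliation with $\cR$ (take the dominating operator to be $1$ and the dominated one to be any positive contraction outside $\cR$). In Proposition \ref{P-3.14}, and likewise here, the domination is what gives \emph{dense definedness} of the parallel sum (so that the positive form is an honest positive self-adjoint operator --- this is exactly what \S\ref{S-5.1} points out before the proposition), while affiliation comes solely from the invariance under unitaries of the commutant, which you do name in the same breath. So the correct mechanism is present in your proposal, but it must not be attributed to the domination; and Lemma \ref{L-3.1} concerns $\tau$-measurability, not affiliation, so it plays no role in this proposition.
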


\begin{proof}
From \eqref{F-5.3} and \eqref{F-5.5} we have
\begin{align}\label{F-5.9}
U(h_{\phi}:h_{\psi})U^*=e^t \otimes \left((d\phi/d\chi):(d\psi/d\chi)\right).
\end{align}
Thus, it remains to show that
the parallel sum $h_{\phi}:h_{\psi}$ belongs to ${\bold H}$ (see \eqref{F-5.2}).  
Let $q_{\phi}$, $q_{\psi}$ be positive forms corresponding to positive self-adjoint operators $h_{\phi}$,
$h_{\psi}$ respectively. Since $h_{\phi}$ is affiliated with $\cR$ and has the scaling
property $\theta_s(h_{\phi})=e^{-s}h_{\phi}$, the associated form $q_{\phi}$ satisfies
$$
q_{\phi}(u'\xi)=q_{\phi}(\xi) \quad \mbox{and} \quad q_{\phi}(\mu(s)^*\xi)=e^{-s}q_{\phi}(\xi)
$$
for any unitary $u' \in \cR'$ and $\mu(s)$ (defined by \eqref{F-A.1}).

By the variational expression for inverses (see \eqref{F-2.2} in \S\ref{S-2.1}) we have
\begin{align*}
q_{\phi}^{-1}(u'\xi)
&=\sup_{\zeta} \frac{|(u'\xi,\zeta)|^2}{q_{\phi}(\zeta)}
=\sup_{\zeta} \frac{|(\xi,u'^*\zeta)|^2}{q_{\phi}(\zeta)}\\
&=\sup_{\zeta} \frac{|(\xi,\zeta)|^2}{q_{\phi}(u'\zeta)}
=\sup_{\zeta} \frac{|(\xi,\zeta)|^2}{q_{\phi}(\zeta)}=q_{\phi}^{-1}(\xi),\\
q_{\phi}^{-1}(\mu(s)^*\xi)
&=\sup_{\zeta} \frac{|(\mu(s)^*\xi,\zeta)|^2}{q_{\phi}(\zeta)}
=\sup_{\zeta} \frac{|(\xi,\mu(s)\zeta)|^2}{q_{\phi}(\zeta)}\\
&=\sup_{\zeta} \frac{|(\xi,\zeta)|^2}{q_{\phi}(\mu(s)^*\zeta)}
=\sup_{\zeta} \frac{|(\xi,\zeta)|^2}{e^{-s}q_{\phi}(\zeta)}=e^s q_{\phi}^{-1}(\xi)
\end{align*}
(where sup is taken over all vectors $\zeta$),  and of course the same 
holds true for $q_{\psi}$. 
It is then trivial from the definition that the form sum $q_{\phi}^{-1}+q_{\psi}^{-1}$
has the same invariance, that is,
\begin{eqnarray*}
&&
\bigl(q_{\phi}^{-1}+q_{\psi}^{-1}\bigr)(u'\xi) = \bigl(q_{\phi}^{-1}+q_{\psi}^{-1}\bigr)(\xi),\\
&&
\bigl(q_{\phi}^{-1}+q_{\psi}^{-1}\bigr)(\mu(s)^*\xi) = e^s\bigl(q_{\phi}^{-1}+q_{\psi}^{-1}\bigr)(\xi). 
\end{eqnarray*}
Thus, by repeating parallel arguments as above, we conclude 
\begin{eqnarray*}
&&
\bigl(q_{\phi}^{-1}+q_{\psi}^{-1}\bigr)^{-1}(u'\xi) = \bigl(q_{\phi}^{-1}+q_{\psi}^{-1}\bigr)^{-1}(\xi),\\
&&
\bigl(q_{\phi}^{-1}+q_{\psi}^{-1}\bigr)^{-1}(\mu(s)^*\xi) = e^{-s}\bigl(q_{\phi}^{-1}+q_{\psi}^{-1}\bigr)^{-1}(\xi). 
\end{eqnarray*}
This means that the positive self-adjoint operator $h_{\phi}:h_{\psi}$
(corresponding to the positive form $\bigl(q_{\phi}^{-1}+q_{\psi}^{-1}\bigr)^{-1}$)
is affiliated with $\cR$
and $\theta_s\left(h_{\phi}:h_{\psi}\right)=e^{-s} (h_{\phi}:h_{\psi})$ holds true, that is, 
$h_{\phi}:h_{\psi} \in {\bold H}$.
\end{proof}

\begin{definition}\label{D-5.4}\rm 
We assume $\phi, \psi \in P_0(\cM, \bC)$. 
The weight in $P_0(\cM, \bC)$ corresponding  $($via \eqref{F-5.1}$)$ to the $(-1)$-homogeneous
operator $(d\phi/d\chi):(d\psi/d\chi) \in {\bold H}$ (in Proposition \ref{P-5.3})
is called the \emph{parallel sum} of $\phi$ and $\psi$,
and denoted by $\phi:\psi$, that is,
$$
d(\phi:\psi)/d\chi=(d\phi/d\chi):(d\psi/d\chi)
\ \left(\in \overline{\cM^+_{-1}}\right).
$$
\end{definition}

\begin{remark}\label{R-5.5}\rm
The parallel sum $\phi:\psi$ is independent of not only the choice of a representing Hilbert space
$\cH$ for $\cM$ but also the choice of a faithful semi-finite normal weight $\chi$ on $\cM'$. Indeed,
assume that $\kappa:\cM$ (on $\cH$) $\to\cM_1$ (on $\cH_1$) is an isomorphism, and let $\ffi_0$
be a faithful normal semi-finite weight on $\cM$ and $\ffi_1:=\ffi_0\circ\kappa^{-1}$ on $\cM_1$.
Also, let $\chi$ and $\chi_1$ be faithful semi-finite normal weights on $\cM'$ and $\cM_1'$
respectively, for which $U$ and $U_1$ are defined as in Lemma \ref{L-5.1}. The
$\kappa:\cM\to\cM_1$ extends to an isomorphism
$\kappa:\cR=\cM\rtimes_{\sigma^{\ffi_0}}\bR\to\cR_1=\cM_1\rtimes_{\sigma^{\ffi_1}}\bR$
so that the canonical trace and the dual action on $\cR_1$ are $\tau_1=\tau\circ\kappa^{-1}$ and
$\theta_s^1:=\kappa\circ\theta_s\circ\kappa^{-1}$. Let $\phi,\psi\in P_0(\cM,\bC)$ and set
$\phi_1:=\phi\circ\kappa^{-1}$, $\psi_1:=\psi\circ\kappa^{-1}\in P_0(\cM_1,\bC)$. Then it is standard
to see that $h_{\phi_1}=\kappa(h_\phi)$, $h_{\psi_1}=\kappa(h_\psi)$ and
$h_{\phi_1}:h_{\psi_1}=\kappa(h_\phi:h_\psi)$, where $\kappa$ gives rise to a map
$\kappa:{\bold H}(\cR,\theta)\to{\bold H}(\cR_1,\theta^1)$ in a natural way. Since \eqref{F-5.9} and
Definition \ref{D-5.4} imply
\begin{align*}
Uh_{\phi:\psi}U^*&=e^t\otimes(d(\phi:\psi)/d\chi)=U(h_\phi:h_\psi)U^*, \\
U_1h_{\phi_1:\psi_1}U_1^*&=e^t\otimes(d(\phi_1:\psi_1)/d\chi_1)=U_1(h_{\phi_1}:h_{\psi_1})U_1^*,
\end{align*}
it follows that $h_{\phi:\psi}=h_\phi:h_\psi$ and $h_{\phi_1:\psi_1}=h_{\phi_1}:h_{\psi_1}$. Therefore,
$h_{\phi_1:\psi_1}=\kappa(h_{\phi:\psi})$ so that we have the desired identity
$$
\phi_1:\psi_1=(\phi:\psi)\circ\kappa^{-1}.
$$
Note that the fact of independence shown above is included in Theorem \ref{T-5.7} below while the
proof of the latter is based on the former.
\end{remark}

\subsection{Properties}\label{S-5.2}

\begin{lemma}\label{L-5.6}
For $\phi, \psi \in P_0(\cM, \bC)$ their parallel sum $\phi:\psi \ (\in P_0(\cM, \bC) )$
satisfies
$$ 
(\phi:\psi)((x_1+x_2)^*(x_1+x_2)) \leq \phi(x_1^*x_1) + \psi(x_2^*x_2) \ (\leq +\infty)
$$
for all $x_1, x_2 \in \cM$.
\end{lemma}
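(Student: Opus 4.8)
The plan is to deduce the estimate from the variational characterisation of parallel sums of positive forms (Theorem \ref{T-2.2}) together with the defining property of the spatial derivative. By Definition \ref{D-5.4} the positive form $q_{d(\phi:\psi)/d\chi}$ is exactly the parallel sum $q_{d\phi/d\chi}:q_{d\psi/d\chi}$ in the sense of Definition \ref{D-2.1}, so Theorem \ref{T-2.2} gives, with no further work,
\[
q_{d(\phi:\psi)/d\chi}(\xi_1+\xi_2)\ \le\ q_{d\phi/d\chi}(\xi_1)+q_{d\psi/d\chi}(\xi_2),\qquad \xi_1,\xi_2\in\cH .
\]
Hence the real content is to translate the three quantities occurring here, evaluated at well-chosen vectors, into weight values $\phi(x_1^*x_1)$, $\psi(x_2^*x_2)$ and $(\phi:\psi)((x_1+x_2)^*(x_1+x_2))$.

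For this I would invoke the fundamental identity of Connes' spatial theory recorded in Appendix \ref{S-B.1} (see \cite{C3,Te1}): for every $\rho\in P_0(\cM,\bC)$ and every $\chi$-bounded vector $\eta\in\cH$ one has $q_{d\rho/d\chi}(\eta)=\rho(\theta^\chi(\eta,\eta))$, where $\theta^\chi(\eta,\eta)=R^\chi(\eta)R^\chi(\eta)^*\in\cM_+$ and $R^\chi(\eta)\colon\cH_\chi\to\cH$ is the bounded intertwiner of the $\cM'$-actions determined by $R^\chi(\eta)\Lambda_\chi(y')=y'\eta$ for $y'\in\mathfrak n_\chi$. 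Since $x^*\in\cM$ commutes with the $\cM'$-action, $R^\chi(x^*\eta)=x^*R^\chi(\eta)$, so $x^*\eta$ is again $\chi$-bounded with $\theta^\chi(x^*\eta,x^*\eta)=x^*\theta^\chi(\eta,\eta)x$, and therefore
\[
q_{d\rho/d\chi}(x^*\eta)=\rho\bigl(x^*\theta^\chi(\eta,\eta)x\bigr),\qquad x\in\cM .
\]
Applying the first displayed inequality with $\xi_1=x_1^*\eta$ and $\xi_2=x_2^*\eta$ (so $\xi_1+\xi_2=(x_1+x_2)^*\eta$) and writing $e_\eta:=\theta^\chi(\eta,\eta)$, one obtains
\[
(\phi:\psi)\bigl((x_1+x_2)^*e_\eta(x_1+x_2)\bigr)\ \le\ \phi(x_1^*e_\eta x_1)+\psi(x_2^*e_\eta x_2)
\]
for every $\chi$-bounded $\eta$; after normalising $\|R^\chi(\eta)\|\le1$ we have $0\le e_\eta\le1$, so the right-hand side is in turn $\le\phi(x_1^*x_1)+\psi(x_2^*x_2)$.

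It then remains to let $e_\eta$ ``increase to $1$''. The cleanest device is a resolution $\sum_i\theta^\chi(\eta_i,\eta_i)=1$ (strong limit of the increasing partial sums), which is available by a standard maximality/approximate-identity argument within Connes' spatial theory: summing the last displayed inequality over $i$ and using normality of the weights $\phi,\psi,\phi:\psi$ — whereby $\rho(y^*y)=\sum_i\rho(y^*\theta^\chi(\eta_i,\eta_i)y)$ for any $y\in\cM$ and any $\rho\in P_0(\cM,\bC)$ — gives the assertion. An alternative, avoiding this point, is available when $\cM$ is $\sigma$-finite: by Remark \ref{R-5.5} the inequality depends neither on $\chi$ nor on the representing Hilbert space, so one may take a representation in which $\cM$ has a cyclic and separating vector $\xi_0$ and put $\chi=\omega_{\xi_0}|_{\cM'}$; then $R^\chi(\xi_0)$ is unitary and $\theta^\chi(\xi_0,\xi_0)=1$, so the inequality with $\xi_i=x_i^*\xi_0$ is already the desired statement, the general case following by cutting down to $\sigma$-finite corners $e\cM e$.

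I expect this final step — exhibiting, inside the comparatively rigid family $\{\theta^\chi(\eta,\eta)\}$, an increasing net (or a resolution of the identity) with limit $1$ — to be the only genuinely delicate part; the rest is bookkeeping resting on Theorem \ref{T-2.2} and the spatial-derivative identity. (Note also that the case in which the right-hand side is $+\infty$ is trivial, so one may throughout assume $\phi(x_1^*x_1),\psi(x_2^*x_2)<\infty$.)
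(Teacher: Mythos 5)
Your argument is correct and is essentially the paper's own proof: both reduce the claim to the form inequality of Theorem \ref{T-2.2} applied to the vectors $x_i^*\xi_\iota$, translated via $q_{d\rho/d\chi}(x^*\eta)=\rho(x^*\theta^\chi(\eta,\eta)x)$, and then sum over a family $\{\xi_\iota\}$ with $\sum_\iota\theta^\chi(\xi_\iota,\xi_\iota)=1$ (exactly Lemma \ref{L-B.4}) using normality of $\phi$, $\psi$ and $\phi:\psi$. The intermediate single-vector step with $e_\eta\le1$ is an unnecessary detour, but the summation argument you identify as the real content is precisely what the paper does.
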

\begin{proof}
We make use of a family $\{\xi_{\iota}\}_{\iota \in I}$ of vectors in $D(\cH,\chi)$ stated in Lemma \ref{L-B.4}.
Since $\sum_{\iota \in I}\theta^{\chi}(\xi_{\iota}, \xi_{\iota})=1$, we have
\begin{align*}
(x_1+x_2)^*(x_1+x_2)&=\sum_{{\iota} \in I} \theta^{\chi}((x_1+x_2)^*\xi_{\iota},(x_1+x_2)^*\xi_{\iota}),\\
x_i^*x_i&=\sum_{{\iota} \in I} \theta^{\chi}(x_i^*\xi_{\iota}, x_i^*\xi_{\iota})
\qquad  (\mbox{for \ $i=1,2$})
\end{align*}
thanks to \eqref{F-B.3}. With these expressions we compute
\begin{eqnarray*}
&&
(\phi:\psi)((x_1+x_2)^*(x_1+x_2))
=
\sum_{\iota \in I}\bigl(\phi:\psi)(\theta^{\chi}((x_1+x_2)^*\xi_{\iota},(x_1+x_2)^*\xi_{\iota}\bigr)\\
&& \qquad \qquad
=
\sum_{\iota \in I}  q_{\phi:\psi}\left((x_1+x_2)^*\xi_{\iota}\right)\\
&& \qquad \qquad
=
\sum_{\iota \in I} \|(d(\phi:\psi)/d\chi)^{1/2} (x_1+x_2)^*\xi_{\iota} \|^2\\
&& \qquad \qquad
=
\sum_{\iota \in I} \|(
\bigl(d\phi/d\chi):(d\psi/d\chi)\bigr)^{1/2} (x_1+x_2)^*\xi_{\iota} \|^2\\
&& \qquad \qquad
\leq
\sum_{\iota \in I}
\|(d\phi/d\chi)^{1/2} x_1^*\xi_{\iota} \|^2 
+
\sum_{\iota \in I}
\|(d\psi/d\chi)^{1/2} x_2^*\xi_{\iota} \|^2\\
&& \qquad \qquad
=
\phi(x_1^*x_1)+\psi(x_2^*x_2).
\end{eqnarray*}
Here, the inequality appearing in the middle of computations
follows from Theorem \ref{T-2.2},
and also normality of involved weights, \eqref{F-B.4}, Definition \ref{D-B.2}
and Definition \ref{D-5.4} are repeatedly used.
\end{proof}

\begin{theorem}\label{T-5.7}
We assume $\phi, \psi \in P_0(\cM,{\bold C})$. 
The parallel sum $\phi:\psi \ (\in P_0(\cM,{\bold C}) )$ is the maximum
of all semi-finite normal weights $\omega$ satisfying
\begin{equation}\label{F-5.10} 
\omega((x_1+x_2)^*(x_1+x_2)) \leq \phi(x_1^*x_1) + \psi(x_2^*x_2) \ (\leq +\infty)
\end{equation}
for all $x_1, x_2 \in \cM$.
\end{theorem}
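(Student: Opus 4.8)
The plan is to show that $\phi:\psi$ satisfies the defining inequality \eqref{F-5.10} (this is exactly Lemma \ref{L-5.6}, already proved), and then to show that any semi-finite normal weight $\omega$ satisfying \eqref{F-5.10} is dominated by $\phi:\psi$. For the maximality direction, the key device is the order-preserving correspondence \eqref{F-5.1} between weights in $P_0(\cM,\bC)$ and $(-1)$-homogeneous positive self-adjoint operators: since both $\omega\mapsto d\omega/d\chi$ and $\phi:\psi\mapsto d(\phi:\psi)/d\chi=(d\phi/d\chi):(d\psi/d\chi)$ are order isomorphisms onto their ranges, it suffices to prove that $d\omega/d\chi\le(d\phi/d\chi):(d\psi/d\chi)$ as positive forms, i.e., $q_{d\omega/d\chi}(\xi)\le\bigl((d\phi/d\chi):(d\psi/d\chi)\bigr)(\xi)$ for all $\xi\in\cH$. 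By Theorem \ref{T-2.2}, the parallel sum on the right is the maximum of all positive forms $q$ with $q(\eta_1+\eta_2)\le q_{d\phi/d\chi}(\eta_1)+q_{d\psi/d\chi}(\eta_2)$ for all $\eta_1,\eta_2\in\cH$. So the whole problem reduces to verifying that the positive form $q_{d\omega/d\chi}$ satisfies this two-variable inequality.

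The bridge between the weight inequality \eqref{F-5.10} and the form inequality is Connes' spatial theory, precisely as used in the proof of Lemma \ref{L-5.6}. First I would fix the family $\{\xi_\iota\}_{\iota\in I}\subseteq D(\cH,\chi)$ of Lemma \ref{L-B.4} with $\sum_\iota\theta^\chi(\xi_\iota,\xi_\iota)=1$, and recall the identity $\omega(x^*x)=\sum_\iota\|(d\omega/d\chi)^{1/2}x^*\xi_\iota\|^2$ coming from \eqref{F-B.3}, \eqref{F-B.4} and Definition \ref{D-B.2}. Given a vector $\xi\in D(\cH,\chi)$ and a decomposition $\xi=\xi_1+\xi_2$, I would want to realize $\xi$ (and $\xi_1,\xi_2$) as $x^*$ applied to one of the base vectors; the standard way is to use that for $\xi\in D(\cH,\chi)$ one has the bounded operator $R^\chi(\xi)\in\cM$ (or its analogue), and more to the point, the linear span of $\{x^*\xi_\iota:\,x\in\cM,\ \iota\in I\}$ is a core for $(d\omega/d\chi)^{1/2}$, with $q_{d\omega/d\chi}(x^*\xi_\iota)=\omega(\text{something})$. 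Then the inequality \eqref{F-5.10}, summed over $\iota$, yields $q_{d\omega/d\chi}(\xi_1+\xi_2)\le q_{d\phi/d\chi}(\xi_1)+q_{d\psi/d\chi}(\xi_2)$ first for vectors of this special form, and then for all $\xi_1,\xi_2\in\cH$ by a density/lower-semicontinuity argument (Fatou applied to the lower semi-continuous form $q_{d\omega/d\chi}$, exactly as lower semi-continuity was invoked in Definition \ref{D-2.5}). Once this two-variable inequality is in hand, Theorem \ref{T-2.2} gives $q_{d\omega/d\chi}\le(d\phi/d\chi):(d\psi/d\chi)$, hence $d\omega/d\chi\le d(\phi:\psi)/d\chi$ in the form sense, hence $\omega\le\phi:\psi$ by the order-preservation in \eqref{F-5.1}. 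Finally, since $\phi:\psi$ itself satisfies \eqref{F-5.10} by Lemma \ref{L-5.6}, it is the maximum.

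The main obstacle I anticipate is the careful passage between the "weighted sum over $\iota$" picture and the pointwise form inequality: one must check that the vectors $x^*\xi_\iota$ form a rich enough set (a core for $(d\omega/d\chi)^{1/2}$, and enough decompositions $\xi=\xi_1+\xi_2$ available with $\xi_1,\xi_2$ of the same form), and that the decomposition of a general $\xi$ into $\xi_1+\xi_2$ can be approximated in a way compatible with all three forms $q_{d\omega/d\chi},q_{d\phi/d\chi},q_{d\psi/d\chi}$ simultaneously. This is a density argument in $D(\cH,\chi)$ using the structure of spatial derivatives; it is somewhat delicate but should follow the same pattern as the computation already carried out in Lemma \ref{L-5.6}, essentially run in reverse, together with the lower semi-continuity of positive forms and the variational characterization of the parallel sum in \eqref{F-2.4}. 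A secondary point to be careful about is that \eqref{F-5.10} only constrains $\omega$ on elements of the form $y^*y$ with $y\in\cM$; but since $\omega$ is a normal weight this determines it, and the spatial-derivative identity $\omega(y^*y)=\sum_\iota\|(d\omega/d\chi)^{1/2}y^*\xi_\iota\|^2$ is precisely what converts this into control of the form $q_{d\omega/d\chi}$ on the dense set $\{y^*\xi_\iota\}$.
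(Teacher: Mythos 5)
Your overall architecture coincides with the paper's: one direction is Lemma \ref{L-5.6}, and for maximality you reduce, via the order isomorphism \eqref{F-5.1}, to the two-vector form inequality $q_{d\omega/d\chi}(\zeta_1+\zeta_2)\le q_{d\phi/d\chi}(\zeta_1)+q_{d\psi/d\chi}(\zeta_2)$ and then invoke Theorem \ref{T-2.2}; the final extension to arbitrary vectors by the core property and lower semi-continuity is also what the paper does. The gap is exactly the step you yourself flag as ``delicate'': converting the hypothesis \eqref{F-5.10}, which constrains $\omega$ only on elements $y^*y$ with $y\in\cM$, into that pointwise vector inequality. Your proposed bridge --- the family $\{\xi_\iota\}$ of Lemma \ref{L-B.4} and the identity $\omega(x^*x)=\sum_\iota\|(d\omega/d\chi)^{1/2}x^*\xi_\iota\|^2$ --- is precisely the mechanism of Lemma \ref{L-5.6}, and it only relates weight values to \emph{sums} of form values over $\iota$. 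Running it ``in reverse'' does not work: a summed inequality does not yield the individual pointwise inequalities, and for a fixed $\iota$ the reachable set $\cM\xi_\iota$ need not be dense in $\cH$, so an arbitrary pair $(\zeta_1,\zeta_2)$ cannot in general be realized (or approximated in the relevant graph norms) as $(x_1^*\xi_\iota,x_2^*\xi_\iota)$ with a common $\iota$.

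The paper's resolution is a different device that your plan is missing. Since normality and semi-finiteness are representation-independent, Remark \ref{R-5.5} allows one to assume $\cM$ acts standardly; then there is a surjective isometry $I:\cH\to\cH_\chi$ intertwining the two $\cM'$-actions, and for every $\chi$-bounded vector $\zeta$ the composition $R^\chi(\zeta)I$ lies in $\cM$, depends \emph{linearly} on $\zeta$, and satisfies $\theta^\chi(\zeta,\zeta)=(R^\chi(\zeta)I)(R^\chi(\zeta)I)^*$. Applying \eqref{F-5.10} with $x_i=(R^\chi(\zeta_i)I)^*$ then gives the pointwise inequality directly for all $\zeta_1\in D(\cH,\chi)\cap\cD((d\phi/d\chi)^{1/2})$ and $\zeta_2\in D(\cH,\chi)\cap\cD((d\psi/d\chi)^{1/2})$, after which the density argument you describe (these sets are cores by Remark \ref{R-B.3},(iii)) together with lower semi-continuity of $q_{d\omega/d\chi}$ closes the proof. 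Without this standard-form reduction, or an equally concrete substitute, your argument does not go through.
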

\begin{proof}
Normality and semi-finiteness (for weights) are expressed in terms of the $\sigma$-weak topology,
and this topology  is independent of the Hilbert space on which $\cM$ acts. Therefore,
in view of Remark \ref{R-5.5} we may and do assume that the action of $\cM$
on $\cH$ is standard. We have shown Lemma \ref{L-5.6}, and it remains to show the maximality of 
the parallel sum $\phi:\psi$. Let us assume that a (semi-finite) normal weight $\omega$
satisfies the inequality \eqref{F-5.10} (and $\omega \leq \phi:\psi$ is to be shown).

We start from two vectors
$$
\zeta_1 \in D(\cH,\chi) \cap {\mathcal D}((d\phi/d\chi)^{1/2})
\quad
\mbox{and}
\quad
\zeta_2 \in D(\cH,\chi) \cap {\mathcal D}((d\psi/d\chi)^{1/2}).
$$
Both of $R^{\chi}(\zeta_i)$ ($i=1,2$) are bounded operators from $\cH_{\chi}$ to $\cH$ and
satisfy $y'R^{\chi}(\zeta_i) = R^{\chi}(\zeta_i)\pi_{\chi}(y')$ for $y' \in \cM'$.
Since the action of $\cM$ on ${\mathcal H}$ is standard, there exists a surjective
isometry $I: \cH \rightarrow \cH_{\chi}$ intertwining the respective two $\cM'$-actions, i.e., $\pi_{\chi}(y')I=Iy'$.
This means that the compositions $R^{\chi}(\zeta_i)I$ are bounded operators on $\cH$ and
commute with $y' \in \cM'$, that is, $R^{\chi}(\zeta_i)I \in \cM$. We note
\begin{align*}
\theta^{\chi}(\zeta_i,\zeta_i)
&=R^{\chi}(\zeta_i)R^{\chi}(\zeta_i)^*
=(R^{\chi}(\zeta_i)I)(R^{\chi}(\zeta_i)I)^* \quad (\mbox{for $i=1,2$}),\\
\theta^{\chi}(\zeta_1+\zeta_2, \zeta_1+\zeta_2)
&=R^{\chi}(\zeta_1+\zeta_2)R^{\chi}(\zeta_1+\zeta_2)^*\\
&=\left(R^{\chi}(\zeta_1)+R^{\chi}(\zeta_2)\right)\left(R^{\chi}(\zeta_1)+R^{\chi}(\zeta_2)\right)^*\\
&=\left(R^{\chi}(\zeta_1)I+R^{\chi}(\zeta_2)I\right)\left(R^{\chi}(\zeta_1)I+R^{\chi}(\zeta_2)I\right)^*.
\end{align*}
We then compute
\begin{align}\label{F-5.11} 
\|(d\omega/d\chi)^{1/2}(\zeta_1+\zeta_2)\|^2
&=q_{\omega}(\zeta_1+\zeta_2)
=\omega\bigl( \theta^{\chi}(\zeta_1+\zeta_2 ,\zeta_1+\zeta_2) \bigr)
\nonumber\\
&=\omega\bigl(
\left(R^{\chi}(\zeta_1)I+R^{\chi}(\zeta_2)I\right)\left(R^{\chi}(\zeta_1)I+R^{\chi}(\zeta_2)I\right)^*\bigr)
\nonumber\\
&\leq\phi\bigl(
\left(R^{\chi}(\zeta_1)I\right)\left(R^{\chi}(\zeta_1)I\right)^*\bigr)
+\psi\bigl(
\left(R^{\chi}(\zeta_2)I\right)\left(R^{\chi}(\zeta_2)I\right)^*\bigr)
\nonumber\\
&=\phi(\theta^{\chi}(\zeta_1,\zeta_1)) + \psi(\theta^{\chi}(\zeta_2,\zeta_2))
=q_{\phi}(\zeta_1) + q_{\psi}(\zeta_2)
\nonumber\\
&=\|(d\phi/d\chi)^{1/2}\zeta_1\|^2  + \|(d\psi/d\chi)^{1/2}\zeta_2\|^2.
\end{align} 
Here, the inequality comes from the assumption \eqref{F-5.10} (with $R^{\chi}(\zeta_i)I \in \cM$) and
of course \eqref{F-B.4} and Definition \ref{D-B.2} are repeatedly used. 

To get the desired conclusion, it suffices to show that the inequality \eqref{F-5.11} remains valid
for arbitrary vectors $\zeta_1, \zeta_2 \in \cH$. Indeed, this would mean 
$$
d\omega/d\chi \leq (d\phi/\chi):(d\phi/d\chi)
$$
thanks to the maximality of $(d\phi/\chi):(d\phi/d\chi)$ stated in Theorem \ref{T-2.2}.
However, since $(d\phi/\chi):(d\phi/d\chi) = d(\phi:\psi)/d\chi$ according to Definition \ref{D-5.4}, 
we have $d\omega/d\chi \leq d(\phi:\psi)/d\chi$, showing $\omega \leq \phi:\psi$.

To show \eqref{F-5.11} in full generality, we may and do assume $\zeta_1 \in {\mathcal D}((d\phi/\chi)^{1/2})$
and $\zeta_2 \in {\mathcal D}((d\psi/\chi)^{1/2})$.
Indeed, otherwise \eqref{F-5.11} trivially holds true, the right hand side being $+\infty$.
Thanks to the core condition stated in Remark \ref{R-B.3},(iii) one can take sequences
$\{ \zeta_{1,n}  \}_{n \in \bN}$ from $D(\cH,\chi) \cap {\mathcal D}((d\phi/d\chi)^{1/2})$ 
and
$\{ \zeta_{2,n}  \}_{n \in \bN}$ from $D(\cH,\chi) \cap {\mathcal D}((d\psi/d\chi)^{1/2})$
satisfying
\begin{eqnarray*}
&&
\zeta_{1,n} \rightarrow \zeta_1, \qquad (d\phi/d\chi)^{1/2}\zeta_{1,n}  \rightarrow (d\phi/d\chi)^{1/2}\zeta_1,\\
&&
\zeta_{2,n} \rightarrow \zeta_2, \qquad (d\psi/d\chi)^{1/2}\zeta_{2,n}  \rightarrow (d\psi/d\chi)^{1/2}\zeta_2
\end{eqnarray*}
as $n \rightarrow \infty$. Since $\zeta_{1,n}+\zeta_{2,n} \rightarrow \zeta_1+\zeta_2$ as $n \rightarrow \infty$,
lower semi-continuity of the positive form $\|(d\omega/d\chi)^{1/2} \cdot  \|^2$ and the first half of the proof
(i.e., \eqref{F-5.11} under the additional assumption) imply
\begin{align*}
\|(d\omega/d\chi)^{1/2}(\zeta_1+\zeta_2)\|^2
&\leq\liminf_{n \to \infty}\|(d\omega/d\chi)^{1/2}(\zeta_{1,n}+\zeta_{2,n})\|^2\\
&\leq\liminf_{n \to \infty}
\bigl(\|(d\phi/d\chi)^{1/2}\zeta_{1,n}\|^2  + \|(d\psi/d\chi)^{1/2}\zeta_{2,n}\|^2\bigr)\\
&=\|(d\phi/d\chi)^{1/2}\zeta_{1}\|^2  + \|(d\psi/d\chi)^{1/2}\zeta_{2}\|^2
\end{align*}
as desired.
\end{proof}

\begin{remark}\label{R-5.8}\rm
\mbox{}

\begin{itemize}

\item[(i)]
This theorem should be compared with the variational expression \eqref{F-4.2} for 
positive functionals $\phi, \psi \in \cM_*^+$.

\item[(ii)]
All expected properties such as monotonicity, concavity and transformer
inequality (i.e., $a^*(\phi:\psi)a\le(a^*\phi a):(a^*\psi a)$ for all $a\in\cM$)
are valid since parallel sums for weights are defined by using corresponding spatial
derivatives and the latter possesses these properties $($see \cite{Ko6}$)$.
It is also possible to see these properties from Theorem \ref{T-5.7}.

\item[(iii)]
We assume that $\{\omega_n\}_{n \in \bN}$ is a decreasing sequence of semi-finite normal weights.
Let us denote the maximum of all semi-finite normal weights majorized by the point-wise infimum
$\inf_n \omega_n$ by $\Inf_n\,\omega_n$ $($see Remark \ref{R-C.5},(i) in Appendix \ref{S-C}$)$. 
Then we have 
\begin{eqnarray*}
&&
d\left(\Inf_n\,\omega_n\right)/d\chi=\Inf_n\left(d\omega_n/d\chi\right),\\
&&
d\omega_n/d\chi \ \longrightarrow  \ d\left(\Inf_n\,\omega_n\right)/d\chi 
\quad \mbox{in the strong resolvent sense}
\end{eqnarray*}
$($see Theorem \ref{T-C.4} and Remark \ref{R-C.5},(i)$)$.
\end{itemize}
\end{remark}

From the above remark (iii) and Theorem \ref{T-2.3} (see also \cite[Theorem 19]{Ko6})
we have the continuity property for decreasing sequences.

\begin{corollary}\label{C-5.9}
If $\{\phi_n\}_{n \in \bN}$ and $\{\psi_n\}_{n \in \bN}$ are two decreasing 
sequences of semi-finite normal weights, then
$$
\Inf_n \,(\phi_n:\psi_n)= \left(\Inf_n\, \phi_n\right):\left(\Inf_n\,\psi_n\right).
$$
\end{corollary}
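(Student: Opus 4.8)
The plan is to reduce Corollary \ref{C-5.9} to the already-established continuity statement for parallel sums of positive forms, namely Theorem \ref{T-2.3}, by transporting everything through the correspondence $\phi\mapsto d\phi/d\chi$ of \eqref{F-5.1} and the identification $q_{d(\phi:\psi)/d\chi}=q_{d\phi/d\chi}:q_{d\psi/d\chi}$ furnished by Definition \ref{D-5.4}. First I would invoke Remark \ref{R-5.8},(iii): for a decreasing sequence $\{\phi_n\}$ of semi-finite normal weights one has $d(\Inf_n\phi_n)/d\chi=\Inf_n(d\phi_n/d\chi)$, the latter $\Inf$ being taken in the sense of positive forms; and similarly for $\{\psi_n\}$ and for the decreasing sequence $\{\phi_n:\psi_n\}$ (decreasing because parallel sum is monotone, Remark \ref{R-5.8},(ii)).

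Next I would simply compute. Write $A_n:=d\phi_n/d\chi$ and $B_n:=d\psi_n/d\chi$, viewed as positive forms. By Definition \ref{D-5.4}, $d(\phi_n:\psi_n)/d\chi=A_n:B_n$. Applying $\Inf_n$ and Remark \ref{R-5.8},(iii) to the decreasing sequence $\{\phi_n:\psi_n\}$ gives
\[
d\bigl(\Inf_n(\phi_n:\psi_n)\bigr)/d\chi=\Inf_n(A_n:B_n).
\]
Now Theorem \ref{T-2.3} applies to the decreasing sequences of positive forms $\{A_n\}$ and $\{B_n\}$, yielding $\Inf_n(A_n:B_n)=\bigl(\Inf_nA_n\bigr):\bigl(\Inf_nB_n\bigr)$. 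Finally, again by Remark \ref{R-5.8},(iii), $\Inf_nA_n=d(\Inf_n\phi_n)/d\chi$ and $\Inf_nB_n=d(\Inf_n\psi_n)/d\chi$, and one more application of Definition \ref{D-5.4} identifies $\bigl(d(\Inf_n\phi_n)/d\chi\bigr):\bigl(d(\Inf_n\psi_n)/d\chi\bigr)=d\bigl((\Inf_n\phi_n):(\Inf_n\psi_n)\bigr)/d\chi$. Thus
\[
d\bigl(\Inf_n(\phi_n:\psi_n)\bigr)/d\chi=d\bigl((\Inf_n\phi_n):(\Inf_n\psi_n)\bigr)/d\chi,
\]
and since the correspondence \eqref{F-5.1} is bijective, the two weights coincide.

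The one point needing care — and the only place where the argument is not a purely formal chase — is the legitimacy of applying $\Inf$ (in the positive-form sense) and getting back a $(-1)$-homogeneous operator, i.e., the compatibility of $\Inf_n$ with the class $\overline{\cM^+_{-1}}$; but this is exactly what Remark \ref{R-5.8},(iii) (via Theorem \ref{T-C.4}) records, so I would lean on it rather than reprove it. I expect no genuine obstacle: once Remark \ref{R-5.8},(iii) and Theorem \ref{T-2.3} are granted, the corollary is a three-line substitution, and the whole proof can be written as the displayed chain of equalities above together with a sentence invoking injectivity of $\phi\mapsto d\phi/d\chi$. The only thing to double-check is that all three sequences ($\{\phi_n\}$, $\{\psi_n\}$, $\{\phi_n:\psi_n\}$) are indeed decreasing so that $\Inf_n$ is defined for each — the third follows from monotonicity of the parallel sum noted in Remark \ref{R-5.8},(ii).
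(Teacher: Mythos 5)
Your proof is correct and follows essentially the same route as the paper: the authors likewise obtain the corollary by combining Remark \ref{R-5.8},(iii) (which commutes $\Inf_n$ with $d(\cdot)/d\chi$ via Theorem \ref{T-C.4}) with Theorem \ref{T-2.3} for positive forms, then reading the identity back through Definition \ref{D-5.4} and the bijectivity of \eqref{F-5.1}. Your extra remarks — that $\{\phi_n:\psi_n\}$ is decreasing by monotonicity and that the compatibility of $\Inf_n$ with $(-1)$-homogeneity is the one nontrivial input — correctly identify where the real work lies.
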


\medskip
We now recall Connes' canonical order reversing correspondence between 
\begin{eqnarray*}
&&
P(\cM, \bC):=\mbox{the set of all faithful semi-finite normal weights on $\cM$},\\[3mm]
&&
P(B(\cH), \cM'):=\mbox{the set of all faithful semi-finite normal}\\
&&  \hskip 4cm   \mbox{operator valued weights from $B({\mathcal H})$ to $\cM'$},
\end{eqnarray*}
which is briefly explained in \S\ref{S-B.3}.
We make use of this apparatus to our investigation on parallel sums for weights.
For $\phi, \psi \in P(\cM, \bC)$ we have the canonical operator valued weights 
$\phi^{-1}, \psi^{-1} \in P(B(\cH),\cM')$ characterized by
$$
\chi\circ\phi^{-1}=\Tr((d\chi/d\phi)\,\cdot)
\quad \mbox{and} \quad
\chi\circ\psi^{-1}=\Tr((d\chi/d\psi)\,\cdot)
$$
(see \eqref{F-B.9}), and they sum up to
\begin{align}\label{F-5.12} 
\chi\circ(\phi^{-1}+\psi^{-1})
&=\Tr((d\chi/d\phi + d\chi/d\psi)\,\cdot)
\nonumber\\
&=\Tr\left(\bigl((d\phi/d\chi)^{-1} + (d\psi/d\chi)^{-1}\bigr)\,\cdot\right).
\end{align}
We assume that the sum $\phi^{-1}+\psi^{-1}$ is a semi-finite operator valued weight  from $B({\mathcal H})$
to $\cM'$
$($which means that $d\chi/d\phi + d\chi/d\psi$ has a dense domain$)$.
Then the above left side is nothing but
$$
\Tr\left(\left( d\chi/d(\phi^{-1}+\psi^{-1})^{-1} \right)\,\cdot\right)
$$
with the inverse weight $(\phi^{-1}+\psi^{-1})^{-1} \in P(\cM, \bC)$
(see \eqref{F-B.10}). Therefore, from \eqref{F-5.12} we get
$$
(d\phi/d\chi)^{-1} + (d\psi/d\chi)^{-1}=d\chi/d(\phi^{-1}+\psi^{-1})^{-1}
\ \left(= \Bigl(d(\phi^{-1}+\psi^{-1})^{-1}/d\chi \Bigr)^{-1} \right)
$$
and hence by taking the inverses of the both sides we conclude
$$
d(\phi^{-1}+\psi^{-1})^{-1}/d\chi=\bigl((d\phi/d\chi)^{-1} + (d\psi/d\chi)^{-1}\bigr)^{-1}
=(d\phi/d\chi):(d\psi/d\chi).
$$
Since the far right side is $d(\phi:\psi)/d\chi$ by the very definition (see Definition \ref{D-5.4}), we have
$
d(\phi^{-1}+\psi^{-1})^{-1}/d\chi = d(\phi:\psi)/d\chi
$.
Therefore, we arrive at the following most natural-looking expression:

\begin{theorem}\label{T-5.10}
For two weights $\phi, \psi \in P(\cM, \bC)$ with the semi-finite sum 
$\phi^{-1}+\psi^{-1}$ of inverses we have
$$
\phi:\psi=\left(\phi^{-1}+\psi^{-1}\right)^{-1}.
$$ 
Here, the right hand side should be understood in terms of Connes' canonical order reversing 
correspondence between $P(\cM, \bC)$ and $P(B(\cH), \cM')$.
\end{theorem}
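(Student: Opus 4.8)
The plan is to transport the identity through Connes' spatial-derivative dictionary and then invoke the bijectivity of the correspondence $\omega\leftrightarrow d\omega/d\chi$ recorded in \eqref{F-5.1}. First I would recall from \S\ref{S-B.3} the two ingredients that make the right-hand side meaningful: for $\phi\in P(\cM,\bC)$ the inverse operator valued weight $\phi^{-1}\in P(B(\cH),\cM')$ is the one with $\chi\circ\phi^{-1}=\Tr((d\chi/d\phi)\,\cdot)$, and the basic compatibility $d\chi/d\phi=(d\phi/d\chi)^{-1}$ between spatial derivatives and their inverses. These two facts already reduce the whole statement to an identity between the positive self-adjoint operators $(d\phi/d\chi)^{-1}$, $(d\psi/d\chi)^{-1}$ and their form sum, i.e.\ to the definition of a parallel sum in \S\ref{S-2.1}.

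Next I would add the characterizing equalities for $\phi^{-1}$ and $\psi^{-1}$, using additivity of the assignment $\theta\mapsto$ (operator implementing $\chi\circ\theta$), to get
$$
\chi\circ(\phi^{-1}+\psi^{-1})=\Tr\bigl((d\chi/d\phi+d\chi/d\psi)\,\cdot\bigr)
=\Tr\bigl(((d\phi/d\chi)^{-1}\,\dot+\,(d\psi/d\chi)^{-1})\,\cdot\bigr).
$$
This is the step where the standing hypothesis is used: $\phi^{-1}+\psi^{-1}$ being a \emph{semi-finite} operator valued weight is exactly what guarantees that $d\chi/d\phi+d\chi/d\psi=(d\phi/d\chi)^{-1}\,\dot+\,(d\psi/d\chi)^{-1}$ has dense domain, hence corresponds to a genuine (not merely extended) weight, so that $\phi^{-1}+\psi^{-1}=\omega^{-1}$ with $\omega:=(\phi^{-1}+\psi^{-1})^{-1}\in P(\cM,\bC)$ and $\chi\circ\omega^{-1}=\Tr((d\chi/d\omega)\,\cdot)$ makes sense. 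Comparing the operators implementing the same normal functional $\chi\circ\omega^{-1}$ (uniqueness in the $\Tr$-pairing) then gives $d\chi/d\omega=(d\phi/d\chi)^{-1}\,\dot+\,(d\psi/d\chi)^{-1}$, and inverting both sides yields
$$
d\omega/d\chi=\bigl((d\phi/d\chi)^{-1}+(d\psi/d\chi)^{-1}\bigr)^{-1}=(d\phi/d\chi):(d\psi/d\chi)
$$
by Definition \ref{D-2.1}.

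Finally, Definition \ref{D-5.4} identifies $(d\phi/d\chi):(d\psi/d\chi)$ with $d(\phi:\psi)/d\chi$, so $d\omega/d\chi=d(\phi:\psi)/d\chi$, and the bijectivity of \eqref{F-5.1} forces $\omega=\phi:\psi$, that is, $\phi:\psi=(\phi^{-1}+\psi^{-1})^{-1}$. I do not expect the algebraic manipulations to cause trouble; the only point requiring genuine care — and the sole place the hypothesis intervenes — is the equivalence between $\phi^{-1}+\psi^{-1}$ being semi-finite and $d\chi/d\phi+d\chi/d\psi$ having dense domain, together with the attendant fact that this is precisely what allows one to write $\phi^{-1}+\psi^{-1}$ as $\omega^{-1}$ for a bona fide weight. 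Pinning this down amounts to unwinding the definitions of \S\ref{S-B.3} (compatibility of Connes' order-reversing correspondence with the operations $T\mapsto d\chi/d(\cdot)$ and with inversion), after which the argument is purely formal; in fact the computation displayed in the paragraph preceding the statement already carries it out.
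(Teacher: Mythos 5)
Your proposal is correct and follows essentially the same route as the paper: both arguments transport everything through the spatial-derivative dictionary via the characterizing identities $\chi\circ\phi^{-1}=\Tr((d\chi/d\phi)\,\cdot)$ and $\chi\circ F=\Tr((d\chi/dF^{-1})\,\cdot)$, use semi-finiteness of $\phi^{-1}+\psi^{-1}$ exactly where you place it (to ensure $d\chi/d\phi+d\chi/d\psi$ has dense domain and hence defines a bona fide weight), and then invert and appeal to Definition \ref{D-5.4} together with the bijectivity of the correspondence \eqref{F-5.1}. This is precisely the computation the paper carries out in the paragraph preceding the statement.
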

The semi-finiteness requirement for $\phi^{-1}+\psi^{-1}$ in the theorem is 
probably redundant. 
One can cut $\cM \subseteq B(\cH)$ by relevant projections, and the
correspondence $P(\cM, \bC) \leftrightarrow P(B(\cH), \cM')$
may  be enlarged to any (operator valued) weights without faithfulness and/or semi-finiteness
requirement.
Then $(\phi^{-1}+\psi^{-1})^{-1}$ might be justified in full generality.

\subsection{Connections of weights}\label{S-5.3}

Let us start from a connection $\sigma$ in the Kubo-Ando sense. 
We assume that its representing function $f$, a non-negative operator monotone function on 
$(0,\infty)$, is 
$$
f(s)=\alpha+\beta s+\int_{(0,\infty)}{(1+t)s\over s+t}\,d\mu(s)
$$
with $\alpha,\beta\ge0$ and a finite positive measure $\mu$ on $(0,\infty)$ 
(see \eqref{F-2.7} and \eqref{F-2.8}).
For each
$\phi,\psi\in P_0(\cM,\bC)$ we define a functional $\phi\sigma\psi:\cM_+\to[0,+\infty]$ by
\begin{align}\label{F-5.13} 
(\phi\sigma\psi)(x):=\alpha\phi(x)+\beta\psi(x)
+\int_{(0,\infty)}{1+t\over t}((t\phi):\psi)(x)\,d\mu(t),\qquad x\in\cM_+.
\end{align}
We note that the function $((t\phi):\psi)(x)$ is non-decreasing in $t$ (by 
Remark \ref{R-5.8},(ii)) so that it is a measurable function of $t \in (0,\infty)$.

We have to check if $\phi\sigma\psi$ defined by \eqref{F-5.13} is a normal weight.
For an increasing net $\{x_{\iota}\}_{\iota \in I}$ in $\cM_+$ with $\sup_{\iota}x_{\iota}=x$
we obviously have $\sup_{\iota} ((t\phi):\psi)(x_{\iota})=((t\phi):\psi)(x)$ for each $t>0$.
However, in the proof of the next lemma one cannot use the monotone convergence
theorem to show $\sup_{\iota} (\phi\sigma\psi)(x_{\iota})=(\phi\sigma\psi)(x)$ since
$\{x_{\iota}\}_{\iota \in I}$ is not necessarily a sequence.

\begin{lemma}\label{L-5.11}
The above $\phi\sigma\psi$ $($defined by \eqref{F-5.13}$)$ is a normal weight on $\cM$
for any $\phi,\psi\in P_0(\cM,\bC)$.
Moreover,
if $\phi+\psi$ is semi-finite, then $\phi\sigma\psi$ is a semi-finite normal weight on $\cM$
$($for any connection $\sigma$$)$.
\end{lemma}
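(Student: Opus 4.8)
The plan is to argue directly from the defining formula \eqref{F-5.13}, in three steps: first that $\phi\sigma\psi$ is a weight, then that it is normal, and finally that it is semi-finite when $\phi+\psi$ is.

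That $\phi\sigma\psi\colon\cM_+\to[0,+\infty]$ is additive and positively homogeneous is routine: each parallel sum $(t\phi):\psi$ lies in $P_0(\cM,\bC)$ by Definition \ref{D-5.4}, hence is itself a weight, so $x\mapsto((t\phi):\psi)(x)$ is additive and homogeneous on $\cM_+$; since this integrand is non-negative and measurable in $t$ (being non-decreasing in $t$, as recalled just before the lemma), additivity and homogeneity pass to the integral by linearity of the Lebesgue integral, and then to the sum $\alpha\phi+\beta\psi+\int(\cdots)\,d\mu(t)$.

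The substantive step is normality, where the obstruction flagged above must be bypassed. Rather than trying to interchange $\sup_\iota$ (over an increasing net $x_\iota\nearrow x$) with the integral, I would instead exhibit $\phi\sigma\psi$ as the supremum of an \emph{increasing sequence} of normal weights, and then use the elementary fact that the supremum of an increasing net of normal weights is again a normal weight (swap the two monotone suprema). To this end, set $d\nu(t):=\tfrac{1+t}{t}\,d\mu(t)$, a $\sigma$-finite Borel measure on $(0,\infty)$ that is finite on every interval bounded away from $0$; then the integral term equals $\int_{(0,\infty)}((t\phi):\psi)(x)\,d\nu(t)$ with integrand non-decreasing in $t$ for each fixed $x$. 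Choosing an increasing sequence of finite partitions of the intervals $(1/n,n]$ exhausting $(0,\infty)$ (including, harmlessly, any atoms of $\nu$ among the partition points), I form the lower-sum weights
$$\Psi_n:=\alpha\phi+\beta\psi+\sum_j\nu\bigl((t_{n,j-1},t_{n,j}]\bigr)\,\bigl((t_{n,j-1}\phi):\psi\bigr),$$
each a finite sum of non-negative scalar multiples of normal weights, hence itself normal. Monotonicity of $t\mapsto((t\phi):\psi)(x)$ makes the lower sums increase under refinement, so $\{\Psi_n\}$ is increasing; and by the monotone convergence theorem together with the exhaustion, $\Psi_n(x)\nearrow(\phi\sigma\psi)(x)$ for every $x\in\cM_+$ (this holds even in the degenerate case $\phi(x)=\psi(x)=+\infty$). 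Thus $\phi\sigma\psi=\sup_n\Psi_n$ is normal. I expect the one delicate point to be exactly this measure-theoretic bookkeeping — but it is genuinely routine, the crucial move being that the problematic limit has been transferred from the net $\{x_\iota\}$ onto the scalar parameter $t$, along which one does have monotonicity.

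For the last assertion I would first record the weight analogue of Lemma \ref{L-3.13}. Setting one variable to $0$ in Theorem \ref{T-5.7} gives $(t\phi):\psi\le t\phi$ and $(t\phi):\psi\le\psi$; splitting the integral in \eqref{F-5.13} at $t=1$ and using $\tfrac{1+t}{t}\cdot t\le 2$ on $(0,1]$ and $\tfrac{1+t}{t}\le 2$ on $(1,\infty)$ then yields $\phi\sigma\psi\le\lambda(\phi+\psi)$ with $\lambda:=\max\{\alpha+2\mu((0,1]),\,\beta+2\mu((1,\infty))\}$. Consequently every $x\in\cM$ with $(\phi+\psi)(x^*x)<\infty$ also satisfies $(\phi\sigma\psi)(x^*x)<\infty$; so if $\phi+\psi$ is semi-finite — i.e., $\{x\in\cM:(\phi+\psi)(x^*x)<\infty\}$ is $\sigma$-weakly dense in $\cM$ — then the corresponding set for $\phi\sigma\psi$, which contains it, is a fortiori $\sigma$-weakly dense, and hence $\phi\sigma\psi$ is semi-finite. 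The main obstacle is the normality step; everything else is bookkeeping.
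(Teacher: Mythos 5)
Your overall strategy coincides with the paper's: both proofs establish normality by writing $\phi\sigma\psi$ as the supremum of an increasing sequence of finite lower sums, each of which is a normal weight (a positive combination of $\phi$, $\psi$ and finitely many parallel sums $(t_{n,k}\phi):\psi$), and then invoke the fact that the supremum of an increasing sequence of normal weights is normal; your semi-finiteness argument via the weight analogue of Lemma \ref{L-3.13} is likewise exactly the one the paper uses. The only cosmetic difference is that you absorb the factor $(1+t)/t$ into the measure $\nu$, whereas the paper keeps $\mu$ and evaluates $(1+t)/t$ at the right endpoint of each dyadic interval (legitimate since that function is decreasing).

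There is one point where your ``routine bookkeeping'' hides a step that monotonicity alone does not deliver. Writing $g(t):=((t\phi):\psi)(x)$, your lower sums equal $\int g_n\,d\nu$ where $g_n(t)=g(t_{n,j-1})$ on the partition interval $(t_{n,j-1},t_{n,j}]$; by monotone convergence these increase to $\int g(t^-)\,d\nu(t)$, not automatically to $\int g(t)\,d\nu(t)$. A non-decreasing $g$ may have countably many jumps, and $\mu$ (hence $\nu$) is an arbitrary finite positive measure that may charge exactly those points; placing the atoms of $\nu$ among the partition points does not repair this, since at such an atom $t_0$ the left-endpoint values still converge only to the left limit $g(t_0^-)$. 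The gap is closed by the observation the paper makes explicitly: for each fixed $x$ either $g\equiv+\infty$ on $(0,\infty)$ (and then both sides are $+\infty$ once some partition interval carries positive $\nu$-mass), or $g$ is finite for all $t>0$, in which case $g$ is concave in $t$ by Remark \ref{R-5.8},(ii) and hence continuous on $(0,\infty)$, so $g(t^-)=g(t)$ everywhere and the lower sums do converge to the integral. With that one sentence added your argument is complete and agrees with the paper's.
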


\begin{proof}
The estimate in Lemma \ref{L-3.13} deals with positive forms, 
but the same estimate clearly remains
valid for weights (with identical arguments) so that semi-finiteness in the second part is
obvious. It remains to show normality. 

We take $x\in\cM_+$ and observe that
$$
\left\{
\begin{array}{l}
((t\phi):\psi)(x)\le(\phi:\psi)(x)\le(\phi:(t^{-1}\psi))(x)=t^{-1}((t\phi):\psi)(x)
\quad \mbox{for $0 < t <1$},
\\[2mm]
t^{-1}((t\phi):\psi)(x)=(\phi:(t^{-1}\psi))(x)\le(\phi:\psi)(x)\le((t\phi):\psi)(x)
\quad \mbox{for $t \geq 1$}.
\end{array}
\right.
$$
Hence we have either $((t\phi):\psi)(x)<+\infty$ for all $t>0$ or
$((t\phi):\psi)(x)=+\infty$ for all $t>0$. 
In the former case, since $((t\phi):\psi)(x)$ is
concave in $t>0$ (by Remark \ref{R-5.8},(ii)), it is continuous in $t>0$. 
For each $n\in\bN$ we set 
$$
t_{n,k}:=k/2^n  \quad (k=1,2,\dots,n2^n)
$$
and define
$$
f_n(t):=\sum_{k=1}^{n2^n}1_{[t_{n,k},t_{n,k+1})}(t) \,
{1+t_{n,k+1}\over t_{n,k+1}} \, ((t_{n,k}\phi):\psi)(x),\qquad t>0.
$$
We claim that
$$
\int_{(0,\infty)}f_n(t)\,d\mu(t)
=\sum_{k=1}^{n2^n}\lambda_{n,k}((t_{n,k}\phi):\psi)(x)
\quad\mbox{with}\quad\lambda_{n,k}:={1+t_{n,k+1}\over t_{n,k+1}}\,\mu([t_{n,k},t_{n,k+1}))
$$
increases to 
$$
\int_{(0,\infty)} \frac{1+t}{t}\,((t\phi):\psi)(x)\,d\mu(t)
$$ 
as $n\to\infty$. 

Indeed, from the fact noted above,  we may and do assume that $((t\phi):\psi)(x)$ is finite
(for all $t>0$). We note the following:
\begin{itemize}
\item[(a)] 
$t\mapsto((t\phi):\psi)(x)$ is a non-decreasing continuous function on $(0,\infty)$ (as was
remarked above);
\item[(b)]
$(1+t)/t$ is decreasing on  $(0,\infty)$;
\item[(c)] The $n$th function $f_n$ takes a constant value
${1+t_{n,k+1}\over t_{n,k+1}} \, ((t_{n,k}\phi):\psi) (x)$ on  each interval
$[t_{n,k}, t_{n,k+1})$ (of length $2^{-n}$), and here the values at the right and left
end-points  (of the functions in (b),\,(a) respectively) are used (to get an increasing
sequence). Also, this interval is further  divided into two subintervals 
$[t_{n,k}, t_{n,k+1})=[t_{n+1,2k}, t_{n+1,2k+1}) \cup  [t_{n+1,2k+1}, t_{n+1,2(k+1)})$
(of length $2^{-(n+1)}$) when one defines the next function $f_{n+1}$.
\end{itemize}
Based on these we can immediately show 
$$
f_n(t)\nearrow \frac{1+t}{t}\,((t\phi):\psi)(x) \quad \mbox{for all $t>0$}
$$
as $n \to \infty$.
Hence the claim is a consequence of the monotone convergence theorem. 

From the claim together with the definition \eqref{F-5.13} we observe
\begin{equation}\label{F-5.14} 
(\phi\sigma\psi)(x)=\sup_n\biggl[\alpha\phi(x)+\beta\psi(x)
+\sum_{k=1}^{n2^n}\lambda_{n,k}((t_{n,k}\phi):\psi)(x)\biggr],\qquad x\in\cM_+.
\end{equation}
Since the functional inside the bracket here is obviously a normal weight, so is 
$\phi\sigma\psi$ thanks to \cite[Theorem 1.8]{Haa1}.
\end{proof}

In view of the above lemma we define the following:

\begin{definition}\label{D-5.12}
\rm
For every $\phi,\psi\in P_0(\cM,\bC)$ we call the normal weight $\phi\sigma\psi$
(given by \eqref{F-5.13}) on $\cM$ the
\emph{connection} of $\phi,\psi$. If $\phi+\psi$ is semi-finite, then we have
$\phi\sigma\psi\in P_0(\cM,\bC)$.
\end{definition}

From the definition \eqref{F-5.13} monotonicity, concavity and transformer inequality
stated in Remark \ref{R-5.8},(ii) immediately extend to connections $\phi\sigma\psi$ for
$\phi,\psi\in P_0(\cM,\bC)$, although it is unknown to us whether or not the
transformer equality in \eqref{F-4.1} extends to $\phi,\psi\in P_0(\cM,\bC)$.
Also, the transpose equality $\phi\tilde\sigma\psi=\psi\sigma\phi$ holds as in Proposition
\ref{P-2.6}.

In the rest of this subsection we consider the situation where $\cM$ is semi-finite.

We begin with the special case $\cM=B(\cH)$ with the usual trace $\Tr$. Set $\chi$
on $\cM'=\bC1$ by $\chi(\lambda1)=\lambda$. Let $a,b$ be positive self-adjoint operators on
$\cH$. It is well-known that $\Tr(a\,\cdot)\in P_0(\cM,\bC)$ and $d\Tr(a\,\cdot)/d\chi=a$.
Hence the connection $a\sigma b$ in Definition \ref{D-2.5} via positive forms becomes a
special case of connections of semi-finite normal weights (while Definition \ref{D-5.4} is
in turn based on Definition \ref{D-2.5}).

Next we consider a general semi-finite von Neumann algebra $\cM$ with a faithful semi-finite
normal trace $\tau$. For simplicity let us assume that $\cM$ is represented in the standard form
$$
\<\pi_\ell(\cM),\,L^2(\cM,\tau),\,J=\,^*,\,L^2(\cM,\tau)_+\>
$$
($\pi_\ell$ is the left multiplication). Set $\chi=\tau'$ by $\tau'(x'):=\tau(Jx'^*J)$,
$x'\in\cM_+'$. 

Let $a,b$ be positive self-adjoint operators $a,b$ affiliated with $\cM$. It is
well-known that $\tau_a=\tau(a\,\cdot)\in P_0(\cM,\bC)$ \cite{PT} and 
\begin{equation}\label{F-5.15} 
d\tau_a/d\tau'=a.
\end{equation}
Indeed, since $(D\tau_a:D\tau)_t=a^{it}$ and $d\tau/d\tau'=1$ in this case,
$$
(d\tau_a/d\tau')^{it}=(D\tau_a:D\tau)_t(d\tau/d\tau')^{it}=a^{it}
$$
(see (c) of \S\ref{S-B.3}). The next result shows that consideration on connections
of weights in this subsection  is quite consistent with what was done in \S\ref{S-3}. 
\begin{proposition}\label{P-5.13}
Let $a,b$ be positive self-adjoint operators affiliated with $\cM$ and we assume that 
the connection $a\sigma b$ $($in the sense of Definition \ref{D-2.5}$)$ is densely defined.
Under these circumstances we have
$$
\tau_{a\sigma b}=\tau_a\sigma\tau_b.
$$
Here, the right hand side is defined in the sense of Definition \ref{D-5.12}. 
\end{proposition}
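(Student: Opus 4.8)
The plan is to show that the two normal weights $\tau_{a\sigma b}$ and $\tau_a\sigma\tau_b$ agree by testing them against the positive elements $\theta^{\tau'}(\xi,\xi)$, $\xi\in D(\cH,\tau')$, and then invoking normality together with the family of Lemma \ref{L-B.4}. The bridge between the form side and the weight side is the defining property of spatial derivatives: if $c$ is a densely defined positive self-adjoint operator affiliated with $\cM$, then $d\tau_c/d\tau'=c$ by \eqref{F-5.15}, and by Definition \ref{D-B.2} together with \eqref{F-B.4} this means
\begin{equation}\label{E-plan}
q_c(\xi)=\bigl\|c^{1/2}\xi\bigr\|^2=\tau_c\bigl(\theta^{\tau'}(\xi,\xi)\bigr),\qquad\xi\in D(\cH,\tau'),
\end{equation}
where $q_c$ is the positive form of $c$ and $\theta^{\tau'}(\xi,\xi)=R^{\tau'}(\xi)R^{\tau'}(\xi)^*\in\cM_+$.

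First I would record the scaling/parallel-sum compatibility. For $t>0$ the positive form $(tq_a):q_b$ is dominated by $tq_a$ and hence corresponds to a densely defined positive self-adjoint operator; moreover $q_a^{-1}$ and $q_b^{-1}$ are invariant under the unitaries of $\cM'$ (exactly as in the proof of Proposition \ref{P-5.3}, via \eqref{F-2.2}), so the same is true of $q_a^{-1}+q_b^{-1}$, of its inverse, and of $(tq_a):q_b$. Denote by $(ta):b$ the positive self-adjoint operator with form $q_{ta}:q_b$; it is affiliated with $\cM$, so $\tau_{(ta):b}\in P_0(\cM,\bC)$ by \cite{PT}. On the other hand $d\tau_{ta}/d\tau'=ta$ and $d\tau_b/d\tau'=b$, so Definition \ref{D-5.4} gives $d\bigl((t\tau_a):\tau_b\bigr)/d\tau'=(ta):b$, while $d\tau_{(ta):b}/d\tau'=(ta):b$ by \eqref{F-5.15}; the injectivity of the correspondence \eqref{F-5.1} then yields
\begin{equation}\label{E-plan2}
(t\tau_a):\tau_b=\tau_{(ta):b},\qquad t>0.
\end{equation}

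Next I would match the two integral representations. Fix $\xi\in D(\cH,\tau')$ and set $x:=\theta^{\tau'}(\xi,\xi)\in\cM_+$. Applying \eqref{E-plan} to $c=a$, $c=b$, $c=(ta):b$ and using $\bigl((tq_a):q_b\bigr)(\xi)=q_{(ta):b}(\xi)$, the defining formula \eqref{F-5.13} for $\tau_a\sigma\tau_b$ becomes
\begin{align*}
(\tau_a\sigma\tau_b)(x)
&=\alpha\,\tau_a(x)+\beta\,\tau_b(x)+\int_{(0,\infty)}\frac{1+t}{t}\,\bigl((t\tau_a):\tau_b\bigr)(x)\,d\mu(t)\\
&=\alpha\,q_a(\xi)+\beta\,q_b(\xi)+\int_{(0,\infty)}\frac{1+t}{t}\,\bigl((tq_a):q_b\bigr)(\xi)\,d\mu(t)
=(q_a\sigma q_b)(\xi),
\end{align*}
where \eqref{E-plan2} was used in the middle line and the last equality is Definition \ref{D-2.5}. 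By hypothesis the positive form $q_a\sigma q_b$ is densely defined; writing $a\sigma b$ for the corresponding positive self-adjoint operator, we have $q_{a\sigma b}=q_a\sigma q_b$, and since $q_a\sigma q_b$ is $\cM'$-invariant by Definition \ref{D-2.5} the operator $a\sigma b$ is affiliated with $\cM$. Hence \eqref{E-plan} with $c=a\sigma b$ gives $(q_a\sigma q_b)(\xi)=q_{a\sigma b}(\xi)=\tau_{a\sigma b}(x)$. Therefore $(\tau_a\sigma\tau_b)(\theta^{\tau'}(\xi,\xi))=\tau_{a\sigma b}(\theta^{\tau'}(\xi,\xi))$ for every $\xi\in D(\cH,\tau')$.

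Finally I would promote this to equality on all of $\cM_+$ exactly as in the proof of Lemma \ref{L-5.6}: with the family $\{\xi_\iota\}_{\iota\in I}\subseteq D(\cH,\tau')$ of Lemma \ref{L-B.4} satisfying $\sum_\iota\theta^{\tau'}(\xi_\iota,\xi_\iota)=1$, any $x\in\cM_+$ decomposes as $x=\sum_\iota\theta^{\tau'}(x^{1/2}\xi_\iota,x^{1/2}\xi_\iota)$ by \eqref{F-B.3}, with $x^{1/2}\xi_\iota\in D(\cH,\tau')$; since $\tau_{a\sigma b}$ (by \cite{PT}) and $\tau_a\sigma\tau_b$ (by Lemma \ref{L-5.11}) are normal, hence completely additive, they agree at $x$. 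I expect the only genuinely delicate point to be the careful handling of the spatial-theory dictionary — in particular, making sure $(ta):b$ and $a\sigma b$ are legitimately affiliated with $\cM$ and densely defined, so that \eqref{E-plan} and \eqref{E-plan2} apply — since once \eqref{E-plan2} is in hand, the two integral representations match term by term and there is no further analytic subtlety.
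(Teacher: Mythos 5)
Your proof is correct, but it follows a genuinely different route from the paper's. The paper also begins by establishing $(t\tau_a):\tau_b=\tau_{(ta):b}$ via spatial derivatives, but from there it reuses the discretization \eqref{F-5.14} from the proof of Lemma \ref{L-5.11} to write $(\tau_a\sigma\tau_b)(x)=\sup_n\tau_{A_n}(x)$ for an explicit increasing sequence $A_n=\alpha a+\beta b+\sum_k\lambda_{n,k}((t_{n,k}a):b)$, checks by the parallel form-level computation that $A_n\nearrow a\sigma b$ in the strong resolvent sense, and then invokes Haagerup's monotone convergence theorem \cite[Theorem 1.12,(1)]{Haa3} to conclude $\tau_{A_n}\nearrow\tau_{a\sigma b}$. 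You instead bypass the discretization entirely: evaluating \eqref{F-5.13} at $x=\theta^{\tau'}(\xi,\xi)$ turns it, term by term in $t$, into \eqref{F-2.9} evaluated at $\xi$ via the dictionary $q_c(\xi)=\tau_c(\theta^{\tau'}(\xi,\xi))$, and you then globalize with the partition of unity of Lemma \ref{L-B.4} and complete additivity of normal weights, exactly as in the proofs of Lemma \ref{L-5.6} and Theorem \ref{T-5.7}. What your approach buys is that no interchange of suprema with the map $A\mapsto\tau_A$ is needed, so Haagerup's theorem is not invoked; the price is heavier reliance on the spatial-theory identifications (affiliation of $(ta):b$ and $a\sigma b$ with $\cM$, the identity \eqref{F-5.15}, and $\cM$-invariance of $D(\cH,\tau')$), all of which you do address. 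Both arguments are sound.
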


\begin{proof}
Definition \ref{D-5.4} means 
$$
d(\tau_a:\tau_b)/d\tau'=(d\tau_a/d\tau):(d\tau_b/d\tau)
=a:b=d\tau_{a:b}/d\tau'
$$
(due to \eqref{F-5.15})
so that $\tau_{a:b}=\tau_a:\tau_b$. 
Hence by the definition \eqref{F-5.13} we have for every $x\in\cM_+$,
\begin{align*}
(\tau_a\sigma\tau_b)(x)
&=\alpha\tau_a(x)+\beta\tau_b(x)+\int_{(0,\infty)}((t\tau_a):\tau_b)(x)\,d\mu(t) \\
&=\alpha\tau(ax)+\beta\tau(bx)+\int_{(0,\infty)}{1+t\over t}\,\tau((ta):b)(x)\,d\mu(t).
\end{align*}
The formula \eqref{F-5.14} applied to $\tau_a$ and $\tau_b$ shows
\begin{align}\label{F-5.16} 
(\tau_a\sigma\tau_b)(x)
&=\sup_n\biggl[\alpha\tau(ax)+\beta\tau(bx)
+\sum_{k=1}^{n2^n}\lambda_{n,k}\tau((t_{n,k}a):b)x)\biggr] \nonumber\\
&=\sup_n\tau\biggl(\biggl[\alpha a+\beta b
+\sum_{k=1}^{n2^n}\lambda_{n,k}((t_{n,k}a):b)\biggr]x\biggr)=\sup_n\tau_{A_n}(x)
\end{align}
with the increasing sequence $\{A_n\}_{n \in \bN}$ given by
$$
A_n:=\alpha a+\beta b+\sum_{k=1}^{n2^n}\lambda_{n,k}((t_{n,k}a):b).
$$
Note that the connection $a\sigma b$ is defined
via the positive form
$$
q_{a\sigma b}(\xi)=\alpha q_a(\xi)+\beta q_b(\xi)+\int_{(0,\infty)}((tq_a):q_b)(\xi)\,d\mu(t),
\qquad\xi\in\cH.
$$
Almost parallel arguments to those in the proof of Lemma \ref{L-5.11} also show
\eqref{F-5.14} for positive forms and consequently we have 
$$
q_{a\sigma b}(\xi)=\sup_n\biggl[\alpha q_a(\xi)+\beta q_b(\xi)
+\sum_{k=1}^{n2^n}\lambda_{n,k}((t_{n,k}q_a):q_b)(\xi)\biggr]=\sup_nq_{A_n}(\xi),
$$
which means that $A_n\nearrow a\sigma b$ in the strong resolvent sense (i.e., as positive
forms). By \cite[Theorem 1.12,(1)]{Haa3} this implies that $\tau_{A_n}\nearrow\tau_{a\sigma b}$
so that the desired result follows from \eqref{F-5.16}.
\end{proof}

In particular, when $a,b\in\overline\cM_+$ ($\tau$-measurable), $\tau_{a \sigma b}$ with the
density $a\sigma b$ in \S\ref{S-3} agrees with the connection $\tau_a\sigma\tau_b$
in Definition \ref{D-5.12} (note that $\tau_a+\tau_b=\tau_{a+b}$ is semi-finite in this case).
Indeed, the definition of $a\sigma b$ in Definition \ref{D-3.16} is independent of the
representing Hilbert space $\cH$, and it agrees with that in the sense of Definition \ref{D-2.5}
(or Definition \ref{D-3.15}).

\section{Lebesgue decomposition in non-commutative $L^p$-spaces}\label{S-6}

In this section we study Lebesgue decomposition in the setting of non-commutative 
$L^p$-spaces.
For bounded positive operators quite a satisfactory theory of Lebesgue decomposition
was worked out by Ando \cite{An1}.
For positive bounded operators $a,b$ the increasing sequence $\{ a:(nb)\}_{n \in \bN}$ 
is bounded by $a$ so that 
\begin{equation}\label{F-6.1} 
a[b]=\sup_n (a:(nb)) \ (\leq a)
\end{equation}
exists as the strong limit.
It was proved in \cite{An1} that
$$
a=a[b]+(a-a[b])
$$
is Lebesgue decomposition of $a$ with respect to $b$, where $a[b]$ and  $a-a[b]$
are $b$-absolutely continuous and  $b$-singular respectively (see Definition \ref{D-6.12}
for absolute continuity and singularity).
This result can be shown by (ingenious) use of basic properties of parallel sums
(see \cite{An1} for details).
The above strong limit $a[b]$ is known to be the largest $b$-absolutely continuous operator majorized 
by $a$, but uniqueness of decomposition (into $b$-absolutely continuous and $b$-singular operators) 
generally fails to hold.
Actually decomposition of each $a \geq 0$ is unique if and only $b$ has a closed range.   
These results (together with some others) were obtained in \cite{An1}

Recall that we have a reasonable notion of parallel sums (with all the expected properties)
for positive elements
in non-commutative $L^p$-spaces (see \S\ref{S-4} and \S\ref{S-5}). Thus, if $a,b$ are replaced 
by positive elements in some $L^p$-space ($p \in [1,\infty)$), 
then one can play the same game (see Remark \ref{R-6.14}). 
However, our approach in this section  (which is akin to that in \cite{Ko3})
is somewhat different. Namely, by using
relevant relative modular operators and Radon-Nikodym cocycles, we try
to express Lebesgue decomposition  in a more explicit manner.
Arguments involving relative modular operators are unavoidable so that $L^p$-spaces
consisting of powers of these operators seem fitting. 
Therefore, we start from a standard form and deal with Hilsum's $L^p$-spaces \cite{H} 
with respect to a fixed faithful state on the commutant
(instead of Haagerup's $L^p$-spaces).
These two $L^p$-spaces are isometrically isomorphic, 
and a brief description of Hilsum's $L^p$-spaces is included in Appendix \S\ref{S-B.2}
for the reader's convenience.

\subsection{Preliminaries}\label{S-6.1}

Let $\cM$ be a von Neumann algebra with a standard form
$\langle \cM, \cH, J, {\mathcal P} \rangle$.
Throughout this section we fix two faithful positive linear functionals  $\vpo, \psi$
in the predual $\cM_*^+$, whose unique implementing vectors 
in the natural cone ${\mathcal P}$ will be denoted by 
$\xi_{\varphi_0}, \xi_{\psi}$ respectively (i.e., $\vpo=\omega_{\xi_{\varphi_0}}$ and 
$\psi=\omega_{\xi_{\psi}}$). The latter functional $\psi$ (or more precisely $\psi'$ to be explained shortly)
will be needed just to define our $L^p$-spaces.
We note that $\xi_{\varphi_0}, \xi_{\psi}$ are cyclic and separating vectors 
(due to faithfulness of $\varphi_0$ and $\psi$). 
For each $\varphi\in\cM_*^+$ we have the \emph{relative modular operator} $\rmvppsi$ as well as
$\rmvpopsi$ (see the last part of Appendix \S\ref{S-B.1}).
The positive self-adjoint operator $\rmvppsi$ is exactly the \emph{spatial derivative}
$d\varphi/d\psi'$ where $\psi' \in \cM'{}_*^+$ is defined by
$$
\psi'(x')=(x'\xi_{\psi}, \xi_{\psi})=\psi(Jx'^*J) 
$$ 
for $x' \in \cM'$ (see \eqref{F-B.6}).

In this section Hilsum's $L^p$-spaces $\ellp$ ($1 \leq p <\infty)$ will be used.
We have, as explained in Appendix \S\ref{S-B.2},
$$
\ellp_+=\{\rmvppsi^{1/p}; \, \varphi \in  \cM_*^+ \}, 
$$
and $\rmvppsi^{1/p}  \, (=(d\varphi/d\psi')^{1/p}) \in \ellp_+$
corresponds to $h_{\varphi}^{1/p}$ in the Haagerup $L^p$-space $L^p(\cM)$ 
(see \eqref{F-B.7}).

We will study various relations (such as absolute continuity and so on) between 
$\rmvppsi^{1/p}$ and $\rmvpopsi^{1/p}$ in $\ellp_+$.

\begin{definition}\label{D-6.1}\rm
For a functional $\varphi \in \cM_*^+$ $($and a fixed faithful functional $\vpo \in \cM_*^+$$)$
we define the operator $T_{\varphi}$ with ${\mathcal D}(T_{\varphi})=\cM'\xi_{\varphi_0}$ by
$$
T_{\varphi} : \ JxJ\xi_{\varphi_0} \in \cM'\xi_{\varphi_0} \ \longmapsto
\ JxJ\rmvpvpo^{1/2p}\, \xi_{\varphi_0} \in \cH
\quad (\mbox{with $x \in \cM$}).
$$
\end{definition}

For $\varphi\in\cM_*^+$ let $(D\varphi:D\vpo)_t$ ($t\in\bR$) be \emph{Connes'
Radon-Nikodym cocycle} \cite{C2}, which is written in terms of relative modular operators as
follows (see \cite{C3}):
\begin{align}
(D\varphi:D\vpo)_t&=\Delta_{\varphi\varphi_0}^{it}\Delta_{\varphi_0}^{-it}
\ \bigl(=(d\varphi/d\varphi_0')^{it}
(d\varphi_0/d\varphi_0')^{-it}\bigr) \nonumber\\[3mm]
&=\Delta_{\varphi\psi}^{it}\rmvpopsi^{-it}
\ \bigl(=(d\varphi/d\psi')^{it}
(d\varphi_0/d\psi')^{-it}
\bigr)\qquad(t\in\bR)
\label{F-6.2} 
\end{align}
When $\varphi \leq \ell \vpo$ (or more generally $\rmvppsi^{1/p} \leq \ell \, \rmvpopsi^{1/p}$)
for some $\ell>0$, $(D\varphi:D\vpo)_{-i/2p}$ makes sense as an element in $\cM$.
To be more precise, there is a $\sigma$-weakly $\cM$-valued continuous function
$f(z)$ on the strip $-1/2p\le\Im z\le0$ which is analytic in the interior and satisfies
$f(t)=(D\varphi:D\vpo)_t$ ($t\in\bR$). Then $(D\varphi:D\vpo)_{-i/2p}$ is determined as $f(-i/2p)$,
and we have
$$
JxJ\Delta_{\varphi\vpo}^{1/2p\,}\xi_{\varphi_0}=JxJ(D\varphi:D\vpo)_{-i/2p}\,\xi_{\varphi_0}
=(D\varphi:D\vpo)_{-i/2p}\,JxJ\xi_{\varphi_0},
$$
showing that $T_{\varphi}$ is just the restriction of $(D\varphi:D\vpo)_{-i/2p}$ 
to the dense subspace $\cM'\xi_{\varphi_0}$ in this situation.

We will have to deal with increasing sequences in $\ellp_+$ quite often.
For an increasing sequence $\{A_n\}_{n\in\bN}$ and $(A_n \leq)\, A$ in $\ellp_+$ the notation
$A_n \nearrow A$ means $A_n \rightarrow A$ in the  $\sigma (L^p,L^q)$-topology (see \eqref{F-B.8}),
i.e., $\langle A_n, B\rangle \nearrow \langle A, B\rangle$ for each $B \in \ellq_+$ where
$1/p+1/q=1$. Note that we have $\|A-A_n\|_p \rightarrow 0$ in this case. In fact, for $p=1$ 
the $L^1$-space $L^1(\cM,\psi')$ can be identified with the predual $\cM_*$ and 
we have
$\|A-A_n\|_1=\langle A-A_n, 1\rangle \rightarrow 0$. On the other hand, for $p \in (1,\infty)$ 
the assertion follows from the well-known uniform convexity of $L^p$-spaces 
(see \cite[\S 5]{FK} for instance) together with
$\lim_{n \to \infty} \|A_n\|_p=\|A\|_p$, which is a consequence of
$$
\left\{
\begin{array}{l}
\|A_n\|_p \leq \|A\|_p \quad (\mbox{due to $A_n \leq A$}),\\[3mm]
\|A\|_p \leq \liminf_{n \to \infty}\|A_n\|_p
\quad(\mbox{due to lower semi-continuity of $\|\cdot\|_p$ in $\sigma (L^p,L^q)$}).
\end{array}
\right.
$$

The next proposition as well as its proof will be of fundamental importance in the rest of the section.
Indeed, the property (ii) below will be used as the definition of $\rmvpopsi^{1/p}$-absolute continuity
(for $\rmvppsi^{1/p}$) in \S\ref{S-6.4} (see Definition \ref{D-6.12}) 
so that this proposition is a characterization result for absolute continuity.  
We point out that another criterion (in terms of a suitable Radon-Nikodym cocycle) will be added in 
Remark \ref{R-6.9},(ii). 
We will repeatedly use arguments in the proof below and their variants (in \S\ref{S-6.3}). 

\begin{proposition}\label{P-6.2}
We assume $1 \leq p < \infty$ and $1/p+1/q=1$. For $\varphi \in \cM_*^+$ the following three properties 
are mutually equivalent$:$
\begin{itemize}
\item[\rm(i)] 
the operator $T_{\varphi}$ $($Definition \ref{D-6.1}$)$ is closable$;$
\item[\rm(ii)] 
there exists an increasing sequence $\{A_n\}$ in $\ellp_+$ satisfying
$$
A_n \nearrow \rmvppsi^{1/p} \quad
\mbox{and}
\quad
A_n \leq \ell_n \rmvpopsi^{1/p} \ \mbox{for some $\ell_n > 0$};
$$
\item[\rm(iii)]
the map
$$
x\xi_{\varphi_0} \in \cM\xi_{\varphi_0} \  \longmapsto \  
\langle  \rmvppsi^{1/p}, \rmvpopsi^{1/2q}x^*x\rmvpopsi^{1/2q} \rangle \in \bR_+ 
$$
is lower semi-continuous.
\end{itemize}
\end{proposition}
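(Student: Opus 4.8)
\textbf{Proof plan for Proposition \ref{P-6.2}.}

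The plan is to prove the cycle of implications (i)$\Rightarrow$(ii)$\Rightarrow$(iii)$\Rightarrow$(i), exploiting the concrete description of $T_\varphi$ as (a restriction of) the Radon-Nikodym cocycle $(D\varphi:D\vpo)_{-i/2p}$ when a domination hypothesis is available, and the identification $\ellp_+=\{\rmvppsi^{1/p}:\varphi\in\cM_*^+\}$ together with the order-theoretic dictionary $\rmvppsi^{1/p}\le\ell\,\rmvpopsi^{1/p}\iff$ suitable boundedness of $T_\varphi$. For (i)$\Rightarrow$(ii): assuming $T_\varphi$ is closable with closure $\overline{T_\varphi}$, take the polar decomposition $\overline{T_\varphi}=u|\overline{T_\varphi}|$ and put $A_n:=$ the part of $\rmvppsi^{1/p}$ obtained by truncating the positive self-adjoint operator $|\overline{T_\varphi}|$ by its spectral projections $e_{[0,n]}(|\overline{T_\varphi}|)$; more precisely one sets $A_n:=(|\overline{T_\varphi}|e_{[0,n]})\rmvpopsi^{1/2p}$-type expressions symmetrized so as to lie in $\ellp_+$. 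One checks $A_n\le n^2\rmvpopsi^{1/p}$ (or the appropriate power) from the spectral cutoff, that $A_n$ is increasing, and that $A_n\nearrow\rmvppsi^{1/p}$ in $\sigma(L^p,L^q)$ by monotone convergence against arbitrary $B\in\ellq_+$; the last point uses that the closability of $T_\varphi$ is exactly what makes the truncations converge back to $\rmvppsi^{1/p}$ rather than to a strictly smaller element.

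For (ii)$\Rightarrow$(iii): for each $n$, the domination $A_n\le\ell_n\rmvpopsi^{1/p}$ gives, via Lemma \ref{L-3.3} (in the $\tau$-measurable picture transported through the isometry between Hilsum's and Haagerup's spaces), a contraction-type factorization $A_n^{1/2}=x_n\rmvpopsi^{1/2p}$ with $x_n\in\cM$, so that the functional $x\xi_{\vpo}\mapsto\langle A_n,\rmvpopsi^{1/2q}x^*x\rmvpopsi^{1/2q}\rangle=\|x_n\rmvpopsi^{1/2p}\rmvpopsi^{1/2q}x^*\|$-type quantity is manifestly continuous (indeed given by a vector state of $\cM$ on $\cM\xi_{\vpo}$, using $\rmvpopsi^{1/2p}\rmvpopsi^{1/2q}=\rmvpopsi^{1/2}$ and the explicit action of $\rmvpopsi$ on $\cM\xi_{\vpo}$). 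Then $A_n\nearrow\rmvppsi^{1/p}$ yields that the functional in (iii) is the pointwise supremum of an increasing net of continuous (hence lower semi-continuous) functionals, and a supremum of lower semi-continuous functions is lower semi-continuous. For (iii)$\Rightarrow$(i): lower semi-continuity of $x\xi_{\vpo}\mapsto\langle\rmvppsi^{1/p},\rmvpopsi^{1/2q}x^*x\rmvpopsi^{1/2q}\rangle$ says precisely that the quadratic form $q(\eta):=\langle\rmvppsi^{1/p},\rmvpopsi^{1/2q}(\eta\cdot)^*(\eta\cdot)\rmvpopsi^{1/2q}\rangle$ on $\cM\xi_{\vpo}$ (suitably interpreted) is closable as a form; since $q(x\xi_{\vpo})=\|T_\varphi(JxJ)\xi_{\vpo}\|^2$ after matching $\rmvpopsi^{1/2q}$ and the cocycle (using \eqref{F-6.2} and the boundedness argument around $(D\varphi:D\vpo)_{-i/2p}$), closability of the form is equivalent to closability of the operator $T_\varphi$ by the standard correspondence between closable nonnegative forms and closable operators.

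The main obstacle I anticipate is the bookkeeping in (i)$\Rightarrow$(ii): producing the approximating sequence $A_n$ \emph{inside} $\ellp_+$ (not merely as positive forms) and verifying simultaneously that it is increasing, dominated by multiples of $\rmvpopsi^{1/p}$, and $\sigma(L^p,L^q)$-convergent to $\rmvppsi^{1/p}$. This requires carefully transporting the spectral truncation of $|\overline{T_\varphi}|$ through the isometry to Haagerup's $L^p(\cM)$, where positivity and the order structure are transparent, and then checking that the truncated objects are genuine $\tau$-measurable operators affiliated with $\cR$ of the correct homogeneity under $\theta_s$ (cf.\ Lemma \ref{L-4.1}); the convergence itself will follow from the remarks preceding the proposition on increasing sequences in $\ellp_+$ once monotonicity and the supremum of norms are established. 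The other steps are essentially form-theoretic and should be routine given Theorem \ref{T-2.2}, Lemma \ref{L-3.3}, and the cocycle identity \eqref{F-6.2}.
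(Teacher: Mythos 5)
Your plan follows essentially the same route as the paper: the same cycle (i)$\Rightarrow$(ii)$\Rightarrow$(iii)$\Rightarrow$(i), with (i)$\Rightarrow$(ii) via spectral truncation of $T_\varphi^*\overline{T_\varphi}$ sandwiched as $A_n=\rmvpopsi^{1/2p}h_n\rmvpopsi^{1/2p}$, (ii)$\Rightarrow$(iii) via the Douglas-type factorization $A_n^{1/2}=u_n\rmvpopsi^{1/2p}$ and a supremum of continuous functionals, and (iii)$\Rightarrow$(i) via lower semi-continuity of the form $x\xi_{\vpo}\mapsto\|T_\varphi JxJ\xi_{\vpo}\|^2$ forcing closability. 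Two details to firm up when writing it out: the key identity $\|T_\varphi JxJ\xi_{\vpo}\|^2=\langle\rmvppsi^{1/p},\rmvpopsi^{1/2q}x^*x\rmvpopsi^{1/2q}\rangle$ must be obtained from the identification of $JxJ\Delta_{\varphi\vpo}^{1/2p}\xi_{\vpo}$ with an element of the $L^2$-space (not from boundedness of $(D\varphi:D\vpo)_{-i/2p}$, which is unavailable for general $\varphi$), and the passage from testing against $\rmvpopsi^{1/2q}x^*x\rmvpopsi^{1/2q}$ to all of $\ellq_+$ needs the density of $\rmvpopsi^{1/2q}\cM_+\rmvpopsi^{1/2q}$ in $\ellq_+$ (the paper's Lemma \ref{L-6.3}).
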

\begin{proof}
We will show (i)$\implies$(ii), (ii)$\implies$(iii), and (iii)$\implies$(i).
Firstly we assume (i).  From the definition $T_{\varphi}$ satisfies $u'T_{\varphi}u'^* =T_{\varphi}$
for an arbitrary unitary $u' \in \cM'$ so that the closure $\overline{T_{\varphi}}$ satisfies 
$u'\overline{T_{\varphi}}u'^* =\overline{T_{\varphi}}$, 
that is, $\overline{T_{\varphi}}$ is affiliated with $\cM$ (see also Remark \ref{R-6.9}).
Let
$$
T_{\varphi}^*\overline{T_{\varphi}}=\int_0^{\infty}\, \lambda\, de_{\lambda}
$$
be the spectral decomposition. We set
$$
h_n:=\int_0^n \, \lambda \,de_{\lambda} \in \cM_+ 
\quad \mbox{and} \quad 
A_n:=\rmvpopsi^{1/2p}h_n\rmvpopsi^{1/2p} \in \ellp_+.
$$
Clearly $\{A_n\}$ is an increasing sequence in $\ellp_+$ and $A_n \leq n\rmvpopsi^{1/p}$.
For each $x \in \cM$ we compute
\begin{eqnarray}\label{F-6.3} 
&&
\langle A_n, \rmvpopsi^{1/2q} x^*x \rmvpopsi^{1/2q} \rangle
=\mbox{tr}\left(\rmvpopsi^{1/2p}h_n\rmvpopsi^{1/2p}\rmvpopsi^{1/2q} x^*x \rmvpopsi^{1/2q}\right)
\nonumber\\
&&
\qquad
=\mbox{tr}\left( h_n\rmvpopsi^{1/2} x^*x \rmvpopsi^{1/2} \right)
=\mbox{tr}\left(\left(h_n\rmvpopsi^{1/2}\right) \left(\rmvpopsi^{1/2}x^*x\right)^* \right)
\nonumber\\
&&
\qquad
=(h_n\xi_{\varphi_0},Jx^*xJ\xi_{\varphi_0})
=(h_nJxJ\xi_{\varphi_0},JxJ\xi_{\varphi_0}).
\end{eqnarray}
Here we have used the fact that $\rmvpopsi^{1/2} \in L^2(\cM,\psi')_+$ corresponds 
to $\xi_{\varphi_0} \in {\mathcal P}$.
On the other hand, we have
\begin{equation}\label{F-6.4} 
\|\overline{T_{\varphi}}JxJ\xi_{\varphi_0}\|^2
=\|T_{\varphi}JxJ\xi_{\varphi_0}\|^2=\|JxJ\Delta_{\varphi\vpo}^{1/2p}\,\xi_{\varphi_0}\|^2.
\end{equation}
The vectors $\Delta_{\varphi\vpo}^{1/2p}\xi_{\varphi_0}$, $JxJ\Delta_{\varphi\vpo}^{1/2p}\xi_{\varphi_0}$
here correspond to $h_{\varphi}^{1/2p}h_{\vpo}^{1/2q}$,  $h_{\varphi}^{1/2p}h_{\vpo}^{1/2q}x^*$
respectively 
in the Haargerup $L^2$-space $L^2(\cM)$ (see \cite[\S 2]{Ko9} and \cite[\S 1]{Ko1})
and hence to $\rmvppsi^{1/2p} \rmvpopsi^{1/2q}$, $\rmvppsi^{1/2p} \rmvpopsi^{1/2q} x^*$
respectively in our $L^2(\cM, \psi')$.
Hence, the far right side of (\ref{F-6.4}) is equal to
\begin{eqnarray}\label{F-6.5} 
&&
\mbox{tr}\left( \left(\rmvppsi^{1/2p}\rmvpopsi^{1/2q}x^*\right)
\left(\rmvppsi^{1/2p}\rmvpopsi^{1/2q}x^*\right)^*  \right)
=\mbox{tr}\left( \rmvppsi^{1/2p}\rmvpopsi^{1/2q}x^*x\rmvpopsi^{1/2q}\rmvppsi^{1/2p}  \right)
\nonumber\\
&&
\qquad \qquad
=\mbox{tr}\left( \rmvppsi^{1/p}\rmvpopsi^{1/2q}x^*x\rmvpopsi^{1/2q}  \right)
=\langle  \rmvppsi^{1/p}, \rmvpopsi^{1/2q}x^*x\rmvpopsi^{1/2q}  \rangle
\end{eqnarray}
in our $L^p$-$L^q$-duality notation (see \eqref{F-B.8}). Thus, we have shown
\begin{equation}\label{F-6.6} 
\|\overline{T_{\varphi}}JxJ\xi_{\varphi_0}\|^2
=\langle  \rmvppsi^{1/p}, \rmvpopsi^{1/2q}x^*x\rmvpopsi^{1/2q}  \rangle.
\end{equation}
Therefore, from (\ref{F-6.3}) and (\ref{F-6.6}) we observe 
\begin{eqnarray*}
&&\langle A_n, \rmvpopsi^{1/2q} x^*x \rmvpopsi^{1/2q} \rangle  
\leq \langle  \rmvppsi^{1/p}, \rmvpopsi^{1/2q}x^*x\rmvpopsi^{1/2q} \rangle,
\nonumber\\
&&
\langle A_n, \rmvpopsi^{1/2q} x^*x \rmvpopsi^{1/2q} \rangle  
\nearrow \langle  \rmvppsi^{1/p}, \rmvpopsi^{1/2q}x^*x\rmvpopsi^{1/2q} \rangle
\quad (\mbox{as $n \to \infty$}).
\end{eqnarray*}
The set of all elements of the form $\rmvpopsi^{1/2q} x^*x \rmvpopsi^{1/2q}$ is a dense subset in $\ellq_+$
(as will be seen in Lemma \ref{L-6.3})
so that we have $A_n \leq \rmvppsi^{1/p}$ and
$\langle A_n, C\rangle  \nearrow\langle  \rmvppsi^{1/p}, C \rangle$ for each $C \in \ellq_+$, i.e.,
$A_n \nearrow \rmvppsi^{1/p}$. Thus, (ii) is shown.

We next assume (ii). 
The assumption $A_n \leq \ell_n\rmvpopsi^{1/p}$ guarantees $A_n^{1/2}=u_n\rmvpopsi^{1/2p}$ 
for some $u_n \in \cM$ with $\|u_n\| \leq \sqrt{\ell_n}$
so that we get 
$$
\langle A_n, \rmvpopsi^{1/2q} x^*x \rmvpopsi^{1/2q} \rangle
= \langle \rmvpopsi^{1/2p} u_n^*u_n\rmvpopsi^{1/2p},
\rmvpopsi^{1/2q} x^*x \rmvpopsi^{1/2q} \rangle
=\|u_nJxJ\xi_{\varphi_0} \|^2,
$$
where the second equality follows from the same computations as (\ref{F-6.3}).
Since $A_n \nearrow \rmvppsi^{1/p}$,  we have
$$
\langle \rmvppsi^{1/p}, \rmvpopsi^{1/2q} x^*x \rmvpopsi^{1/2q} \rangle
=
\sup_n \langle A_n, \rmvpopsi^{1/2q} x^*x \rmvpopsi^{1/2q} \rangle 
= \sup_n\|u_nJxJ\xi_{\varphi_0} \|^2.
$$
Being the supremum of continuous functions
$x\xi_{\varphi_0} \mapsto (u_n^*u_nJx\xi_{\varphi_0},Jx\xi_{\varphi_0})$
($n=1,2,\cdots$), the above quantity is lower semi-continuous and (iii) is shown.

Finally let us assume (iii). 
In the very first part of the proof  the equation \eqref{F-6.6} was shown from (\ref{F-6.4}) and (\ref{F-6.5}), 
but obviously (\ref{F-6.4}), (\ref{F-6.5}) always yield
$$
\|T_{\varphi}JxJ\xi_{\varphi_0}\|^2
=\langle  \rmvppsi^{1/p}, \rmvpopsi^{1/2q}x^*x\rmvpopsi^{1/2q}  \rangle
$$
(since the second equality in (\ref{F-6.4}) is just the definition of $T_{\varphi}$ in Definition \ref{D-6.1}).
This shows the quadratic form associated with $T_{\varphi}$ defined on
${\mathcal D}(T_{\varphi})=\cM'\xi_{\varphi_0}$ 
(i.e., the above left side) is lower semi-continuous from the assumption (iii),
and hence $T_{\varphi}$ is closable as desired. 
Indeed, let us assume that a sequence $\eta_n$ in ${\mathcal D}(T_{\varphi})$
 satisfies $\eta_n \to 0$ and $T_{\varphi}\eta_n \to \zeta$. 
Since $\{T_{\varphi}\eta_n\}$ is a Cauchy sequence,
for each $\varepsilon>0$ we have $N=N_{\varepsilon}$ 
such that $\|T_{\varphi}\eta_n-T_{\varphi}\eta_m\| \leq \varepsilon$
for $n,m \geq N$. We fix $n \geq N$ and let $m \to \infty$. 
Since $\eta_n-\eta_m \to \eta_n$, lower semi-continuity shows
$$
\|T_{\varphi}\eta_n\| \leq \liminf_{m \to \infty}\|T_{\varphi}(\eta_n-\eta_m)\| 
\leq \varepsilon \quad (\mbox{for $n \geq N$}).
$$
Thus, we have $\lim_{n \to \infty}\|T_{\varphi}\eta_n\|=0$, i.e., $\zeta=0$, and hence 
$\overline{\Gamma(T_{\varphi})}$ meets the ``$y$-axis" $0\oplus\cH$ in the trivial way.
\end{proof}

The next result (required in the above proof) is certainly known to specialists. 
However, we present a proof for the reader's convenience.
\begin{lemma}\label{L-6.3}
The subset $\rmvpopsi^{1/2q}\cM_+\rmvpopsi^{1/2q}  \, \left( \subseteq \ellq_+ \right)$
is dense in $\ellq_+$.
\end{lemma}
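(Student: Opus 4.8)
The goal is to show that $\rmvpopsi^{1/2q}\cM_+\rmvpopsi^{1/2q}$ is dense in $\ellq_+$, where $1\le q<\infty$. The plan is to exploit the identification of $\ellq$ with Haagerup's $L^q(\cM)$ and, via that identification, reduce the problem to a standard density statement about $\tau$-measurable operators affiliated with the crossed product $\cR$. Concretely, under the isometric isomorphism $\ellq\cong L^q(\cM)$ the element $\rmvpopsi^{1/2q}$ corresponds to $h_{\vpo}^{1/2q}$ (see \eqref{F-B.7}), and $\rmvpopsi^{1/2q}x\rmvpopsi^{1/2q}$ corresponds to $h_{\vpo}^{1/2q}xh_{\vpo}^{1/2q}$ for $x\in\cM_+$. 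Since $\vpo$ is faithful, $h_{\vpo}$ is a nonsingular element of $\mathbf H$, so $h_{\vpo}^{1/2q}$ has dense range (equivalently, support projection $1$). Thus the task becomes: show that $\{h^{1/2q}xh^{1/2q};\,x\in\cM_+\}$ is dense in $L^q(\cM)_+$ whenever $h$ is a nonsingular positive operator with $h^{1/q}\in L^q(\cM)$.

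First I would establish the following approximation device. Given $a\in L^q(\cM)_+$, take its spectral decomposition $a=\int_0^\infty\lambda\,de_\lambda$ and the approximants $a_n:=\int_{(1/n,n]}\lambda\,de_\lambda$; then $a_n\nearrow a$ with $a_n\le n\,e_{(1/n,n]}$ bounded and $1/n\,e_{(1/n,n]}\le a_n$, and $\|a_n-a\|_q\to0$. So it suffices to approximate each such $a_n$, i.e., one may assume $\eps\cdot s(a)\le a\le\lambda\cdot s(a)$ for some $\eps,\lambda>0$ and $\tau$-finite support $s(a)$ — actually, working with bounded $a$ with bounded inverse on its support is the key reduction. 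For such $a$, I would next approximate the nonsingular $h$ from below: letting $h_m:=\int_{(1/m,m]}\mu\,df_\mu$ with $h=\int_0^\infty\mu\,df_\mu$, one has $f_m:=f_{(1/m,m]}\nearrow 1$ strongly (nonsingularity and $h$ affiliated), $h_m$ bounded and invertible in $f_m\cR f_m$. Then $h_m^{-1/2q}a h_m^{-1/2q}$ is a well-defined positive element of $f_m\cR f_m$; since $a$ is $\tau$-measurable and $h_m^{-1/2q}$ is bounded, this lies in $\overline\cR_+$, but one must check it is actually in $\cM_+$ and not merely in $\overline\cR_+$ — this is where the $\theta$-scaling comes in: $\theta_s(h_m^{-1/2q})=e^{s/2q}h_m^{-1/2q}$ while $\theta_s(a)=e^{-s/q}a$, so $\theta_s(h_m^{-1/2q}ah_m^{-1/2q})=h_m^{-1/2q}ah_m^{-1/2q}$, forcing it into $\cM$. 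Finally I would set $x_m:=h_m^{-1/2q}\,f_m a f_m\,h_m^{-1/2q}\in\cM_+$ and show $h^{1/2q}x_m h^{1/2q}=h^{1/2q}h_m^{-1/2q}f_m a f_m h_m^{-1/2q}h^{1/2q}\to a$ in $\|\cdot\|_q$, using that $h^{1/2q}h_m^{-1/2q}f_m=f_m\to 1$ strongly (indeed $h^{1/2q}h_m^{-1/2q}=f_m$ since $h$ and $h_m$ agree on $f_m\cR f_m$ and $h_m^{-1/2q}$ kills $f_m^\perp$) together with the fact that multiplication is jointly continuous on bounded sets in the measure topology, hence (by \cite[Lemma B]{Ko8}) norm-convergent in $L^q(\cM)$ because the product stays in the $\|\cdot\|_q$-ball of radius $\|a\|_q$.

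The step I expect to be the main obstacle is the last convergence: one needs $f_m a f_m\to a$ in $\|\cdot\|_q$, which follows from $f_m\nearrow1$ and Theorem \ref{T-3.8} applied to the decreasing sequence $a-f_maf_m$ (dominated by $a\in L^q$), and simultaneously one needs the outer conjugation by $h^{1/2q}h_m^{-1/2q}=f_m$ to not destroy this — but since $h^{1/2q}x_mh^{1/2q}=f_m a f_m$ exactly (by the identity $h^{1/2q}h_m^{-1/2q}f_m=f_m$), the two issues collapse into one and $h^{1/2q}x_mh^{1/2q}=f_maf_m\to a$ is immediate. So the only genuine care needed is the justification that $h^{1/2q}h_m^{-1/2q}=f_m$, which I would verify on the dense range of $h_m^{1/2q}$ via the functional calculus on $f_m\cR f_m$, and the verification that $x_m\in\cM$ (not just $\overline\cR$) via the $\theta_s$-invariance computation above. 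Everything else is routine bookkeeping with the spectral calculus and the $L^q$-norm estimates from \cite{FK}.
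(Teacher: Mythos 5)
Your strategy fails at its foundation: spectral truncation does not preserve membership in Haagerup's $L^q(\cM)$ when $\cM$ is not semi-finite, and Lemma 6.3 is needed for a general von Neumann algebra. An element $a\in L^q(\cM)_+$ is characterized inside $\overline\cR_+$ by the homogeneity $\theta_s(a)=e^{-s/q}a$, and applying the automorphism $\theta_s$ to the spectral resolution gives $\theta_s\bigl(e_{(\lambda,\infty)}(a)\bigr)=e_{(e^{s/q}\lambda,\infty)}(a)$. Since $\mu_t(a)=t^{-1/q}\|a\|_q$ forces $\tau\bigl(e_{(\lambda,\infty)}(a)\bigr)=(\|a\|_q/\lambda)^q$ to be strictly decreasing in $\lambda$, the spectral projections of a nonzero $a$ are never $\theta$-invariant. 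Hence your $a_n=\int_{(1/n,n]}\lambda\,de_\lambda$ does not lie in $L^q(\cM)$ and $\|a_n-a\|_q$ is undefined; the reduction to ``bounded $a$ with bounded inverse on its support'' is vacuous. The same defect destroys the second half: $f_m=e_{(1/m,m]}(h)$ is not in $\cM$, and the identity your membership argument hinges on, $\theta_s(h_m^{-1/2q})=e^{s/2q}h_m^{-1/2q}$, is false --- one has instead $\theta_s(h_m)=e^{-s}\int_{(e^s/m,\,e^sm]}\mu\,df_\mu$, i.e.\ truncation does not commute with the scaling, so $x_m$ is not $\theta$-invariant and does not belong to $\cM_+$. (A further, smaller error: $f_m$ is a spectral projection of $h$, not of $a$, so $f_maf_m\le a$ fails in general --- already for $2\times2$ matrices --- and $a-f_maf_m$ is not a decreasing sequence of positive operators, so Theorem 3.8 does not apply.) What you have written is essentially a proof for the classical semi-finite $L^q(\cM,\tau_0)$, where truncations do stay in the space; it does not survive the passage to type III.

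The missing idea is to replace spectral cutting by \emph{domination of functionals}. The paper first notes that the vector functionals $\omega_{x'\xi_{\varphi_0}}$ with $x'\in\cM'$ satisfy $\omega_{x'\xi_{\varphi_0}}\le\|x'\|^2\vpo$ and are norm-dense in $\cM_*^+$ (by density of $\cM'\xi_{\varphi_0}$ and the estimate $\|\omega_{\xi_1}-\omega_{\xi_2}\|\le\|\xi_1+\xi_2\|\,\|\xi_1-\xi_2\|$), so the set of $\omega\in\cM_*^+$ with $\omega\le\ell\vpo$ for some $\ell>0$ is dense; by the generalized Powers-S\o rmer inequality the map $\omega\mapsto\Delta_{\omega\psi}^{1/q}$ is continuous onto $\ellq_+$, so the corresponding $\Delta_{\omega\psi}^{1/q}$ are dense. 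For such a dominated $\omega$ the Connes cocycle $(D\omega:D\vpo)_t$ extends analytically to the strip $-1/2\le\Im z\le0$, and with $x:=(D\omega:D\vpo)_{-i/2q}$ one gets the exact identity $\Delta_{\omega\psi}^{1/q}=\rmvpopsi^{1/2q}x^*x\,\rmvpopsi^{1/2q}$. Domination $\omega\le\ell\vpo$, not spectral truncation, is the condition that puts an element of $\ellq_+$ into $\rmvpopsi^{1/2q}\cM_+\rmvpopsi^{1/2q}$.
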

\begin{proof}
We may and do assume $1 \leq q  < \infty$.
We note that 
$$
{\mathcal A}:=\{\omega \in \cM_*^+;\, \mbox{$\omega \leq \ell\, \vpo$ for some $\ell>0$} \}
$$
is dense in  $\cM_*^+$.  In fact, $\omega_{x'\xi_{\varphi_0}} =(\cdot\,x'\xi_{\varphi_0},x'\xi_{\varphi_0})$
with $x' \in \cM'$ belongs to ${\mathcal A}$ due to $\omega_{x'\xi_{\varphi_0}} \leq \|x'\|^2\vpo$.
Thus, the density follows from the following two facts:
\begin{itemize}
\item[(i)] 
$\cM' \xi_{\varphi_0}$ is dense in $\cH$,
\item[(ii)] 
$\|\omega_{\xi_1}-\omega_{\xi_2}\| \leq \|\xi_1+\xi_2\|\cdot\|\xi_1-\xi_2\|$ for $\xi_i \in \cH$.
\end{itemize}

Then, we observe that the set of all
$\Delta_{\omega\psi}^{1/q}, \, \omega \in {\mathcal A}$, is dense in $\ellq_+$ 
(since $\omega \in \cM_*^+ \mapsto \Delta_{\omega\psi}^{1/q} \in \ellq_+$ is a continuous surjection
due to the generalized Powers-S\o rmer inequality \cite[Appendix]{HN} for instance). 
For $\omega \in {\mathcal A}$ the Radon-Nikodym cocycle 
$(D\omega:D\vpo)_t=\Delta_{\omega\psi}^{it}\rmvpopsi^{-it}$ (see \eqref{F-6.2}) extends
to a bounded continuous function on the strip $-1/2 \leq \Im z \leq 0$ which is analytic in the interior.
Therefore, with $x=(D\omega:D\vpo)_{-i/2q}$ 
we have 
$$
\Delta_{\omega \psi}^{1/q}=\rmvpopsi^{1/2q}x^*x\rmvpopsi^{1/2q} \in \rmvpopsi^{1/2q}\cM_+\rmvpopsi^{1/2q}
$$
thanks to $\Delta_{\omega\psi}^{1/2q}=x\Delta_{\ffi_0\psi}^{1/2q}$, and we are done.
\end{proof}

\subsection{Graph analysis}\label{S-6.2}

To investigate closability of $T_{\varphi}$ (mentioned in Proposition \ref{P-6.2}), 
it is natural to see the graph of this operator. It requires some manipulations of
relevant relative modular operators and Radon-Nikodym cocycles. 

Let $\sigma_t \, (=\sigma_t^{\vpo})$ be the modular automorphism group associated with $\vpo$, 
and we set
$$
\cM_0:=\{x \in \cM;\, \mbox{$t \in \bR \mapsto \sigma_t(x) \in \cM$ extends to an entire function}\}.
$$
The modular operator associated with $\vpo'=\vpo\circ j \in \cM'^+_*$ 
(with $j(x')=Jx'^*J$ for $x' \in \cM'$)  is $J\Delta J=\Delta^{-1}$. Therefore, $J\cM_0 J$ consists of
all elements $x' \in \cM'$ for which $\sigma'_t(x') \ \left(=\sigma^{\vpo'}_t(x')\right)$ extend to
entire functions.
We note $J\cM_0 J\xi_{\varphi_0}=\cM_0\xi_{\varphi_0}$.

Let us begin with the next general result on a core for relative modular operators.  
\begin{lemma}\label{L-6.4}
For each $\omega \in \cM_*^+$ $($and a fixed faithful functional $\varphi_0 \in\cM_*^+ $$)$
the subspace $\cM_0 \xi_{\varphi_0}$ is a core for $\Delta_{\omega \vpo}^{\alpha}$ with $\alpha \in [0,1/2]$.
\end{lemma}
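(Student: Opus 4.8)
\textbf{Proof plan for Lemma \ref{L-6.4}.}

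The plan is to exploit the standard machinery around relative modular operators together with a smearing (Gaussian mollifier) argument. First I would recall the known description of $\Delta_{\omega\vpo}$ in terms of the GNS data: for $x\in\cM_0$ the vector $x\xi_{\varphi_0}$ lies in $\cD(\Delta_{\omega\vpo}^\alpha)$ for every $\alpha\in[0,1/2]$, because $\Delta_{\omega\vpo}^{it}x\xi_{\varphi_0}=\sigma_t^{\omega,\vpo}(x)\Delta_{\omega\vpo}^{it}\xi_{\varphi_0}$ and the one-parameter family $t\mapsto\sigma_t^{\omega,\vpo}(x)$ (the relative modular action, coinciding with $\sigma_t^{\vpo}$ composed with the cocycle $(D\omega:D\vpo)_t$) extends analytically thanks to $x\in\cM_0$; moreover $\xi_{\varphi_0}\in\cD(\Delta_{\omega\vpo}^\alpha)$ for $\alpha\in[0,1/2]$ since $\Delta_{\omega\vpo}^{1/2}\xi_{\varphi_0}$ is (up to $J$) the implementing vector machinery for $\omega$ relative to $\vpo$. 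So $\cM_0\xi_{\varphi_0}\subseteq\cD(\Delta_{\omega\vpo}^\alpha)$, and the real content is the core property.

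Next I would run the Gaussian smearing argument. Given an arbitrary $\eta\in\cD(\Delta_{\omega\vpo}^\alpha)$, I want to approximate it in the graph norm of $\Delta_{\omega\vpo}^\alpha$ by vectors in $\cM_0\xi_{\varphi_0}$. The strategy is two-fold. First, using that $\cM\xi_{\varphi_0}$ (equivalently $\cM'\xi_{\varphi_0}$, and indeed any core coming from the full left algebra) is dense, and using the analytic-vector theory of Tomita--Takesaki, one shows that vectors of the form $\Delta_{\omega\vpo}^{-\beta}x\xi_{\varphi_0}$ or, better, the spectral truncations $e_{[1/n,n]}(\Delta_{\omega\vpo})\eta$ form a core; each such truncation is an entire analytic vector for $\Delta_{\omega\vpo}$. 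Second, for a bounded analytic vector $\zeta$ for $\Delta_{\omega\vpo}$ I apply the mollifier
$$
\zeta_n:=\sqrt{\frac{n}{\pi}}\int_{\bR}e^{-n s^2}\,\sigma_s^{\vpo}\!\big(x_\zeta\big)\,\xi_{\varphi_0}\,ds
$$
where $x_\zeta\in\cM$ is chosen (via density of $\cM\xi_{\varphi_0}$) so that $x_\zeta\xi_{\varphi_0}$ is close to $\zeta$; here the point is that $\sqrt{n/\pi}\int e^{-ns^2}\sigma_s^{\vpo}(x_\zeta)\,ds\in\cM_0$ by the classical entire-smearing lemma, so $\zeta_n\in\cM_0\xi_{\varphi_0}$. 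One then checks $\zeta_n\to\zeta$ and $\Delta_{\omega\vpo}^\alpha\zeta_n\to\Delta_{\omega\vpo}^\alpha\zeta$; the second convergence is where one uses that $\Delta_{\omega\vpo}^\alpha\sigma_s^{\vpo}(x_\zeta)\xi_{\varphi_0}=(D\omega:D\vpo)_{-i\alpha}\,\sigma_{s}^{\vpo}(x_\zeta)\,\Delta_{\vpo}^\alpha\xi_{\varphi_0}$-type identities (valid on analytic vectors) to move $\Delta_{\omega\vpo}^\alpha$ through the integral, together with dominated convergence in $s$ controlled by the Gaussian weight and the boundedness of the cocycle on the strip $0\le\Im z\le\alpha\le 1/2$.

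I expect the main obstacle to be the bookkeeping in the second convergence: one must justify interchanging $\Delta_{\omega\vpo}^\alpha$ (an unbounded operator) with the Bochner integral defining $\zeta_n$, and control the resulting integrand uniformly. The clean way is to first reduce to $\zeta$ in the range of a spectral projection $e_{[1/m,m]}(\Delta_{\omega\vpo})$, on which $\Delta_{\omega\vpo}^\alpha$ is bounded, prove the statement there (so the interchange is trivial and convergence is in norm), and then remove the cutoff by a standard $\epsilon/3$ argument using $e_{[1/m,m]}(\Delta_{\omega\vpo})\eta\to\eta$ in the graph norm of $\Delta_{\omega\vpo}^\alpha$. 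A secondary technical point is ensuring $x_\zeta$ can be taken in $\cM$ with $x_\zeta\xi_{\varphi_0}$ approximating a vector in the range of a spectral projection of $\Delta_{\omega\vpo}$; this follows since $\xi_{\varphi_0}$ is cyclic for $\cM$. Putting these together yields that $\cM_0\xi_{\varphi_0}$ is dense in $\cD(\Delta_{\omega\vpo}^\alpha)$ for the graph norm, i.e., a core, for every $\alpha\in[0,1/2]$.
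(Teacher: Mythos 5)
Your proposal has a genuine gap at the approximation step. After reducing to $\zeta=e_{[1/m,m]}(\Delta_{\omega\vpo})\eta$, you choose $x_\zeta\in\cM$ with $x_\zeta\xi_{\varphi_0}$ close to $\zeta$ using only cyclicity of $\xi_{\varphi_0}$, i.e., closeness in the norm of $\cH$. But $x_\zeta\xi_{\varphi_0}$ does not lie in the range of $e_{[1/m,m]}(\Delta_{\omega\vpo})$, so the boundedness of $\Delta_{\omega\vpo}^\alpha$ on that spectral subspace gives no control whatsoever of $\|\Delta_{\omega\vpo}^\alpha(x_\zeta\xi_{\varphi_0}-\zeta)\|$; the $\eps/3$ argument collapses precisely where graph-norm approximation is needed, which is the entire content of the lemma. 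A second problem is the identity you invoke to move $\Delta_{\omega\vpo}^\alpha$ through the Bochner integral: the analytic extension $(D\omega:D\vpo)_{-i\alpha}$ exists as a bounded operator only under a domination hypothesis such as $\omega\le\ell\,\vpo$ (here $\omega\in\cM_*^+$ is arbitrary), and $\sigma_s^{\vpo}(x_\zeta)$ is not $\sigma^{\vpo}$-analytic for a general $x_\zeta\in\cM$, so neither factor of the claimed intertwining relation is available.

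The paper's proof avoids all of this with two elementary observations. First, for $\alpha\in[0,1/2]$ the graph norm of $\Delta_{\omega\vpo}^\alpha$ is dominated on $\cD(\Delta_{\omega\vpo}^{1/2})$ by three times the graph norm of $\Delta_{\omega\vpo}^{1/2}$ (split the spectral integral at $\lambda=1$ and use $1+\lambda^{2\alpha}\le 3(1+\lambda)$); since $\cD(\Delta_{\omega\vpo}^{1/2})$ is a core for $\Delta_{\omega\vpo}^\alpha$, any core for $\Delta_{\omega\vpo}^{1/2}$ is automatically a core for $\Delta_{\omega\vpo}^\alpha$. Second, $\cM_0\xi_{\varphi_0}$ is a core for $\Delta_{\omega\vpo}^{1/2}$ because $\cM\xi_{\varphi_0}$ is one by the very definition of $S_{\omega\varphi_0}=J\Delta_{\omega\vpo}^{1/2}$, and by Kaplansky (strong$^*$) density one can choose $y\in\cM_0$ with both $\|(x-y)\xi_{\varphi_0}\|$ and $\|(x^*-y^*)\xi_{\omega}\|=\|\Delta_{\omega\vpo}^{1/2}(x-y)\xi_{\varphi_0}\|$ small. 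Your Gaussian mollifier is essentially the device behind that strong$^*$ approximation, but routing everything through the single exponent $\alpha=1/2$, where $\Delta_{\omega\vpo}^{1/2}(x-y)\xi_{\varphi_0}$ has the closed form $J(x^*-y^*)\xi_{\omega}$, is what makes the unbounded-operator bookkeeping disappear. If you insist on your structure, you must upgrade the approximation of $\zeta$ to a graph-norm approximation, and at that point you are forced back to the $\alpha=1/2$ reduction anyway.
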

\begin{proof}
Obviously ${\mathcal D}(\Delta_{\omega \vpo}^{1/2}) \ \left(\subseteq  {\mathcal D}(\Delta_{\omega \vpo}^{\alpha}) \right)$
is a core for $\Delta_{\omega \vpo}^{\alpha}$, that is,  
${\mathcal D}(\Delta_{\omega \vpo}^{1/2})$ is dense in ${\mathcal D}(\Delta_{\omega \vpo}^{\alpha})$
with respect to the graph norm of  $\Delta_{\omega \vpo}^{\alpha}$. 
Thus, it suffices to show that $\cM_0 \xi_{\varphi_0}$
is dense in ${\mathcal D}(\Delta_{\omega \vpo}^{1/2})$ 
with respect to the graph norm of  $\Delta_{\omega \vpo}^{\alpha}$,
which follows from the following two facts:
\begin{itemize}
\item[(i)] 
$\cM_0 \xi_{\varphi_0}$ is a core for $\Delta_{\omega \vpo}^{1/2}$;
\item[(ii)]
the graph norm of $\Delta_{\omega \vpo}^{\alpha}$ is majorized by
a scalar multiple of that of $\Delta_{\omega \vpo}^{1/2}$ on ${\mathcal D}(\Delta_{\omega \vpo}^{1/2})$.
\end{itemize}
From the definition $\cM \xi_{\varphi_0}$ is a core for $\Delta_{\omega \vpo}^{1/2}$. 
We note that $\cM_0$ is a dense *-subalgebra of $\cM$ in the $\sigma$-weak topology
and hence in the strong* topology (thanks to the Kaplansky density theorem), that is,
for $x \in \cM$ we can take $y \in \cM_0$ with arbitrarily small
$\|(x-y)\xi_{\varphi_0}\|$ and $\|(x^*-y^*)\xi_{\omega}\|
\ \left(=\|\Delta_{\omega \vpo}^{1/2}(x-y)\xi_{\varphi_0}\|\right)$.  
Therefore, (i) holds true. 
On the other hand, (ii) is a consequence
of the following obvious estimate for $\eta \in{\mathcal D}(\Delta_{\omega \vpo}^{1/2})$:
\begin{align*}
\|\eta\|^2+\|\Delta_{\omega \vpo}^{\alpha}\eta\|^2
&=\int_0^{\infty}\,(1+\lambda^{2\alpha})\,d\|e_{\lambda}\eta\|^2\\
&=\int_0^{1}\,(1+\lambda^{2\alpha})\,d\|e_{\lambda}\eta\|^2+
\int_1^{\infty}\,(1+\lambda^{2\alpha})\,d\|e_{\lambda}\eta\|^2\\
&\leq\int_0^{1}\,2\,d\|e_{\lambda}\eta\|^2+\int_1^{\infty}\,(1+\lambda)\,d\|e_{\lambda}\eta\|^2\\
&\leq 3\int_0^{\infty}\,(1+\lambda)\,d\|e_{\lambda}\eta\|^2
=3\left(\|\eta\|^2+\|\Delta_{\omega \vpo}^{1/2}\eta\|^2\right),
\end{align*}
where $\Delta_{\omega \vpo}=\int_0^{\infty}\,\lambda\,de_{\lambda}$ is the spectral decomposition.
\end{proof}

We note 
$\Delta_{\varphi\vpo}^{1/p}+\Delta_{\varphi_0}^{1/p} \in L^p(\cM,\vpo')_+$
and it must be of the form
\begin{equation}\label{F-6.7} 
\Delta_{\varphi\vpo}^{1/p}+\Delta_{\varphi_0}^{1/p} = \Delta_{\chi\vpo}^{1/p}
\end{equation}
with some $\chi \in \cM_*^+$. We observe that $\Delta_{\chi\vpo}^{1/p}$ is non-singular 
(since so is  $\Delta_{\varphi_0}^{1/p}$), showing that $\chi$ is faithful.
When $p=1$, we simply have $\chi=\varphi+\varphi_0$ (the usual sum as functionals), 
which is exactly the situation dealt with in \cite{Ko3}.
When $p=2$, $\chi$ is the functional coming from the vector sum
$\xi_{\varphi}+\xi_{\varphi_0} \in {\mathcal P}$, i.e., 
$$
\chi(x)=\left(x\,(\xi_{\varphi}+\xi_{\varphi_0}), (\xi_{\varphi}+\xi_{\varphi_0})\right).
$$
We now go back to a general $p$, and for convenience we set
\begin{equation}\label{F-6.8} 
\zeta_{\chi}:=\Delta_{\chi \vpo}^{1/2p}\,\xi_{\varphi_0}.
\end{equation}
Majorization $\Delta_{\varphi_0}^{1/p}, \, \Delta_{\varphi\vpo}^{1/p} \leq \Delta_{\chi\vpo}^{1/p}$
guarantees that 
both of the Radon-Nikodym cocycles $(D\vpo:D\chi)_t$ and $(D\varphi:D\chi)_t$
admit bounded continuous extensions on the strip $-1/2p \leq \Im z \leq 0$ 
which are analytic in the interior.

\begin{definition}\label{D-6.5}\rm
With $\chi \in \cM_*^+$ determined by \eqref{F-6.7} we set
$$
a=(D\vpo:D\chi)_{-i/2p}
\quad \mbox{and} \quad
b=(D\varphi:D\chi)_{-i/2p},
$$
which are contractions in $\cM$.
\end{definition}

\begin{lemma}\label{L-6.6}
We have $\xi_{\varphi_0}=a\zeta_{\chi}$ and $\Delta_{\varphi \vpo}^{1/2p}\,\xi_{\varphi_0}=b\zeta_{\chi}$.
\end{lemma}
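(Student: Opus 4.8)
\textbf{Proof plan for Lemma \ref{L-6.6}.}

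The plan is to unwind the definitions of $a$, $b$ and $\zeta_\chi$ using the cocycle identities in \eqref{F-6.2} and \eqref{F-6.7}, and then invoke the analyticity of the relevant Radon–Nikodym cocycles to evaluate them at the point $z=-i/2p$. First I would recall that by \eqref{F-6.2} the Connes cocycle $(D\vpo:D\chi)_t$ equals $\Delta_{\vpo\vpo}^{it}\Delta_{\chi\vpo}^{-it}=\Delta_{\vpo}^{it}\Delta_{\chi\vpo}^{-it}$ (working with $\vpo$ as the reference functional and $\vpo'$ on the commutant), and similarly $(D\varphi:D\chi)_t=\Delta_{\varphi\vpo}^{it}\Delta_{\chi\vpo}^{-it}$. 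The majorizations $\Delta_{\vpo_0}^{1/p}\le\Delta_{\chi\vpo}^{1/p}$ and $\Delta_{\varphi\vpo}^{1/p}\le\Delta_{\chi\vpo}^{1/p}$ — which are immediate from \eqref{F-6.7} since all three operators are positive in $L^p(\cM,\vpo')_+$ — guarantee (as already noted right before Definition \ref{D-6.5}) that $t\mapsto(D\vpo:D\chi)_t$ and $t\mapsto(D\varphi:D\chi)_t$ extend to bounded, $\sigma$-weakly continuous $\cM$-valued functions on the strip $-1/2p\le\Im z\le0$, analytic in the interior, so that $a$ and $b$ are well-defined contractions in $\cM$.

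Next I would evaluate the analytic continuations on the vector $\zeta_\chi=\Delta_{\chi\vpo}^{1/2p}\,\xi_{\vpo_0}$. The key point is the standard fact that for a cocycle of the form $\Delta_1^{it}\Delta_2^{-it}$ with $\Delta_1\le\lambda\Delta_2$, the analytic continuation to $z=-i/2p$ applied to a vector in the range of $\Delta_2^{1/2p}$ satisfies $(D_1:D_2)_{-i/2p}\,\Delta_2^{1/2p}\eta=\Delta_1^{1/2p}\eta$; this is the content of the bounded-operator identity $\Delta_1^{1/2p}=(D_1:D_2)_{-i/2p}\,\Delta_2^{1/2p}$ on $\cD(\Delta_2^{1/2p})$. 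Applying this with $\Delta_1=\Delta_{\vpo_0}$, $\Delta_2=\Delta_{\chi\vpo}$ and $\eta=\xi_{\vpo_0}$ gives
$$
a\,\zeta_\chi=(D\vpo:D\chi)_{-i/2p}\,\Delta_{\chi\vpo}^{1/2p}\,\xi_{\vpo_0}
=\Delta_{\vpo_0}^{1/2p}\,\xi_{\vpo_0}=\xi_{\vpo_0},
$$
the last equality because $\Delta_{\vpo_0}\xi_{\vpo_0}=\xi_{\vpo_0}$ (so any bounded Borel function of $\Delta_{\vpo_0}$ fixes $\xi_{\vpo_0}$, in particular $\Delta_{\vpo_0}^{1/2p}$). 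Likewise, applying it with $\Delta_1=\Delta_{\varphi\vpo}$, $\Delta_2=\Delta_{\chi\vpo}$ gives
$$
b\,\zeta_\chi=(D\varphi:D\chi)_{-i/2p}\,\Delta_{\chi\vpo}^{1/2p}\,\xi_{\vpo_0}
=\Delta_{\varphi\vpo}^{1/2p}\,\xi_{\vpo_0},
$$
which is the second claimed identity.

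To make the middle step rigorous I would justify the evaluation $a\zeta_\chi=\Delta_{\vpo_0}^{1/2p}\xi_{\vpo_0}$ by a standard analytic-vector argument: consider the $\cH$-valued function $F(z)=\Delta_{\vpo_0}^{-iz}\Delta_{\chi\vpo}^{iz}\,\Delta_{\chi\vpo}^{1/2p}\xi_{\vpo_0}$, which by the majorization $\Delta_{\vpo_0}\le\Delta_{\chi\vpo}$ is bounded and continuous on $-1/2p\le\Im z\le0$ and analytic inside, with $F(t)=(D\vpo:D\chi)_t\,\zeta_\chi$ on $\bR$; by uniqueness of analytic continuation $F(-i/2p)=a\zeta_\chi$, and on the other hand $F(-i/2p)=\Delta_{\vpo_0}^{1/2p}\Delta_{\chi\vpo}^{-1/2p}\Delta_{\chi\vpo}^{1/2p}\xi_{\vpo_0}=\Delta_{\vpo_0}^{1/2p}\xi_{\vpo_0}$; the same template handles $b\zeta_\chi$. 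I expect this justification — keeping track of domains and the fact that $\Delta_{\chi\vpo}^{-1/2p}\Delta_{\chi\vpo}^{1/2p}$ acts as the identity on the relevant dense subspace — to be the only genuinely delicate point; everything else is bookkeeping with \eqref{F-6.2}, \eqref{F-6.7} and \eqref{F-6.8}. (Alternatively, one can cite the analogous computation in \cite{Ko3} for $p=1$ and the general analytic-continuation machinery already invoked for $T_\varphi$ in the paragraph after \eqref{F-6.2}.)
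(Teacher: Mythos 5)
Your proposal is correct and follows essentially the same route as the paper: the paper likewise notes the real-parameter identities $(D\vpo:D\chi)_t\Delta_{\chi\vpo}^{it}\xi_{\varphi_0}=\Delta_{\varphi_0}^{it}\xi_{\varphi_0}=\xi_{\varphi_0}$ and $(D\varphi:D\chi)_t\Delta_{\chi\vpo}^{it}\xi_{\varphi_0}=\Delta_{\varphi\vpo}^{it}\xi_{\varphi_0}$ and then "substitutes $t=-i/2p$," which is exactly the analytic continuation you carry out (and justify in somewhat more detail via the majorizations from \eqref{F-6.7}).
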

\begin{proof}
We note
\begin{eqnarray*}
&&
(D\vpo:D\chi)_t\Delta_{\chi \vpo}^{it}\xi_{\varphi_0}
=\Delta_{\varphi_0}^{it}\Delta_{\chi \vpo}^{-it}\Delta_{\chi \vpo}^{it} \xi_{\varphi_0}=\Delta_{\varphi_0}^{it}\xi_{\varphi_0}=\xi_{\varphi_0},\\
&&
(D\varphi:D\chi)_t\Delta_{\chi \vpo}^{it}\xi_{\varphi_0}
=\Delta_{\varphi \vpo}^{it}\Delta_{\chi \vpo}^{-it}\Delta_{\chi \vpo}^{it}\xi_{\varphi_0}
=\Delta_{\varphi \vpo}^{it}\xi_{\varphi_0}.
\end{eqnarray*}
We can substitute $t=-i/2p$ here, which gives us the desired conclusion.
\end{proof}

\begin{theorem}\label{T-6.7}
The closure of the graph $\Gamma(T_{\varphi}) \ \left(\subseteq \cH \oplus \cH \right)$ 
of the operator $T_{\varphi}$ $($see Definition \ref{D-6.1}$)$ is given by
$$
\overline{\Gamma(T_{\varphi})}=\{(a\xi,b\xi) \in \cH \oplus \cH ;\, \xi \in \cH\}
$$
$($with the contractions $a, b$ in Definition \ref{D-6.5}$)$.
\end{theorem}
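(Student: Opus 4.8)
The plan is to show the two graphs $\overline{\Gamma(T_\varphi)}$ and $G:=\{(a\xi,b\xi):\xi\in\cH\}$ coincide by proving $\Gamma(T_\varphi)\subseteq G$ (with $G$ closed) and then the reverse inclusion $G\subseteq\overline{\Gamma(T_\varphi)}$. First I would observe that $G$ is a closed subspace of $\cH\oplus\cH$: since $a,b$ are contractions (Definition \ref{D-6.5}), if $(a\xi_n,b\xi_n)\to(\eta,\eta')$ then along a subsequence $\xi_n$ converges weakly to some $\xi$, whence $\eta=a\xi$, $\eta'=b\xi$ (one must be slightly careful here — the cleaner route is to note $G$ is the range of the bounded operator $\xi\mapsto(a\xi,b\xi)$ restricted to $(\ker a\cap\ker b)^\perp$, but in fact the natural argument is that $G$ is closed because it equals $\overline{\Gamma(T_\varphi)}$ once the inclusions are established; so I would instead prove $\Gamma(T_\varphi)\subseteq G$ first, then show $G$ is contained in the closure, and finally invoke that any operator's closure graph is the smallest closed subspace containing $\Gamma(T_\varphi)$ that is itself a graph — but since we will show $G$ \emph{is} a graph via closability from Proposition \ref{P-6.2}, this is circular, so the right order is: (1) $\Gamma(T_\varphi)\subseteq G$; (2) $G$ closed; (3) $G\subseteq\overline{\Gamma(T_\varphi)}$).

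For step (1): take a typical element $JxJ\xi_{\varphi_0}\in\cD(T_\varphi)$ with $x\in\cM$. By Lemma \ref{L-6.6} we have $\xi_{\varphi_0}=a\zeta_\chi$ and $\Delta_{\varphi\vpo}^{1/2p}\xi_{\varphi_0}=b\zeta_\chi$, where $\zeta_\chi=\Delta_{\chi\vpo}^{1/2p}\xi_{\varphi_0}$. Since $a,b\in\cM$ and $JxJ\in\cM'$ commutes with both,
$$
JxJ\xi_{\varphi_0}=JxJ\,a\zeta_\chi=a\,(JxJ\zeta_\chi),\qquad
T_\varphi(JxJ\xi_{\varphi_0})=JxJ\Delta_{\varphi\vpo}^{1/2p}\xi_{\varphi_0}=JxJ\,b\zeta_\chi=b\,(JxJ\zeta_\chi),
$$
so $(JxJ\xi_{\varphi_0},T_\varphi(JxJ\xi_{\varphi_0}))=(a\eta,b\eta)$ with $\eta=JxJ\zeta_\chi$. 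Hence $\Gamma(T_\varphi)\subseteq G$. For step (2), $G$ is closed: it is the image of $\cH$ under the bounded injective-modulo-kernel map $V:\xi\mapsto(a\xi,b\xi)$, and closedness follows because $a^*a+b^*b$ is bounded below on $(\ker a\cap\ker b)^\perp=\overline{\mathrm{ran}}(a^*)+\overline{\mathrm{ran}}(b^*)$ — actually the clean fact is that $a^*a+b^*b\ge\Delta_{\vpo}^{1/p}(\Delta_{\chi\vpo}^{-1/p})\cdots$; more simply, one checks $\|\xi\|^2\le C(\|a\xi\|^2+\|b\xi\|^2)$ on the relevant subspace using $a^*a+b^*b = (D\vpo:D\chi)_{-i/2p}^*(D\vpo:D\chi)_{-i/2p}+(D\varphi:D\chi)_{-i/2p}^*(D\varphi:D\chi)_{-i/2p}$, whose ``$L^p$-picture'' is $\Delta_{\chi\vpo}^{-1/2p}(\Delta_{\vpo}^{1/p}+\Delta_{\varphi\vpo}^{1/p})\Delta_{\chi\vpo}^{-1/2p}=\Delta_{\chi\vpo}^{-1/2p}\Delta_{\chi\vpo}^{1/p}\Delta_{\chi\vpo}^{-1/2p}$, which has trivial kernel on the appropriate subspace — so $V$ is bounded below there and $G=\mathrm{ran}(V)$ is closed.

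For step (3), I would show $G\subseteq\overline{\Gamma(T_\varphi)}$. It suffices to show that for $\eta$ ranging over a dense subset of $\cH$, the pair $(a\eta,b\eta)$ is a limit of elements of $\Gamma(T_\varphi)$. Since $\cM'\xi_{\varphi_0}$ is dense in $\cH$ (faithfulness/cyclicity of $\xi_{\varphi_0}$ for $\cM'$) and $\zeta_\chi=\Delta_{\chi\vpo}^{1/2p}\xi_{\varphi_0}$ is a cyclic separating-type vector, the set $\cM'\zeta_\chi$ is dense in $\cH$; for $\eta=JxJ\zeta_\chi$ with $x\in\cM$ we already computed $(a\eta,b\eta)\in\Gamma(T_\varphi)$, and a density/closure argument passing from $JxJ\zeta_\chi$ to arbitrary $\eta$ (approximating $\eta$ by $Jx_nJ\zeta_\chi$ and using boundedness of $a,b$ to conclude $(a\,Jx_nJ\zeta_\chi,b\,Jx_nJ\zeta_\chi)\to(a\eta,b\eta)$) gives $G=\overline{V(\cM'\zeta_\chi)}\subseteq\overline{\Gamma(T_\varphi)}$. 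Combining (1)–(3): $\overline{\Gamma(T_\varphi)}\subseteq\overline{G}=G\subseteq\overline{\Gamma(T_\varphi)}$, so equality holds.

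\textbf{Main obstacle.} The delicate point is step (2), the closedness of $G$ — equivalently, that $V\colon\xi\mapsto(a\xi,b\xi)$ has closed range. The honest way to see this is to identify $a^*a+b^*b$ with (the operator version of) $\Delta_{\chi\vpo}^{-1/2p}\big(\Delta_{\vpo}^{1/p}+\Delta_{\varphi\vpo}^{1/p}\big)\Delta_{\chi\vpo}^{-1/2p}$ acting on $\zeta_\chi$-type vectors, using \eqref{F-6.7}, so that it equals the projection $s$ onto $\overline{\mathrm{ran}}(\Delta_{\chi\vpo}^{1/2p})=\cH$ (since $\Delta_{\chi\vpo}$ is non-singular), hence $a^*a+b^*b=1$ and $\|a\xi\|^2+\|b\xi\|^2=\|\xi\|^2$ — whereupon $V$ is an isometry into $\cH\oplus\cH$ and $G=\mathrm{ran}(V)$ is automatically closed. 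Verifying $a^*a+b^*b=1$ rigorously requires care with unbounded $\Delta_{\chi\vpo}^{-1/2p}$ and the precise meaning of the cocycle boundary values, but this is exactly the kind of relative-modular-operator bookkeeping set up in \S\ref{S-6.1}; I expect it to be routine given Lemma \ref{L-6.6} and \eqref{F-6.7}, applied after cutting by spectral projections of $\Delta_{\chi\vpo}$ and taking limits.
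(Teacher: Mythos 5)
Your proposal is correct and follows essentially the same route as the paper: rewrite $\Gamma(T_\varphi)$ as $\{(ax'\zeta_\chi,bx'\zeta_\chi);\,x'\in\cM'\}$ via Lemma \ref{L-6.6}, use density of $\cM'\zeta_\chi$ to pass to all of $\{(a\xi,b\xi);\,\xi\in\cH\}$, and close the argument with the identity $a^*a+b^*b=1$ derived from \eqref{F-6.7} (the paper uses it to recover $\eta=a^*\xi_1+b^*\xi_2$ from a limiting pair, which is equivalent to your isometry observation). The only point you gloss over is the density of $\cM'\zeta_\chi$ itself, which the paper establishes as \eqref{F-6.9} by combining the core property of $\cM_0\xi_{\varphi_0}$ for $\Delta_{\chi\vpo}^{1/2p}$ (Lemma \ref{L-6.4}) with the $(-1/2p)$-homogeneity relation \eqref{F-6.10} and faithfulness of $\chi$ — this is a real step, not an automatic consequence of $\zeta_\chi$ being ``cyclic separating-type,'' but it is exactly the bookkeeping you anticipated.
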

\begin{proof}
For $x' \in \cM'$ Lemma \ref{L-6.6} yields 
$$
x'\xi_{\varphi_0}=x'a\zeta_{\chi}=ax'\zeta_{\chi} 
\quad
\mbox{and}
\quad 
x'\rmvpvpo^{1/2p}\,\xi_{\varphi_0}=x'b\zeta_{\chi}=bx'\zeta_{\chi},
$$
and hence Definition \ref{D-6.1} says
$$
\Gamma(T_{\varphi})=\{(ax'\zeta_{\chi}, bx'\zeta_{\chi}); \, x' \in \cM'\}.
$$

It is important to notice
\begin{equation}\label{F-6.9} 
\overline{J\cM_0 J\zeta_{\chi}}=\cH.
\end{equation}
Indeed, we observe
$$
\overline{J\cM_0 J \zeta_{\chi}}=\overline{J\cM_0 J \Delta_{\chi \vpo}^{1/2p}\, \xi_{\varphi_0}}
=\overline{\Delta_{\chi \vpo}^{1/2p}J\cM_0 J\xi_{\varphi_0}}
=\overline{{\mathcal R}(\Delta_{\chi \vpo}^{1/2p})}=\cH.
$$
Here, the first and fourth equalities follow from (\ref{F-6.8}) and faithfulness of $\chi$ respectively. 
The third equality holds true due to the fact that 
$\cM_0\xi_{\varphi_0}=J\cM_0 J\xi_{\varphi_0}$ is a core for $\Delta_{\chi \vpo}^{1/2p}$ (thanks to Lemma \ref{L-6.4})
while the second is a consequence of
$(-1/2p)$-homogeneity of $\Delta_{\chi \vpo}^{1/2p}$ relative to $\varphi'_0$ 
(see \cite[Chap.~III, Corollary 34]{Te1} or Lemma \ref{L-C.2} in \S\ref{S-C}),  
that is, 
\begin{equation}\label{F-6.10} 
x'\Delta_{\chi\vpo}^{1/2p} \subseteq \Delta_{\chi\vpo}^{1/2p}\sigma'_{-i/2p}(x')
\end{equation}
holds true for  each $x' \in J\cM_0 J$.
Since $ \cM' \zeta_{\chi}\,  (\supseteq J\cM_0 J\zeta_{\chi})$ is dense in $\cH$, we have
$$
\overline{\Gamma(T_{\varphi})} \supseteq
\{(a\xi, b\xi); \, \xi \in \cH\}
\supseteq \Gamma(T_{\varphi}),
$$
and it remains to show that the set $\{(a\xi, b\xi); \, \xi \in \cH\}$ is closed.

To show this closedness, we note
\begin{equation}\label{F-6.11} 
a^*a+b^*b=1,
\end{equation}
which is equivalent to
$$
((a^*a+b^*b)x'\zeta_{\chi}, x'\zeta_{\chi})=(x'\zeta_{\chi},x'\zeta_{\chi})
\quad (x' \in J\cM_0 J)
$$
thanks to (\ref{F-6.9}). The left hand side here is equal to
\begin{eqnarray*}
&&
\|ax'\zeta_{\chi}\|^2+\|bx'\zeta_{\chi}\|^2
=\|x'a\zeta_{\chi}\|^2+\|x'b\zeta_{\chi}\|^2\\
&&
\qquad 
=\|x'\xi_{\varphi_0}\|^2+\|x'\Delta_{\varphi \vpo}^{1/2p}\,\xi_{\varphi_0}\|^2
\quad (\mbox{by Lemma \ref{L-6.6}})\\
&&
\qquad
=\|\Delta_{\varphi_0}^{1/2p}\sigma'_{-i/2p}(x')\,\xi_{\varphi_0}\|^2
+\|\Delta_{\varphi\vpo}^{1/2p}\sigma'_{-i/2p}(x')\,\xi_{\varphi_0}\|^2\\
&&
\qquad
=
\|\Delta_{\chi\vpo}^{1/2p}\sigma'_{-i/2p}(x')\,\xi_{\varphi_0}\|^2
\quad (\mbox{by (\ref{F-6.7})})\\
&&
\qquad
=\|x'\Delta_{\chi\vpo}^{1/2p}\,\xi_{\varphi_0}\|^2=\|x'\zeta_{\chi}\|^2
\end{eqnarray*}
thanks to
 $(-1/2p)$-homogeneity of $\Delta_{\varphi \vpo}^{1/2p}$ and $ \Delta_{\chi\vpo}^{1/2p}$
(see (\ref{F-6.10}))
so that (\ref{F-6.11}) has been proved.

To see the closedness in question, let us assume
that $(\xi_1,\xi_2)$ is in the closure of $\{(a\xi, b\xi); \, \xi \in \cH\}$.
This means that there is a sequence $\{\eta_n\}$ such that
$$
\xi_1=\lim_{n \to \infty} a\eta_n 
\quad 
\mbox{and}
\quad
\xi_2=\lim_{n \to \infty} b\eta_n.
$$
Thanks to (\ref{F-6.11}) we have
$$ 
\eta_n=a^*a\eta_n+b^*b\eta_n \longrightarrow a^*\xi_1+b^*\xi_2 \quad (\mbox{as $n \to \infty$}).
$$
Therefore, with $\eta=a^*\xi_1+b^*\xi_2$
we get $(\xi_1,\xi_2)=(a\eta,b\eta) \in \{(a\xi, b\xi); \, \xi \in \cH\}$, and we are done.
\end{proof}
\begin{corollary}\label{C-6.8}
The projection $P$ from $\cH \oplus \cH$ onto $\overline{\Gamma(T_{\varphi})}$ is given by
$$
P=
\begin{bmatrix}
aa^*&ab^*\\
ba^*&bb^*
\end{bmatrix}
$$
$($with the contractions $a, b$ in Definition \ref{D-6.5}$)$.
\end{corollary}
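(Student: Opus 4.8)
The plan is to read off the projection directly from the description of the closed graph obtained in Theorem~\ref{T-6.7}. Recall that by Theorem~\ref{T-6.7} we have
$$
\overline{\Gamma(T_{\varphi})}=\{(a\xi,b\xi)\in\cH\oplus\cH;\ \xi\in\cH\},
$$
and by \eqref{F-6.11} the contractions $a,b\in\cM$ of Definition~\ref{D-6.5} satisfy $a^*a+b^*b=1$. First I would introduce the operator $V:\cH\to\cH\oplus\cH$ defined by $V\xi=(a\xi,b\xi)$, i.e. $V=\begin{bmatrix}a\\ b\end{bmatrix}$ in matrix form. Then $V^*V=a^*a+b^*b=1$, so $V$ is an isometry, its range is closed and equals $\overline{\Gamma(T_{\varphi})}$, and consequently the orthogonal projection onto $\overline{\Gamma(T_{\varphi})}$ is $P=VV^*$.

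Next I would simply compute $VV^*$ as a $2\times2$ operator matrix: $V^*=\begin{bmatrix}a^* & b^*\end{bmatrix}$, hence
$$
P=VV^*=\begin{bmatrix}a\\ b\end{bmatrix}\begin{bmatrix}a^* & b^*\end{bmatrix}
=\begin{bmatrix}aa^* & ab^*\\ ba^* & bb^*\end{bmatrix},
$$
which is exactly the claimed formula. As a sanity check one verifies $P^*=P$ (clear from the matrix) and $P^2=P$, the latter reducing to $a(a^*a+b^*b)a^*=aa^*$, $a(a^*a+b^*b)b^*=ab^*$, etc., all of which follow from \eqref{F-6.11}; this is precisely the ``routine calculation'' I would not grind through in full.

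There is essentially no obstacle here: the entire content is already carried by Theorem~\ref{T-6.7} and relation \eqref{F-6.11}. The only point requiring a word of care is that $\overline{\Gamma(T_{\varphi})}$ is genuinely closed (so that a projection onto it exists) — but this is part of the statement of Theorem~\ref{T-6.7} and was proved there using \eqref{F-6.11}. Thus the proof of the corollary is a two-line deduction from the theorem.

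\begin{proof}
Let $V:\cH\to\cH\oplus\cH$ be given by $V\xi=(a\xi,b\xi)$, where $a,b\in\cM$ are the contractions of Definition~\ref{D-6.5}. By \eqref{F-6.11} we have $V^*V=a^*a+b^*b=1$, so $V$ is an isometry; hence its range is closed, and by Theorem~\ref{T-6.7} this range is exactly $\overline{\Gamma(T_{\varphi})}$. Therefore the orthogonal projection $P$ from $\cH\oplus\cH$ onto $\overline{\Gamma(T_{\varphi})}$ equals $VV^*$. Writing $V=\begin{bmatrix}a\\ b\end{bmatrix}$ and $V^*=\begin{bmatrix}a^*&b^*\end{bmatrix}$, we obtain
$$
P=VV^*=\begin{bmatrix}a\\ b\end{bmatrix}\begin{bmatrix}a^*&b^*\end{bmatrix}
=\begin{bmatrix}aa^*&ab^*\\ ba^*&bb^*\end{bmatrix},
$$
as claimed.
\end{proof}
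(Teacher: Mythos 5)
Your proof is correct and follows essentially the same route as the paper: both arguments rest on the identity $a^*a+b^*b=1$ from \eqref{F-6.11} and on the description of $\overline{\Gamma(T_{\varphi})}$ in Theorem \ref{T-6.7}; your packaging via the isometry $V=\begin{bmatrix}a\\ b\end{bmatrix}$ and $P=VV^*$ is just a tidier way of organizing the paper's direct verification that $P$ is a projection fixing the graph and mapping $\cH\oplus\cH$ into it.
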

\begin{proof}
Thanks to \eqref{F-6.11}
$P$ is indeed a projection and 
$
P
\begin{bmatrix}
a\xi\\
b\xi
\end{bmatrix}
=
\begin{bmatrix}
a\xi\\
b\xi
\end{bmatrix}
$
holds true.
On the other hand, Theorem \ref{T-6.7} shows
$$
P
\begin{bmatrix}
\xi_1\\
\xi_2
\end{bmatrix}
=
\begin{bmatrix}
a(a^*\xi_1+b^*\xi_2)\\
b(a^*\xi_1+b^*\xi_2)
\end{bmatrix}
\in \overline{\Gamma(T_{\varphi})}
$$
for $\xi,\xi_i \in \cH$.
\end{proof}

\begin{remark}\label{R-6.9}\rm
\mbox{}
\begin{itemize}
\item[(i)]
Obstruction for closability of $T_{\varphi}$ is 
$$
\overline{\Gamma(T_{\varphi})} \cap (0 \oplus \cH) \cong
\{b\xi; \, a\xi=0 \}
=b\ker a.
$$
The polar decompositions of $a, b$ $($in Definition \ref{D-6.5}$)$ are of the forms
$$
a=v(1-h^2)^{1/2}, \quad b=uh
$$
with a positive contraction $h$ $($because of \eqref{F-6.11}$)$.
Thus, we observe
$$
\ker a=\ker (1-h^2)^{1/2}= \ker (1-h)=\{\xi \in \cH;\, \xi=h\xi\}
$$ 
and
$b\ker (1-h)=u\ker (1-h)$.
Since $\ker(1-h) \subseteq \overline{{\mathcal R}(h)}$,
the initial space of the partial isomery $u$, the projection $\tilde{q}$ onto $b\ker a$ is given by
$$
\tilde{q}=u\tilde{q}_0u^*
$$ 
with the projection $\tilde{q}_0$ onto $\ker(1-h)=\ker a$. 

\item[(ii)]
In particular, $T_{\varphi}$ is closable if 
and only if $\ker a=0$ $($see Definition \ref{D-6.1} and Proposition \ref{P-6.2}$)$.
Here, 
$$
a=(D\vpo:D\chi)_{-i/2p}
$$
is one of the contractions in Definition \ref{D-6.5}.

\item[(iii)]
We set 
\begin{equation}\label{F-6.12} 
\tp=1-\tilde{q}. 
\end{equation}
Then $(T_{\varphi})_{c}=\tp\, T_{\varphi}$ $($the obstruction $\tilde{q}$ for
closability is removed and $(T_{\varphi})_{c}$ is closable$)$  is known as the \emph{closable part} 
or the operator part of $T_{\varphi}$ $($see \cite{J}$)$. We note
\begin{eqnarray*}
&&
\tilde{q}ba^*=u\tilde{q}_0u^*uh(1-h^2)^{1/2}v^*=u\tilde{q}_0h(1-h^2)^{1/2}v^*
=uh(1-h^2)^{1/2}\tilde{q}_0v^*=0,\\
&&
\tilde qbb^*=u\tilde q_0u^*uh^2u^*=u\tilde q_0h^2u^*=uh^2\tilde q_0u^*=bb^*\tilde q,
\end{eqnarray*}
showing 
$$
\tilde{p}ba^*=ba^* \quad \mbox{and} \quad \tilde{p}bb^*=bb^*\tilde{p}.
$$
Hence the projection onto the graph of $\overline{(T_{\varphi})_{c}}$ $($i.e., the characteristic
matrix of  $\overline{(T_{\varphi})_{c}}$$)$ is given by
$$
\begin{bmatrix}
aa^*&ab^*\\
\tp\, ba^*&\tp\, bb^*
\end{bmatrix}
=
\begin{bmatrix}
aa^*&ab^*\\
ba^*&\tp\, bb^*
\end{bmatrix}.
$$
This matrix obviously commutes with $u' \otimes 1_{M_2({\bold C})}$ $($with a unitary $u' \in \cM'$$)$,
showing that $\overline{(T_{\varphi})_{c}}$ is always affiliated with $\cM$.
\end{itemize}
\end{remark}
So far we have presented direct self-contained arguments (based on Corollary \ref{C-6.8})
in our special situation.
A general theory on the (maximal) closable part of an operator was actually developed in \cite{J}, and
it goes as follows: 
It is well-known that a densely defined operator $T$ is closable if and only if the adjoint $T^*$ has
a dense domain. Let
$$
\begin{bmatrix}
p_{11}&p_{12}\\
p_{21}&p_{22}
\end{bmatrix}
$$
be the projection onto the closure $\overline{\Gamma(T)}$. Then, we have $\cD(T^*)^{\perp}=\ker(1-p_{22})$
(see \cite[Theorem 3.1,(d)]{J}), which corresponds to obstruction for closablity. 
In our special case $p_{22}$ is $bb^*$ and hence the projection onto $\ker(1-p_{22})$ is nothing but 
$\tilde{q} \, (= u\tilde{q}_0u^*)$ (in Remark \ref{R-6.9},(ii)), i.e., $\tilde{p}=1-\tilde{q}$ is the projection 
onto $\overline{\cD(T^*)}$.
In \cite{J}  $T_{c}=\tilde{p}\,T$ is defined as the closable part of $T$.  
It was shown there that $T_{c}$ is indeed closable with the characteristic matrix
$$
\begin{bmatrix}
p_{11}&p_{12}\\
\tilde{p}\,p_{21}&\tilde{p}\,p_{22}
\end{bmatrix}
=
\begin{bmatrix}
p_{11}&p_{12}\\
p_{21}&\tilde{p}\,p_{22}
\end{bmatrix}.
$$

\subsection{Technical lemmas}\label{S-6.3}

The projection $\tp$ defined by \eqref{F-6.12} 
(in Remark \ref{R-6.9},(iii)) plays a special role in the next result
(and also in the rest of the section).
\begin{lemma}\label{L-6.10}
For $\varphi \in \cM_*^+$
we have an increasing sequence $\{A_n\}_{n \in \bN}$ in $\ellp_+$ satisfying
$$
A_n \nearrow \rmvppsi^{1/2p} \, \tilde{p} \, \rmvppsi^{1/2p}
\quad \mbox{and} \quad  A_n \leq n\rmvpopsi^{1/p},
$$
and the map
\begin{equation}\label{F-6.13} 
x\xi_{\varphi_0} \in \cM\xi_{\varphi_0} \ \longmapsto \  
\langle  \rmvppsi^{1/2p} \,  \tp \, \rmvppsi^{1/2p}, \rmvpopsi^{1/2q}x^*x\rmvpopsi^{1/2q} \rangle \in \bR_+
\end{equation}
is always lower semi-continuous.
\end{lemma}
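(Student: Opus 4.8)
The plan is to imitate the proof of Proposition~\ref{P-6.2}, but with the operator $T_{\varphi}$ replaced throughout by its closable part $(T_{\varphi})_{c}=\tp\,T_{\varphi}$. Recall from Remark~\ref{R-6.9},(iii) that $(T_{\varphi})_{c}$ is closable and that $\overline{(T_{\varphi})_{c}}$ is affiliated with $\cM$; in particular $\tp\in\cM$, so that $B_0:=\rmvppsi^{1/2p}\,\tp\,\rmvppsi^{1/2p}$ is a well-defined element of $\ellp_+$ by H\"older's inequality for Hilsum's $L^p$-spaces. First I would take the spectral decomposition $\overline{(T_{\varphi})_{c}}^{\,*}\,\overline{(T_{\varphi})_{c}}=\int_0^{\infty}\lambda\,de_{\lambda}$ (its spectral projections lie in $\cM$), set $h_n:=\int_0^n\lambda\,de_{\lambda}\in\cM_+$ and $A_n:=\rmvpopsi^{1/2p}h_n\rmvpopsi^{1/2p}\in\ellp_+$. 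Then $\{A_n\}$ is obviously increasing and $A_n\le n\rmvpopsi^{1/p}$.

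The next step is to identify both $\langle A_n,\rmvpopsi^{1/2q}x^*x\rmvpopsi^{1/2q}\rangle$ and its limit. The computation in \eqref{F-6.3} applies verbatim and gives $\langle A_n,\rmvpopsi^{1/2q}x^*x\rmvpopsi^{1/2q}\rangle=(h_nJxJ\xi_{\varphi_0},JxJ\xi_{\varphi_0})$, which increases to $\|\overline{(T_{\varphi})_{c}}JxJ\xi_{\varphi_0}\|^2$ as $n\to\infty$ since $JxJ\xi_{\varphi_0}\in\cM'\xi_{\varphi_0}=\cD((T_{\varphi})_{c})$. On the other hand, because $\tp\in\cM$ commutes with $JxJ\in\cM'$ we have $\|(T_{\varphi})_{c}JxJ\xi_{\varphi_0}\|^2=\|JxJ\,\tp\,\rmvpvpo^{1/2p}\xi_{\varphi_0}\|^2$, and translating this through the Haagerup/Hilsum $L^2$-space identifications exactly as in \eqref{F-6.4}--\eqref{F-6.6} (the vector $\tp\,\rmvpvpo^{1/2p}\xi_{\varphi_0}$ corresponding to $\tp\,\rmvppsi^{1/2p}\rmvpopsi^{1/2q}$ in $L^2(\cM,\psi')$, i.e.\ left multiplication by $\tp$ passes through the identifications), together with $\tp^2=\tp$ and cyclicity of the trace, gives
$$
\|(T_{\varphi})_{c}JxJ\xi_{\varphi_0}\|^2
=\bigl\langle\,\rmvppsi^{1/2p}\,\tp\,\rmvppsi^{1/2p},\ \rmvpopsi^{1/2q}x^*x\rmvpopsi^{1/2q}\,\bigr\rangle .
$$
Hence $\langle A_n,\rmvpopsi^{1/2q}x^*x\rmvpopsi^{1/2q}\rangle\nearrow\langle B_0,\rmvpopsi^{1/2q}x^*x\rmvpopsi^{1/2q}\rangle$ for every $x\in\cM$. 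Since $\rmvpopsi^{1/2q}\cM_+\rmvpopsi^{1/2q}$ is dense in $\ellq_+$ by Lemma~\ref{L-6.3}, this first forces $A_n\le B_0$ for all $n$, and then, $\{A_n\}$ being increasing and order-bounded in $\ellp_+$, it converges in $\sigma(L^p,L^q)$ to its supremum, which must be $B_0=\rmvppsi^{1/2p}\,\tp\,\rmvppsi^{1/2p}$ by equality of the $L^q$-pairings on the dense cone. This is the first assertion of the lemma.

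For the lower semi-continuity, I would simply read off from the displayed identity, using $JxJ\xi_{\varphi_0}=J(x\xi_{\varphi_0})$, that the map \eqref{F-6.13} evaluated at $x\xi_{\varphi_0}$ equals
$$
\bigl\langle\,\rmvppsi^{1/2p}\,\tp\,\rmvppsi^{1/2p},\ \rmvpopsi^{1/2q}x^*x\rmvpopsi^{1/2q}\,\bigr\rangle
=\|\overline{(T_{\varphi})_{c}}\,J(x\xi_{\varphi_0})\|^2
=\sup_n\,\bigl(h_nJ(x\xi_{\varphi_0}),\,J(x\xi_{\varphi_0})\bigr),
$$
a pointwise supremum of the genuinely continuous functions $\eta\mapsto(h_nJ\eta,J\eta)$ on $\cH$; therefore it is lower semi-continuous, and it is $\bR_+$-valued because $J(x\xi_{\varphi_0})\in\cD(\overline{(T_{\varphi})_{c}})$. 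This is precisely the device used in the implication (ii)$\Rightarrow$(iii) of Proposition~\ref{P-6.2}, now available unconditionally because $(T_{\varphi})_{c}$ is always closable.

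The step I expect to be the main obstacle is the second displayed identity: one has to insert the projection $\tp$ correctly into the chain of $L^2(\cM)$/$L^2(\cM,\psi')$ correspondences of \eqref{F-6.4}--\eqref{F-6.6} — verifying that left multiplication by $\tp\in\cM$ commutes with those identifications, so that $\tp\,\rmvpvpo^{1/2p}\xi_{\varphi_0}$ indeed corresponds to $\tp\,\rmvppsi^{1/2p}\rmvpopsi^{1/2q}$ — and then carry out the resulting trace manipulation to land on $\langle\rmvppsi^{1/2p}\tp\rmvppsi^{1/2p},\,\cdot\,\rangle$. Everything else is bookkeeping strictly parallel to Proposition~\ref{P-6.2}.
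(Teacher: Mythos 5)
Your proposal is correct and follows essentially the same route as the paper: replace $T_{\varphi}$ by its closable part $\tp\,T_{\varphi}$, build $h_n$ and $A_n=\rmvpopsi^{1/2p}h_n\rmvpopsi^{1/2p}$ from the spectral decomposition of $|\overline{\tp\,T_{\varphi}}|^2$, identify $\|\overline{\tp\,T_{\varphi}}JxJ\xi_{\varphi_0}\|^2$ with $\langle\rmvppsi^{1/2p}\tp\,\rmvppsi^{1/2p},\rmvpopsi^{1/2q}x^*x\rmvpopsi^{1/2q}\rangle$ via the $L^2$-correspondence, cyclicity of $\tr$ and $\tp^2=\tp$, and conclude with the density from Lemma \ref{L-6.3} and the supremum-of-continuous-functions argument for lower semi-continuity. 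The step you flagged as the main obstacle (inserting $\tp$ into the chain \eqref{F-6.4}--\eqref{F-6.6}) goes through exactly as you describe and is what the paper does in \eqref{F-6.14}.
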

\begin{proof}
Recall that $\tp\, T_{\varphi}\,(= (T_{\varphi})_{c})$
is closable and its closure $\overline{\tp\, T_{\varphi}}$ is affiliated with $\cM$
(see Remark \ref{R-6.9},(iii)).
Then, with $\tp \,T_{\varphi}$ (instead of $T_{\varphi}$) we can repeat almost identical arguments as 
in the proof of  (i) $\Longrightarrow$ (ii) in Proposition \ref{P-6.2}.
We will just sketch arguments with explanations on some differences.
Using the spectral decomposition of $|\overline{\tp \, T_{\varphi}}|^2$ we construct
$h_n \in \cM_+$ and $A_n=\rmvpopsi^{1/2p} h_n \rmvpopsi^{1/2p} \in \ellp_+$.
Note that \eqref{F-6.4} is modified to
$$
\|\overline{\tp\, T_{\varphi}}JxJ\xi_{\varphi_0}\|^2
=\|JxJ\overline{\tp\, T_{\varphi}}\xi_{\varphi_0}\|^2=\|JxJ\tp\, T_{\varphi}\xi_{\varphi_0}\|^2
=\|JxJ\tp \,\Delta_{\varphi\vpo}^{1/2p}\xi_{\varphi_0}\|^2
$$
and hence \eqref{F-6.5} changes to
\begin{eqnarray}
&&
\mbox{tr}\left(
\left(\tp\, \rmvppsi^{1/2p} \rmvpopsi^{1/2q} x^* \right) 
\left(\tp\, \rmvppsi^{1/2p} \rmvpopsi^{1/2q} x^* \right)^* 
\right)
=\mbox{tr}\left( \tp\,\rmvppsi^{1/2p}\rmvpopsi^{1/2q}x^*x\rmvpopsi^{1/2q}\rmvppsi^{1/2p}\, \tp \right)
\nonumber\\
&&
\qquad
=\mbox{tr}\left( \rmvppsi^{1/2p}\, \tp \,\rmvppsi^{1/2p}\rmvpopsi^{1/2q}x^*x\rmvpopsi^{1/2q} \right)
=\langle  \rmvppsi^{1/2p} \, \tp \, \rmvppsi^{1/2p}    , \rmvpopsi^{1/2q}x^*x\rmvpopsi^{1/2q}  \rangle.
\label{F-6.14} 
\end{eqnarray}
Thus, we have 
\begin{eqnarray*}
&&
\langle \rmvppsi^{1/2p} \, \tp \,  \rmvppsi^{1/2p}, \rmvpopsi^{1/2q}x^*x\rmvpopsi^{1/2q} \rangle
=\|\overline{\tp\, T_{\varphi}}JxJ\xi_{\varphi_0}\|^2\\
&& \qquad \qquad
=\sup_n \, (h_n JxJ\xi_{\varphi_0},JxJ\xi_{\varphi_0})
=\sup_n \, \langle A_n, \rmvpopsi^{1/2q}x^*x\rmvpopsi^{1/2q} \rangle.
\end{eqnarray*}
Here, the above last equality holds true since the computation \eqref{F-6.3} remains in the same form.
This means $A_n \nearrow \rmvppsi^{1/2p} \, \tp \, \rmvppsi^{1/2p}$ as before (i.e., thanks to
Lemma \ref{L-6.3} again).

Since  
$ 
\langle \rmvppsi^{1/2p} \, \tp \,  \rmvppsi^{1/2p}, \rmvpopsi^{1/2q}x^*x\rmvpopsi^{1/2q} \rangle
=
\sup_n \, (h_n JxJ\xi_{\varphi_0},JxJ\xi_{\varphi_0})
$ 
as seen above,
lower semi-continuity of the map defined by \eqref{F-6.13} is obvious.
\end{proof}

Lower semi-continuity of \eqref{F-6.13} shown above and the obvious inequality
$$
\rmvppsi^{1/2p} \, \tilde{p} \, \rmvppsi^{1/2p} \leq \rmvppsi^{1/p}
$$
yield
\begin{align}\label{F-6.15} 
\langle \rmvppsi^{1/2p}\,  \tp \, \rmvppsi^{1/2p}, \rmvpopsi^{1/2q}x^*x\rmvpopsi^{1/2q} \rangle 
&=\inf \Bigl(\liminf_{n \to \infty}    
\, \langle \rmvppsi^{1/2p}\,  \tp \, \rmvppsi^{1/2p}, \rmvpopsi^{1/2q}x_n^*x_n\rmvpopsi^{1/2q} \rangle 
\Bigr)
\nonumber\\
&\leq \inf \Bigl(\liminf_{n \to \infty}    
\, \langle \rmvppsi^{1/p}, \rmvpopsi^{1/2q}x_n^*x_n\rmvpopsi^{1/2q} \rangle 
\Bigr)
\end{align}
for each $x \in \cM$.
Here, the infimum is taken over all sequences $\{x_n\}_{n \in \bN}$ in $\cM$ satisfying
$x_n\xi_{\varphi_0} \to x\xi_{\varphi_0}$ in $\cH$.

Actually, the three quantities in \eqref{F-6.15} are all identical.
\begin{lemma}\label{L-6.11}
For each $x \in \cM$ we have
$$
\langle \rmvppsi^{1/2p}\,  \tp\,  \rmvppsi^{1/2p}, \rmvpopsi^{1/2q}x^*x\rmvpopsi^{1/2q} \rangle 
=
\inf \Bigr(\liminf_{n \to \infty}    
\, \langle  \rmvppsi^{1/p}, \rmvpopsi^{1/2q}x_n^*x_n\rmvpopsi^{1/2q} \rangle \Bigr),
$$
where the infimum is taken over all sequences $\{x_n\}_{n \in \bN}$ in $\cM$ satisfying
$x_n\xi_{\varphi_0} \to x\xi_{\varphi_0}$ in $\cH$.
\end{lemma}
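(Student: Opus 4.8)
The plan is to prove the inequality ``$\ge$'' in the asserted identity; the reverse inequality ``$\le$'' is already available, being precisely the passage from the middle term to the right-hand term of \eqref{F-6.15}, which rested on the lower semi-continuity of the map \eqref{F-6.13} proved in Lemma \ref{L-6.10}. So fix $x\in\cM$ and put $y':=JxJ\in\cM'$. Since $J$ is anti-unitary and fixes the cone vector $\xi_{\varphi_0}\in\mathcal{P}$, a sequence $\{x_n\}$ in $\cM$ satisfies $x_n\xi_{\varphi_0}\to x\xi_{\varphi_0}$ if and only if the vectors $y_n':=Jx_nJ\in\cM'$ satisfy $y_n'\xi_{\varphi_0}\to y'\xi_{\varphi_0}$; moreover, by the identity $\|T_{\varphi}JzJ\xi_{\varphi_0}\|^2=\langle\rmvppsi^{1/p},\rmvpopsi^{1/2q}z^*z\rmvpopsi^{1/2q}\rangle$ recorded in the proof of Proposition \ref{P-6.2}, we have $\langle\rmvppsi^{1/p},\rmvpopsi^{1/2q}x_n^*x_n\rmvpopsi^{1/2q}\rangle=\|T_{\varphi}y_n'\xi_{\varphi_0}\|^2$. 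Hence it suffices to produce one sequence $y_n'\in\cM'$ with $y_n'\xi_{\varphi_0}\to y'\xi_{\varphi_0}$ along which $\|T_{\varphi}y_n'\xi_{\varphi_0}\|^2$ converges to $\langle\rmvppsi^{1/2p}\,\tp\,\rmvppsi^{1/2p},\rmvpopsi^{1/2q}x^*x\rmvpopsi^{1/2q}\rangle$.

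To build such a sequence I would first place the pair $(y'\xi_{\varphi_0},\,\overline{(T_{\varphi})_{c}}\,y'\xi_{\varphi_0})$ inside $\overline{\Gamma(T_{\varphi})}$. Indeed $(y'\xi_{\varphi_0},T_{\varphi}y'\xi_{\varphi_0})\in\Gamma(T_{\varphi})\subseteq\overline{\Gamma(T_{\varphi})}$, and by Theorem \ref{T-6.7} and Remark \ref{R-6.9},(i) the obstruction subspace $\overline{\Gamma(T_{\varphi})}\cap(0\oplus\cH)$ equals $0\oplus\mathrm{ran}\,\tilde q$; since $\tilde q\,T_{\varphi}y'\xi_{\varphi_0}\in\mathrm{ran}\,\tilde q$, the vector $(0,\tilde q\,T_{\varphi}y'\xi_{\varphi_0})$ also lies in $\overline{\Gamma(T_{\varphi})}$, whence, $\overline{\Gamma(T_{\varphi})}$ being a linear subspace,
$$
(y'\xi_{\varphi_0},\,\tp\,T_{\varphi}y'\xi_{\varphi_0})
=(y'\xi_{\varphi_0},T_{\varphi}y'\xi_{\varphi_0})-(0,\tilde q\,T_{\varphi}y'\xi_{\varphi_0})\in\overline{\Gamma(T_{\varphi})}.
$$
Here $\tp=1-\tilde q\in\cM$ commutes with $y'\in\cM'$, so $\tp\,T_{\varphi}y'\xi_{\varphi_0}=y'\,\tp\,\rmvpvpo^{1/2p}\xi_{\varphi_0}$, and since $y'\xi_{\varphi_0}\in\cD(T_{\varphi})=\cD((T_{\varphi})_{c})$ this equals $\overline{(T_{\varphi})_{c}}\,y'\xi_{\varphi_0}$. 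Next, $\Gamma(T_{\varphi})=\{(z'\xi_{\varphi_0},T_{\varphi}z'\xi_{\varphi_0}):z'\in\cM'\}$ is dense in $\overline{\Gamma(T_{\varphi})}$, so one can choose $y_n'\in\cM'$ with $(y_n'\xi_{\varphi_0},T_{\varphi}y_n'\xi_{\varphi_0})\to(y'\xi_{\varphi_0},\overline{(T_{\varphi})_{c}}\,y'\xi_{\varphi_0})$; in particular $y_n'\xi_{\varphi_0}\to y'\xi_{\varphi_0}$ and $\|T_{\varphi}y_n'\xi_{\varphi_0}\|^2\to\|\overline{(T_{\varphi})_{c}}\,y'\xi_{\varphi_0}\|^2$. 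Setting $x_n:=Jy_n'J\in\cM$, so that $Jx_nJ=y_n'$, the first paragraph gives $x_n\xi_{\varphi_0}\to x\xi_{\varphi_0}$ and
\begin{align*}
\langle\rmvppsi^{1/p},\rmvpopsi^{1/2q}x_n^*x_n\rmvpopsi^{1/2q}\rangle
&=\|T_{\varphi}y_n'\xi_{\varphi_0}\|^2
\ \longrightarrow\ \|\overline{(T_{\varphi})_{c}}\,JxJ\xi_{\varphi_0}\|^2 \\
&=\langle\rmvppsi^{1/2p}\,\tp\,\rmvppsi^{1/2p},\rmvpopsi^{1/2q}x^*x\rmvpopsi^{1/2q}\rangle,
\end{align*}
the last equality being \eqref{F-6.14} from the proof of Lemma \ref{L-6.10}.

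This forces the infimum on the right-hand side of the assertion to be $\le$ its left-hand side, which together with \eqref{F-6.15} yields equality. The only genuinely delicate point is the graph bookkeeping in the middle paragraph, i.e.\ the verification that $(0,\tilde q\,T_{\varphi}y'\xi_{\varphi_0})$ really belongs to $\overline{\Gamma(T_{\varphi})}$; this is exactly the description of the obstruction space and the identification of $\tilde q$ as the projection onto it supplied by Theorem \ref{T-6.7} and Remark \ref{R-6.9},(i). Everything else is a routine transfer through $J$ and the $L^2(\cM,\psi')$-identifications already in use throughout \S\ref{S-6.1}.
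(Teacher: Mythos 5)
Your argument is correct and follows essentially the same route as the paper: both exhibit the minimizing sequence by using Theorem \ref{T-6.7} and Remark \ref{R-6.9},(i) to place $(0,\tilde q\,T_{\varphi}JxJ\xi_{\varphi_0})$ in $\overline{\Gamma(T_{\varphi})}$ and then approximating $(JxJ\xi_{\varphi_0},\,\tp\,T_{\varphi}JxJ\xi_{\varphi_0})$ by graph elements (the paper phrases this as $x_n=x-y_n$ with $(Jy_nJ\xi_{\varphi_0},T_\varphi Jy_nJ\xi_{\varphi_0})\to(0,\tilde q JxJ\rmvpvpo^{1/2p}\xi_{\varphi_0})$, which is the same construction). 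The computational identifications via \eqref{F-6.5} and \eqref{F-6.14} are likewise the ones the paper uses.
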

\begin{proof}
For $x \in \cM$ we have $T_{\varphi}JxJ\xi_{\varphi_0}=JxJ\Delta_{\varphi\vpo}^{1/2p}\xi_{\varphi_0}$
(Definition \ref{D-6.1}).
On the other hand, let us recall  the projection $\tilde{q}=1-\tilde{p}$, which corresponds to
the intersection of $\overline{\Gamma(T_{\varphi})}$ and the $y$-axis $0 \oplus \cH$
(as was discussed in Remark \ref{R-6.9}).
Since 
$(0,\tilde{q} JxJ  \Delta_{\varphi\vpo}^{1/2p}\xi_{\varphi_0})$
belongs to $\overline{\Gamma(T_{\varphi})}$, there is a sequence $\{y_n\}$ in $\cM$ satisfying 
$$
Jy_nJ\xi_{\varphi_0} \ \rightarrow \ 0
\quad \mbox{and} \quad 
T_{\varphi}Jy_nJ\xi_{\varphi_0} \ \rightarrow \ \tilde{q}JxJ\Delta_{\varphi\vpo}^{1/2p}\xi_{\varphi_0}.
$$
We set $x_n=x-y_n \in \cM$. We observe $x_n\xi_{\varphi_0} \rightarrow x\xi_{\varphi_0}$ and
$T_{\varphi}Jx_nJ\xi_{\varphi_0} \ \left(= Jx_nJ\Delta_{\varphi\vpo}^{1/2p}\xi_{\varphi_0}\right)$
tends to
$$  
JxJ\Delta_{\varphi\vpo}^{1/2p}\xi_{\varphi_0}-\tilde{q}JxJ\Delta_{\varphi\vpo}^{1/2p}\xi_{\varphi_0}
=\tilde{p}JxJ\Delta_{\varphi\vpo}^{1/2p}\xi_{\varphi_0}
=JxJ\tilde{p}\,\Delta_{\varphi\vpo}^{1/2p}\xi_{\varphi_0}.
$$ 
Hence, we have  
$$
\lim_{n \to \infty} \|Jx_nJ\Delta_{\varphi\vpo}^{1/2p}\xi_{\varphi_0}\|^2
=\|JxJ\tilde{p}\,\Delta_{\varphi\vpo}^{1/2p}\xi_{\varphi_0}\|^2,
$$
but this means
$$
\lim_{n \to \infty}\,\langle  \rmvppsi^{1/p}, \rmvpopsi^{1/2q}x_n^*x_n\rmvpopsi^{1/2q} \rangle
=\langle \rmvppsi^{1/2p}\,  \tp\,  \rmvppsi^{1/2p}, \rmvpopsi^{1/2q}x^*x\rmvpopsi^{1/2q} \rangle
$$
by usual computations that have been carried out repeatedly (see \eqref{F-6.5} and \eqref{F-6.14}).
Thus, the far right side  in \eqref{F-6.15}  is  smaller than 
$\langle \rmvppsi^{1/2p}\,  \tp\,  \rmvppsi^{1/2p}, \rmvpopsi^{1/2q}x^*x\rmvpopsi^{1/2q} \rangle$
(i.e., the far left side  in \eqref{F-6.15}), that is, the three quantities there are all identical.
\end{proof}

\subsection{Lebesgue decomposition}\label{S-6.4}

In this subsection we obtain results on Lebesgue-type decomposition in $\ellp_+$
among others by combining various technical results obtained so far.
We begin with definitions of absolute continuity and singularity.

\begin{definition}\label{D-6.12}\rm
 $(${\bf Absolute continuity and singularity}$)$\
We assume $1 \leq p < \infty$.
A positive element $A \in \ellp_+$ is \emph{absolutely continuous with respect to 
$\rmvpopsi^{1/p}$},
or $\rmvpopsi^{1/p}$-absolutely continuous in short, if there is an increasing sequence
$\{A_n\}_{n \in \bN}$ in $\ellp_+$
satisfying $A_n \nearrow A$ and $A_n \leq \ell_n \rmvpopsi^{1/p}$ for some $\ell_n>0$.
On the other hand, $A$ is defined to be \emph{$\rmvpopsi^{1/p}$-singular} 
if $B \in  \ellp_+ $ must be $0$ whenever $B \leq A$ and $B \leq \rmvpopsi^{1/p}$. 
\end{definition}
Note that Proposition \ref{P-6.2} and Remark \ref{R-6.9},(ii) characterize absolute continuity in terms of
the associated operator $T_{\varphi}$ (given in Definition \ref{D-6.1}).
Our proof of the next theorem is motivated by discussions in \cite[\S 3]{Ko2} and \cite{Si1}.

\begin{theorem}\label{T-6.13}
$(${\bf Maximal absolutely continuous part}$)$ \
We assume that $\varphi_0$ is a faithful positive linear functional in  $\cM_*^+$.
For a given  positive operator $\rmvppsi^{1/p} \in \ellp_+$ $($with $\varphi \in \cM_*^+$$)$
the operator $\rmvppsi^{1/2p} \, \tilde{p} \, \rmvppsi^{1/2p} \in \ellp_+$
with the projection $\tilde{p} \in \cM$ defined by \eqref{F-6.12}
$($see Remark \ref{R-6.9}$)$ is 
$\rmvpopsi^{1/p}$-absolutely continuous. Moreover, it is the maximum among all
$\rmvpopsi^{1/p}$-absolutely continuous elements in $\ellp_+$ majorized by $\rmvppsi^{1/p}$.
\end{theorem}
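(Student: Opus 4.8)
The statement has two halves: first, that $\rmvppsi^{1/2p}\,\tp\,\rmvppsi^{1/2p}$ is $\rmvpopsi^{1/p}$-absolutely continuous; second, that it dominates every $\rmvpopsi^{1/p}$-absolutely continuous element of $\ellp_+$ below $\rmvppsi^{1/p}$. The first half is already essentially done: Lemma \ref{L-6.10} produces an increasing sequence $\{A_n\}$ in $\ellp_+$ with $A_n\nearrow\rmvppsi^{1/2p}\,\tp\,\rmvppsi^{1/2p}$ and $A_n\le n\rmvpopsi^{1/p}$, which is exactly the defining property of absolute continuity in Definition \ref{D-6.12}. So the bulk of the work is the maximality claim.

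For maximality, I would start with an arbitrary $\rmvpopsi^{1/p}$-absolutely continuous $C\in\ellp_+$ with $C\le\rmvppsi^{1/p}$, and fix an increasing sequence $\{C_m\}$ in $\ellp_+$ with $C_m\nearrow C$ and $C_m\le\ell_m\rmvpopsi^{1/p}$. The key device is the variational/lower-semicontinuity characterization packaged in \eqref{F-6.15} and Lemma \ref{L-6.11}: for each $x\in\cM$,
\[
\langle \rmvppsi^{1/2p}\,\tp\,\rmvppsi^{1/2p},\,\rmvpopsi^{1/2q}x^*x\rmvpopsi^{1/2q}\rangle
=\inf\Bigl(\liminf_{n\to\infty}\langle\rmvppsi^{1/p},\,\rmvpopsi^{1/2q}x_n^*x_n\rmvpopsi^{1/2q}\rangle\Bigr),
\]
the infimum over sequences $x_n\xi_{\vpo}\to x\xi_{\vpo}$. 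Since $C_m\le\ell_m\rmvpopsi^{1/p}$, we may write $C_m^{1/2}=w_m\rmvpopsi^{1/2p}$ with $w_m\in\cM$, so that $\langle C_m,\rmvpopsi^{1/2q}x^*x\rmvpopsi^{1/2q}\rangle=\|w_mJxJ\xi_{\vpo}\|^2$ is norm-continuous in $x\xi_{\vpo}$ (by the same computation as \eqref{F-6.3}); hence along any sequence $x_n\xi_{\vpo}\to x\xi_{\vpo}$ one has $\langle C_m,\rmvpopsi^{1/2q}x_n^*x_n\rmvpopsi^{1/2q}\rangle\to\langle C_m,\rmvpopsi^{1/2q}x^*x\rmvpopsi^{1/2q}\rangle$. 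Combining this continuity with $C_m\le C\le\rmvppsi^{1/p}$, for any admissible sequence $\{x_n\}$ we get
\[
\langle C_m,\rmvpopsi^{1/2q}x^*x\rmvpopsi^{1/2q}\rangle
=\liminf_{n\to\infty}\langle C_m,\rmvpopsi^{1/2q}x_n^*x_n\rmvpopsi^{1/2q}\rangle
\le\liminf_{n\to\infty}\langle\rmvppsi^{1/p},\rmvpopsi^{1/2q}x_n^*x_n\rmvpopsi^{1/2q}\rangle,
\]
and taking the infimum over $\{x_n\}$ yields $\langle C_m,\rmvpopsi^{1/2q}x^*x\rmvpopsi^{1/2q}\rangle\le\langle\rmvppsi^{1/2p}\,\tp\,\rmvppsi^{1/2p},\rmvpopsi^{1/2q}x^*x\rmvpopsi^{1/2q}\rangle$ for all $x\in\cM$. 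Letting $m\to\infty$ gives the same inequality with $C$ in place of $C_m$. Since $\rmvpopsi^{1/2q}\cM_+\rmvpopsi^{1/2q}$ is dense in $\ellq_+$ by Lemma \ref{L-6.3}, and both sides are (by construction, using $C_m\nearrow C$ and $A_n\nearrow$ the maximal part) continuous/monotone limits that extend to all of $\ellq_+$, we conclude $\langle C,B\rangle\le\langle\rmvppsi^{1/2p}\,\tp\,\rmvppsi^{1/2p},B\rangle$ for every $B\in\ellq_+$, i.e. $C\le\rmvppsi^{1/2p}\,\tp\,\rmvppsi^{1/2p}$ in $\ellp_+$.

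\textbf{Main obstacle.} The delicate point is the density/limit-exchange at the end: the inequality of pairings is first obtained only against the dense cone $\rmvpopsi^{1/2q}\cM_+\rmvpopsi^{1/2q}$, and one must pass to all of $\ellq_+$. This requires knowing that $B\mapsto\langle C,B\rangle$ and $B\mapsto\langle\rmvppsi^{1/2p}\,\tp\,\rmvppsi^{1/2p},B\rangle$ are both lower semicontinuous (hence order-determined) on $\ellq_+$ — the second by Lemma \ref{L-6.10} together with the density in Lemma \ref{L-6.3}, the first because $C$ is itself a monotone $\sigma(L^p,L^q)$-limit of the $C_m$. A second subtlety is making sure the quantity $\langle C_m,\rmvpopsi^{1/2q}x_n^*x_n\rmvpopsi^{1/2q}\rangle$ really is continuous (not merely lower semicontinuous) along $x_n\xi_{\vpo}\to x\xi_{\vpo}$: this uses crucially that $C_m$ is \emph{bounded above by} a multiple of $\rmvpopsi^{1/p}$, which is precisely why one works with the approximants $C_m$ rather than with $C$ directly, and why the inner part of the argument mirrors the proof of (i)$\Rightarrow$(ii)$\Rightarrow$(iii) in Proposition \ref{P-6.2} and of Lemma \ref{L-6.10}.
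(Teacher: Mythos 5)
Your proof is correct and follows essentially the same route as the paper: absolute continuity is exactly Lemma \ref{L-6.10}, and maximality is obtained by combining the variational identity of Lemma \ref{L-6.11} with the majorization $C\le\rmvppsi^{1/p}$ inside the $\liminf$ and then invoking the density of $\rmvpopsi^{1/2q}\cM_+\rmvpopsi^{1/2q}$ from Lemma \ref{L-6.3}. The only cosmetic difference is that you re-derive the needed lower semi-continuity for $C$ via the bounded approximants $C_m$, whereas the paper simply quotes Proposition \ref{P-6.2},(iii) for $C$ itself; both are valid, and your final density step needs nothing more than norm-continuity of the $L^p$-$L^q$ pairing.
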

\begin{proof}
The first statement is just Lemma \ref{L-6.10}. Hence, it remains to show maximality stated
in the last part.
Let us assume that $\Delta_{\varphi_1\psi}^{1/p}  \in \ellp_+$ (with $\varphi_1 \in \cM_*^+  $)
is $\rmvpopsi^{1/p}$-absolutely continuous
and $\Delta_{\varphi_1\psi}^{1/p} \leq \rmvppsi^{1/p}$. 
Proposition \ref{P-6.2},(iii)  (applied for $\varphi_1$) says that the map
$$
x\xi_{\varphi_0} \in \cM\xi_{\varphi_0} \ \longmapsto \  
\langle  \Delta_{\varphi_1\psi}^{1/p}, \rmvpopsi^{1/2q}x^*x\rmvpopsi^{1/2q} \rangle \in \bR_+
$$
is lower semi-continuous so that we have
$$
\langle  \Delta_{\varphi_1\psi}^{1/p}, \rmvpopsi^{1/2q}x^*x\rmvpopsi^{1/2q} \rangle
=
\inf \Bigr(\liminf_{n \to \infty}    
\, \langle  
\Delta_{\varphi_1\psi}^{1/p}, \rmvpopsi^{1/2q}x_n^*x_n\rmvpopsi^{1/2q} 
\rangle
\Bigr)
$$
for each $x \in \cM$. 
Here (and below) the infimum is taken over all sequences $\{x_n\}_{n \in \bN}$ in $\cM$ 
satisfying $x_n\xi_{\varphi_0} \to x\xi_{\varphi_0}$. 
Thanks to $\Delta_{\varphi_1\psi}^{1/p} \leq \rmvppsi^{1/p}$
the above right side is smaller than
$$
\inf \Bigr(\liminf_{n \to \infty}    
\, \langle  \rmvppsi^{1/p}, \rmvpopsi^{1/2q}x_n^*x_n\rmvpopsi^{1/2q} \rangle \Bigr),
$$
which is equal to $\langle \rmvppsi^{1/2p}\,  \tp \, \rmvppsi^{1/2p}, \rmvpopsi^{1/2q}x^*x\rmvpopsi^{1/2q} \rangle$
according to Lemma \ref{L-6.11}.  Thus, we have shown 
$$
\langle  \Delta_{\varphi_1\psi}^{1/p}, \rmvpopsi^{1/2q}x^*x\rmvpopsi^{1/2q} \rangle
\leq 
\langle \Delta_{\varphi\vpo}^{1/2p} \, \tp\,  \Delta_{\varphi\vpo}^{1/2p}, \rmvpopsi^{1/2q}x^*x\rmvpopsi^{1/2q} \rangle
$$
for each $x \in \cM$. 
Then  Lemma \ref{L-6.3} enables us to change $\rmvpopsi^{1/2q}x^*x\rmvpopsi^{1/2q}$ (in the above
inequality) to arbitrary positive elements in $\ellq_+$ and consequently we have
$\Delta_{\varphi_1\psi}^{1/p} \leq \rmvppsi^{1/2p}\, \tp\, \rmvppsi^{1/2p}$ as desired.
\end{proof}

\begin{remark}\label{R-6.14}\rm
As was mentioned at the beginning of the section.
Ando \cite{An1} defined $a[b]=\sup_n (a:(nb))$ for positive bounded operators $a,b$
(to study Lebesgue decomposition).
Since $\{a:(nb)\}_n$ is an increasing sequence majorized by $a$, this sequence
converges to $a[b]$ in the strong operator topology.

Let us mimic this procedure in our non-commutative $L^p$-space setting.
We have to begin by understanding the meaning of $\sup$.
We assume that $\{A_n\}$ is an increasing sequence in $\ellp_+$ satisfying
$A_n \leq A$ for some $A \in \ellp_+ $. Then, $\{A-A_n\}$ is a decreasing 
sequence in  $\ellp_+$. We set $X=\Inf_n (A-A_n)$.  Indeed, we know $X \in \ellp_+$
and $A-A_n \to X$ in the strong resolvent sense and also in the $L^p$-norm.
(It is possible to use Proposition \ref{P-4.2}  here 
since Haagerup and Hilsum $L^p$-spaces
are  isometrically order-isomorphic.) 
Therefore, $A_n=A-(A-A_n) \to A-X$ in the $L^p$-norm.
Since $A-A_n \geq X$, we have $A-X \geq A_n$.
Conversely, if $Y \geq A_n$ $($for each $n$$)$, then $A-A_n \geq A-Y$ 
and hence $X \geq A-Y$, i.e., $Y \geq A-X$.
Thus, $A-X$ is the minimal element majorizing all $A_n$, i.e., $A-X=\sup_n A_n$.
Therefore, we have seen that 
\begin{quote}
an increasing sequence having an upper bound (in $\ellp_+$)
converges to its supremum in the $L^p$-norm. 
\end{quote}
We recall \eqref{F-6.1} (which is the strong limit) and
start from $\rmvppsi^{1/p}$, $\rmvpopsi^{1/p}$ in $\ellp_+$.
Parallel sums in non-commutative $L^p$-spaces were
studied in \S\ref{S-4}.
The increasing sequence $\{\rmvppsi^{1/p} : (n\rmvpopsi^{1/p})\}_{n \in \bN}$
bounded by $\rmvppsi^{1/p}$ from the above converges to the supremum in the $L^p$-norm
(as was remarked above).
Let us denote this supremum by 
$$
\rmvppsi^{1/p} [\rmvpopsi^{1/p}] =\sup_n \left(\rmvppsi^{1/p} : (n\rmvpopsi^{1/p})\right).
$$
As is explained at the beginning of \S \ref{S-6}
$($see also \cite[Corollary 3]{Ko7}$)$ $\rmvppsi^{1/p} [\rmvpopsi^{1/p}]$
is the maximum of all  $\rmvpopsi^{1/p}$-absolutely continuous operators $($in $\ellp_+$$)$
majorized by $\rmvppsi^{1/p}$, and hence we conclude
$$
\rmvppsi^{1/p} [\rmvpopsi^{1/p}] = \rmvppsi^{1/2p} \, \tilde{p} \, \rmvppsi^{1/2p}, 
$$ 
i.e., the maximal $\rmvpopsi^{1/p}$-absolutely continuous part captured 
in Theorem \ref{T-6.13}.
\end{remark}

\begin{theorem}\label{T-6.15}
$(${\bf Lebesgue decomposition}$)$ \
With the same notations as in the previous theorem the difference
$$
\rmvppsi^{1/p}-\rmvppsi^{1/2p} \,\tilde{p}\, \rmvppsi^{1/2p}
=\rmvppsi^{1/2p}(1- \tilde{p})\rmvppsi^{1/2p}
\ \bigl(\in \ellp_+\bigr)
$$
is $\rmvpopsi^{1/p}$-singular and hence
$$
\rmvppsi^{1/p}
=
\rmvppsi^{1/2p} \, \tilde{p }\, \rmvppsi^{1/2p}
+ \rmvppsi^{1/2p} \, (1- \tilde{p}) \, \rmvppsi^{1/2p}
$$
gives us a Lebesgue decomposition of $\rmvppsi^{1/p}$ into $($maximal$)$
$\rmvpopsi^{1/p}$-absolutely continuous and $\rmvpopsi^{1/p}$-singular parts.
\end{theorem}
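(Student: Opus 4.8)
The plan is to reduce Theorem~\ref{T-6.15} to the maximality clause of Theorem~\ref{T-6.13}, exactly as in Ando's bounded-operator argument. Write $A:=\rmvppsi^{1/2p}\,\tp\,\rmvppsi^{1/2p}$ and $S:=\rmvppsi^{1/2p}(1-\tp)\rmvppsi^{1/2p}$. Since $\tp\in\cM$ is a projection, $A=(\tp\rmvppsi^{1/2p})^*(\tp\rmvppsi^{1/2p})$ and $S=((1-\tp)\rmvppsi^{1/2p})^*((1-\tp)\rmvppsi^{1/2p})$, so both lie in $\ellp_+$ by H\"older for Hilsum $L^p$-spaces, and a straightforward computation (most transparently in the isometrically order-isomorphic Haagerup picture) gives $A+S=\rmvppsi^{1/p}$; this already justifies the displayed identity in the statement. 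By Theorem~\ref{T-6.13}, $A$ is the maximum of all $\rmvpopsi^{1/p}$-absolutely continuous elements of $\ellp_+$ majorized by $\rmvppsi^{1/p}$, so it remains only to show that $S$ is $\rmvpopsi^{1/p}$-singular.

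First I would record that absolute continuity is stable under addition: if $A',A''\in\ellp_+$ are $\rmvpopsi^{1/p}$-absolutely continuous with $A'_n\nearrow A'$, $A''_n\nearrow A''$, $A'_n\le\ell'_n\rmvpopsi^{1/p}$, $A''_n\le\ell''_n\rmvpopsi^{1/p}$, then $\{A'_n+A''_n\}$ is increasing, $A'_n+A''_n\le(\ell'_n+\ell''_n)\rmvpopsi^{1/p}$, and for every $C\in\ellq_+$ one has $\<A'_n+A''_n,C\>=\<A'_n,C\>+\<A''_n,C\>\nearrow\<A'+A'',C\>$, i.e.\ $A'_n+A''_n\nearrow A'+A''$ in $\sigma(L^p,L^q)$; hence $A'+A''$ is $\rmvpopsi^{1/p}$-absolutely continuous.

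Now take any $B\in\ellp_+$ with $B\le S$ and $B\le\rmvpopsi^{1/p}$. The constant sequence $B_n:=B$ exhibits $B$ as $\rmvpopsi^{1/p}$-absolutely continuous, so by the stability just noted $A+B$ is $\rmvpopsi^{1/p}$-absolutely continuous; moreover $A+B\le A+S=\rmvppsi^{1/p}$. The maximality clause of Theorem~\ref{T-6.13} then forces $A+B\le A$, i.e.\ $B\le0$, whence $B=0$. Thus $S$ is $\rmvpopsi^{1/p}$-singular, and $\rmvppsi^{1/p}=A+S$ is the asserted Lebesgue decomposition into a maximal $\rmvpopsi^{1/p}$-absolutely continuous part and a $\rmvpopsi^{1/p}$-singular part.

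There is no serious obstacle here, since the real work was done in Theorem~\ref{T-6.13} and the technical lemmas of \S\ref{S-6.3}; the proof is the $L^p$-analogue of Ando's. The only points needing a little care are bookkeeping: that the order on $\ellp_+$ is compatible with addition (the strong sum being the form sum, cf.\ Lemmas~\ref{L-3.2} and~\ref{L-3.4}, and sums of positive forms being positive), that $\rmvppsi^{1/2p}\rmvppsi^{1/2p}=\rmvppsi^{1/p}$ and $A+S=\rmvppsi^{1/p}$, and the correct reading of $\nearrow$ as convergence in $\sigma(L^p,L^q)$ in Definition~\ref{D-6.12}.
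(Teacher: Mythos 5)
Your proof is correct and follows essentially the same route as the paper's: both reduce singularity of $\rmvppsi^{1/2p}(1-\tp)\rmvppsi^{1/2p}$ to the maximality clause of Theorem \ref{T-6.13} by noting that a $B$ dominated by $\rmvpopsi^{1/p}$ is trivially absolutely continuous, that absolute continuity is stable under addition, and that $B+\rmvppsi^{1/2p}\,\tp\,\rmvppsi^{1/2p}\le\rmvppsi^{1/p}$ then forces $B=0$. The extra bookkeeping you supply (the explicit $\sigma(L^p,L^q)$ argument for additivity of absolute continuity and the identity $A+S=\rmvppsi^{1/p}$) is consistent with what the paper leaves implicit.
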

\begin{proof}
Let us assume that $B \in \ellp_+ $ satisfies 
$B \leq \rmvppsi^{1/2p}(1- \tilde{p})\rmvppsi^{1/2p}$ 
and $B \leq \rmvpopsi^{1/p}$.
At first we observe that the sum of two $\rmvpopsi^{1/p}$-absolutely continuous elements remains to be 
$\rmvpopsi^{1/p}$-absolutely continuous, which is clear from the definition.
Note that $B \, \left(\leq \rmvpopsi^{1/p}\right)$ is obviously $\rmvpopsi^{1/p}$-absolutely continuous 
and hence so is the sum  $B+\rmvppsi^{1/2p}\, \tilde{p}\, \rmvppsi^{1/2p}$.
However, we have $B+\rmvppsi^{1/2p}\, \tilde{p} \, \rmvppsi^{1/2p} \leq \rmvppsi^{1/p}$
because of $B \leq \rmvppsi^{1/2p}(1- \tilde{p})\rmvppsi^{1/2p}$.
Thus, the maximality stated in the preceding theorem implies  
$B + \rmvppsi^{1/2p}\,\tilde{p}\,\rmvppsi^{1/2p} 
\leq \rmvppsi^{1/2p}\,\tilde{p}\,\rmvppsi^{1/2p}$,
showing $B=0$.
\end{proof}

Recall that 
via $\rmvppsi^{1/p} \leftrightarrow h_{\varphi}^{1/p}$ one can identify $\ellp_+$ with
Haagerup's $L^p(\cM)_+$. The latter is independent of the choice of $\psi$ so that 
absolute continuity, Lebesgue decomposition and so on  between $\rmvppsi, \rmvpopsi \in \ellp_+$
are the same as those of $\Delta_{\varphi\psi_1}, \Delta_{\varphi_0\psi_1} \in L^p(\cM, \psi_1)_+$
with a different faithful $\psi_1 \in \cM_*^+$.
More precisely, everything is determined by just $\varphi$ and $\varphi_0$ (and no role is played
by $\psi$). Observe that $T_{\varphi}$ in Definition \ref{D-6.1} 
and
the projection $\tilde{p}$ in Remark \ref{R-6.9}
are determined just by $\varphi, \varphi_0$. 

We point out that for a finite von Neumann algebra the concept of absolute continuity
is not so meaningful. In fact, (whenever $\varphi_0$ is faithful) any $\rmvppsi^{1/p}$ is
$\rmvpopsi^{1/p}$-absolutely continuous. Indeed, we take a finite trace as $\psi$.
Then the Hilsum $L^p(\cM,\tau')$ is just the classical $L^p$-space $L^p(\cM,\tau)_+$ 
(in the setting of semi-finite von Neumann algebras).
Thus, what we are claiming is the following:
\begin{quote}
If $H, K \in L^p(\cM,\tau)_+ $ with $K$ non-singular,
then
$H$ is always $K$-absolutely continuous. 
\end{quote}
This comes from the classical fact 
(which goes back to Murray-von Neumann)
that all (unbounded) operators affiliated with $\cM$ can be manipulated 
without worrying domain questions,  closablity and so on, 
that is, everything is $\tau$-measurable.
Thus, $X=K^{-1/2}HK^{-1/2}$ always makes a perfect sense.
Thus, with the spectral projections $\{e_{\lambda}\}$ of $X$ 
we have 
$$
H_n=K^{1/2}\left(\int_0^{n} \lambda\,de_{\lambda}\right)K^{1/2} \nearrow K^{1/2}XK^{1/2}=H
$$
(as $n \to \infty$) with the obvious estimate  $H_n \leq nK$.

On the other hand, a lot of pathological phenomena are known even in $B(\cH)$ 
(see \cite[\S 10]{Ko3} for instance)
and hence so is the case for properly infinite von Neumann algebras $\cM \cong \cM \otimes B(\cH)$.

\appendix


\section{Haagerup's $L^p$-spaces}\label{S-A}

In this appendix we give a brief survey on Haagerup's $L^p$-spaces as a preliminary for
\S\ref{S-4} for the reader's convenience.

Let $\cM$ be a general von Neumann algebra on a Hilbert space $\cH$.
We fix a faithful semi-finite normal weight $\varphi_0$ on $\cM$ and the associated modular
automorphims $\sigma_t \, (=\sigma_t^{\varphi_0})$. Let
$$
\cR:=\cM \rtimes_{\sigma} {\bR}
$$
be the crossed product (acting on $L^2(\bR, \cH) = \cH \otimes L^2({\bR},dt)$), 
and the \emph{dual action} on $\cR$ is 
given by $\theta_s=\Ad\, \mu(s)\vert_{\cR}$ ($s \in \bR$) with 
\begin{equation}\label{F-A.1} 
(\mu(s)\xi)(t)=e^{-ist}\xi(t)  \quad (\mbox{for $\xi  \in  L^2(\bR, \cH)$}).
\end{equation}
(See \cite{Ta} for basics of crossed products.)
Let $T$ be the operator valued weight from $\cR$ to $\cM$ defined by
$$
T(x):=\int_{-\infty}^{\infty} \theta_s(x)\,ds \in \widehat{\cM}_+ \quad (\mbox{for $x \in \cR_+$}),
$$
where $\widehat\cM_+$ is the \emph{extended positive part} of $\cM$ (see \cite{Haa3,Te1}).
We note the invariance  $T\circ \theta_s=T$ ($s\in\bR$). Let $P_0(\cM,\bC)$ denote the set
of all semi-finite normal weights on $\cM$ (as in \S\ref{S-5}).
For each weight $\varphi \in P_0(\cM, \bC)$ the composition 
$$
\widehat{\varphi}=\varphi \circ T \in P_0(\cR, \bC) 
$$
 (where $\varphi$ is extended to a normal weight on $\widehat{\cM}_+$) 
is nothing but  the \emph{dual weight} of $\varphi$ (see \cite{Haa2} for details).
Then, the modular automorphism group $\sigma_t^{\widehat{\varphi}_0}$ on
$\cR=\cM \rtimes_{\sigma} {\bR}$ is implemented by the translation
operators $\lambda(t)$, i.e., $\sigma_t^{\widehat\sigma_0}=\Ad\,\lambda(t)$ with
$$
(\lambda(t)\xi)(s)=\xi(s-t) \quad (\xi \in L^2(\bR, \cH))
$$
(or more precisely $1 \otimes \lambda(t)$ on $L^2({\bR}, \cH) \cong \cH \otimes L^2(\bR,ds)$).
This is of course one of generators in the crossed product $\cM \rtimes_{\sigma} {\bR}$,
and hence  $\sigma_t^{\widehat{\varphi}_0}$ is inner, that is, $\cR$ is semi-finite.
Stone's theorem says $\lambda(t)=e^{itH_0}$ with some self-adjoint operator $H_0$, and
$\tau=\widehat{\varphi}_0(H^{-1}\cdot)$ (with the non-singular positive self-adjoint 
operator $H=e^{H_0}$) is a trace on $\cR$. 
This is often referred to as the \emph{canonical trace}
and possesses the following trace-scaling property:
$$
\tau \circ \theta_s=e^{-s}\tau \quad (s \in {\bR}).
$$
It is an easy exercise to show that via Fourier transformation $\mathcal F$ the operator $H$
is transformed to
\begin{equation}\label{F-A.2} 
{\mathcal F}H{\mathcal F}^*=m_{e^t},
\end{equation}
the multiplication operator induced by the function $t \mapsto e^t$.
For a weight $\varphi \in P_0(\cM,\bC)$ let $h_{\phi}$ be the Radon-Nikodym derivative 
of the dual weight $\widehat{\varphi}$ with respect to $\tau$, 
i.e., $\widehat{\varphi}=\tau(h_{\varphi}\,\cdot)$. Since $\widehat{\varphi}\circ \theta_s=\widehat{\varphi}$
from the definition, we have $\theta_s(h_{\varphi})=e^{-s}h_{\varphi}$. This scaling property
is known to characterize all (densely defined) positive self-adjoint operators $h$ (on $L^2(\bR, \cH)$)
of the form $h=h_{\varphi}$ for some $\varphi \in P_0(\cM,\bC)$.
We set
\begin{align*} 
{\bold H}&:=\mbox{the set of all positive self-adjoint operators $h$ affiliated with $\cR$}\\
& \qquad \qquad \qquad \mbox{satisfying $\theta_s(h)=e^{-s}h$ for each $s \in \bR$}
\end{align*}
so that
\begin{equation}\label{F-A.3} 
\varphi \in P_0(\cM, \bC) \ \longleftrightarrow \ h_{\varphi} \in {\bold H}
\end{equation}
is an order preserving one-to-one bijective correspondence.

One of the most important ingredients in Haagerup's theory on non-commutative $L^p$-spaces is 
the fact that $h_{\varphi}$ is a $\tau$-measurable operator exactly when $\varphi \in \cM_*^+$,
that is, restricting \eqref{F-A.3} to $\cM_*^+$ we have an order preserving bijection
$\ffi\in\cM_*^+\leftrightarrow h_\ffi\in{\bold H}\cap\overline\cR$, where $\overline\cR$ is
the space of $\tau$-measurable operators affiliated with $\cR$. For $0<p\le\infty$
\emph{Haagerup's $L^p$-space} is defined by
\begin{align}\label{F-A.4}
L^p(M):=\{a\in\overline\cR;\,\theta_s(a)=e^{-s/p}a,\,s\in\bR\},
\end{align}
and its positive part is $L^p(\cM)_+:=L^p(\cM)\cap\overline\cR_+$.
In particular, $L^\infty(\cM)=\cR^\theta$ (the $\theta$-fixed point algebra) $=\cM$.
Note $L^1(\cM)={\bold H}\cap\overline\cR$ and the above bijection extends to
$\rho\in\cM_*\leftrightarrow h_\rho\in L^1(\cM)$ by linearity. Thus, a positive linear
functional $\tr$ is defined on $L^1(\cM)$ as
$$
\tr(h_\rho)=\rho(1)\quad(\rho\in\cM_*).
$$
For
$0<p<\infty$ the $L^p$-(quasi)norm is defined by
$$
\|a\|_p:=\bigl(\tr(|a|^p)\bigr)^{1/p}\quad(a\in L^p(\cM)).
$$
Also $\|\cdot\|_\infty$ denotes the operator norm on $\cM$. When
$1\le p<\infty$, $L^p(M)$ is a Banach space with the norm $\|\cdot\|_p$, and its dual Banach
space is $L^q(\cM)$, where $1/p+1/q=1$, by the duality form
\begin{align}\label{F-A.5}  
\<a,b\>_{p,q}:=\tr(ab)\ (=\tr(ba))\quad \mbox{for $a\in L^p(\cM)$, $b\in L^q(\cM)$}.
\end{align}
In particular, $L^2(\cM)$ is a Hilbert space with the inner product
$$
(a,b):=\tr(b^*a)\ (=\tr(ab^*)).
$$
Then the quadruple 
$$
\<\pi_{\ell}(M),L^2(\cM),\,{}^*,L^2(\cM)_+\>
$$ 
(where $\pi_{\ell}$ means the left multiplication) is a standard form of $\cM$.

Note that the space $L^p(\cM)$ is independent (up to an isometric isomorphism) of the
choice of $\ffi_0$. In fact, set $\cR_1:=\cM\rtimes_{\sigma^{\ffi_1}}\bR$ for
another faithful semi-finite normal weight $\ffi_1$ on $\cM$. Then there exists an ismorphism
$\kappa:\cR\to\cR_1$ such that $\kappa\circ\theta_s\circ\kappa^{-1}$ is the dual action on
$\cR_1$ and $\tau\circ\kappa^{-1}$ is the canonical trace on $\cR_1$ (see
\cite[Chap.~II, Theorem 37]{Te1}). The $\kappa$ extends to a homeomorphic isomorphism
$\overline\cR\to\overline\cR_1$, and it induces an isometric isomorphism between $L^p(\cM)$
in $\overline\cR$ and that in $\overline\cR_1$. When $\cM$ is semi-finite with a trace
$\tau_0$, $L^p(\cM)$ is naturally identified with the classical $L^p$-space $L^p(\cM,\tau_0)$ 
(\cite{Di,Ku,Y})  with respect to
$\tau_0$ (see the last part of \cite[Chap.~II]{Te1}).

\section{Connes' spatial theory}\label{S-B}

Materials in \S\ref{S-5} and \S\ref{S-6} require some familiarity on Connes' spatial theory (\cite{C3}),
and brief explanation on this theory is given for the reader's convenience.
However, for the definition of spatial derivatives we mainly follow accounts in \cite{Te1}. 
Actually slightly different (but of course equivalent) explanations 
are presented in these two articles. The main difference is that use of extended 
positive parts is emphasized in the latter 
(see Remark \ref{R-B.1} and  Remark \ref{R-B.3}),
and this viewpoint seems to be more fitting to our purpose here.

\subsection{Spatial derivatives}\label{S-B.1}

Throughout let $\cM$ be a von Neumann algebra acting on a Hilbert space $\cH$
and $\chi$ be a (fixed) faithful  semi-finite normal weight on the commutant $\cM'$.
We will use the following standard notations:
\begin{align*}
n_{\chi}&:=\{y' \in \cM'; \ \chi(y'^*y') < \infty\},\\
\cH_{\chi}&:=\mbox{the Hilbert space completion of $n_{\chi}$ relative to the inner product}\\
&  \hskip 5cm   (y_1', y_2') \mapsto \chi(y_2'\mbox{}^*y_1'),\\ 
\Lambda_{\chi}&:= \mbox{the canonical injection $n_{\chi} \rightarrow \cH_{\chi}$},\\
\pi_{\chi}&:= \mbox{the GNS regular representation of $\cM'$ on $\cH_{\chi}$}.
\end{align*}
For each $\xi \in \cH$ we set
$$
R^{\chi}(\xi)\Lambda_{\chi}(y')=y'\xi \quad (\mbox{for $y' \in \cM'$}),
$$
which is a densely defined ($\cD(R^{\chi}(\xi))=\Lambda_{\chi}(n_{\chi})$) operator from $\cH_{\chi}$
to $\cH$. Obviously $\xi \mapsto R^{\chi}(\xi)$ is linear, and it is also plain to see
\begin{eqnarray}
&&\label{F-B.1} 
R^{\chi}(x\xi)=xR^{\chi}(\xi) \quad \mbox{for $x \in \cM$},\\
&&\label{F-B.2} 
y'R^{\chi}(\xi) \subseteq R^{\chi}(\xi)\pi_{\chi}(y') \quad \mbox{for $y' \in \cM'$}.
\end{eqnarray}
A vector $\xi \in \cH$ is said to be \emph{$\chi$-bounded} when $R^{\chi}(\xi)$ is a bounded operator, that is,
$$
\|y'\xi\|^2 \leq C\|\Lambda_{\chi}(y')\|^2=C\chi(y'{}^*y') \quad (y' \in n_{\chi})
$$
for some constant $C>0$.
We set
$$
D(\cH, \chi)=\{\xi \in \cH; \, \mbox{$\xi$ is a $\chi$-bounded vector}\},
$$
which is $\cM$-invariant due to \eqref{F-B.1}.
One can show that $D(\cH, \chi)$ is a dense subspace in $\cH$ (\cite[Lemma 2]{C3}). 

When $\xi \in D(\cH, \chi)$, $R^{\chi}(\xi)$ is extended to a bounded operator from $\cH_{\chi}$ to $\cH$
(for which the same symbol $R^{\chi}(\xi)$ will be used). For $\xi \in D(\cH, \chi)$ we set
$$
\theta^{\chi}(\xi,\xi)=R^{\chi}(\xi)R^{\chi}(\xi)^* \in \cM_+.
$$
Indeed, $R^{\chi}(\xi)R^{\chi}(\xi)^*$ is a bounded operator from $\cH$ to itself, which commutes
with each $y' \in \cM'$ (due to \eqref{F-B.2}).
Thanks to  \eqref{F-B.1} we have
\begin{equation}\label{F-B.3} 
\theta^{\chi}(x\xi, x\xi)=x\theta^{\chi}(\xi,\xi)x^*
\end{equation}
for $x \in \cM$ and $\xi \in D(\cH, \chi)$.

\begin{remark}\label{R-B.1}\rm

For a generic vector $\xi \in \cH$ $($not necessarily $\xi \in D(\cH, \chi)$$)$
$R^{\chi}(\xi)^*$ is a $($possibly non-densely defined$)$ closed operator from 
$\cH_{\chi}$ to $\cH$. Therefore, the relation
$$
\langle \omega_{\zeta},\theta^{\chi}(\xi,\xi) \rangle=
\left\{
\begin{array}{cl}
\|R^{\chi}(\xi)^*\zeta\|^2&\mbox{if $\zeta \in {\mathcal D}(R^{\chi}(\xi)^*)$},\\[2mm]
+\infty         & \mbox{otherwise}
\end{array}
\right.
$$
uniquely determines an element $\theta^{\chi}(\xi,\xi)$ in the extended positive part
$\widehat{\cM}_+$ $($see \cite[\S 1]{Haa3} and \cite[Chap.~III, Definition 5]{Te1} for details$)$.
Here, $\omega_{\zeta}$ means a positive linear functional $(\cdot\,\zeta,\zeta)$.
Then, \eqref{F-B.3} remains valid with the understanding
$$
\langle \omega_{\zeta}, x\theta^{\chi}(\xi,\xi)x^* \rangle
=\langle x^*\omega_{\zeta}x, \theta^{\chi}(\xi,\xi) \rangle 
=\langle \omega_{\zeta}(x \cdot x^*), \theta^{\chi}(\xi,\xi) \rangle 
=\langle \omega_{x^*\zeta}, \theta^{\chi}(\xi,\xi) \rangle.
$$
\end{remark}

Let $\psi$ be a semi-finite normal weight on $\cM$. We set
\begin{equation}\label{F-B.4} 
q_{\psi}(\xi)=\psi(\theta^{\chi}(\xi,\xi)) \in [0,\infty] \quad \mbox{for $\xi \in \cH$}
\end{equation}
(where $\psi$ is extended to a weight on  $\widehat{\cM}_+$).
It is easy to see that $\xi \in \cH \mapsto q_{\psi}(\xi) \in [0,\infty]$ is a quadratic form, i.e.,
$$
q_{\psi}(\xi_1+\xi_2)+q_{\psi}(\xi_1-\xi_2)=2q_{\psi}(\xi_1)+2q_{\psi}(\xi_2), 
\quad q_{\psi}(\lambda\xi)=|\lambda|^2q_{\psi}(\xi)
$$
for $\xi_1,\xi_2,\xi \in \cH$ and $\lambda \in \bC$. Moreover, $q_{\psi}(\cdot)$
is lower semi-continuous (and hence $q_{\psi}$ is a positive form in the sense explained in \S\ref{S-2})
as shown below.

To show lower semi-continuity, we begin with the special case 
$\psi=\omega_{\zeta}$ with a vector $\zeta \in \cH$.
Since ${\mathcal D}(R^{\chi}(\xi))=\Lambda_{\chi}(n_{\chi})$, 
for $\zeta \in {\mathcal D}(R^{\chi}(\xi)^*)$ we have
\begin{align}\label{F-B.5} 
q_{\omega_{\zeta}}(\xi)&=\|R^{\chi}(\xi)^*\zeta\|^2
=\sup_{\chi(y'^*y') \leq 1} |(R^{\chi}(\xi)^*\zeta,\Lambda_{\chi}(y'))|^2
\nonumber\\
&=\sup_{\chi(y'^*y') \leq 1} |(\zeta,R^{\chi}(\xi)\Lambda_{\chi}(y'))|^2
=\sup_{\chi(y'^*y') \leq 1} |(\zeta,y'\xi)|^2.
\end{align}
This variational expression \eqref{F-B.5} remains valid for $\zeta \not\in {\mathcal D}(R^{\chi}(\xi)^*)$ as well. 
In this case  we have $q_{\omega_{\zeta}}(\xi)=+\infty$ from the definition and
$\sup_{\chi(y'^*y') \leq 1} |(\zeta,R^{\chi}(\xi)\Lambda_{\chi}(y'))|^2=+\infty$ also holds true.
Indeed, if this $\sup$ were $C <\infty$, then we would get 
$|(\zeta,R^{\chi}(\xi)\Lambda_{\chi}(y'))|\leq C^{1/2}\|\Lambda_{\chi}(y')\|$ for each $y' \in n_{\chi}$ and hence 
$(\zeta,R^{\chi}(\xi)\Lambda_{\chi}(y'))=(\eta,\Lambda_{\chi}(y'))$ for some vector $\eta \in \cH_{\chi}$,
contradicting $\zeta \not\in {\mathcal D}(R^{\chi}(\xi)^*)$.
Since the map $\xi \mapsto (\zeta,y'\xi)$ is continuous for each fixed $y'$, 
$q_{\omega_{\zeta}}(\cdot)$
is lower semi-continuous thanks to \eqref{F-B.5}.
Finally a general weight $\psi$ can be expressed as $\psi=\sum_{\iota \in I}\omega_{\zeta_{\iota}}$
with a suitable family $\{\zeta_{\iota}\}_{\iota \in I}$ of vectors (see \cite{Haa1}) so that
$
q_{\psi}=\sum_{\iota \in I} q_{\omega_{\zeta_{\iota}}}
$
is also lower semi-continuous.

We set
$$
{\mathcal D}( q_{\psi})=\{ \xi \in \cH; \, q_{\psi}(\xi) < \infty\}.
$$
Since $\psi$ is semi-finite, we can prove that ${\mathcal D}( q_{\psi}) $ is dense in $\cH$. 
This follows from the following observation: for $x \in n_{\psi}=\{x \in \cM;\, \psi(x^*x)<\infty\}$ and
$\xi \in D(\cH, \chi)$ we have
$$
q_{\psi}(x^*\xi)=\psi(\theta^{\chi}(x^*\xi,x^*\xi))
=\psi(x^*\theta^{\chi}(\xi,\xi)x) \leq \|\theta^{\chi}(\xi,\xi)\|\psi(x^*x) < \infty
$$
due to \eqref{F-B.3} (see \cite[Lemma 6]{C3} for details).
Therefore, the restriction
$$
q_{\psi}\vert_{{\mathcal D}( q_{\psi})}: \
\xi \in {\mathcal D}( q_{\psi}) \  \mapsto \ q_{\psi}(\xi) \in [0,\infty)
$$ 
is a densely defined quadratic form.
Lower semi-continuity explained above means that this quadratic form is  closed 
(see \cite[Proposition 10.1]{Sch} for instance).

With preparations so far we are now ready to use the standard representation theorem 
for densely defined closed quadratic forms 
(see \cite[Theorem VIII.15]{RS} or \cite[Theorem 10.7]{Sch} for instance).
\begin{definition}\label{D-B.2}\rm
The positive self-adjoint operator associated with the densely defined closed quadratic 
form $q_{\psi}\vert_{{\mathcal D}( q_{\psi})}$ $($see \eqref{F-B.4}$)$ is denoted by $d\psi/d\chi$, 
the \emph{spatial derivative} of a $($semi-finite normal$)$ weight $\psi$ on $\cM$ relative to
a $($faithful semi-finite normal$)$ weight $\chi$ on $\cM'$. 
Since we have captured $d\psi/d\chi$ via a closed quadratic form, we have
$$
q_{\psi}(\xi)=
\left\{
\begin{array}{cl}
\|(d\psi/d\chi)^{1/2}\xi\|^2
& \
\mbox{when $\xi \in {\mathcal D}((d\psi/d\chi)^{1/2})$},\\[2mm]
+\infty & \ \mbox{otherwise}
\end{array}
\right.
$$
with
${\mathcal D}(q_{\psi})={\mathcal D}((d\psi/d\chi)^{1/2})$.
\end{definition}

A few remarks are in order:
\begin{remark}\label{R-B.3}\rm
\mbox{}
\begin{itemize}
\item[(i)]  
The extended positive part $\widehat{\cM}_+$ is not used in \cite{C3} and slightly
different explanation is given there.
Namely, the quadratic form $q_{\psi}$ $($see \eqref{F-B.4}$)$ is just defined on $D(\cH, \chi)$. 
Then, it is extended to a lower semi-continuous quadratic form on the whole space $\cH$ 
$($based on the idea borrowed from \cite{Si1}$)$.
Its restriction to the ``finite part" $($where finite values are taken$)$ is  
a closable quadratic form due to lower semi-continuity.
Then  the spatial derivative $d\psi/d\chi$ is defined as the positive
self-adjoint operator associated with the closure of this closable form.
Equivalence of the two definitions is explained in \cite[Chap.~III]{Te1}. 

\item[(ii)]
The support $($as a positive self-adjoint operator$)$ of $d\psi/d\chi$ is the support $($as a weight$)$
of $\psi$ $($see \cite[Corollary 12]{C3}$)$.

\item[(iii)]
The intersection $D(\cH, \chi) \cap {\mathcal D}\left((d\psi/d\chi)^{1/2}\right)$ 
is known to be a core for the square root $(d\psi/d\chi)^{1/2}$ $($see
\cite[Chap.~III, Proposition 22, 2)]{Te1}$)$.
\end{itemize}
\end{remark}

We record the following fact (\cite[Proposition 3]{C3}) (that was needed in \S\ref{S-5}):

\begin{lemma}\label{L-B.4}
There exists a family $\{\xi_{\iota}\}_{\iota \in I}$ of vectors in $D(\cH, \chi)$, the set of $\chi$-bounded vectors, 
such that
$$
\sum_{\iota \in I}\theta^{\chi}
(\xi_{\iota}, \xi_{\iota})=1.
$$
\end{lemma}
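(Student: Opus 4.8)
The plan is to run a maximality (Zorn) argument, exploiting the density and the $\cM$-invariance of the $\chi$-bounded vectors together with the covariance identity \eqref{F-B.3}.

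First I would set up the poset. Let $\mathcal S$ be the collection of all indexed systems $F=(\xi_\iota)_{\iota\in I_F}$ with $\xi_\iota\in D(\cH,\chi)$ such that $\sum_{\iota\in J}\theta^\chi(\xi_\iota,\xi_\iota)\le 1$ for every finite $J\subseteq I_F$; recall that $\theta^\chi(\xi,\xi)=R^\chi(\xi)R^\chi(\xi)^*\in\cM_+$ for $\xi\in D(\cH,\chi)$, and that a bounded increasing net of positive operators converges $\sigma$-strongly to its supremum, so for $F\in\mathcal S$ the operator $p_F:=\sum_{\iota\in I_F}\theta^\chi(\xi_\iota,\xi_\iota)\in\cM_+$ is well defined and $\le 1$. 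Order $\mathcal S$ by inclusion of systems. The empty system lies in $\mathcal S$, and the union of a chain in $\mathcal S$ is again in $\mathcal S$ (a finite subset of the union already sits inside one member of the chain), so Zorn's lemma yields a maximal element $F^*=(\xi_\iota)_{\iota\in I^*}$. Put $p:=p_{F^*}\le 1$. It then suffices to prove $p=1$, since the finite partial sums of $\{\theta^\chi(\xi_\iota,\xi_\iota)\}_{\iota\in I^*}$ form an increasing net bounded by $1$ with supremum $p$, so $p=1$ gives exactly the stated convergence $\sum_{\iota\in I^*}\theta^\chi(\xi_\iota,\xi_\iota)=1$.

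The substantive point is the enrichment step showing $p=1$. Suppose $a:=1-p\neq 0$. Since $D(\cH,\chi)$ is dense in $\cH$ by \cite[Lemma 2]{C3} and $a^{1/2}\neq 0$, I can pick $\eta\in D(\cH,\chi)$ with $a^{1/2}\eta\neq 0$; in particular $\eta\neq 0$, hence $R^\chi(\eta)\neq 0$ (because $R^\chi(\eta)\Lambda_\chi(y')=y'\eta$ and $n_\chi$ is $\sigma$-weakly dense in $\cM'$), hence $\theta^\chi(\eta,\eta)\neq 0$, so after rescaling $\eta$ I may assume $\theta^\chi(\eta,\eta)\le 1$. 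Now set $\xi:=a^{1/2}\eta$. By the $\cM$-invariance of $D(\cH,\chi)$ coming from \eqref{F-B.1} we have $\xi\in D(\cH,\chi)$, and \eqref{F-B.3} gives
\[
\theta^\chi(\xi,\xi)=a^{1/2}\,\theta^\chi(\eta,\eta)\,a^{1/2}\le a^{1/2}a^{1/2}=a=1-p ,
\]
while $\theta^\chi(\xi,\xi)=R^\chi(\xi)R^\chi(\xi)^*\neq 0$ since $\xi=a^{1/2}\eta\neq 0$. Then $F^*$ augmented by the extra vector $\xi$ is a strictly larger system still belonging to $\mathcal S$ (its finite partial sums are dominated by $p+\theta^\chi(\xi,\xi)\le 1$), contradicting maximality of $F^*$. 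Hence $p=1$, which completes the proof.

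I do not expect a genuine obstacle here: apart from this enrichment move everything is bookkeeping — nonemptiness of $\mathcal S$, upper bounds for chains, the elementary fact $\theta^\chi(\xi,\xi)\in\cM_+$, and the elementary fact $R^\chi(\eta)\neq 0$ for $\eta\neq 0$ (which is what makes the rescaling of $\eta$ legitimate). The only place where the structure is really used is the pair of identities \eqref{F-B.1} (the $\chi$-bounded vectors form an $\cM$-module) and \eqref{F-B.3} (the covariance $\theta^\chi(x\xi,x\xi)=x\theta^\chi(\xi,\xi)x^*$), which together let me manufacture, from any nonzero residual positive contraction $1-p$, a bounded vector $\xi$ with $0\neq\theta^\chi(\xi,\xi)\le 1-p$.
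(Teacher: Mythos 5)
Your argument is correct. The paper does not actually prove this lemma --- it records it as a known fact with a citation to \cite[Proposition 3]{C3} --- and your Zorn-plus-enrichment argument is essentially the standard proof of that proposition: the only substantive inputs are exactly the ones you isolate, namely the density of $D(\cH,\chi)$, its $\cM$-invariance via \eqref{F-B.1}, and the covariance \eqref{F-B.3}, which let you manufacture from any nonzero residual $a=1-p$ a bounded vector $\xi=a^{1/2}\eta$ with $0\neq\theta^{\chi}(\xi,\xi)\le a$ and so contradict maximality.
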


Let us start from a standard form 
$\langle \cM, \cH, J, {\mathcal P} \rangle$ and 
$\varphi, \varphi_0  \in \cM_*^+$ with $\varphi_0$ faithful.
Let $\xi_{\varphi}, \xi_{\varphi_0}$  be unique implementing vectors in ${\mathcal P}$ for
$\varphi, \varphi_0$ respectively so that $\xi_{\varphi_0}$ is cyclic and separating.
Here we recall the notion of the relative modular operator due to Araki \cite{Ar}.
A densely defined (conjugate linear) operator
$$
S_{\varphi \varphi_0}: \ x\xi_{\varphi_0}  \in \cM \xi_{\varphi_0} \ \longmapsto
\ x^*\xi_{\varphi} \in \cM \xi_{\varphi}
$$
is closable, and its closure admits the polar decomposition of the form 
$$
\overline{S_{\varphi \varphi_0}}=J\Delta_{\varphi\varphi_0}^{1/2}.
$$
The positive self-adjoint operator $\rmvppsi$ here is referred to as the
\emph{relative modular operator} (of $\varphi$ relative to $\psi$) in the literature.
We set
$$
\varphi_0'(y')=(y'\xi_{\varphi_0}, \xi_{\varphi_0}) \quad (y' \in \cM')
$$
so that $\varphi_0' \in \cM'{}_*^{+}$ is the same as $\varphi_0'(y')=\varphi_0(Jy'^*J)$.
We consider the spatial derivative $d\varphi/d\varphi_0'$ in this special setting.
We have of course $n_{\varphi_0'}=\cM'$ and
$\cH_{\varphi_0'}, \Lambda_{\varphi_0'}, \pi_{\varphi_0'}$ can be identified with
$$
\cH_{\varphi_0'}=\cH, \quad  \Lambda_{\varphi_0'}(y')=y'\xi_{\varphi_0}, \quad \pi_{\varphi_0'}(y')\xi=y'\xi.
$$
An operator $R^{\varphi_0'}(\xi)$ (with $\xi \in \cH$) from $\cH$ to 
$\cH_{\varphi_0'}$ becomes an operator on $\cH$ and is
given by 
$$
R^{\varphi_0'}(\xi)(y'\xi_{\varphi_0})=y'\xi \quad (y' \in \cM').
$$
If $R^{\varphi_0'}(\xi)$ is a bounded operator on $\cH$, then $R^{\varphi_0'}(\xi)$ 
sits in $\cM'' =\cM$  (due to \eqref{F-B.2}) 
and this means $\xi=R^{\varphi_0'}(\xi)\xi_{\varphi_0} \in \cM\xi_{\varphi_0}$.
Therefore, we simply have  
\begin{eqnarray*}
&&
D(\cH,\varphi_0')=\cM\xi_{\varphi_0},\\
&&
R^{\varphi_0'}(x\xi_{\varphi_0})=x \quad \mbox{for $x \in \cM$}, \\
&&
\theta^{\varphi_0'}
(x\xi_{\varphi_0},x\xi_{\varphi_0})=R^{\varphi_0'}(x\xi_{\varphi_0})R^{\varphi_0'}(x\xi_{\varphi_0})^*
=xx^*.
\end{eqnarray*}
Therefore, the quadratic form $q_{\varphi}$ (used to
define the spatial derivative $d\varphi/d\varphi_0'$, see \eqref{F-B.4})
is simply
$$
x\xi_{\varphi_0} \in \cM\xi_{\varphi_0} \ \longmapsto \ q_{\varphi}(x\xi_{\varphi_0})=\varphi(xx^*) \in [0,\infty).
$$
We note
$$
\varphi(xx^*)=\|x^*\xi_{\varphi}\|^2 \ \left(=\|S_{\varphi\varphi_0}x\xi_{\varphi_0}\|^2
=\|\Delta_{\varphi\varphi_0}^{1/2}x\xi_{\varphi_0}\|^2\right).
$$
Therefore, the spatial derivative $d\varphi/d\varphi_0'$ in this special case
is nothing but the relative modular operator $\Delta_{\varphi\varphi_0}$, i.e.,
\begin{align}\label{F-B.6} 
d\varphi/d\varphi_0'=\Delta_{\varphi\varphi_0} \quad (\varphi\in M_*^+).
\end{align}

\subsection{Hilsum's $L^p$-spaces}\label{S-B.2}

Let $\cM$ be a von Neumann algebra acting on $\cH$, and
we fix a faithful semi-finite normal weight $\chi$ on the commutant $\cM'$.
For a semi-finite normal weight $\varphi$ we denote Connes' spatial derivative 
of $\varphi$ relative to $\chi$ by $d\varphi/d\chi$ (see Appendix \S\ref{S-B.1}
for details).
Then, for $p \in [1,\infty)$ \emph{Hilsum's $L^p$-space} $L^p(\cM,\chi)$
consists of all (densely defined) closed operators $T$ on $\cH$ 
whose polar decompositions $T=u|T|$ satisfy $u \in \cM$ and
$|T|^p=d\varphi/d\chi$ with $\varphi \in \cM_*^+$ (see \cite{H}).
 In particular, we have
$$
L^p(\cM,\chi)_+=
\left\{  
(d\varphi/d\chi)^{1/p};\, \varphi \in \cM_*^+
\right\}.
$$
For a functional $\rho \in \cM_*$ with the polar decomposition $\rho=u\varphi$ (with
$\varphi=|\rho| \in \cM_*^+$) as a functional the notations $d\rho/d\chi=ud\varphi/d\chi$ and 
$$
\int d\rho/d\chi\,d\chi=\rho(1) \  (=\varphi(u))
$$
are used in \cite{H}.  Hilsum's $L^p$-space $L^p(\cM,\chi)$ can be identified
with the Haagerup $L^p$-space  $L^p(\cM)$ (see Appendix \ref{S-A}) via
$$
T \quad \longleftrightarrow \quad uh_{\varphi}^{1/p},
$$ 
where $T$ has the polar decomposition $T=u(d\varphi/d\chi)^{1/p}$.
In fact, through this identification all the operations (such as  sums, products and so on)
among  elements in $L^p(\cM,\chi)$ (which are unbounded operators) are justified 
(see \cite[\S 1]{H} and \cite[Chap.~IV]{Te1} for details).
Thus, one can manipulate them like $\tau$-measurable operators.
Via this identification between $L^1(\cM,\chi)$ and $L^1(\cM)$ the above 
$\int\cdot \,d\chi$ corresponds exactly
to the trace-like linear functional $\tr(\cdot)$ (explained in Appendix \ref{S-A}).
By this reason, we will use the the same symbol $\tr$ (instead of the integral notation
$\int\cdot \,d\chi$). For $T \in L^p(\cM,\chi)$ its $L^p$-norm is defined by
$$
\|T\|^p=\bigl(\tr(|T|^p)\bigr)^{1/p}.
$$
Thus, for $T$ with the polar decomposition  $u(d\varphi/d\chi)^{1/p}$
we have $\|T\|^p=\varphi(1)^{1/p}$.

In particular, when $\cM$ is represented in a standard form $\<\cM,\cH,J,\mathcal P\>$ and
$\chi=\varphi_0'$ with a faithful $\ffi_0\in\cM_*^+$ (where $\varphi_0'(x')=\ffi_0(Jx'^*J)$),
in view of \eqref{F-B.6} we can write
\begin{align}\label{F-B.7} 
L^p(\cM,\varphi_0')_+=\{\Delta_{\varphi\varphi_0}^{1/p};\, \ffi\in\cM_*^+\},
\end{align}
and $\Delta_{\varphi\varphi_0}^{1/p}=(d\varphi/d\varphi_0')^{1/p}$ corresponds to
$h_\varphi^{1/p}$ in Haagerup's $L^p$-space $L^p(\cM)$.

The well-known duality between $\ellp$ and $\ellq$ with $1/p+1/q=1$ is given by the bilinear
form $\langle \cdot,\cdot \rangle$ on $\ellp \times \ellq$ defined by
\begin{align}\label{F-B.8} 
\langle A, B \rangle \,(=\langle A, B \rangle_{p,q})=\mbox{tr}(AB)
\quad\mbox{for $A\in\ellp$, $B\in\ellq$}
\end{align}
(which of course corresponds to \eqref{F-A.5}).
Here, suffixes $p,q$ may be omitted when no confusion is possible.
The inner product in the $L^2$-space $L^2(\cM,\psi')$ is given by
$$
(A,B)=\mbox{tr}(AB^*).
$$
We note that the quadruple 
$$
\langle
\pi_{\ell}(\cM), \ L^2(\cM,\psi'), \ {}^*, \ L^2(\cM,\psi')_+
\rangle
$$
(where $\pi_{\ell}$ means the left multiplication) is a standard form.

\subsection{Canonical correspondence of $P(\cM, \bC)$ and $P(B({\mathcal H}), \cM')$}
\label{S-B.3}

For convenience we will use the following notations:
\begin{eqnarray*}
&&
P(\cM, \bC):=\mbox{the set of all faithful semi-finite normal weights on $\cM$},\\
&&
P(B(\cH), \cM'):=\mbox{the set of all faithful semi-finite normal}\\
&&    \hskip 4cm    \mbox{operator valued weights from $B(\cH)$ to $\cM'$}.
\end{eqnarray*}
A canonical order reversing  correspondence between 
$P(\cM, \bC)$ and $P(B(\cH), \cM')$ 
was constructed in \cite{C3}, and brief explanation on this correspondence  is presented.

We take faithful semi-finite normal weights $\phi,\phi_1$ on $\cM$
and  a faithful semi-finite normal weight $\chi$  on $\cM'$.
We will repeatedly use modular properties of spatial derivatives (see \cite{C3}):
\begin{itemize}
\item[(a)] 
$(d\phi/d\chi)^{-1}=d\chi/d\phi$,
\item[(b)]
We have
\begin{eqnarray*}
&& 
\sigma^{\phi}_t(x)=(d\phi/d\chi)^{it}x(d\phi/d\chi)^{-it}  \ \mbox{for} \ x \in \cM,\\ 
&&
\sigma^{\chi}_t(y')=(d\phi/d\chi)^{-it}y'(d\phi/d\chi)^{it}
\ \left(=  (d\chi/d\phi)^{it}y'(d\chi/d\phi)^{-it}  \right)
 \ \mbox{for} \ y' \in \cM',
\end{eqnarray*}
\item[(c)] $(D\phi:D\phi_1)_t=(d\phi/d\chi)^{it}(d\phi_1/d\chi)^{-it}$.
\end{itemize}

Firstly we start from $\phi \in P(\cM, \bC)$
(by fixing $\chi$ for a moment). We set $\omega=\Tr((d\chi/d\phi) \, \cdot)$
(with the standard trace on $B(\cH)$ and the density operator $d\chi/d\phi$) 
which is a weight on $B(\cH)$. We note
$$
\sigma_t^{\omega}=\Ad \, (d\chi/d\phi)^{it} \quad \mbox{and} \quad
\sigma_t^{\omega}\vert_{\cM'}=\sigma_t^{\chi}
$$
(as was mentioned in the above (b)).
Thus, there is a
unique operator valued weight $E  \in P(B(\cH), \cM')$ satisfying
$\omega=\chi\circ E$ (\cite[Theorem 5.1]{Haa3}). We note that $E$ does not depend on $\chi$.  
Indeed, for another faithful semi-finite normal weight $\chi'$ on $\cM'$ we have
$$
(D(\chi'\circ E):D(\chi\circ E))_t=(D\chi':D\chi)_t=(d\chi'/d\phi)^{it}(d\chi/d\phi)^{-it}.
$$
This means that the density operator for the weight $\chi'\circ E$ on $B(\cH)$ is $d\chi'/d\phi$,
that is, $\chi'\circ E=\Tr((d\chi'/d\phi) \, \cdot)$ and $\chi'$ gives us the same $E$. 
Let us use the notation $E=\phi^{-1}$.  
Discussions so far means that the defining
property for $\phi^{-1} \in P(B(\cH), \cM')$ is
\begin{equation}\label{F-B.9} 
\chi \circ \phi^{-1}=\Tr((d\chi/d\phi)\,\cdot)
\end{equation} 
for each faithful semi-finite normal weight $\chi$ on $\cM'$. 
Note that the value of \eqref{F-B.9} against a rank-one operator $\xi \otimes \xi^{c}$ is
$$
\Tr\left((d\chi/d\phi)  (\xi \otimes \xi^{c}) \right)
=\|(d\chi/d\phi)^{1/2} \xi \|^2=\chi\left( \theta^{\phi}(\xi,\xi) \right)
$$
(see \eqref{F-B.4}) so that \eqref{F-B.9} means 
$$
\phi^{-1}(\xi \otimes \xi^{c})=\theta^{\phi}(\xi,\xi).
$$

This time we start from $F \in P(B(\cH), \cM')$. We set $\omega'=\chi\circ F$.
This is a weight on $B(\cH)$ and of the form $\Tr(K\,\cdot)$ with some density operator $K$.
We have
$$
\sigma_t^{\omega'}=\Ad\, K^{it} \quad \mbox{and} \quad
\sigma_t^{\omega'}(y')=\sigma_t^{\chi}(y')=(d\phi/d\chi)^{-it}y'(d\phi/d\chi)^{it} \quad \mbox{for $y' \in \cM'$},
$$
and hence $D_t=K^{-it}(d\phi/d\chi)^{-it}$ falls into $\cM''=\cM$. We also note
\begin{align*}
D_{t+s}&=K^{-it}K^{-is}(d\phi/d\chi)^{-is}(d\phi/d\chi)^{-it}\\
&=K^{-it}(d\phi/d\chi)^{-it}(d\phi/d\chi)^{it}K^{-is}(d\phi/d\chi)^{-is}(d\phi/d\chi)^{-it}
=D_t\sigma_t^{\phi}(D_s).
\end{align*}
This means that $D_t$ is a $\sigma^{\phi}$-cocycle so that 
$$
D_t=(D\psi:D\phi)_t \left(=(d\psi/d\chi)^{it} (d\phi/d\chi)^{-it} \right)
$$ 
for some faithful semi-finite normal weight $\psi$ on $\cM$ (\cite[Th\'{e}or\`{e}me 1.2.4]{C1}),
showing $K=(d\psi/d\chi)^{-1}=d\chi/d\psi$ and
$$
\chi\circ F=\omega' \ \bigl(=\Tr(K\,\cdot) \bigr) =\Tr((d\chi/d\psi)\, \cdot).
$$
This expression shows that $\psi$ is determined by $F$ and $\chi$.
However, $\psi$ does not depend on $\chi$. Indeed, for another faithful semi-finite normal
weight $\chi'$ on $\cM'$ we have
$$
(D(\chi'\circ F): D(\chi\circ F))_t=(D\chi':D\chi)_t=(d\chi'/d\psi)^{it}(d\chi/d\psi)^{-it},
$$ 
showing $\chi'\circ F=\Tr((d\chi'/d\psi)\, \cdot)$.
Let us denote the above $\psi \in P(\cM, \bC)$ by $F^{-1}$.
Discussions so far mean that the defining property of $F^{-1}$ is
\begin{equation}\label{F-B.10} 
\chi \circ F=\Tr((d\chi/dF^{-1})\, \cdot)
\end{equation}
for each faithful semi-finite normal weight $\chi$ on $\cM'$.

It is easy to see $(\phi^{-1})^{-1}=\phi$ for $\phi \in P(\cM, \bC)$
and $(F^{-1})^{-1}=F$ for $F \in P(B(\cH), \cM')$ 
by repeated use of the defining properties \eqref{F-B.9} and \eqref{F-B.10}. 
Indeed, we have
\begin{eqnarray*}
&&
\Tr\left(\left( d\chi/d((\phi^{-1})^{-1})
\right) \cdot \right)
=\chi\circ \phi^{-1}=\Tr\left(\left(d\chi/d\phi\right) \cdot \right),\\
&&
\chi\circ (F^{-1})^{-1}=\Tr\left(\left( d\chi/dF^{-1}
\right) \cdot \right)=\chi\circ F.
\end{eqnarray*}
The properties \eqref{F-B.9} and \eqref{F-B.10}
also clearly show that taking the inverse is  order-reversing, which can be also seen from
\begin{align}\label{F-B.11} 
(D\phi_1^{-1}: D\phi_2^{-1})_t
&= (D(\chi \circ\phi_1^{-1}): D(\chi\circ\phi_2^{-1}))_t
\nonumber\\
&= (d\chi/d\phi_1)^{it}(d\chi/d\phi_2)^{-it} \quad (\mbox{by \eqref{F-B.9}})
\nonumber\\
&= (d\phi_1/d\chi)^{-it}(d\phi_2/d\chi)^{it}
= (D\phi_1:D\phi_2)_{-t}
\nonumber\\
&= (D\phi_2:D\phi_1)_{-t}^*
\end{align}
for $\phi_1, \phi_2 \in P(\cM, \bC)$.
\begin{remark}\label{R-B.5}\rm
Let us clarify the reason why order-reversing property can be seen from \eqref{F-B.11}.
We set $I_{-1/2}=\{z \in \bC;\,  -1/2\leq\Im z \leq 0 \}$ and 
$I_{1/2}=\{z \in \bC;\,  0 \leq \Im z \leq 1/2 \}$
for convenience. Then we observe$:$
\begin{quote}
$(D\phi_1^{-1}: D\phi_2^{-1})_t$ extends
to a bounded  continuous function on $I_{-1/2}$
which is analytic in the interior
if and only if  $(D\phi_2:D\phi_1)_t$ has the same extension property. Furthermore, when this extension
property is satisfied, we have
\begin{equation}\label{F-B.12} 
(D\phi_2:D\phi_1)_{z}=(D\phi_1^{-1}: D\phi_2^{-1})_{-\overline{z}}^*
\quad (\mbox{for $z \in I_{-1/2}$})
\end{equation}
and in particular $(D\phi_2:D\phi_1)_{-i/2}=(D\phi_1^{-1}: D\phi_2^{-1})_{-i/2}^*$.
\end{quote}
Therefore, the order reversing property holds true thanks to \cite[Th\'eor\`eme 3]{C2}.

To see the above extension property, let us assume that
$(D\phi_1^{-1}: D\phi_2^{-1})_t$ has the extension $(D\phi_1^{-1}: D\phi_2^{-1})_z$ $($$z \in I_{-1/2}$$)$
stated above.
The  obvious computation
$$
((D\phi_1^{-1}: D\phi_2^{-1})_{\overline{z}}^*\,\xi,\zeta)=(\xi,(D\phi_1^{-1}: D\phi_2^{-1})_{\overline{z}}\,\zeta)
=\overline{((D\phi_1^{-1}: D\phi_2^{-1})_{\overline{z}}\,\zeta,\xi)}
$$
$($for vectors $\xi,\zeta$$)$ shows that
$(D\phi_1^{-1}: D\phi_2^{-1})_{\overline{z}}^*$ is a bounded continuous function
on $\overline{I_{-1/2}}=I_{1/2}$ which is analytic in the interior.
Therefore,
$(D\phi_1^{-1}: D\phi_2^{-1})_{-\overline{z}}^*$
is a bounded continuous function on $-I_{1/2}=I_{-1/2}$ which is analytic in the interior.
The value of this function for $z=t \in \bR$ is
$(D\phi_1^{-1}: D\phi_2^{-1})_{-t}^*=(D\phi_2:D\phi_1)_{t}$ thanks to \eqref{F-B.11}.
This means that $(D\phi_2:D\phi_1)_t$ extends to a bounded  continuous function on $I_{-1/2}$
which is analytic in the interior and we have \eqref{F-B.12}.
Conversely, when $(D\phi_2:D\phi_1)_t$ has the extension $(D\phi_2:D\phi_1)_z$
$($$z \in I_{-1/2}$$)$ as stated above,
by similar arguments as above 
$(D\phi_2:D\phi_1)_{-\overline{z}}^*$ is a bounded continuous function on $I_{-1/2}$ 
which is analytic in the interior. 
Also for $z=t \in \bR$ the value of this function is
$(D\phi_2:D\phi_1)_{-t}^*=(D\phi_1^{-1}: D\phi_2^{-1})_t$,
that is, we have the extension $(D\phi_1^{-1}: D\phi_2^{-1})_z$ and
$(D\phi_1^{-1}: D\phi_2^{-1})_z=(D\phi_2:D\phi_1)_{-\overline{z}}^*$ $($$z \in I_{-1/2}$$)$.
This equation is of course equivalent to \eqref{F-B.12}.

In \cite{Haa3} an order-reversing bijection $($called $\alpha$$)$ from 
$P(\cM, \bC)$ onto $P(B(\cH), \cM')$ was 
constructed in a less canonical fashion, 
and the formula $(D\alpha(T_1):D\alpha(T_2))_t=(DT_2:DT_1)_t$ appears in \cite[p.~360]{Haa3}.
This is an obvious misprint, and $(D\alpha(T_1):D\alpha(T_2))_t=(DT_2:DT_1)_{-t}^*$ is the 
correct formula.
\end{remark}

\section{Infima of decreasing sequences of weights}\label{S-C}

In the main body of the article we have encountered decreasing sequences of
various operators such as
$\tau$-measurable operators and positive elements in non-commutative $L^p$-spaces 
(see Theorem \ref{T-3.8} and Proposition \ref{P-4.2} for instance). 
More generally decreasing sequences of unbounded positive self-adjoint operators 
(or rather positive forms in the sense of \S\ref{S-2}) were considered in \cite{Ko6} 
(motivated by \cite {Si1}).
Here, for the sake of completeness we will study decreasing sequences 
of weights on a von Neumann algebra,
and probably no such study has  been made in literature. 

Let $\cM$ be a von Neumann algebra acting on a Hilbert space $\cH$
and $\chi$ be a (fixed) faithful  semi-finite normal weight on the commutant $\cM'$.
The modular automorphism group on $\cM'$ induced by $\chi$ will be denoted by
$\{\sigma_t'\}_{t \in \bR}$.

\begin{definition}[\cite{C3,H,Te1}]\label{D-C.1}\rm
Assume $\gamma \in \bR$. A densely defined closed operator $T$ on $\cH$ with the polar
decomposition $T=u|T|$ is said to be \emph{$\gamma$-homogeneous} (relative to $\chi$) when
$$
\mbox{$u \in \cM$ and $\sigma_{\gamma t}'(y')|T|^{it}=|T|^{it}y'$
for each $t \in \bR, \ y' \in \cM'$.}
$$
\end{definition}

The next characterization is proved based on Carlson's theorem
for entire functions of exponential type  (see \cite[Lemma 2.1]{Ko1} and \cite[p.~338]{Te2}).
For instance it is difficult to see that the sum of $\gamma$-homogeneous operators 
possesses the same homogeneity from Definition \ref{D-C.1} whereas
this property can be immediately seen from the characterization in the next lemma. 

\begin{lemma}\label{L-C.2}
A densely defined closed operator $T$ on $\cH$ is $\gamma$-homogeneous if and only if
$$
y'T \subseteq T\sigma_{i\gamma}'(y')
$$
for each $y' \in \cM'$ analytic with respect to the modular automorphism group $\sigma_t'$.
\end{lemma}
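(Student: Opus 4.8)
The plan is to prove the two implications separately, in each case reducing the claim to a statement about the positive part $|T|$ and then passing between the ``imaginary-time'' form of $\gamma$-homogeneity in Definition~\ref{D-C.1} and the form $y'T\subseteq T\sigma'_{i\gamma}(y')$ by analytic continuation, as in \cite[Lemma~2.1]{Ko1} and \cite[p.~338]{Te2}. First I would record the reductions. Write $T=u|T|$, let $s=u^*u$ be the support projection of $|T|$, and set $h:=|T|$ regarded as a positive self-adjoint operator on $s\cH$ (extended by $0$ on $s^\perp\cH$). A routine computation with $uy'=y'u$ (when $u\in\cM$), $u^*T=h$ and $u^*u h=h$, together with the boundedness of $\sigma'_{i\gamma}(y')$ for $\sigma'$-analytic $y'$, shows that ``$u\in\cM$ and $y'T\subseteq T\sigma'_{i\gamma}(y')$ for all $\sigma'$-analytic $y'$'' is equivalent to ``$u\in\cM$ and $y'h\subseteq h\sigma'_{i\gamma}(y')$ for all $\sigma'$-analytic $y'$''; and the displayed relation $\sigma'_{\gamma t}(y')|T|^{it}=|T|^{it}y'$ of Definition~\ref{D-C.1} is just $h^{it}y'h^{-it}=\sigma'_{\gamma t}(y')$ on $s\cH$ for all $t\in\mathbb{R}$. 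The condition $u\in\cM$ appears in both characterizations, so it suffices to prove the equivalence of the two relations for $h$; the normalization $u\in\cM$ is handled by a standard commutant argument (it is implicit in \cite{C3,Te1}) and I would dispatch it once at the start of the converse.

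For the forward implication, fix a $\sigma'$-analytic $y'$ and work with vectors in the dense subspace $\cD_0:=\bigcup_n e_{[1/n,n]}(h)\cH$, which is a core for every $h^{iz}$. For $\xi,\eta\in\cD_0$ the scalar function
\[
F(z):=\bigl\langle y'\,h^{-iz}\xi,\; h^{-i\bar z}\eta\bigr\rangle
\]
is entire in $z$ (a sesquilinear pairing of two $\cH$-valued entire functions whose spectral measures stay in a fixed compact subset of $(0,\infty)$, so there are no domain questions), it is bounded and of exponential type on each horizontal strip, and for real $z=t$ it equals $\langle h^{it}y'h^{-it}\xi,\eta\rangle=\langle\sigma'_{\gamma t}(y')\xi,\eta\rangle$. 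Since $y'$ is $\sigma'$-analytic, $z\mapsto\langle\sigma'_{\gamma z}(y')\xi,\eta\rangle$ is also entire and bounded on strips; the two entire functions agree on $\mathbb{R}$, hence everywhere by the identity theorem. Evaluating at $z=i$ and moving the resulting $h^{\pm1}$'s across the inner product (legitimate on $\cD_0$, using self-adjointness of $h$) shows that $\sigma'_{i\gamma}(y')\xi\in\cD(h)$ with $h\,\sigma'_{i\gamma}(y')\xi=y'h\xi$ for all $\xi\in\cD_0$. As $\cD_0$ is a core for $h$ and $h$ is closed, $y'h\subseteq h\sigma'_{i\gamma}(y')$ follows, and re-inserting $u\in\cM$ gives $y'T\subseteq T\sigma'_{i\gamma}(y')$.

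For the converse, assume $y'T\subseteq T\sigma'_{i\gamma}(y')$ for every $\sigma'$-analytic $y'$; after extracting $u\in\cM$ this reads $y'h\subseteq h\sigma'_{i\gamma}(y')$ for analytic $y'$. Now I would run the continuation backwards. Applying the inclusion iteratively to the analytic elements $\sigma'_{i\gamma}(y'),\sigma'_{2i\gamma}(y'),\dots$ (and their adjoints for the negative side) and repeating the pairing computation above against vectors of $\cD_0$ yields $\langle h^{ik}y'h^{-ik}\xi,\eta\rangle=\langle\sigma'_{\gamma k}(y')\xi,\eta\rangle$ for $k$ in the imaginary integer lattice; the two sides are restrictions of entire functions of exponential type that are bounded on horizontal strips, so Carlson's theorem forces them to coincide for all real times, i.e.\ $h^{it}y'h^{-it}=\sigma'_{\gamma t}(y')$ for all $t\in\mathbb{R}$. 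Finally, the $\sigma'$-analytic elements are $\sigma'$-strongly dense in $\cM'$ and both sides are $\sigma'$-strongly continuous in $y'$, so the relation extends to all $y'\in\cM'$; together with $u\in\cM$ this is exactly $\gamma$-homogeneity of $T$. The main obstacle is the analytic bookkeeping: verifying the holomorphy and exponential-type bounds of $z\mapsto h^{iz}\xi$ on bounded spectral subspaces, controlling the support and kernel of $|T|$ so that the $z=i$ evaluation genuinely lands in the graph of $T$, and — in the converse — carrying out the Carlson-type uniqueness step, since there one only recovers the commutation relation on a lattice of imaginary times and must upgrade it to all real times; the extraction of $u\in\cM$ and the final density/continuity closure are comparatively routine.
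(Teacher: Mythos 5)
The paper itself does not prove this lemma; it only points to \cite[Lemma 2.1]{Ko1} and \cite[p.~338]{Te2} and records that the argument rests on Carlson's theorem, so your overall strategy (reduce to $h=|T|$, continue analytically from real to imaginary time for the forward direction, and recover the real-time commutation from a lattice of imaginary times in the converse) is exactly the intended one, and your forward direction is essentially sound. The genuine gap is in the converse, at the point where you invoke Carlson's theorem. You assert that both $F(z)=\langle y'h^{-iz}\xi,h^{-i\bar z}\eta\rangle$ and $G(z)=\langle\sigma'_{\gamma z}(y')\xi,\eta\rangle$ are entire of exponential type. For $F$ this is true (type $\le 2\log n_0$ in $|\Im z|$ when $\xi,\eta\in e_{[1/n_0,n_0]}(h)\cH$), but for $G$ it is false for a general $\sigma'$-analytic $y'$: the standard dense family of entire elements $y'=\sqrt{n/\pi}\int e^{-nt^2}\sigma'_t(x')\,dt$ satisfies $\|\sigma'_{is}(y')\|\sim e^{ns^2}$, which is of order $2$, not of exponential type. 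Carlson's theorem needs finite exponential type along the lattice direction (here $i\bR$) together with type $<\pi$ in the transverse direction; the transverse bound is fine (both functions are bounded on $\bR$), but without the exponential-type bound along $i\bR$ the conclusion simply fails --- $\sin(\pi w)e^{-i\beta w^2}$ vanishes on $\mathbb{Z}$ and is bounded on $\bR$ but is not identically zero. So as written the Carlson step does not go through for the very elements one would normally use.

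There are two standard repairs. Either (a) run the converse only for the dense subclass of $\sigma'$-analytic elements of exponential type (those with compact $\sigma'$-spectrum, obtained by smearing against functions with compactly supported Fourier transform), for which $\|\sigma'_z(y')\|\le Ce^{R|\Im z|}$ and Carlson does apply, and then extend the resulting identity $h^{it}y'h^{-it}=\sigma'_{\gamma t}(y')$ to all of $\cM'$ by $\sigma$-strong density; or (b) avoid Carlson altogether by observing that the hypothesis makes $K(z):=\langle h^{-iz}\xi,\sigma'_{-\gamma\bar z}(y'^*)h^{-i\bar z}\eta\rangle$ periodic with period $i$ and bounded on the period strip $0\le\Im z\le1$, whence $K$ is constant by Liouville and $K(t)=K(0)$ gives the commutation relation directly. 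A secondary point: in the converse you cannot ``extract $u\in\cM$ at the start,'' since nothing is yet known about $u$ or about the support $s=u^*u$; one must first combine the hypothesis for $T$ with its adjoint version for $T^*$ to get $y'T^*T\subseteq T^*T\sigma'_{2i\gamma}(y')$, run the argument for the positive operator $|T|^2$, deduce the homogeneity of $|T|$ and $s\in\cM$, and only then compare $y'u|T|=y'T\subseteq T\sigma'_{i\gamma}(y')\supseteq uy'|T|$ on the dense range of $|T|$ to conclude $u\in\cM''{}'=\cM$. This reordering is harmless but necessary to avoid circularity.
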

This characterization for $\gamma$-homogeneity will be used to prove the next result.
\begin{lemma}\label{L-C.3}
Let $T_n$, $n=1,2,\cdots$, be a sequence of positive self-adjoint operators converging to
a positive self-adjoint operator $T$ in the strong resolvent sense
$($i.e., $(T_n+1)^{-1} \rightarrow (T+1)^{-1}$ in the strong operator topology$)$. 
If each $T_n$ is $\gamma$-homogeneous, then so is $T$.
\end{lemma}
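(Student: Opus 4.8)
The plan is to reduce the statement, via Lemma \ref{L-C.2}, to a single resolvent identity that passes to the limit under strong resolvent convergence. Fix $y'\in\cM'$ analytic with respect to $\sigma'_t$ and set $w':=\sigma'_{i\gamma}(y')\in\cM'$. Since $T$ is positive self-adjoint, hence densely defined and closed, Lemma \ref{L-C.2} tells us that $T$ is $\gamma$-homogeneous as soon as we verify $y'T\subseteq Tw'$ for every such $y'$, so that inclusion is the only thing to prove.

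First I would record how $\gamma$-homogeneity is encoded at the level of resolvents. For a positive self-adjoint operator $S$ with $y'S\subseteq Sw'$, take $\eta\in\cH$, put $\xi:=(1+S)^{-1}\eta\in\cD(S)$, note $w'\xi\in\cD(S)$ and $y'S\xi=Sw'\xi$, and combine this with $\eta=\xi+S\xi$ to get $(1+S)w'\xi=y'\eta+(w'-y')\xi$, i.e.
\begin{equation*}
w'(1+S)^{-1}=(1+S)^{-1}y'+(1+S)^{-1}(w'-y')(1+S)^{-1}.
\end{equation*}
Running the same computation backwards shows that this operator identity forces $w'\xi\in\cD(S)$ and $y'S\xi=Sw'\xi$ for every $\xi\in\cD(S)$; thus the displayed identity is \emph{equivalent} to $y'S\subseteq Sw'$.

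Next I would apply the forward direction to $S=T_n$: since each $T_n$ is $\gamma$-homogeneous, Lemma \ref{L-C.2} gives $y'T_n\subseteq T_nw'$, hence
\begin{equation*}
w'(1+T_n)^{-1}=(1+T_n)^{-1}y'+(1+T_n)^{-1}(w'-y')(1+T_n)^{-1}
\end{equation*}
for all $n$. Letting $n\to\infty$ and using $(1+T_n)^{-1}\to(1+T)^{-1}$ strongly together with $\|(1+T_n)^{-1}\|\le1$ and the elementary fact that $A_nv_n\to Av$ whenever $v_n\to v$, $A_n\to A$ strongly and $\sup_n\|A_n\|<\infty$, every term converges strongly to the term with $T_n$ replaced by $T$. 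This yields
\begin{equation*}
w'(1+T)^{-1}=(1+T)^{-1}y'+(1+T)^{-1}(w'-y')(1+T)^{-1},
\end{equation*}
and by the reverse direction established above this is exactly $y'T\subseteq Tw'=T\sigma'_{i\gamma}(y')$. As $y'$ was an arbitrary analytic element of $\cM'$, Lemma \ref{L-C.2} concludes that $T$ is $\gamma$-homogeneous.

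The only genuinely delicate point is the domain bookkeeping in the equivalence between $y'S\subseteq Sw'$ and the resolvent identity — above all verifying in the backward direction that the identity really returns $w'\xi$ into $\cD(S)$ and that $Tw'\xi$ then makes sense. Once that is pinned down, the limit passage is routine, and in particular none of the Carlson-type analyticity machinery underlying Lemma \ref{L-C.2} itself is needed here.
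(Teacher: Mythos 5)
Your proof is correct, but it takes a genuinely different route from the paper's. The paper argues directly at the level of vectors: for $\xi\in\cD(T)$ it introduces $\xi_n=(T_n+1)^{-1}(T+1)\xi$, verifies $\xi_n\to\xi$ and $T_n\xi_n\to T\xi$, then builds a second auxiliary sequence $\eta_n=(T+1)^{-1}(T_n+1)\sigma'_{i\gamma}(y')\xi_n$ in $\cD(T)$ and shows $\eta_n\to\sigma'_{i\gamma}(y')\xi$ and $T\eta_n\to y'T\xi$, so that closedness of the graph of $T$ yields $y'T\xi=T\sigma'_{i\gamma}(y')\xi$. You instead encode the inclusion $y'S\subseteq S\sigma'_{i\gamma}(y')$ as the bounded-operator identity
\begin{equation*}
w'(1+S)^{-1}=(1+S)^{-1}y'+(1+S)^{-1}(w'-y')(1+S)^{-1},\qquad w'=\sigma'_{i\gamma}(y'),
\end{equation*}
verify (correctly — the key point in the backward direction is that the right-hand side of $w'\xi=(1+S)^{-1}\bigl[y'\eta+(w'-y')\xi\bigr]$ lies in $\mathrm{Ran}\bigl((1+S)^{-1}\bigr)=\cD(S)$) that this identity is equivalent to the inclusion, and then pass to the limit, which is immediate from strong resolvent convergence and the uniform bound $\|(1+T_n)^{-1}\|\le1$. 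Both proofs invoke Lemma \ref{L-C.2} in exactly the same way; what your reformulation buys is that all the analysis takes place among contractions, so the delicate domain bookkeeping and the two-stage approximation of the paper collapse into a one-line limit. The trade-off is that the equivalence between the inclusion and the resolvent identity must be checked carefully once, but you have done that, and your argument is complete.
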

\begin{proof}
We choose and fix $\xi \in {\mathcal D}(T)$ and set
$$
\xi_n=(T_n+1)^{-1}(T+1)\xi \in {\mathcal D}(T_n+1) = {\mathcal D}(T_n)
$$
for each $n=1,2,\cdots$.
Since $(T_n+1)^{-1} \rightarrow (T+1)^{-1}$ strongly, we have
\begin{equation} \label{F-C.1} 
\lim_{n \to \infty}\|\xi_n-\xi \|=\lim_{n \to \infty}\|\left((T_n+1)^{-1}-(T+1)^{-1}\right)(T+1)\xi \|=0.
\end{equation}
Note $T_n\xi_n=(T_n+1)\xi_n-\xi_n=(T+1)\xi-\xi_n$ and hence \eqref{F-C.1} also means
\begin{equation}\label{F-C.2} 
\lim_{n \to \infty}\|T_n\xi_n-T\xi \|=0.
\end{equation}

We take an analytic element $y' \in \cM'$ with respect to the modular 
automorphism group $\sigma_t'$.
Since $y'T_n \subseteq T_n\sigma_{i\gamma}'(y')$ by Lemma \ref{L-C.2}, 
we have  $\sigma_{i\gamma}'(y')\xi_n \in {\mathcal D}(T_n)$ and 
$y'T_n\xi_n =T_n\sigma_{i\gamma}'(y')\xi_n$.
Since
$$
T_n\sigma_{i\gamma}'(y')\xi_n-y'T\xi=y'T_n\xi_n-y'T\xi=y'\left(T_n\xi_n-T\xi \right),
$$
\eqref{F-C.2} implies
\begin{equation}\label{F-C.3} 
\lim_{n \to \infty}\|T_n\sigma_{i\gamma}'(y')\xi_n-y'T\xi\|=0.
\end{equation}
Since $\sigma_{i\gamma}'(y')\xi_n \in {\mathcal D}(T_n)$, we can set
$$
\eta_n=(T+1)^{-1}(T_n+1)\sigma_{i\gamma}'(y')\xi_n
\in {\mathcal D}(T+1) = {\mathcal D}(T).
$$
We observe 
$$
\eta_n-\sigma_{i\gamma}'(y')\xi_n
=\left((T+1)^{-1}-(T_n+1)^{-1} \right)
(T_n+1)\sigma_{i\gamma}'(y')\xi_n.
$$
We set
\begin{align*}
\zeta_n&=(T_n+1)\sigma_{i\gamma}'(y')\xi_n-\bigl(y'T\xi+\sigma_{i\gamma}'(y')\xi\bigr)\\
&=\bigl( T_n\sigma_{i\gamma}'(y')\xi_n-y'T\xi\bigr)
+\sigma_{i\gamma}'(y')\left(\xi_n-\xi\right) .
\end{align*}
The second expression, \eqref{F-C.3} and \eqref{F-C.1} guarantee
\begin{equation}\label{F-C.4} 
\lim_{n \to \infty}\|\zeta_n\|=0.
\end{equation}
By using this $\zeta_n$ we rewrite the above $\eta_n-\sigma_{i\gamma}'(y')\xi_n$ as follows:
$$
\eta_n-\sigma_{i\gamma}'(y')\xi_n
=\left((T+1)^{-1}-(T_n+1)^{-1} \right)
\bigl(\bigl(y'T\xi+\sigma_{i\gamma}'(y')\xi\bigr) +\zeta_n
\bigr).
$$
We estimate
\begin{align*}
\|\eta_n-\sigma_{i\gamma}'(y')\xi_n\|
&\leq \|\left((T+1)^{-1}-(T_n+1)^{-1} \right)
\bigl( y'T\xi+\sigma_{i\gamma}'(y')\xi\bigr)\|\\
& \hskip 3.5cm
+\|(T+1)^{-1}-(T_n+1)^{-1}\|\,\|\zeta_n\|\\
&\leq \|\left((T+1)^{-1}-(T_n+1)^{-1} \right)
\bigl( y'T\xi+\sigma_{i\gamma}'(y')\xi\bigr)\|+2\|\zeta_n\|
\end{align*}
so that the strong resolvent convergence of $T_n$ to $T$ and \eqref{F-C.4} show
\begin{equation}\label{F-C.5} 
\lim_{n \to \infty}
\|\eta_n-\sigma_{i\gamma}'(y')\xi_n\|
=0,
\end{equation}
that is, 
\begin{equation}\label{F-C.6} 
\lim_{n \to \infty}
\|\eta_n-\sigma_{i\gamma}'(y')\xi\|
=0
\end{equation}
(see \eqref{F-C.1}).

On the other hand, we have
\begin{align*}
T\eta_n-y'T\xi
&=(T+1)\eta_n-\eta_n-y'T\xi\\
&=(T_n+1)\sigma_{i\gamma}'(y')\xi_n-\eta_n-y'T\xi\\
&=\bigl( T_n\sigma_{i\gamma}'(y')\xi_n-y'T\xi\bigr)
+
\bigl(\sigma_{i\gamma}'(y')\xi_n-\eta_n \bigr),
\end{align*}
which enables us to conclude
\begin{equation}\label{F-C.7} 
\lim_{n \to \infty}\|T\eta_n-y'T\xi\|=0
\end{equation}
due to \eqref{F-C.5} and \eqref{F-C.3}.

Each $\eta_n$ sits in ${\mathcal D}(T)$ so that the convergences \eqref{F-C.6} and \eqref{F-C.7} mean
$$
(\sigma_{i\gamma}'(y')\xi, y'T\xi) \in \overline{\Gamma(T)}, 
\ \mbox{the closure of the graph of $T$ (in $\cH \oplus \cH$}).
$$
Therefore, from the closedness of $T$ we conclude $\sigma_{i\gamma}'(y')\xi \in {\mathcal D}(T)$ with 
$T\sigma_{i\gamma}'(y')\xi =y'T\xi$ and we are done.
\end{proof}

If all of $T_n$'s and the limiting operator $T$ are assumed to be non-singular, then 
Lemma \ref{L-C.3} is much easier to prove.
Indeed, in this situation one can just use the condition (involving unitary operators $T_n^{it}$, $T^{it}$)
in Definition \ref{D-C.1}.
Then, since $T_n^{it}=\exp(it\log T_n)$ and $T^{it}=\exp(it\log T)$, one can use Trotter's theorem (see
\cite[Theorem VIII.21]{RS}) as in the proof of \cite[Corollary 15]{C3}.

In \cite{C3} an increasing sequence $\{\omega_n\}$ of faithful normal weights was studied.
The key point here is that the point-wise supremum $\omega=\sup_n \omega_n$
(i.e., $\omega(x)=\sup_n \omega_n(x)$, $x \in \cM_+$)
is normal. 
Indeed, normality means lower semi-continuity in the $\sigma$-weak topology, and the supremum
of lower semi-continuous functions is lower semi-continuous.
When $\omega$ is semi-finite, it is known that
spatial derivatives $d\omega_n/d\chi$ (increasingly) tend to $d\omega/d\chi$ in the
strong resolvent sense and hence corresponding modular automorphisms behave 
in the expected way (see \cite[Corollary 15]{C3} for details).
We will deal with a decreasing sequence $\{\omega_n\}_n$ of normal weights instead in the
next theorem. (The theorem is used in Remark \ref{R-5.8},(iii) of \S\ref{S-5}.)  We assume
semi-finiteness of $\omega_1$ (and hence all $\omega_n$'s), but faithfulness is not assumed
(and hence Lemma \ref{L-C.3} is needed).
The point-wise infimum $\inf_n \omega_n$ is certainly a weight (in the algebraic sense), but 
one cannot expect lower-semicontinuity in the $\sigma$-weak topology (i.e., normality),
see Remark \ref{R-C.5},(ii).  

\begin{theorem}\label{T-C.4}
Let $\{\omega_n\}_{n \in \bN}$ be a decreasing sequence of semi-finite normal weights on $\cM$. 
There exists a semi-finite normal weight $\omega$ such that spatial derivatives
$d\omega_n/d\chi$, $n=1,2,\cdots$,  converge $($decreasingly$)$ to $d\omega/d\chi$
in the strong resolvent sense.
Furthermore, $\omega$ is the maximum of all semi-finite normal weights majorized by
the point-wise infimum $\inf_n \omega_n$.
\end{theorem}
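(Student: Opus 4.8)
The plan is to transport the question, via Connes' spatial derivatives, into the form calculus of \S\ref{S-2.1}, where the decreasing-sequence machinery is already in place, and then to use Lemma \ref{L-C.3} to return to weights. Set $h_n:=d\omega_n/d\chi$. By \eqref{F-B.4} the associated positive forms $q_{\omega_n}(\xi)=\omega_n(\theta^{\chi}(\xi,\xi))=\|h_n^{1/2}\xi\|^2$ decrease pointwise in $n$ (since $\omega_n$ is decreasing and $\theta^{\chi}(\xi,\xi)\in\widehat{\cM}_+$ is fixed), so $\{h_n\}$ is a decreasing sequence of $(-1)$-homogeneous positive self-adjoint operators, all dominated by $h_1$ in the form sense. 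I would then put $q:=\Inf_n q_{\omega_n}=\bigl(\sup_n q_{\omega_n}^{-1}\bigr)^{-1}$ (see \eqref{F-2.5}), the maximal positive form lying below the pointwise infimum $\inf_n q_{\omega_n}$, and let $h$ be the associated extended positive self-adjoint operator. By Lemma \ref{L-2.4}, $h$ is exactly the limit of $\{h_n\}$ in the strong resolvent sense, with $(1+h_n)^{-1}\nearrow(1+h)^{-1}$ strongly, i.e.\ $h_n\searrow h$.

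A small but essential observation is that $h$ is \emph{densely defined}, hence a genuine positive self-adjoint operator and not merely a positive form: since $q\le q_{\omega_1}$ one has $\cD(h^{1/2})=\cD(q)\supseteq\cD(q_{\omega_1})=\cD(h_1^{1/2})$, and the latter is dense because $\omega_1$ is semi-finite, so $d\omega_1/d\chi$ is densely defined. Granting this, Lemma \ref{L-C.3} applied with $\gamma=-1$ to the strong-resolvent convergence $h_n\to h$ shows that $h$ is again $(-1)$-homogeneous relative to $\chi$, i.e.\ $h\in\overline{\cM^+_{-1}}$. The order-preserving bijection \eqref{F-5.1} then furnishes a unique $\omega\in P_0(\cM,\bC)$, that is, a semi-finite normal weight, with $d\omega/d\chi=h$; and by construction $d\omega_n/d\chi\to d\omega/d\chi$ decreasingly in the strong resolvent sense. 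This settles the first assertion.

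For the maximality statement, first note $q_{\omega}=q=\Inf_n q_{\omega_n}\le\inf_n q_{\omega_n}\le q_{\omega_n}$ for every $n$; by the order-preserving nature of the spatial-derivative dictionary (concretely, by reconstructing $\omega(x)=\sum_{\iota}q_{\omega}(x^{1/2}\xi_{\iota})$ from \eqref{F-B.4} and Lemma \ref{L-B.4}, using \eqref{F-5.1}) this gives $\omega\le\omega_n$ for all $n$, hence $\omega\le\inf_n\omega_n$, so $\omega$ is one of the competing weights. Conversely, let $\rho$ be any semi-finite normal weight with $\rho\le\inf_n\omega_n$, so $\rho\le\omega_n$ for each $n$; then $q_{\rho}\le q_{\omega_n}$ for all $n$ by \eqref{F-B.4}, hence $q_{\rho}\le\inf_n q_{\omega_n}$, and the defining maximality of $\Inf_n q_{\omega_n}$ among positive forms dominated by $\inf_n q_{\omega_n}$ forces $q_{\rho}\le q=q_{\omega}$, i.e.\ $d\rho/d\chi\le d\omega/d\chi$ in the form sense, whence $\rho\le\omega$. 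Thus $\omega$ is the required maximum.

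The only genuinely substantial ingredient is Lemma \ref{L-C.3} — the persistence of $\gamma$-homogeneity under strong-resolvent limits — which has already been proved; everything else is a translation through the correspondence \eqref{F-5.1} and the form calculus of \S\ref{S-2.1}. The one point that needs a moment's care, and which I would flag as the main (if modest) obstacle, is the density of $\cD(h^{1/2})$: this is precisely where semi-finiteness of the $\omega_n$ is used essentially, since without it the limiting form could have a non-dense domain and would then correspond to no weight at all.
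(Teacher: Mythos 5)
Your proposal is correct and follows essentially the same route as the paper's own proof: form $\Inf_n(d\omega_n/d\chi)$ as a positive form, invoke Lemma \ref{L-C.3} to preserve $(-1)$-homogeneity under the strong resolvent limit, pass back to a weight via the correspondence \eqref{F-5.1}, and derive maximality from the maximality of $\Inf$ among positive forms dominated by the pointwise infimum. Your explicit verification that the limiting form has dense domain (via $\cD(q)\supseteq\cD(q_{\omega_1})$ and semi-finiteness of $\omega_1$) is a point the paper leaves implicit, but it is the same argument, not a different one.
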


Our proof is based on the order preserving one-to-one correspondence.
$$
\varphi \in P_0(\cM, \bC) \quad \longleftrightarrow \quad d\varphi/d\chi \in \overline{\cM^+_{-1}}
$$
explained in \S\ref{S-5.1} (see \eqref{F-5.1}).
\begin{proof}
We have the following decreasing sequence of spatial derivatives:
$$
d\omega_1/d\chi \geq d\omega_2/d\chi \geq d\omega_3/d\chi \geq \cdots.
$$
We set $T=\Inf_n d\omega_n/d\chi$ so that the sequence $d\omega_n/d\chi$ converges to $T$
in the strong resolvent sense (see \cite{Ko6,Si1} for details).
Since each $d\omega_n/d\chi$ is $(-1)$-homogeneous, so is $T$ thanks to Lemma \ref{L-C.3}.
Thus, we have $T=d\omega/d\chi$ for some semi-finite normal weight $\omega$.
Since $d\omega_1/d\chi \geq d\omega_2/d\chi \geq d\omega_3/d\chi \geq \cdots \geq T=d\omega/\chi$,
we know $\omega_n \geq \omega$.

Conversely, we assume that a semi-finite normal weight $\omega'$ satisfies $\inf_n\omega_n \geq \omega'$.
Then, we have $\omega_n \geq \omega'$ for each for each $n$.
Hence we have $d\omega_n/d\chi \geq d\omega'/d\chi$ for each $n$ so that
the point-wise infimum $\inf_n$ satisfies
$$
\inf_nd\omega_n/d\chi \geq d\omega'/d\chi.
$$
Thus, the maximality of $\Inf_n d\omega_n/d\chi \ (=T)$ among all positive forms
majorized by $\inf_n$ (see the paragraph before Theorem \ref{T-2.3}) implies 
$$
\Inf_n d\omega_n/d\chi \ (=d\omega/d\chi) \geq d\omega'/d\chi.
$$
This means $\omega \geq \omega'$, showing the desired maximality of $\omega$.
\end{proof}

A few remarks on the theorem are in order:
\begin{remark}\label{R-C.5}\rm
\mbox{}
\begin{itemize}

\item[(i)]
The weight $\omega$ mentioned in the theorem does not depend upon the choice of
a fathful semi-finite normal weight $\chi$ on the commutant $\cM'$
$($which can be seen from the latter statement of the theorem$)$. 
Let us use the notation
$$
\omega=\Inf_n \omega_n,
$$ 
and the proof of Theorem \ref{T-C.4} says 
$$
d\left(\Inf_n \omega_n\right)/d\chi=\Inf_n \left(d\omega_n/d\chi \right).
$$

\item[(ii)]
When $\{\omega_n\}_{n \in \bN}$ is a decreasing sequence in $\cM_*^+$, 
$\omega_n$ tends to $\omega \in \cM_*^+$
in norm. We have $\omega_n(x) \searrow \omega(x)$ for $x \in \cM_+$ and
$\omega(x)=\inf_n\omega_n(x)$.
However, generally the point-wise infimum $\inf_n \omega_n$ and the weight $\omega$ in the theorem
could be quite different. 
This phenomenon can be  seen by considering the special case $\cM=B(\cH)$.
In this case a semi-finite normal weight is of the form $\Tr(T\cdot)$ with a densely defined
positive self-adjoint operator $T$ and the standard trace  $\Tr$ of $B(\cH)$.
The value of this weight against a rank-one operator $\xi \otimes \xi^c$ is
$$
\Tr(T(\xi \otimes \xi^c))=\|T^{1/2}\xi\|^2 \ \bigl(=q_T(\xi)\bigr),
$$
i.e., the canonical quadratic form associated with $T$. There are many examples of decreasing sequences
$T_1 \geq T_2 \geq T_3 \geq \cdots$ of densely defined closed quadratic forms such that
the point-wise infimum $\inf_n q_{T_n}$ is not closable, i.e., $\inf_n q_{T_n} \gneqq \Inf_n q_{T_n}$
$($see \cite{Ko6, Si1}$)$. 
\end{itemize}
\end{remark}

Let us recall the canonical order reversing correspondence
$$
\varphi \in P(\cM,\bC)
\quad
\longleftrightarrow 
\quad
\varphi^{-1} 
\in
P(B(\cH), \cM')
$$
explained in \S\ref{S-B.3}. We will interpret the meaning of $\Inf_n \omega_n$ in terms of this
correspondence.

For simplicity let us start from the following situation:
$$
\omega_1 \geq  \omega_2 \geq \omega_3 \geq \cdots \geq \omega_0
$$
with some faithful normal weight $\omega_0$.
Then we obviously have $\inf_n\omega_n \geq \omega_0$ and the
maximality in the preceding theorem shows $\Inf_n \omega_n \geq \omega_0$.
Therefore, faithfulness of all $\omega_n$'s and also 
$\Inf_n \omega_n$ are guaranteed (which is the only role played by $\omega_0$).
Then, by using the above-explained order reversing one-to-one correspondence,
we get the increasing sequence  
$$
\omega_1^{-1} \leq  \omega_2^{-1} \leq \omega_3^{-1} \leq \cdots \leq \omega_0^{-1}.
$$
of faithful semi-finite normal operator valued weights from $B(\cH)$ to $\cM'$.
\begin{lemma}\label{L-C.6}
Under the above-explained situation the point-wise supremum  
$
\sup_n \omega_n^{-1}
$
of the increasing sequence $\{\omega_n^{-1}\}_{n \in \bN}$ in $P(B(\cH),\cM')$
$($which is obviously an operator valued weight from $B(\cH)$ to $\cM'$$)$
is normal.
\end{lemma}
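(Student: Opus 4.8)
The plan is to reduce the statement to the abstract principle that the pointwise supremum of an increasing sequence of normal operator valued weights is again normal, and then feed in the correspondence of \S\ref{S-B.3}.

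First I would record that each $\omega_n$ is in fact faithful: since $\omega_n\ge\omega_0$ and $\omega_0$ is faithful, $\omega_n(x^*x)=0$ forces $\omega_0(x^*x)=0$, hence $x=0$. (The same argument applied to $\Inf_n\omega_n\ge\omega_0$ shows $\Inf_n\omega_n$ is faithful; this is the only role $\omega_0$ plays, and it is not strictly needed for the present lemma.) Thus $\omega_n\in P(\cM,\bC)$ for every $n$, so the canonical order reversing correspondence of \S\ref{S-B.3} produces faithful semi-finite normal operator valued weights $\omega_n^{-1}\in P(B(\cH),\cM')$. Because $\phi\mapsto\phi^{-1}$ is order reversing, $\{\omega_n^{-1}\}$ is an increasing sequence of \emph{normal} operator valued weights, and its pointwise supremum $F$ — already noted in the statement to be an operator valued weight from $B(\cH)$ to $\cM'$ — is well defined.

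Next I would prove: if $\{E_n\}$ is an increasing sequence of normal operator valued weights from $B(\cH)$ to $\cM'$ and $F:=\sup_nE_n$ pointwise, then $F$ is normal. Fix a bounded increasing net $x_\iota\nearrow x$ in $B(\cH)_+$. For each $n$ one has $\sup_\iota E_n(x_\iota)=E_n(x)$ by normality of $E_n$, and for each $\iota$ one has $\sup_nE_n(x_\iota)=F(x_\iota)$ by definition of $F$. The family $\{E_n(x_\iota)\}$ is increasing in the product order on $I\times\bN$, and all the relevant suprema exist in $\widehat{\cM'}_+$, so the iterated suprema may be exchanged:
$$
\sup_\iota F(x_\iota)=\sup_\iota\sup_n E_n(x_\iota)
=\sup_n\sup_\iota E_n(x_\iota)=\sup_n E_n(x)=F(x),
$$
the first equality being the definition of $F$ at $x_\iota$, the second the interchange of suprema, the third normality of each $E_n$, and the fourth the definition of $F$ at $x$. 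Since $F(x_\iota)$ is automatically increasing (monotonicity of an operator valued weight), this says precisely that $F$ is normal. Applying this with $E_n=\omega_n^{-1}$ gives the lemma.

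The steps are routine, but the point that needs care is the exchange of suprema in the extended positive cone $\widehat{\cM'}_+$ together with pinning down the definition of normality of an operator valued weight; in particular one should be sure that $F$ is genuinely additive and $\cM'$-bilinear (using that the supremum of an increasing \emph{sequence} in $\widehat{\cM'}_+$ is coordinatewise and additive on increasing sequences, in contrast with the infimum, cf.\ the discussion after Theorem \ref{T-2.3}), so that ``normality'' is being verified for an object that really is an operator valued weight. If one prefers to avoid handling $\widehat{\cM'}_+$ directly, an equivalent route is to compose with the fixed faithful weight $\chi$: normality of $\chi$ gives $\chi\circ F=\sup_n(\chi\circ\omega_n^{-1})$, a supremum of normal weights on $B(\cH)$ and hence normal, and normality of $F$ then follows from the fact that an operator valued weight is normal precisely when its composition with one faithful semi-finite normal weight on the target algebra is normal.
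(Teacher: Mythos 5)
Your main argument is correct and is in substance the paper's own proof: the paper likewise reduces the claim to checking $\sup_\iota F(x_\iota)=F(x_0)$ for a bounded increasing net, and it performs exactly your interchange of the suprema over $n$ and over $\iota$ by testing against arbitrary normal weights $\varphi$ on $\cM'$ (via Haagerup's Proposition 1.10, $\varphi(\sup_n m_n)=\sup_n\varphi(m_n)$) and invoking normality of $\sup_n\varphi\circ\omega_n^{-1}$ as a supremum of an increasing sequence of normal scalar weights --- which is precisely the justification your bare-handed exchange of suprema in $\widehat{\cM'}_+$ needs. The only caveat is that your alternative route relies on the criterion that normality of $F$ follows from normality of $\chi\circ F$ for a single faithful semi-finite normal $\chi$, which is true but less immediate than you suggest; the paper sidesteps this by quantifying over all normal $\varphi$ on $\cM'$.
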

\begin{proof}
Let us assume $x_{\iota} \nearrow x_0$ in $B(\cH)_+$ and 
$$
\sup_n\omega_n^{-1}(x_{\iota}) \nearrow y_0 \, \left(\leq \sup_n \omega_n^{-1}(x_0) \right)
$$
taken in the extended positive part $\widehat{\cM'}_+$.
For a fixed $\iota$ we have $\sup_n\omega_n^{-1}(x_{\iota}) \leq y_0$ and consequently
$$
\sup_n \varphi \circ \omega_n^{-1}(x_{\iota})
=\varphi\left( \sup_n \omega_n^{-1}(x_{\iota})\right) \leq \varphi(y_0)
$$
for each normal weight $\varphi$ on $\cM'$ (which extends to $\widehat{\cM'}_+$,
see \cite[Proposition 1.10]{Haa3}).
Therefore, we have
\begin{equation}\label{F-C.8} 
\sup_{\iota}\left(\sup_n \varphi \circ \omega_n^{-1}(x_{\iota})\right) \leq \varphi(y_0).
\end{equation}
On the other hand, $\{ \varphi \circ \omega_n^{-1}\}_{n \in \bN}$ is an increasing sequence 
of normal weights on $B(\cH)$ so that $\sup_n \varphi \circ \omega_n^{-1}$ is normal
(see the paragraph right before Theorem \ref{T-C.4}).
Thus, the left hand side of \eqref{F-C.8} is actually $\sup_n \varphi \circ \omega_n^{-1}(x_0)$
and we have
$$
\sup_n \varphi \circ \omega_n^{-1}(x_0) \leq \varphi(y_0).
$$
The left hand side here is equal to 
$\varphi\left(\sup_n \omega_n^{-1}(x_0) \right)$ by \cite[Proposition 1.10]{Haa3}
and the above inequality is valid for each $\varphi$ so that we conclude  
$$
\sup_n \omega_n^{-1}(x_0) \leq y_0.
$$
\end{proof}

The following fact is probably worth pointing out, that is considered as a weight version of the formula
\eqref{F-2.5} in \S\ref{S-2}:

\begin{proposition}\label{P-C.7}
Let $\{\omega_n\}_{n \in \bN}$ be a decreasing sequence of semi-finite normal weights on $\cM$,
and we assume
$$
\omega_1 \geq  \omega_2 \geq \omega_3 \geq \cdots \geq \omega_0
$$
with some faithful normal weight $\omega_0$.
Then, the semi-finite normal weight $\omega=\Inf_n \omega_n$ in Theorem \ref{T-C.4}
$($i.e., the maximum of all semi-finite normal weights
majorized by the point-wise infimum $\inf_n\omega_n$$)$ is given by
$$
\Inf_n \omega_n = \bigl(\sup_n \omega_n^{-1}\bigr)^{-1}.
$$
\end{proposition}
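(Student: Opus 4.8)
The plan is to reduce the statement to the form identity \eqref{F-2.5} together with the two defining properties \eqref{F-B.9}, \eqref{F-B.10} of Connes' inverse correspondence. Write $\omega:=\Inf_n\omega_n$ for the semi-finite normal weight furnished by Theorem \ref{T-C.4}. Since $\omega_n\ge\omega_0$ for every $n$, the paragraph preceding the proposition gives $\omega\ge\omega_0$, so $\omega$ is faithful; moreover Theorem \ref{T-C.4} yields $d\omega/d\chi=\Inf_n(d\omega_n/d\chi)$ with $d\omega_n/d\chi\searrow d\omega/d\chi$ in the strong resolvent sense. Because $\{d\omega_n/d\chi\}$ is a decreasing sequence of (non-singular, each $\omega_n$ being faithful) positive self-adjoint operators regarded as positive forms, formula \eqref{F-2.5} gives $d\omega/d\chi=\bigl(\sup_n(d\omega_n/d\chi)^{-1}\bigr)^{-1}$; taking inverses and invoking the modular relation (a) of \S\ref{S-B.3} (legitimate since every $\omega_n$ and $\omega$ is a faithful semi-finite normal weight) I obtain
\begin{equation*}
d\chi/d\omega=(d\omega/d\chi)^{-1}=\sup_n(d\omega_n/d\chi)^{-1}=\sup_n\,d\chi/d\omega_n,
\end{equation*}
the supremum being that of an increasing sequence of positive forms. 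As $d\chi/d\omega$ is a densely defined positive self-adjoint operator, this supremum of forms is exactly $d\chi/d\omega$.

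Next I would transfer this to the $B(\cH)$-side. By Lemma \ref{L-C.6} the increasing point-wise supremum $F:=\sup_n\omega_n^{-1}$ is a normal operator-valued weight from $B(\cH)$ to $\cM'$, and it is faithful because $F\ge\omega_1^{-1}$ with $\omega_1^{-1}$ faithful. For $x\in B(\cH)_+$ one has $F(x)=\sup_n\omega_n^{-1}(x)$ in $\widehat{\cM'}_+$, so normality of $\chi$ on $\widehat{\cM'}_+$ (\cite[Proposition 1.10]{Haa3}) gives $(\chi\circ F)(x)=\sup_n(\chi\circ\omega_n^{-1})(x)$. By the defining property \eqref{F-B.9}, $\chi\circ\omega_n^{-1}=\Tr\bigl((d\chi/d\omega_n)\,\cdot\bigr)$, and on a rank-one operator $\xi\otimes\xi^{c}$ this equals $\|(d\chi/d\omega_n)^{1/2}\xi\|^2$. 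Since $d\chi/d\omega_n\nearrow d\chi/d\omega$ as positive forms, these values increase to $\|(d\chi/d\omega)^{1/2}\xi\|^2=\Tr\bigl((d\chi/d\omega)(\xi\otimes\xi^{c})\bigr)$. A normal weight on $B(\cH)$ is determined by its values on one-dimensional projections (expand $x\in B(\cH)_+$ spectrally and use order-continuity), so I conclude $\chi\circ F=\sup_n\Tr\bigl((d\chi/d\omega_n)\,\cdot\bigr)=\Tr\bigl((d\chi/d\omega)\,\cdot\bigr)$.

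Finally, the same density operator $d\chi/d\omega$ characterizes $\omega^{-1}$ through \eqref{F-B.9}, i.e. $\chi\circ\omega^{-1}=\Tr\bigl((d\chi/d\omega)\,\cdot\bigr)$, a semi-finite normal weight on $B(\cH)$. Thus $F=\sup_n\omega_n^{-1}$ and $\omega^{-1}$ are two normal operator-valued weights from $B(\cH)$ to $\cM'$ whose compositions with $\chi$ coincide, so the uniqueness half of Haagerup's theorem on operator-valued weights (\cite[Theorem 5.1]{Haa3}) forces $F=\omega^{-1}$ (and in particular exhibits $F$ as semi-finite). Inverting this equality — equivalently, applying the inverse correspondence \eqref{F-B.10} to $F$ — gives $\bigl(\sup_n\omega_n^{-1}\bigr)^{-1}=\omega=\Inf_n\omega_n$, as claimed. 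The step I expect to be the main obstacle is the middle one: interchanging the $\widehat{\cM'}_+$-valued weight $\chi$ with the increasing supremum of operator-valued weights and then identifying $\sup_n(d\chi/d\omega_n)$ (a supremum of forms) with the genuine spatial derivative $d\chi/d\omega$; once this is in place everything else is bookkeeping with the definitions of Appendices \ref{S-B} and \ref{S-C}.
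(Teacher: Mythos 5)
Your argument is correct and follows essentially the same route as the paper's proof: both reduce the claim to the identity $\chi\circ\bigl(\sup_n\omega_n^{-1}\bigr)=\chi\circ\bigl(\Inf_n\omega_n\bigr)^{-1}$ by combining \eqref{F-B.9}, the normality of $\chi$, and the form identity \eqref{F-2.5} applied to the spatial derivatives $d\omega_n/d\chi$. The only cosmetic differences are that you justify the interchange of $\Tr$ with the increasing supremum by testing on rank-one operators and then conclude from the uniqueness part of Haagerup's theorem for a single fixed $\chi$, whereas the paper obtains the same conclusion by letting $\chi$ range over all faithful semi-finite normal weights on $\cM'$.
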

\begin{proof}
It suffices to show $\left(\Inf_n \omega_n\right)^{-1} = \sup_n \omega_n^{-1}$.
For each $n$ we have
$$
\chi \circ \omega_n^{-1}=\Tr\left((d\chi/d\omega_n)\, \cdot \right)
$$
thanks to \eqref{F-B.9}. Therefore, we have
\begin{align}\label{F-C.9} 
\chi\circ\left(\sup_n \omega_n^{-1} \right)
&=\sup_n \chi \circ \omega_n^{-1} \quad (\mbox{since $\chi$ is normal})
\nonumber\\
&= \sup_n \Tr \left((d\chi/d\omega_n)\, \cdot\right)
= \Tr \left(\left(\sup_n d\chi/d\omega_n\right) \cdot\right). 
\end{align}
We observe that the density in the above right side is
\begin{align*}
\sup_n d\chi/d\omega_n
&= \sup_n \left( d\omega_n/d\chi \right)^{-1}
= \left(\Inf_n d\omega_n/d\chi \right)^{-1}\\
&= \left(d\left(\Inf_n \omega_n\right)/d\chi \right)^{-1}
= d\chi/d\left(\Inf_n \omega_n\right).
\end{align*}
Here, the second equality comes from \eqref{F-2.5}
while the third is just the definition of $\Inf_n \omega_n$ (see
Remark \ref{R-C.5},(i)).
Therefore, from \eqref{F-C.9} and then \eqref{F-B.9} we have
$$
\chi\circ\left(\sup_n \omega_n^{-1} \right)
=\Tr\left( d\chi/d\left(\Inf_n \omega_n\right) \cdot \right)
=\chi \circ \left(\Inf_n \omega_n\right)^{-1},
$$
showing  $\left(\Inf_n \omega_n\right)^{-1}=\sup_n \omega_n^{-1}$
(since $\chi$ can be arbitrary).
\end{proof}
The following natural interpretation is also possible: 
The second part of Theorem \ref{T-C.4} and the order reversing property of 
$
\varphi \in P(\cM,\bC)
\leftrightarrow 
\varphi^{-1} 
\in
P(B(\cH), \cM')
$
yield that $(\Inf_n \omega_n)^{-1}$ is the minimum of all operator valued weights 
(in $P(B(\cH), \cM')$) majorizing all $\omega_n^{-1}$ ($n \in \bN$). However, the point-wise
supremum $\sup_n \omega_n^{-1}$ is already a normal operator valued weight so that
this must be the minimum in question.

\addcontentsline{toc}{section}{References}

\end{document}